\newtheorem{thm}{Theorem}[section]
\newtheorem{prop}[thm]{Proposition}
\newtheorem{lem}[thm]{Lemma}
\newtheorem{cor}[thm]{Corollary}
\theoremstyle{definition}
\newtheorem{definition}[thm]{Definition}
\theoremstyle{remark}
\newtheorem{remark}[thm]{Remark}
\numberwithin{equation}{section}
\newcommand{\case}[2]{ \noindent \textbf{Case #1:} #2 \\}
\newcommand{\ZZZ}{\mathbb{Z}}
\newcommand{\RRR}{\mathbb{R}}
\newcommand{\TTT}{\mathbb{T}}
\newcommand{\dd}{\delta}
\newcommand{\DD}{\Delta}
\newcommand{\w}{\omega}
\newcommand{\ee}{\varepsilon}
\begin{document}

\title{Global well-posedness of the Energy-critical\\ nonlinear
Schr\"{o}dinger
equation on $\mathbb{T}^4$}

\author{HAITIAN YUE}
\address{Department of Mathematics and Statistics, University of Massachusetts
 Amherst}
\email{hyue@math.umass.edu}


\begin{abstract}
In this paper, we first prove global well-posedness for the  defocusing cubic  nonlinear Schr\"odinger equation (NLS) on 4-dimensional tori - either rational or irrational - and with initial data in $H^1$. Furthermore, we prove that if a maximal-lifespan solution of the focusing cubic NLS $u: I\times\TTT^4\to \mathbb{C}$ satisfies $\sup_{t\in I}\|u(t)\|_{\dot{H}^1(\TTT^4)}<\|W\|_{\dot{H}^1(\RRR^4)}$, then it is a global solution. $W$ denotes the ground state on Euclidean space, which is a stationary solution of the corresponding focusing equation in $\RRR^4$.
\end{abstract}

\maketitle

\section{Introduction}

In this paper, we consider the cubic nonlinear Schr\"odinger equation (NLS)  in the periodic setting $x\in \TTT_\lambda^4$
\begin{equation}\label{eq:NLS}
  (i\partial_t + \Delta) u = \mu u|u|^2,
\end{equation}
where $\mu = \pm 1$ ($+1$: the defocusing
 case,
$-1$: the focusing case).
And $u : \RRR\times \TTT_\lambda^4 \to \mathbb{C}$ is a complex-valued function of time space $\RRR$ and spatial space $\TTT^4_\lambda$, a general rectangler tori,
i.e.
\[
\TTT^4_\lambda := \RRR^4/(\prod_{i=1}^4 \lambda_i \ZZZ),\qquad \lambda = (\lambda_1, \lambda_2, \lambda_3, \lambda_4),
\]
where  $\lambda_i\in (0,\infty)$ { for } $i=1, 2 ,3 ,4.$
Specifically, if the ratio of arbitrary two $\lambda_i'\, s$ in $\{\lambda_1, \lambda_2, \lambda_3, \lambda_4\}$ is an irrational number, then $\TTT^4_\lambda$ is called an irrational torus, otherwise $\TTT^4_\lambda$ is called a rational torus. Since our proof doesn't change no matter either rational or irrational tori. For the convenience, we use $\TTT^4: = \TTT^4_\lambda$ hence-forth in the paper.

Solutions of (\ref{eq:NLS}) conserve in both the mass of $u$:
\begin{equation}\label{eq:mass}
  M(u)(t):= \int_{\TTT^4} |u(t)|^2\, dx
\end{equation}
and the energy of $u$:
\begin{equation}\label{eq:energy}
  E(u)(t):= \frac{1}{2} \int_{\TTT^4} |\nabla u(t)|^2\, dx + \frac{1}{4}\mu
  \int_{\TTT^4} |{u(t)}|^4\, dx.
\end{equation}

\subsection{The defocusing case ($\mu = +1$)}
In the defocusing case, our main theorem is global well-posedness  of (\ref{eq:NLS}) with
$H^1(\TTT^4)$ initial data.

\begin{thm}[GWP of the defocusing NLS]\label{thm:main}
If $u_0\in H^1(\TTT^4)$, for any $T\in [0, \infty)$,  there exists a unique global solution $u\in X^1([-T, T])$
of the initial value problem
\begin{equation}\label{eq:IVP}
  (i\partial_t + \Delta) u = u|u|^2,\qquad u(0) = u_0.
\end{equation}
In addition, the mapping $u_0 \to u$ extends to a continuous mapping from
$H^1(\TTT^4)$ to $X^1([-T, T])$ and $M(u)$ and $E(u)$
defined in (\ref{eq:mass}) and (\ref{eq:energy}) are conserved along the flow.
\end{thm}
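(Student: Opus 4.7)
The plan is to adapt the concentration-compactness / rigidity strategy of Kenig--Merle to the periodic setting, following the template developed by Herr--Tataru--Tzvetkov and Ionescu--Pausader in lower dimensions. Because the problem is energy-critical on $\TTT^4$, the conservation of $E(u)$ controls precisely the $\dot{H}^1$-norm of the solution, so iterating a subcritical local theory via energy conservation is unavailable; what one needs instead is an a priori global-in-time bound on a critical space-time (``scattering-size'') norm.

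The first step is to set up the critical function spaces $X^1$ and $Y^1$ built on the atomic and variation spaces $U^p_\Delta$ and $V^p_\Delta$ of Koch--Tataru type, restricted to time intervals. The linear building block is the $L^4_{t,x}$ Strichartz estimate for frequency-localized data on $\TTT^4$, which in this dimension is a consequence of the $\ell^2$-decoupling theorem of Bourgain--Demeter. A trilinear estimate of schematic form $\|\prod_{j=1}^{3} u_j\|_{N^1(I)} \lesssim \prod_{j} \|u_j\|_{X^1(I)}$, obtained by combining a Littlewood--Paley decomposition with this Strichartz bound, reduces local well-posedness to a standard contraction in $X^1$; the same toolbox then yields a long-time perturbation lemma, to the effect that an approximate solution with bounded scattering-size norm and small error can be shadowed by a genuine $X^1$-solution. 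This reduces Theorem~\ref{thm:main} to an a priori bound on the scattering-size norm of arbitrary $H^1$-solutions.

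I would then argue by contradiction and assume that there is a minimal energy $E_c$ above which this scattering-size bound can fail. A linear profile decomposition for bounded sequences in $H^1(\TTT^4)$ evolved by $e^{it\DD}$, in the spirit of Keraani, produces two types of profiles: genuinely periodic profiles at fixed scale, and rescaled and translated Euclidean bubbles $N_n\,\phi(N_n(x-x_n))$ with $N_n\to\infty$ concentrating at a point of $\TTT^4$. The nonlinear evolution of a Euclidean bubble, on its natural time scale $N_n^{-2}$, is compared to the evolution of $\phi$ under the energy-critical cubic NLS on $\RRR^4$, for which the global scattering result of Ryckman--Visan provides a uniform space-time bound. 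Combining this nonlinear profile approximation with the perturbation lemma and the energy subadditivity of the profile decomposition forces the hypothetical critical solution to concentrate on a single Euclidean profile, which is then ruled out by the Euclidean scattering bound.

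The main technical obstacle I expect is the Euclidean approximation step: one must show that the true $X^1$-solution of the periodic NLS with initial data $N_n\,\phi(N_n(x-x_n))$, evolved over a time interval of length much larger than $N_n^{-2}$, is well-approximated in the critical norm by the rescaled and spatially cut-off Euclidean evolution of $\phi$. This requires quantitative control on the spatial localization of the Euclidean scattering solution over long rescaled times, together with a careful estimate in $X^1$ of the error made by replacing the Euclidean Laplacian with the periodic one in the localization region, before the perturbation lemma can be invoked. Making these approximations quantitative in the critical $X^1$ space --- rather than in weaker Strichartz norms --- is where the bulk of the work lies.
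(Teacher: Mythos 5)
Your proposal follows essentially the same route as the paper: atomic spaces $X^1$, $Y^1$ built on $U^p_\Delta$, $V^p_\Delta$, Bourgain--Demeter decoupling for the frequency-localized Strichartz estimates, a trilinear estimate and long-time perturbation lemma reducing matters to an a priori bound on a weaker critical space-time norm (the $Z$-norm), a profile decomposition with one fixed-scale profile plus concentrating Euclidean bubbles, the Euclidean approximation step invoking Ryckman--Vi\c{s}an as a black box, and a minimal-counterexample contradiction argument. The one place where your sketch is coarser than what is actually needed is the trilinear estimate: the schematic $\|\prod u_j\|_{N(I)}\lesssim\prod\|u_j\|_{X^1(I)}$ must be refined so that two of the three factors carry the weaker interpolation norm $Z'$ rather than $X^1$ (as in Proposition~\ref{prop:nonlinear}); without that refinement the $Z$-norm cannot serve as the controlling scattering-size quantity, and the extinction lemma for concentrating linear evolutions (Lemma~\ref{lem:extinction}) would have no leverage in the profile-reduction step.
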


The space $X^1(I) \subset C(I: H^1(\TTT^4))$  is the adapted atomic space (see Definition \ref{def:Xs}).

On $\RRR^d$,  the scaling symmetry plays an important role in the well-posedness (existence, uniqueness and continuous dependence of the data to solution map) problem of initial value problem (IVP) for NLS:
\begin{equation}\label{eq:pNLS}
\begin{cases}
i\partial_t u + \Delta u = |{u}|^{p-1} u,\qquad p>1\\
u(0, x) = u_0(x) \in \dot{H}^s(\RRR^d).
\end{cases}
\end{equation}
The IVP (\ref{eq:pNLS}) is scale invariant in the Sobolev norm $\dot{H}^{s_c}$, where $s_c := \frac{d}{2} - \frac{2}{p-1}$ is called the scaling critical regularity. 

For $H^s$ data with $s > s_c$ (sub-critical regime), the local-well posedness (LWP) of the IVP (\ref{eq:pNLS}) in sub-critical regime was proven by Cazenave-Weissler \cite{caz1}. 
For $H^s$ data with $s = s_c$ (critical regime), Bourgain \cite{bourgain1999radial} first proved the large data global well-posedness (GWP) and scattering for the defocusing energy-critical ($s_c = 1$) NLS in $\RRR^3$  with the radially symmetric initial data in $\dot{H}^1$ by introducing an induction method on the size of energy and a refined Morawetz inequality. A different proof of the same result was given by Grillakis in \cite{grillakis2000radial}. Then a breakthrough was made by Colliander-Keel-Staffilani-Takaoka-Tao \cite{colliander2008global}. Their work extended the results of Bourgain \cite{bourgain1999radial} and Grillakis \cite{grillakis2000radial}. They proved global well-posedness and scattering of the energy-critical problem in $\RRR^3$ for general large data in $\dot{H}^1$.
Then similar results were proven by Ryckman-Vi{\c{s}}an \cite{visan2007global} and Vi{\c{s}}an 
\cite{vicsan2011global} on the higher dimension $\RRR^d$ spaces. Furthermore, Dodson proved mass-critical ($s_c = 0$) global wellposedness results for $\RRR^d$ in his series of papers \cite{dodson2012global, dodson2016global1, dodson2016global2}.

For the corresponding problems on the tori, the Strichartz estimates on rational tori $\TTT^d$ (see \cite{GV,Yaj,KTao} for the Strichartz estimates in the Euclidean spaces $\RRR^d$), which prove the local well-posedness of the periodic NLS, was initially developed by Bourgain \cite{bourgain1993fourier}. In \cite{bourgain1993fourier}, the number theoretical related lattice counting arguments were used, hence this method worked better in the rational tori than irration tori.  Recently Bourgain-Demeter's work \cite{bourgain2015proof} proved the optimal Strichartz estimates on both rational and irrational tori via a totally different approach which doesn't depend on the lattice counting lattice. Also there are other important references \cite{BourgainStrichartz2007, DeSilvaNatasaStaffilani2007, CatoireWang2010, BourgainStrichartz2013, GuoOhWang2014, KillipVisanStrichartz2016, Demirbas2017, DengGermainGuth2017, FanBilinear2018} on the Strichartz estimates on the tori and global existence of solution of the Cauchy problem in sub-critical regime.
On the general compact manifolds, Burq-Gerard-Tzvetkov derived the Strichartz type estimates and applied these estimates to the global well-posedness of NLS on compact manifolds in a series of papers \cite{BurqGerardTzvetkov2004, BurqGerardTzvetkov2005Bilinear, BurqGerardTzvetkov2005multilinear, BurqGerardTzvetkov2007}. We also refers to \cite{Zhong2008, GerardPierfelice2010, ZaherBilinear2012, Zaher2012} and references therein for the other results of global existence sub-critical NLS on compact manifolds.

In the critical regime, Herr-Tataru-Tzvetkov \cite{herr2011global} studied the global existence of the energy-critical NLS on $\TTT^3$ and first proved the global well-posedness with small initial data in $H^1$. They used a crucial trilinear Strichartz type estimates in the context of the critical atomic spaces $U^p$ and $V^p$, which were originally developed in unpublished work on wave maps by Tararu. These atomic spaces were systematically formalized by Hadac-Herr-Koch \cite{hadac2009well} (see also \cite{KochTataru2005}\cite{herr2014strichartz}) and now the atomic spaces $U^p$ and $V^p$ are widely used in the field of the critical well-posedness theory of nonlinear dispersive equations.
The large data global well-posedness result of the energy-critical NLS on rational $\TTT^3$ was proven by Ionescu-Pausader \cite{ionescu2012energy}, which is the first large data critical global well-posedness result of NLS on a compact manifold.  
In a series of papers, Ionescu-Pausader \cite{ionescu2012energy}\cite{ionescu2012global2} and Ionescu-Pausader-Staffilani \cite{ionescu2012global1} developed a method to obtain energy-critical large data global well-posedness in more general manifolds ($\TTT^3$, $\TTT^3\times\RRR$, and $\mathbb{H}^3$) based on the corresponding results on the Euclidean spaces in the same dimension. So far, their method 
has been successfully applied to other manifolds in several following papers \cite{pausader2014global, strunk2015global, zhao2017gT2R2, zhao2017TR3}. In particular, based on the recent developments on the large data global well-posedness theory in the product spaces $\RRR^m\times \TTT^n\ (m\geq 1, n\geq 1)$, many authors (\cite{TzvetkovVisciglia2012, HaniPausader2014, tzvetkov2014well, modifiedScattering2015, Grebert2016, cheng2017scattering, zhao2017gT2R2, zhao2017TR3, Liu2018}) studied the long time behaviors (scattering, modified scattering and etc) of the solutions of the NLS. 

In this paper, we prove the large data global wellposedness result of defocusing energy-critical NLS on the both rational and irrational tori in the 4-dimension. Our proof is closely related to the strategy developed by Ionescu-Pausader \cite{ionescu2012energy}\cite{ionescu2012global2}. Compared to the $\TTT^3\times \RRR$ case, in Ionescu-Pausader \cite{ionescu2012global2}, there is less dispersion of the Schr\"{o}dinger operator in the compact manifold $\TTT^4$. This means that it is more difficult to obtain a sharp enough Strichartz type estimates as Proposition 2.1 in \cite{ionescu2012global2}. 
We use the sharp Strichartz type estimates (Lemma 
\ref{prop:strichartz}) recently proven by Bourgain-Demeter 
\cite{bourgain2015proof} in our proof.
Moreover, since Lemma \ref{prop:strichartz} works both for rational or irrational tori, we can prove the result on both rational and irrational tori.

The main parts in the proof of {Theorem \ref{thm:main}} will follow the concentration-compactness framework of Kenig-Merle 
\cite{KenigMerle}, which is a deep and broad road map to deal with critical problems (see also in \cite{KenigMerle2008NLW}\cite{KenigMerle2010}).
Our first step is to obtain the critical local well-posedness theory 
and the stability theory of (\ref{eq:NLS}) in $\TTT^4$. For that 
purpose, we follow Herr-Tataru-Tzvetkov's idea \cite{herr2011global}\cite{herr2014strichartz} and introduce the adapted critical spaces $X^s$ and $Y^s$, which are frequency localized modifications of atomic spaces $U^p$ and $V^p$, as our solution spaces and nonlinear spaces.  
Applying Proposition \ref{prop:strichartz} and the strip decomposition technique in \cite{herr2011global, herr2014strichartz} to the atomic spaces in time-space frequency space, we 
obtain a crucial bilinear estimate and then the local well-posedness of (\ref{eq:NLS}). 
Then we measure the solution in a weaker critical space-time norm $Z
$, which plays a similar role as $L^{10}_{x,t}$ norm in 
\cite{colliander2008global}. On the one hand, equipped with $Z$-norm, we 
obtain the refined bilinear estimate (Lemma \ref{lem:bilinear}) and 
hence it is proven that the solution stay regular as long as $Z$-norm stay finite (i.e. global well-posedness with a priori $Z$-norm bound). 
On the other hand, we show that concentration of a large amount of the Z -norm in fnite time is self-defeating. The reason is as follows.
Concentration of a large amount the Z-norm in finite
time can only happen around a space-time point, which can be 
considered as a Euclidean-like solution. 
To implement this, arguing by contradiction, we construct a 
sequence of initial data which implies a sequence of solutions and 
leads the $Z$-norm towards infinity. Then following the profile decompositon idea (firstly by Gerard \cite{gerard1998} in Sobolev embedding and Merle-Vega \cite{merle1998} in the Schr\"odinger equation), we perform a linear profile decomposition 
of the sequence of initial data with one Scale-1-profile and a 
series of Euclidean profiles that concentrate at space-time points. 
We get nonlinear profiles by running the linear profiles along the 
nonlinear Schr\"{o}dinger flow as initial data. By the contradiction 
condition, the scattering properties of nonlinear Euclidean profiles 
and the defect of interaction between different profiles show that 
there is actually at most one profile which is the Euclidean 
profile. And the corresponding nonlinear Euclidean profile is just 
the Euclidean-like solution we want.
Euclidean-like solution can be interpreted in some sense as 
solutions in the Euclidean space $\RRR^4$, however, these kind of 
concentration as a Euclidean-like solution is prevented by the global well-posedness 
theory on the Euclidean space $\RRR^4$ in Vi{\c{s}}an-Ryckman
\cite{visan2007global} and Vi{\c{s}}an \cite{vicsan2011global}'s 
papers.

\subsection{The focusing case ($\mu = -1$)}

In the focusing case, we prove global well-posedness when both the modified energy and  kinect energy of the initial data less than energy and modified kinect energy of the ground state $W$ in $\RRR^4$. Moreover,
\begin{equation}\label{eqn:groundstate1}
W(x) = W(x, t) = \frac{1}{ 1 + \frac{|x|^2}{8}}  \qquad \text{in } \dot{H}^1(\RRR^4)
\end{equation}
which is a stationary solution of the focusing case of (\ref{eq:NLS}) and also solves the elliptic equation in $\RRR^4$
\begin{equation}\label{eqn:groundstate2}
\Delta W +|W|^{2}W = 0.
\end{equation}
Then we define a constant $C_4$ by using the stationary solution $W$. And also $C_4$ is the best constant in Sobolev embedding (see Remark \ref{rmk:bestconstant}).
\begin{equation}\label{eq:WC}
\|W\|^2_{\dot{H}^1(\RRR^4)} = \|W\|^4_{L^4(\RRR^4)} := \frac{1}{C_4^4} \qquad\text{ and then } \qquad
E_{\RRR^4}(W) = \frac{1}{4C^4_4},
\end{equation}
where $E_{\RRR^4}(W)$ is the energy of $W$ in the Euclidean space $\RRR^4$:
\begin{equation}\label{def:energyinEuclidean}
E_{\RRR^4}(W):= \frac{1}{2} \int_{\RRR^4} |\nabla W(x)|^2\, dx - \frac{1}{4}
  \int_{\RRR^4} |{W(x)}|^4\, dx.
\end{equation}

\begin{thm}[GWP of the focusing NLS]\label{thm:focusing}
Assume $u_0\in H^1(\TTT^4)$. Assume that $u$ is a maximal-lifespan solution $u: I\times\TTT^4\to \mathbb{C}$ satisfying
\begin{equation}\label{cond:focusing1}
\sup_{t\in I}\|u(t)\|_{\dot{H}^1(\TTT^4)}<\|W\|_{\dot{H}^1(\RRR^4)},
\end{equation} 
then for any $T\in [0, +\infty)$, $u\in X^1([-T, T])$ is a solution
of the initial value problem
\begin{equation}\label{eq:IVP2}
  (i\partial_t + \Delta) u = -u|u|^2,\qquad u(0) = u_0.
\end{equation}
\end{thm}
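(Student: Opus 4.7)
The plan is to transplant the concentration--compactness proof of Theorem \ref{thm:main} to the focusing setting, substituting its Euclidean defocusing global input with the sub-threshold focusing analogue. Under (\ref{cond:focusing1}) the focusing equation is effectively defocusing: one regains a uniform $H^1(\TTT^4)$ bound on the solution together with a definite gap from the ground-state threshold, after which the defocusing argument for Theorem \ref{thm:main} goes through almost verbatim.

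The first step is to convert (\ref{cond:focusing1}) into a uniform $H^1$ control. Mass conservation handles the $L^2$ part; for the $L^4$ piece I would apply the sharp Sobolev embedding on $\TTT^4$, which inherits the best constant $C_4$ from $\RRR^4$ (up to lower-order mass corrections, via the standard blow-up analysis of extremisers), to obtain
\begin{equation*}
\|u(t)\|_{L^4(\TTT^4)}^4 \leq C_4^4 \|u(t)\|_{\dot H^1(\TTT^4)}^4 + C(\|u_0\|_{L^2}).
\end{equation*}
Combined with (\ref{cond:focusing1}) this yields $\sup_{t \in I} \|u(t)\|_{H^1(\TTT^4)} < \infty$ along with a quantitative gap $\|W\|_{\dot H^1(\RRR^4)} - \sup_{t \in I}\|u(t)\|_{\dot H^1(\TTT^4)} > 0$, which will be essential at the Euclidean-profile stage.

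The second step is to observe that the $X^1$ local-well-posedness, the stability theory and the $Z$-norm continuation criterion underpinning Theorem \ref{thm:main} extend verbatim to (\ref{eq:IVP2}): the bilinear and trilinear atomic-space estimates, together with Lemma \ref{prop:strichartz}, depend only on the algebraic structure of the cubic term $|u|^2 u$, not on the sign $\mu$. If $I \neq \RRR$, the $Z$-norm of $u$ must therefore blow up at a finite endpoint $T^\ast \in \partial I$; picking times $t_n \to T^\ast$ for which the $Z$-norm on shrinking intervals diverges, the uniformly $H^1$-bounded family $\{u(t_n)\}$ admits a linear profile decomposition into a scale-$1$ profile plus Euclidean profiles at divergent scales, exactly one of which, call it $\phi_\infty$, must account for the unbounded $Z$-norm. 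The $\dot H^1$ asymptotic orthogonality transfers the strict inequality (\ref{cond:focusing1}) to the Euclidean nonlinear flow of $\phi_\infty$, so by the sub-threshold global theorem for the $\RRR^4$ focusing cubic NLS (of Kenig--Merle type) this profile is global with finite scattering-size Strichartz norm, and the extended stability theory then uniformly bounds the $Z$-norm of $u$ near $T^\ast$, a contradiction.

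The main obstacle I anticipate is propagating the strict inequality in (\ref{cond:focusing1}) through the profile decomposition so that $\phi_\infty$ satisfies $\sup_{t} \|\phi_\infty(t)\|_{\dot H^1(\RRR^4)} < \|W\|_{\dot H^1(\RRR^4)}$ along its entire lifespan, not merely at the initial time, so that the Kenig--Merle-type input on $\RRR^4$ genuinely applies. Ruling out an Euclidean profile that saturates the threshold in the limit requires combining the $\dot H^1$ decoupling of the profile decomposition with the quantitative gap from Step~1 and the variational characterisation of $W$; this bookkeeping, more than the transplantation of the defocusing skeleton, is the delicate part.
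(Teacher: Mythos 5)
Your outline is essentially the paper's: run the concentration--compactness/rigidity argument of the defocusing proof, with the Euclidean black box replaced by the sub-threshold focusing global well-posedness on $\RRR^4$, and argue by contradiction against finite-time $Z$-norm blowup. Two remarks on the periphery before the main issue. First, Step~1 is largely redundant for Theorem~\ref{thm:focusing} itself: hypothesis (\ref{cond:focusing1}) \emph{already} controls $\sup_{t\in I}\|u(t)\|_{\dot H^1(\TTT^4)}$, and mass conservation gives the $L^2$ part, so the uniform $H^1$ bound and the quantitative gap are immediate; the sharp Sobolev embedding (Lemma~\ref{lem:sobolev}) is needed instead for the energy-trapping Theorem~\ref{thm:EnergyTrapping}, which converts the initial-data conditions of Corollary~\ref{cor:focusing} into (\ref{cond:focusing1}) --- a separate implication. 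Second, the $\RRR^4$ black box you want is Dodson's non-radial focusing result \cite{DodsonFocusing} (Theorem~\ref{thm:GWPfocusing}), not the radial Kenig--Merle theorem, though this is a matter of citation rather than substance.

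The genuine gap is exactly where you flag trouble but do not resolve it. In the focusing quantity $L(\phi)=\sup_{t}\|u_\phi(t)\|^2_{\dot H^1}$ the supremum runs over \emph{the whole lifespan}, whereas the linear profile decomposition of Proposition~\ref{prop:ProfileDecomposition} decouples $\dot H^1$ norms only at the fixed time $t=0$. To conclude $L(\alpha)\le E_{\max}$ for each profile --- and thus that each Euclidean profile's nonlinear evolution stays strictly below $\|W\|_{\dot H^1(\RRR^4)}$ so that Theorem~\ref{thm:GWPfocusing} is applicable --- one must show the nonlinear profiles $U^\alpha_k(t)$ remain almost orthogonal in $\dot H^1$ \emph{uniformly in $t$}. ``Combining the $\dot H^1$ decoupling with the quantitative gap and the variational characterisation of $W$'' does not do this, because the decoupling you invoke is a statement about initial data. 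The paper closes the gap with a dedicated new lemma (Lemma~\ref{prop:almostorth}, almost orthogonality of nonlinear profiles): using the decomposition of each nonlinear profile into scattering tails, a localized core, and a small remainder (Corollary~\ref{lem:decomposition1}), together with the spatial/temporal separation of orthogonal frames, it proves $\sup_{t}\langle U^\alpha_k(t),U^\beta_k(t)\rangle_{\dot H^1\times\dot H^1}\to 0$ and likewise against $G$. This is precisely what yields the time-uniform energy decoupling (\ref{eq:EnergyDecoupling}) in the focusing case and is the additional technical ingredient your proposal is missing.
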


For the technical reason in the focusing case, we should introduce two modified energies of $u$:
\begin{equation}\label{def:modifiedEnergy1}
E_*(u)(t) := \frac{1}{2}(\|u(t)\|^2_{\dot{H}^1(\TTT^4)}+c_* \|u(t)\|^2_{L^2(\TTT^4)}) - \frac{1}{4}\|u(t)\|_{L^4(\TTT^4)}^4,
\end{equation}
and
\begin{equation}\label{def:modifiedEnergy2}
E_{**}(u)(t) := \frac{1}{2}(\|u(t)\|^2_{\dot{H}^1(\TTT^4)}+c_* \|u(t)\|^2_{L^2(\TTT^4)})- \frac{1}{4}\|u(t)\|_{L^4(\TTT^4)}^4  + \frac{c_*^2 C^4_4}{4}\|u(t)\|^4_{L^2(\TTT^4)},
\end{equation}
where $c_*$ is a fixed constant determined by the Sobolev embedding on $\TTT^4$ (Lemma \ref{lem:sobolev}). By the definitions (\ref{def:modifiedEnergy1})(\ref{def:modifiedEnergy2}), $E_*(u)(t)$ and $E_{**}(u)(t)$ are conserved in time.

We also introduce $\|u\|_{H_*^1(\TTT^4)}$ as a modified inhomogenous Sobolev norm:
\begin{equation}\label{def:modifiedNorm}
\|u\|^2_{H_*^1(\TTT^4)} = \|u\|^2_{\dot{H}^1(\TTT^4)} + c_* \|u\|^2_{L^2(\TTT^4)}
\end{equation}
 Obviously, $H^1_*(\TTT^4)$-norm and $H^1(\TTT^4)$-norm are two comparable norms ($\|u\|_{H^1_*(\TTT^4)} \simeq \|u\|_{H^1(\TTT^4)}$).

\begin{cor}\label{cor:focusing}
Assume that $u_0\in H^1(\TTT^4)$ satisfying
\begin{equation}\label{cond:focusing2}
\|u_0\|_{H^1_*(\TTT^4)}<\|W\|_{\dot{H}^1(\RRR^4)},\quad E_*(u_0)<E_{\RRR^4}(W);
\end{equation}
OR
\begin{equation}\label{cond:focusing3}
\|u_0\|_{\dot{H}^1(\TTT^4)}<\|W\|_{\dot{H}^1(\RRR^4)},\quad E_{**}(u_0)<E_{\RRR^4}(W),
\end{equation}
where $E_*(u_0)$ and $E_{**}(u_0)$ are two modified Energies defined in (\ref{def:modifiedEnergy1}) and (\ref{def:modifiedEnergy2}), and $E_{\RRR^4}(W)$ is the Energy in the Euclidean space defined in (\ref{def:energyinEuclidean}).
Then for any $T\in [0, \infty)$,  there exists a unique global solution $u\in X^1([-T, T])$
of the initial value problem (\ref{eq:IVP2}).
In addition, the mapping $u_0 \to u$ extends to a continuous mapping from
$H^1(\TTT^4)$ to $X^1([-T, T])$ for any $T\in [0, \infty)$.
\end{cor}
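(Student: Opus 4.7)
The plan is to reduce both cases to the hypothesis of Theorem \ref{thm:focusing}, namely $\sup_{t\in I}\|u(t)\|_{\dot{H}^1(\TTT^4)}<\|W\|_{\dot{H}^1(\RRR^4)}$, by means of a classical energy-trapping (bootstrap) argument. The two ingredients are: (i) the conservation in time of $E_*(u)$ and $E_{**}(u)$, which follows from conservation of the standard energy $E(u)$ and mass $M(u)$ together with the fact that $E_*$, $E_{**}$ are linear combinations of $E$ and $M$, $M^2$; and (ii) the sharp Sobolev inequality on $\TTT^4$ supplied by Lemma \ref{lem:sobolev}, which in the form
\[
\|u\|_{L^4(\TTT^4)}^4\leq C_4^4\,\|u\|_{H^1_*(\TTT^4)}^4
\]
pins down the choice of $c_*$ in (\ref{def:modifiedNorm}) and matches the best Euclidean constant characterized by $W$. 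Once the trapping is in force, Theorem \ref{thm:focusing} supplies the conclusion immediately.

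For the first case (\ref{cond:focusing2}), introduce $f(x):=\tfrac{1}{2}x^{2}-\tfrac{C_4^{4}}{4}x^{4}$. A direct computation shows $f$ is strictly increasing on $[0,\|W\|_{\dot H^1(\RRR^4)}]$ and attains its unique maximum $\tfrac{1}{4C_4^4}=E_{\RRR^4}(W)$ exactly at $x=\|W\|_{\dot H^1(\RRR^4)}$. The sharp Sobolev inequality yields $E_*(u(t))\geq f(\|u(t)\|_{H^1_*(\TTT^4)})$, so by conservation
\[
f(\|u(t)\|_{H^1_*(\TTT^4)})\leq E_*(u_0)<E_{\RRR^4}(W)=f(\|W\|_{\dot H^1(\RRR^4)}).
\]
Let $a<\|W\|_{\dot H^1(\RRR^4)}$ be the unique root of $f(a)=E_*(u_0)$ in $[0,\|W\|_{\dot H^1(\RRR^4)})$. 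Since $\|u_0\|_{H^1_*}<\|W\|_{\dot H^1}$, continuity of $t\mapsto\|u(t)\|_{H^1_*}$ (guaranteed by $u\in C(I;H^1)$) and strict monotonicity of $f$ on $[0,\|W\|_{\dot H^1}]$ force $\|u(t)\|_{H^1_*}\leq a$ for all $t\in I$: the set where the inequality $\|u(t)\|_{H^1_*}<\|W\|_{\dot H^1}$ holds is open, and the improved bound $\|u(t)\|_{H^1_*}\leq a$ shows it is also closed in $I$. Since $\|u\|_{\dot H^1}\leq\|u\|_{H^1_*}$, the hypothesis of Theorem \ref{thm:focusing} holds and global well-posedness in $X^{1}([-T,T])$ follows.

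For the second case (\ref{cond:focusing3}), expand
\[
\|u\|_{H^1_*}^{4}=\|u\|_{\dot H^1}^{4}+2c_*\|u\|_{\dot H^1}^{2}\|u\|_{L^2}^{2}+c_*^{2}\|u\|_{L^2}^{4},
\]
plug this into the sharp Sobolev inequality, and observe that the extra term $\tfrac{c_*^{2}C_4^{4}}{4}\|u\|_{L^2}^{4}$ placed in $E_{**}$ is chosen precisely to cancel the lowest-order contribution. One obtains
\[
E_{**}(u)\geq \tfrac{1}{2}\|u\|_{\dot H^1}^{2}-\tfrac{C_4^{4}}{4}\|u\|_{\dot H^1}^{4}+\tfrac{c_*}{2}\|u\|_{L^2}^{2}\bigl(1-C_4^{4}\|u\|_{\dot H^1}^{2}\bigr),
\]
so whenever $\|u\|_{\dot H^1}<\|W\|_{\dot H^1(\RRR^4)}$ the final term is nonnegative and $E_{**}(u)\geq f(\|u\|_{\dot H^1})$ with the same $f$ as above. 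The identical bootstrap, now run on $\|u(t)\|_{\dot H^1(\TTT^4)}$ itself and using $E_{**}$-conservation plus the strict initial bound, yields $\sup_{t\in I}\|u(t)\|_{\dot H^1}<\|W\|_{\dot H^1(\RRR^4)}$ and Theorem \ref{thm:focusing} again concludes. The only delicate point in the whole argument is the algebraic matching in case two showing that $c_*^{2}C_4^{4}/4$ is the correct coefficient to recover coercivity in terms of $\|u\|_{\dot H^1}$ alone; the remaining steps are the familiar variational trapping of Kenig-Merle type.
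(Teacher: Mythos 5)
Your proof is correct and follows essentially the same route as the paper: both reduce to the hypothesis of Theorem \ref{thm:focusing} via the energy-trapping mechanism based on the sharp Sobolev inequality of Lemma \ref{lem:sobolev}, the variational identity $\|W\|_{\dot H^1}^2=1/C_4^4$, conservation of $E_*$ (resp.\ $E_{**}$), and a continuity/bootstrap argument in $t$. The paper packages this as Lemma \ref{lem:EnergyTrapping} and Theorem \ref{thm:EnergyTrapping} (with the additional quantitative coercivity $E_*(u)\geq\tfrac14(1+\bar\delta)\|u\|_{H^1_*}^2$, used elsewhere) and, in case (ii), replaces $C_4^4\|u\|_{\dot H^1}^2$ by $1$ at an earlier stage rather than factoring out the nonnegative term $\tfrac{c_*}{2}\|u\|_{L^2}^2(1-C_4^4\|u\|_{\dot H^1}^2)$ as you do, but the underlying computation and conclusion are the same.
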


\begin{remark}
By the the energy trapping lemma (Theorem 2.5) in the Section 2, either 
(\ref{cond:focusing2}) or (\ref{cond:focusing3}) implies the condition (\ref{cond:focusing1}) in Theorem \ref{thm:focusing}. 
\end{remark}

In the focusing case, global well-posedness result usually doesn't 
hold for arbitary data. For the energy-critical focusing NLS on $\RRR^d$, Kenig-Merle \cite{KenigMerle} first proved the global 
well-posedness and scattering with initial data below a ground state 
threshold ($E_{\RRR^d}(u_0)<E_{\RRR^d}(W)$ and $\|u_0\|_{\dot{H}^1}< \|W\|_{\dot{H}^1}$) in the 
radial case ($d\geq 3$). And then the corresponding results without 
the radial conditions were proven by Killp-Vi{\c{s}}an \cite{KillipVisanFocusing} ($d\geq 5$) and Dodson \cite{DodsonFocusing} ($d=4$). We also refer to \cite{DuyckaertsFocusing2008, HolmerRoudenko2008, Cazenave2011, DuyckaertsRoudenko2015, Masaki2015, DodsonMurphy2017, dodson2017new} for other focusing NLS results.

In this paper, we also prove a similar global well-posedness result for the energy-critical focusing NLS on $\TTT^4$ below the ground state threshold. As in the defocusing case, we follow the idea in Ionescu-Pausader \cite{ionescu2012energy}\cite{ionescu2012global2} and use the 
focusing global well-posedness result \cite{DodsonFocusing} in $\RRR^4$ as a black box. 
It is known that the conditions in $\RRR^d$ are $E_{\RRR^d}(u_0)<E_{\RRR^d}(W)$ 
and $\|u_0\|_{\dot{H}^1}< \|W\|_{\dot{H}^1}$, which are tightly related to the Sobolev embedding with the best constant in $\RRR^d$. However the 
sharp version of Sobolev embedding (Lemma \ref{lem:sobolev}) is 
quite different. So compared to the conditions for initial data in Eulidean space $\RRR^d$, the conditions in Corollary \ref{cor:focusing} should be different. A similar case is the focusing 
NLS on the hyperbolic space, it doesn't share the same sharp version 
of Sobolev embedding either. On the hyperbolic space, Fan-Kleinhenz
\cite{FanVariational} give a minimal ratio between energy and $L^2$ 
norm under the condition $E_{\mathbb{H}^3}(u_0)<E_{\RRR^3}(W)$ and $\|u_0\|_{\dot{H}^1}< \|W\|_{\dot{H}^1}$ in the radial case, and also in Banica-Duyckaerts \cite{BanicaDuyckaerts}'s paper about the focusing NLS on the 
hyperbolic space, they modified the Sobolev norm and energy by 
subtracting  a multiple of the $L^2$ norm.
 On $\TTT^d$, based on the best constants of  Sobolev embedding (Lemma \ref{lem:sobolev}) on $\TTT^d$, we should also modify the energy and Sobolev norm by adding terms related to the $L^2$ norm , so that the modified 
 conditions together with Sobolev embedding derive the energy 
 trapping property which controls the Sobolev norm globally in time. 
 In Section 2, we will discuss the Sobolev embedding and energy 
 trapping lemma in detail.

\subsection{Outline of the following paper}
The rest of the paper is organized as follows. In Section 2, we prove the energy trapping property for the focusing NLS. In Section 3, we introduce the adapted atomic spaces $X^s$, $Y^s$ and $Z$ norm and provide some corresponding embedding properties of the spaces. In Section 4, we use Herr-Tataru-Tzvetkov's method and Bourgain-Demeter's sharp Strichartz estimate to develop a large-data local well-posedness and stability theory for (\ref{eq:NLS}).
In Section 5, {we study} the behavior of Euclidean-like solutions to the linear and nonlinear equation concentrating to a point in space and time. In Section 6, we recall a similar profile decomposition as the Section 5 in \cite{ionescu2012energy} to measure the defects of compactness in the Strchartz inequality. In Section 7, we prove the main theorems (Theorem \ref{thm:main} and Theorem \ref{thm:focusing}) except for a lemma. In Section 8, we prove the remaining lemma about approximate solutions.

\subsection*{Acknowledgements}
The author is greatly indebted to his advisor, Andrea R. Nahmod, for suggesting this problem and her patient guidance and warm encouragement over the past years. The author would like to thank Beno\^{i}t Pausader for prompting the author to study the focusing case. The author also would like to thank Chenjie Fan for his helpful discussions on the focusing case of this paper.  The author acknowledges support from the National Science Foundation
through his advisor Andrea R. Nahmod's grants NSF-DMS 1201443 and NSF-DMS 1463714.

\section{Energy trapping for the focusing NLS}
Before proceeding to the proofs of main theorems (Theorem \ref{thm:main} and Theorem \ref{thm:focusing}), we explain how Theorem \ref{thm:focusing} implies Corollary \ref{cor:focusing} in the focusing case by using the energy trapping argument. In this section, we'll prove the energy trapping argument in $\TTT^4$ which is different from the energy trapping argument (Theorem 3.9 in \cite{KenigMerle}) in $\RRR^4$.

\begin{lem}[Sobolev embedding with best constants by \cite{Aubin2}\cite{Hebey2}\cite{hebey1996}]\label{lem:sobolev}
Let $f\in H^1(\TTT^4)$, then there exists a positive constant $c_*$, such that
\begin{equation}\label{eq:Sobolev}
\|f\|^2_{L^4(\TTT^4)} \leq C^2_4 (\|f\|^2_{\dot{H}^1(\TTT^4)} + c_* \|f\|^2_{L^2(\TTT^4)}).
\end{equation}
where $C_4$ is the best constant of this inequality.
\end{lem}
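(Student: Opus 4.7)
The plan is to deduce the inequality from the sharp Euclidean Sobolev inequality $\|g\|_{L^4(\RRR^4)}^2 \leq C_4^2 \|\nabla g\|_{L^2(\RRR^4)}^2$ (whose extremizer is $W$ from (\ref{eqn:groundstate1})) via a localization-plus-compactness argument in the spirit of Aubin and Hebey. The statement is really two intertwined claims: that $C_4^2$ is admissible as the coefficient of the $\dot H^1$ term, and that some finite $c_*$ then suffices for the $L^2$ remainder.

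First I would exploit that the flat torus $\TTT^4$ is locally isometric to $\RRR^4$: the Euclidean inequality applies verbatim to any function supported in a sufficiently small geodesic ball $B \subset \TTT^4$, giving $\|g\|_{L^4(\TTT^4)}^2 \leq C_4^2 \|\nabla g\|_{L^2(\TTT^4)}^2$ for $g \in C_c^\infty(B)$. Fix a finite partition of unity $\{\eta_\alpha\}_{\alpha=1}^N$ with $\sum_\alpha \eta_\alpha = 1$, each $\eta_\alpha \in C_c^\infty$ supported in such a ball. Apply the local inequality to $\eta_\alpha^{1/2} f$, use $\nabla(\eta_\alpha^{1/2} f) = \eta_\alpha^{1/2} \nabla f + f \nabla \eta_\alpha^{1/2}$, and sum via $|f|^4 \leq \sum_\alpha \eta_\alpha |f|^4$ together with Cauchy--Schwarz to combine pieces. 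This yields a soft version
\begin{equation*}
\|f\|_{L^4(\TTT^4)}^2 \leq (C_4^2 + \ee)\, \|f\|_{\dot H^1(\TTT^4)}^2 + C(\ee)\, \|f\|_{L^2(\TTT^4)}^2
\end{equation*}
for every $\ee > 0$, at the cost of shrinking the scale of the cover and blowing up $C(\ee)$.

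To upgrade this to the claimed inequality with the \emph{exact} coefficient $C_4^2$, I would argue by contradiction. Assume no finite $c_*$ makes (\ref{eq:Sobolev}) hold. Then there exists a sequence $f_n \in H^1(\TTT^4)$ with $\|f_n\|_{L^4}^2 = 1$, $C_4^2 \|f_n\|_{\dot H^1}^2 \leq 1 - \dd$ for some fixed $\dd > 0$, and $\|f_n\|_{L^2} \to 0$. A linear profile decomposition in $H^1(\TTT^4)$ (the Euclidean-profile type decomposition recorded in Section 6 of this paper, tracing back to Keraani and Gerard) must then extract a Euclidean profile $\phi \in \dot H^1(\RRR^4)$ concentrating at some scale $\lambda_n \to 0$ and center $x_n \in \TTT^4$: indeed, the soft bound above, together with $\|f_n\|_{L^2} \to 0$, rules out any scale-one profile surviving in $L^4$. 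By lower semicontinuity the rescaled limit satisfies $\|\phi\|_{L^4(\RRR^4)}^2 \geq 1$ and $C_4^2 \|\phi\|_{\dot H^1(\RRR^4)}^2 \leq 1 - \dd$, contradicting sharpness of the Euclidean Sobolev inequality.

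The main obstacle is precisely this sharpness step: the localization gives only $C_4^2 + \ee$, and pinning down the exact Euclidean constant requires knowing that any failure of (\ref{eq:Sobolev}) at constant $C_4^2$ must arise from Euclidean-scale concentration inside $\TTT^4$ --- which is what couples the compact geometry to the Euclidean extremal $W$ and simultaneously produces a single finite $c_*$ that accounts for the absence of a nonzero eigenvalue gap on the torus (where constants do not decay).
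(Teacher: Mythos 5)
The paper does not actually prove Lemma~\ref{lem:sobolev}; it is imported from the literature (Aubin, Hebey, Hebey--Vaugon), with the citations placed in the lemma's title, and no proof environment follows. So the comparison is between your sketch and the standard proof in those references, not against anything in the paper.

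Your localization step is fine and standard: partition of unity plus the Euclidean inequality on small balls gives
\begin{equation*}
\|f\|_{L^4(\TTT^4)}^2 \leq (C_4^2+\ee)\,\|f\|_{\dot H^1(\TTT^4)}^2 + C(\ee)\,\|f\|_{L^2(\TTT^4)}^2
\end{equation*}
for every $\ee>0$. The gap is in the upgrade to $\ee=0$. You assert that, if no finite $c_*$ works, one can extract a sequence with $\|f_n\|_{L^4}^2=1$, $\|f_n\|_{L^2}\to 0$, and $C_4^2\|f_n\|_{\dot H^1}^2\leq 1-\dd$ for a \emph{fixed} $\dd>0$. That gap does not follow from the contradiction hypothesis. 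All you get from ``no finite $c_*$'' is, after normalizing $\|f_n\|_{L^4}=1$ and taking $c_*=n$, that $C_4^2\|f_n\|_{\dot H^1}^2<1$ and $n\,C_4^2\|f_n\|_{L^2}^2<1$. Feeding $\|f_n\|_{L^2}\to 0$ back into the soft inequality with $\ee\to 0$ forces $C_4^2\|f_n\|_{\dot H^1}^2\to 1$, so there is no uniform $\dd$. Consequently the profile you extract is an \emph{exact} extremizer of the Euclidean Sobolev inequality, i.e.\ a rescaled translate of $W$, and the Euclidean sharpness produces no contradiction at all: $\|\phi\|_{L^4}^2=C_4^2\|\phi\|_{\dot H^1}^2$ is perfectly consistent. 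The decoupling identities from the profile decomposition give inequalities in the wrong direction to save this (each profile's $\|\cdot\|_{L^4}^4$ and $\|\cdot\|_{\dot H^1}^2$ are \emph{at most} the limit of the $f_n$, not at least).

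Closing this gap is precisely the content of the Hebey--Vaugon theorem, and it is not soft. The correct argument runs a blow-up analysis along a putative extremal sequence: one expands the Sobolev quotient to second order around a standard concentrating bubble and shows that, on a flat manifold, the error in the first-constant term is absorbed by the $L^2$ term. In dimension $4$ the relevant scales are visible directly: for a bubble $W_\lambda$ truncated to the torus at scale $\lambda^{-1}$ one has $\|W_\lambda\|_{L^2}^2\sim\lambda^{-2}\log\lambda$, while the deficit $\|W_\lambda\|_{L^4}^2-C_4^2\|W_\lambda\|_{\dot H^1}^2$ is only $O(\lambda^{-2})$, so a finite $c_*$ can dominate it thanks to the logarithmic divergence of $\|W\|_{L^2(\RRR^4)}$. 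This asymptotic comparison, not concentration-compactness by itself, is what makes the exact constant $C_4^2$ achievable with a finite $c_*$, and it is why the lemma is delegated to the cited works rather than proved here.
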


\begin{remark}\label{rmk:bestconstant}
$C_4$ is the same constant as expressed in (\ref{eq:WC}), because $C_4$ is also the best constant of the Sobolev embedding in $\RRR^4$,
$
\|f\|^2_{L^4(\RRR^4)} \leq C^2_4 \|f\|^2_{\dot{H}^1(\RRR^4)},
$
and the function $W(x)$ holds the Sobolev embedding with the best constant $C_4$ in $\RRR^4$.
\end{remark}

\begin{remark}
Since $\|u\|^2_{H_*^1(\TTT^4)} = \|u\|^2_{\dot{H}^1(\TTT^4)} + c_* \|u\|^2_{L^2(\TTT^4)}$, the Sobolev embedding (Lemma \ref{lem:sobolev}) can be also written in the form:
\[
\|f\|^2_{L^4(\TTT^4)} \leq C^2_4 \|f\|^2_{{H}_*^1(\TTT^4)}.
\]
Suppose $c_{opt}:= \inf\{c_* : c_* \text{ holds } (\ref{eq:Sobolev})\}$.
By taking $f=1$, it's easy to check that $c_{opt} \geq \frac{1}{C_4^2 Vol(\TTT^4)^{1/2}}$. 
\end{remark}  

\begin{lem}\label{lem:EnergyTrapping}
(i) Suppose $f\in H^1(\TTT^4)$ and $\delta_0>0$ satisfying 
\begin{equation}\label{eq:conditionOfEnergytrapping}
\|f\|_{H^1_*(\TTT^4)}< \|W\|_{\dot{H}^1(\RRR^4)} \qquad\text{ and }\qquad E_*(f)<(1-\delta_0)E_{\RRR^4}(W),
\end{equation}
then there exists $\bar{\delta} = \bar{\delta}(\delta_0)>0$ such that
\begin{align}\label{eq:energyresult1}
\|f\|^2_{H^1_*(\TTT^4)}< (1-\bar{\delta}) \|W\|^2_{\dot{H}^1(\RRR^4)}\\\label{eq:energyresult2}
\|f\|^2_{H^1_*(\TTT^4)} - \|f\|^4_{L^4(\TTT^4)} \geq \bar{\delta}\|f\|^2_{H^1_*(\TTT^4)},
\end{align} 

and in particular
\begin{equation}\label{eq:energyresult3}
E_*(f) \geq \frac{1}{4}(1+\bar{\delta})\|f\|^2_{H^1_*(\TTT^4)}.
\end{equation}

(ii)  Suppose $f\in H^1(\TTT^4)$ and $\delta_0>0$ satisfying 
\begin{equation}\label{eq:conditionOfEnergytrapping2}
\|f\|_{\dot{H}^1(\TTT^4)}< \|W\|_{\dot{H}^1(\RRR^4)} \qquad\text{ and }\qquad E_{**}(f)<(1-\delta_0)E_{\RRR^4}(W),
\end{equation}
then there exists $\bar{\delta} = \bar{\delta}(\delta_0)>0$ such that
\begin{align}\label{eq:energyresult12}
\|f\|^2_{\dot{H}^1(\TTT^4)}< (1-\bar{\delta}) \|W\|^2_{\dot{H}^1(\RRR^4)}\\\label{eq:energyresult22}
\|f\|^2_{\dot{H}^1(\TTT^4)} - \|f\|^4_{L^4(\TTT^4)} + 2c_* \|f\|_{L^2(\TTT^4)}+c_*^2 C^4_4 \|f\|_{L^2(\TTT^4)}^4\geq \bar{\delta}\|f\|^2_{\dot{H}^1(\TTT^4)},
\end{align} 

and in particular
\begin{equation}\label{eq:energyresult32}
E_{**}(f) \geq \frac{1}{4}(1+\bar{\delta})\|f\|^2_{\dot{H}^1(\TTT^4)}.
\end{equation}
\end{lem}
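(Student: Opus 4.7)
The plan is to reduce both parts to a one-variable variational analysis of the polynomial
\[
\phi(y) := \tfrac{1}{2}y - \tfrac{C_4^4}{4}y^2,
\]
whose unique maximum on $[0,\infty)$ occurs at $y_* := 1/C_4^4 = \|W\|^2_{\dot{H}^1(\RRR^4)}$ with $\phi(y_*) = 1/(4C_4^4) = E_{\RRR^4}(W)$, and which is strictly increasing on $[0, y_*]$. The quantitative shape is completely explicit: setting $y = (1-s)/C_4^4$ gives $\phi(y) = (1-s^2)\phi(y_*)$. Both parts then fit a single template: once one establishes a pointwise lower bound $E_\#(f) \geq \phi(\|f\|^2_\#)$ for the appropriate energy and norm, the hypothesis $E_\#(f) < (1-\delta_0) E_{\RRR^4}(W)$ combined with $\|f\|^2_\# < y_*$ (placing us on the increasing branch) forces $(1-s^2) < 1-\delta_0$, hence $s > \sqrt{\delta_0}$. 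Taking $\bar{\delta} := \sqrt{\delta_0}$ (or any smaller positive value) then delivers the first gap estimate \eqref{eq:energyresult1} or \eqref{eq:energyresult12} in each case.

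For Part (i), the lower bound $E_*(f) \geq \phi(\|f\|^2_{H^1_*(\TTT^4)})$ is immediate from Lemma \ref{lem:sobolev} rewritten as $\|f\|^4_{L^4(\TTT^4)} \leq C_4^4 \|f\|^4_{H^1_*(\TTT^4)}$. The coercivity estimate \eqref{eq:energyresult2} is then the algebraic inequality $\|f\|^2_{H^1_*}(1 - C_4^4\|f\|^2_{H^1_*}) \geq \bar{\delta}\|f\|^2_{H^1_*}$ applied to the Sobolev bound, and \eqref{eq:energyresult3} follows by regrouping $4E_*(f) = \|f\|^2_{H^1_*(\TTT^4)} + (\|f\|^2_{H^1_*(\TTT^4)} - \|f\|^4_{L^4(\TTT^4)})$ and invoking \eqref{eq:energyresult2}.

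Part (ii) requires one genuine computation, which also motivates the specific form of the modified energy \eqref{def:modifiedEnergy2}. Squaring out the sharp Sobolev inequality yields
\[
\|f\|^4_{L^4} \leq C_4^4\|f\|^4_{\dot{H}^1} + 2c_*C_4^4\|f\|^2_{\dot{H}^1}\|f\|^2_{L^2} + c_*^2 C_4^4 \|f\|^4_{L^2},
\]
and substituting into $E_{**}(f)$ the additional term $\tfrac{c_*^2 C_4^4}{4}\|f\|^4_{L^2}$ in \eqref{def:modifiedEnergy2} precisely cancels the pure $\|f\|^4_{L^2}$ contribution, leaving
\[
E_{**}(f) \geq \phi(\|f\|^2_{\dot{H}^1(\TTT^4)}) + \tfrac{c_*}{2}\|f\|^2_{L^2(\TTT^4)}\bigl(1 - C_4^4\|f\|^2_{\dot{H}^1(\TTT^4)}\bigr).
\]
Under the hypothesis $\|f\|^2_{\dot{H}^1} < 1/C_4^4$ of \eqref{eq:conditionOfEnergytrapping2} the last term is non-negative, so $E_{**}(f) \geq \phi(\|f\|^2_{\dot{H}^1})$ and the variational template above yields \eqref{eq:energyresult12}. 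The coercivity bound \eqref{eq:energyresult22} is then a rearrangement of the same expanded Sobolev inequality, bounded below by $(\|f\|^2_{\dot{H}^1} + 2c_*\|f\|^2_{L^2})(1 - C_4^4\|f\|^2_{\dot{H}^1}) \geq \bar{\delta}\|f\|^2_{\dot{H}^1}$; and \eqref{eq:energyresult32} follows from the same regrouping identity as in Part (i).

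The only real obstacle is recognizing the cancellation in Part (ii): the coefficient $c_*^2 C_4^4/4$ in the definition of $E_{**}$ is tuned exactly to absorb the quartic $L^2$ tail of the squared sharp Sobolev embedding, and the sign of the surviving cross term is governed precisely by the threshold $\|f\|^2_{\dot{H}^1} < 1/C_4^4$. Once that is spotted, both parts collapse to the standard Kenig-Merle one-variable variational argument, adapted here to the inhomogeneous sharp Sobolev embedding on $\TTT^4$.
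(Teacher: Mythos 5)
Your proof is correct and follows essentially the same route as the paper: lower-bound the relevant modified energy by the one-variable quadratic $\phi(y)=g_1(y)=\tfrac12 y-\tfrac{C_4^4}{4}y^2$ via the sharp Sobolev embedding (squared, in part (ii)), then exploit the shape of $\phi$ near its maximum $y_*=1/C_4^4$. Two small stylistic improvements in your write-up are worth noting: you make the quantitative shape explicit via $y=(1-s)/C_4^4\Rightarrow\phi(y)=(1-s^2)\phi(y_*)$ rather than just asserting $\bar{\delta}\sim\delta_0^{1/2}$, and for the coercivity bounds you use the direct algebraic observation $1-C_4^4\|f\|^2_{\#}>\bar{\delta}$ in place of the paper's Jensen-type chord argument with $g_2(y)=y-C_4^4y^2$. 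In part (ii) you also track the cross term $\tfrac{c_*}{2}\|f\|^2_{L^2}(1-C_4^4\|f\|^2_{\dot H^1})$ explicitly rather than discarding it at the squared-Sobolev stage as the paper does, which makes the role of the threshold $\|f\|^2_{\dot H^1}<1/C_4^4$ transparent; both routes are fine, but yours makes the design of $E_{**}$ clearer.
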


\begin{proof}
In the proof of part (i), we almost identically follow the proof of Lemma 3.4 in Kenig-Merle's paper \cite{KenigMerle}, but use ${H^1_*(\TTT^4)}$-norm instead of ${\dot{H}^1(\TTT^4)}$-norm. Consider a quadratic function $g_1 = \frac{1}{2}y - \frac{C^4_4}{4}y^2$, and plug in $\|f\|^2_{H^1_*(\TTT^4)}$, by Sobolev embedding (Lemma \ref{lem:sobolev}) and the assumption (\ref{eq:conditionOfEnergytrapping}), we have that
\begin{equation}\label{eq:energytrapping1}
\begin{split}
g_1(\|f\|^2_{H^1_*}) & = \frac{1}{2} \|f\|^2_{H^1_*} - \frac{C^4_4}{4} \|f\|^4_{H^1_*}\\
&\leq \frac{1}{2} \|f\|^2_{H^1_*} - \frac{1}{4}\|f\|_{L^4}^4 = E_*(f)\\
&< (1-\delta_0)E_{\RRR^4}(W)= (1-\delta_0)g_1(\|W\|^2_{\dot{H}^1(\RRR^4)}).
\end{split}
\end{equation}

It is easy to know $\|f\|^2_{H^1_*(\TTT^4)}<(1-\bar{\delta})\|W\|^2_{\dot{H}^1(\RRR^4)}$, from (\ref{eq:energytrapping1}) and the property of quadratic function $g_1$, where $\bar{\delta}\sim \delta_0^{\frac{1}{2}}$.

Then choose $g_2(y) = y - C^4_4 y^2$, if plug in $\|f\|^2_{H^1_*(\TTT^4)}$, by Sobolev embedding (Lemma \ref{lem:sobolev}), we have that 
\begin{equation}\label{eq:energytrapping2}
g_2(\|f\|^2_{H^1_*(\TTT^4)}) = \|f\|^2_{H^1_*(\TTT^4)}- C^4_4 \|f\|^4_{H^1_*(\TTT^4)} \leq \|f\|^2_{H^1_*(\TTT^4)} - \|f\|^4_{L^4(\TTT^4)}.
\end{equation}

Since $g_2(0) = 0$, $g''_2(y) = - 2C^4_4<0$ and 
$ \|f\|^2_{H^1_*(\TTT^4)} < (1-\bar{\delta}) \|W\|^2_{\dot{H}^1(\RRR^4)}$, by Jensen's inequality and (\ref{eq:WC}),
\begin{equation}\label{eq:energytrapping3}
g_2(\|f\|^2_{H^1_*(\TTT^4)})> g_2((1-\bar{\delta})\|W\|^2_{\dot{H}^1(\RRR^4)})\frac{\|f\|^2_{H^1_*(\TTT^4)}}{(1-\bar{\delta})\|W\|^2_{\dot{H}^1(\RRR^4)}}=\bar{\delta}\|f\|^2_{H^1_*(\TTT^4)}. 
\end{equation}

Together (\ref{eq:energytrapping2}) and (\ref{eq:energytrapping3}), we get (\ref{eq:energyresult2}).

By (\ref{eq:energyresult2}), we get (\ref{eq:energyresult3})
\[
E_*(f) = \frac{1}{4}\|f\|^2_{H^1_*(\TTT^4)} + \frac{1}{4}(\|f\|^2_{H^1_*(\TTT^4)} -\|f\|^4_{L^4(\TTT^4)})\geq \frac{1}{4}(1+\bar{\delta})\|f\|^2_{H^1_*(\TTT^4)}.
\]

The proof of part (ii) would be similar with part (i). Under the assumptions (\ref{eq:conditionOfEnergytrapping2}) of part (ii), by squaring Sobolev embedding (Lemma \ref{lem:sobolev}), we have that
\begin{equation}\label{eq:squareSobolev}
C^4_4 \|f\|^4_{\dot{H}^1(\TTT^4)}\geq \|f\|^4_{L^4} -2c_* \|f\|^2_{L^2(\TTT^4)}-c_*^2C^4_4\|f\|^4_{L^2(\TTT^4)}
\end{equation}

Plugging $\|f\|^2_{\dot{H}^1(\TTT^4)}$ into $g_1$, by (\ref{eq:squareSobolev}), we hold that
\begin{equation}\label{eq:energytrapping12}
\begin{split}
g_1(\|f\|^2_{\dot{H}^1}) & = \frac{1}{2} \|f\|^2_{\dot{H}^1} - \frac{C^4_4}{4} \|f\|^4_{\dot{H}^1}\\
&\leq \frac{1}{2} \|f\|^2_{\dot{H}^1} - \frac{1}{4}\|f\|_{L^4}^4 + \frac{c_*}{2}\|f\|^2_{L^2(\TTT^4)}+\frac{c_*^2 C^4_4}{4}\|f\|^4_{L^2(\TTT^4)}= E_{**}(f)\\
&< (1-\delta_0)E_{\RRR^4}(W)= (1-\delta_0)g_1(\|W\|^2_{\dot{H}^1(\RRR^4)}).
\end{split}
\end{equation}
It is easy to know $\|f\|^2_{\dot{H}^1(\TTT^4)}<(1-\bar{\delta})\|W\|^2_{\dot{H}^1(\RRR^4)}$, from (\ref{eq:energytrapping12}) and the property of quadratic function $g_1$, where $\bar{\delta}\sim \delta_0^{\frac{1}{2}}$. Similarly, we can also hold (\ref{eq:energyresult22})(\ref{eq:energyresult32}) under the assumption (\ref{eq:conditionOfEnergytrapping2}).
\end{proof}

\begin{thm}[Energy trapping]\label{thm:EnergyTrapping}
(i)
Let $u$ be a solution of IVP (\ref{eq:IVP2}), such that for $\delta_0>0$
\begin{equation}\label{ass:i}
\|u_0\|_{H^1_*(\TTT^4)}<\|W\|_{\dot{H}^1(\RRR^4)},\quad E_*(u_0)<(1-\delta_0)E_{\RRR^4}(W);
\end{equation}

Let $I\ni 0$ be the maximal interval of existence, then there exists $\bar{\delta} = \bar{\delta}(\delta_0)>0$ such that for all $t\in I$
\begin{align}
\|u(t)\|^2_{H^1_*(\TTT^4)}< (1-\bar{\delta}) \|W\|_{\dot{H}^1(\RRR^4)},\\
\|u(t)\|^2_{H^1_*(\TTT^4)} - \|u(t)\|^4_{L^4(\TTT^4)} \geq \bar{\delta}\|u(t)\|^2_{H^1_*(\TTT^4)},
\end{align} 
and in particular
\begin{equation}\label{eq:energyresult4}
E_*(u)(t) \geq \frac{1}{4}(1+\bar{\delta})\|u(t)\|^2_{H^1_*(\TTT^4)}.
\end{equation}

(ii)
Let $u$ be a solution of IVP (\ref{eq:IVP2}), such that for $\delta_0>0$
\begin{equation}\label{ass:ii}
\|u_0\|_{\dot{H}^1(\TTT^4)}<\|W\|_{\dot{H}^1(\RRR^4)},\quad E_{**}(u_0)<(1-\delta_0)E_{\RRR^4}(W);
\end{equation}

Let $I\ni 0$ be the maximal interval of existence, then there exists $\bar{\delta} = \bar{\delta}(\delta_0)>0$ such that for all $t\in I$
\begin{align}
\|u(t)\|^2_{\dot{H}^1(\TTT^4)}< (1-\bar{\delta}) \|W\|_{\dot{H}^1(\RRR^4)},\\
\|u(t)\|^2_{\dot{H}^1(\TTT^4)} - \|u(t)\|^4_{L^4(\TTT^4)} + 2c_* \|u(t)\|^2_{L^2(\TTT^4)} + c_*^2C^4_4\|u(t)\|^4_{L^2(\TTT^4)}\geq \bar{\delta}\|u(t)\|^2_{\dot{H}^1(\TTT^4)},
\end{align} 
and in particular
\begin{equation}\label{eq:energyresult42}
E_{**}(u)(t) \geq \frac{1}{4}(1+\bar{\delta})\|u(t)\|^2_{\dot{H}^1(\TTT^4)}.
\end{equation}
\end{thm}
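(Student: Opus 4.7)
The plan is to deduce Theorem \ref{thm:EnergyTrapping} from Lemma \ref{lem:EnergyTrapping} by a continuity/bootstrap argument driven by conservation of the modified energies $E_*$ and $E_{**}$. For part (i), I first note that the hypothesis (\ref{ass:i}) is exactly the pointwise hypothesis (\ref{eq:conditionOfEnergytrapping}) applied to $f = u_0$, so the lemma gives $\bar\delta = \bar\delta(\delta_0) > 0$ with
\[
\|u_0\|_{H^1_*(\TTT^4)}^2 < (1-\bar\delta)\|W\|_{\dot H^1(\RRR^4)}^2.
\]
Since $u \in C(I; H^1(\TTT^4))$ (solutions live in $X^1 \subset C_tH^1$), the map $t \mapsto \|u(t)\|_{H^1_*(\TTT^4)}^2$ is continuous on $I$. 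Moreover, by conservation of $E_*$ (noted directly after (\ref{def:modifiedEnergy2})) we have $E_*(u)(t) = E_*(u_0) < (1-\delta_0) E_{\RRR^4}(W)$ for all $t \in I$.

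Now let
\[
J := \bigl\{\, t \in I : \|u(t)\|_{H^1_*(\TTT^4)}^2 < \|W\|_{\dot H^1(\RRR^4)}^2 \,\bigr\}.
\]
Then $0 \in J$ and $J$ is open in $I$ by continuity. To show $J$ is closed in $I$, let $t \in \overline{J} \cap I$. By continuity we have $\|u(t)\|_{H^1_*}^2 \leq \|W\|_{\dot H^1(\RRR^4)}^2$. If equality held, then evaluating the quadratic $g_1(y) = \tfrac12 y - \tfrac{C_4^4}{4}y^2$ at $\|u(t)\|_{H^1_*}^2$ would give $g_1(\|u(t)\|_{H^1_*}^2) = g_1(\|W\|_{\dot H^1(\RRR^4)}^2) = E_{\RRR^4}(W)$, contradicting the chain of inequalities (\ref{eq:energytrapping1}) (which holds pointwise in time since its only inputs are Sobolev embedding and the conserved bound on $E_*$). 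Hence the inequality is strict and $t \in J$. So $J$ is nonempty, open and closed in the connected interval $I$, i.e.\ $J = I$.

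Once $J = I$, the hypothesis (\ref{eq:conditionOfEnergytrapping}) of Lemma \ref{lem:EnergyTrapping}(i) is satisfied by $f = u(t)$ with the \emph{same} $\delta_0$ for every $t \in I$, and the three pointwise conclusions (\ref{eq:energyresult1})--(\ref{eq:energyresult3}) transfer directly to $u(t)$, giving (\ref{eq:energyresult4}) and its companions. Part (ii) is handled by the identical bootstrap, this time with $\|\cdot\|_{\dot H^1(\TTT^4)}$ in place of $\|\cdot\|_{H^1_*(\TTT^4)}$, with conservation of $E_{**}$ replacing that of $E_*$, and with Lemma \ref{lem:EnergyTrapping}(ii) supplying the pointwise trapping conclusions (\ref{eq:energyresult12})--(\ref{eq:energyresult32}).

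There is no real obstacle here: the substance has already been absorbed in Lemma \ref{lem:EnergyTrapping}. The only delicate point is verifying that the sublevel set argument keeps $\|u(t)\|_{H^1_*}^2$ trapped in the lower connected component $\{y < \|W\|_{\dot H^1(\RRR^4)}^2\}$ of the sublevel set $\{g_1 < (1-\delta_0) g_1(\|W\|_{\dot H^1(\RRR^4)}^2)\}$, which is precisely what the continuity-plus-strict-inequality argument above accomplishes.
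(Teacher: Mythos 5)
Your argument is correct and is essentially the paper's own proof, which is only the one-line remark ``by conservation of mass and energy and a continuity argument''; you have simply filled in the details of that continuity/bootstrap, correctly identifying that the only nontrivial point is keeping $\|u(t)\|_{H^1_*}^2$ in the lower component of the sublevel set of $g_1$, which the open-and-closed argument accomplishes.
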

\begin{proof}
By the conservation of energy and mass, this theorem directly from Lemma \ref{lem:EnergyTrapping} by the continuity argument.
\end{proof}

\begin{remark}\label{rmk:EnergySimSobolev}
The energy trapping lemma (Theorem \ref{thm:EnergyTrapping}) shows that if the initial data satisfies the condition (\ref{cond:focusing2}) or (\ref{cond:focusing3}) then the solution $u(t)$ satisfies $\|u(t)\|_{\dot{H}^1(\TTT^4)}< \|W\|_{\dot{H}^1(\RRR^4)}$ for all $t$ in the lifespan of the solution.
So Theorem \ref{thm:focusing} implies Corollary \ref{cor:focusing}.
 In particular,  we also obtain that $E_*(u)(t)\simeq \|u(t)\|^2_{H^1_*(\TTT^4)}$ under the assumption (\ref{ass:i}) and $E_{**}(u)(t)\simeq \|u(t)\|^2_{\dot{H}^1(\TTT^4)}$ under the assumption (\ref{ass:ii}) by Theorem \ref{thm:EnergyTrapping}.
\end{remark}

\section{Adapted function spaces}
In this section, we introduce $X^s$ and $Y^s$ spaces which are based on the atomic spaces $U^p$ and $V^p$ which were originally developed in unpublished work on wave maps by Tararu and then were applied to PDEs in \cite{hadac2009well}\cite{herr2011global}\cite{herr2014strichartz}, while we'll use the $X^s$ and $Y^s$ spaces in the proof of the defocusing and focusing global wellposedness.
$\mathcal{H}$ is a separable Hilbert space on $\mathbb{C}$ and $\mathcal{Z}$ denotes the set of finite partitions $-\infty = t_0 < t_1 < ... <t_K = \infty$ of the real line, with the convention that  $v(\infty) := 0$ for any function $v : \mathbb{R} \to \mathcal{H}$.
\begin{definition}[Definition 2.1 in \cite{herr2011global}]
Let $1\leq p < \infty$. For $\{t_k\}_{k=0}^K \in \mathcal{Z}$ and $\{\phi_k\}_{k=0}^{K-1} \subset \mathcal{H}$ with $\sum_{k=0}^K \|\phi_k\|_{\mathcal{H}}^p = 1$ and $\phi_0 = 0$. A $U^p$-atom is a piecewise defined function  $a : \mathbb{R} \to \mathcal{H}$ of the form
$$
a = \sum_{k=1}^K \mathds{1}_{[t_{k-1}, t_k)} \phi_{k-1}.
$$
The atomic Banach space $U^p(\mathbb{R}, \mathcal{H})$ is then defined to be the set of all functions $u : \mathbb{R} \to \mathcal{H}$ such that 
$$
u = \sum_{j=1}^{\infty} \lambda_{j} a_j,\qquad  \text{for}\   U^p \text{-atoms}\   a_j,\qquad \{\lambda_j\}_j \in \ell^1,\quad \|u\|_{U^p}<\infty,
$$
where
$$
\|u\|_{U^p} : = \inf \{\sum_{j=1}^{\infty} |\lambda_j| : u = \sum_{j=1}^\infty \lambda_j a_j,\  \lambda_j \in \mathbb{C}\ \text{and}\ a_j\ \text{an}\ U^p\ \text{atom}\}.
$$
Here $\mathds{1}_{I}$ denotes the indicator function over the time interval $I$.
\end{definition}

\begin{definition}[Definition 2.2 in \cite{herr2011global}]
Let $1\leq p < \infty$. The Banach space $V^p(\mathbb{R}, \mathcal{H})$ is defined to be the set of all functions $v: \mathbb{R} \to \mathcal{H}$ with $v(\infty):=0$ and $v(-\infty) := \lim_{t\to -\infty} v(t)$ exists, such that 
$$
\|v\|_{V^p} : = \sup_{\{t_k\}_{k=0}^K\in \mathcal{Z}} (\sum_{k=1}^K \|v(t_k)-v(t_{k-1})\|^p_\mathcal{H})^\frac{1}{p}\quad
 \text{is}\  \text{finite}.
$$
Likewise, let $V_{-}^p$ denote the closed subspace of all $v \in V^p$ with $\lim_{t\to -\infty}v(t) = 0$. $V_{-, rc}^p$ means all right-continuous $V_{-}^p$ functions.
\end{definition}

\begin{remark}[Some embeding properties]\label{rmk:embedding}
Note that for $1\leq p \leq q < \infty$,
\begin{equation}
U^p(\mathbb{R}, \mathcal{H}) \hookrightarrow U^q(\mathbb{R}, \mathcal{H}) \hookrightarrow  L^{\infty}(\mathbb{R},\mathcal{H}),
\end{equation}
and functions in $U^p(\mathbb{R}, \mathcal{H})$ are right continuous, and $\lim_{t\to -\infty} u(t) = 0$ for each $u \in U^p(\mathbb{R}, \mathcal{H})$.  Also note that,
\begin{equation}
U^p(\mathbb{R}, \mathcal{H}) \hookrightarrow V^p_{-,rc} (\mathbb{R}, \mathcal{H}) \hookrightarrow U^q(\mathbb{R}, \mathcal{H}).
\end{equation}
\end{remark}

\begin{definition}[Definition 2.5 in \cite{herr2011global}]
For $s\in \mathbb{R}$, we let $U^p_{\Delta}H^s$, respectively $V^p_{\Delta}H^s$, be the space of all functions $u : \mathbb{R}\to H^s(\mathbb{T}^d)$ such that $t\mapsto e^{-it\Delta}u(t)$ is in $U^p(\mathbb{R}, H^s)$, respectively in  $V^p(\mathbb{R}, H^s)$ with norm
$$
\|u\|_{U^p(\mathbb{R}, H^s)} := \|e^{-it\Delta}u(t)\|_{U^p(\mathbb{R},H^s)}, 
\quad \|u\|_{V^p(\mathbb{R}, H^s)} := \|e^{-it\Delta}u(t)\|_{V^p(\mathbb{R},H^s)}.
$$
\end{definition}

\begin{definition}[Definition 2.6 in \cite{herr2011global}]\label{def:Xs}
For $s\in \mathbb{R}$, we define $X^s$ as the space of all functions $u : \mathbb{R}\to H^s(\mathbb{T}^d)$ such that for every $n\in \mathbb{Z}^d$, the map $t\mapsto e^{it|n|^2}\widehat{u(t)}(n)$ is in $U^2(\mathbb{R}, \mathcal{C})$, and with the norm
\begin{equation}
\|u\|_{X^s} : = (\sum_{n\in \mathbb{Z}^d}\langle n \rangle^{2s} \|e^{it|n|^2}\widehat{u(t)}(n)\|^2_{U_t^2})^{\frac{1}{2}}\quad \text{is finite}. 
\end{equation}
\end{definition}

\begin{definition}[Definition 2.7 in \cite{herr2011global}]
For $s\in \mathbb{R}$, we define $Y^s$  as the space of all functions $u : \mathbb{R}\to H^s(\mathbb{T}^d)$ such that for every $n\in \mathbb{Z}^d$, the map $t\mapsto e^{it|n|^2}\widehat{u(t)}(n)$ is in $V_{rc}^2(\mathbb{R}, \mathcal{C})$, and with the norm
\begin{equation}
\|u\|_{Y^s} : = (\sum_{n\in \mathbb{Z}^d}\langle n \rangle^{2s} \|e^{it|n|^2}\widehat{u(t)}(n)\|^2_{V_t^2})^{\frac{1}{2}}\quad \text{is finite}. 
\end{equation}
\end{definition}

Note that 
\begin{equation}\label{eq:Embedding}
U^2_{\Delta}H^s \hookrightarrow X^s \hookrightarrow Y^s \hookrightarrow V^2_{\Delta}H^s.
\end{equation}

\begin{prop}[Proposition 2.10 in \cite{hadac2009well}]\label{EstimateFreeSolution}
Suppose $u :=  e^{it\Delta}\phi$ which is a linear Schr\"{o}dinger solution, then for any $T>0$ we obtain that
$$\|u\|_{X^s([0,T])} \leq \|\phi\|_{H^s}.$$
\end{prop}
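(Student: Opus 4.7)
The plan is to exhibit a concrete extension $\tilde u$ of $u|_{[0,T]}$ to all of $\mathbb{R}$ whose $X^s$ norm is bounded by $\|\phi\|_{H^s}$; since $X^s([0,T])$ is defined by restriction as $\|u\|_{X^s([0,T])} = \inf\{\|\tilde u\|_{X^s} : \tilde u|_{[0,T]} = u|_{[0,T]}\}$, producing one admissible extension suffices.

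First I use the explicit form of the free evolution. Since $u(t) = e^{it\Delta}\phi$ we have $\widehat{u(t)}(n) = e^{-it|n|^2}\widehat{\phi}(n)$, so the modulated Fourier coefficient that appears in Definition \ref{def:Xs} is \emph{constant} in time:
\begin{equation*}
e^{it|n|^2}\widehat{u(t)}(n) \,=\, \widehat{\phi}(n),\qquad n\in\mathbb{Z}^d,\ t\in\mathbb{R}.
\end{equation*}
I then define $\tilde u:\mathbb{R}\to H^s(\mathbb{T}^d)$ on the Fourier side by declaring $e^{it|n|^2}\widehat{\tilde u(t)}(n) := \widehat{\phi}(n)\,\mathds{1}_{[0,\infty)}(t)$ for every $n$. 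This candidate obviously agrees with $u$ on $[0,T]$ and, by construction, its $X^s$ norm is a sum of scalar $U^2_t$ norms of the simple step functions $t\mapsto \widehat{\phi}(n)\mathds{1}_{[0,\infty)}(t)$.

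The key point is that each such step function is a scalar multiple of a $U^2$-atom. Indeed, take the partition $-\infty=t_0<t_1=0<t_2=\infty$ with $\phi_0=0$ and, assuming $\widehat\phi(n)\neq 0$, $\phi_1=\widehat{\phi}(n)/|\widehat{\phi}(n)|$; the normalization $\sum_k\|\phi_k\|^2=1$ holds, giving the $U^2$-atom $a(t)=\mathds{1}_{[0,\infty)}(t)\,\widehat{\phi}(n)/|\widehat{\phi}(n)|$. Hence $\widehat{\phi}(n)\mathds{1}_{[0,\infty)} = |\widehat{\phi}(n)|\cdot a$ and
\begin{equation*}
\bigl\|\widehat{\phi}(n)\mathds{1}_{[0,\infty)}\bigr\|_{U^2_t}\,\leq\,|\widehat{\phi}(n)|,
\end{equation*}
which is trivially true as well when $\widehat{\phi}(n)=0$.

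Summing this pointwise-in-$n$ estimate against the weight $\langle n\rangle^{2s}$ and invoking Plancherel on $\mathbb{T}^d$ then yields
\begin{equation*}
\|u\|_{X^s([0,T])}^2 \,\leq\, \|\tilde u\|_{X^s}^2 \,=\, \sum_{n\in\mathbb{Z}^d}\langle n\rangle^{2s}\bigl\|\widehat{\phi}(n)\mathds{1}_{[0,\infty)}\bigr\|_{U^2_t}^2 \,\leq\, \sum_{n\in\mathbb{Z}^d}\langle n\rangle^{2s}|\widehat{\phi}(n)|^2 \,=\, \|\phi\|_{H^s}^2,
\end{equation*}
which is the claimed inequality. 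There is no serious obstacle here: the argument is purely a verification against the definitions. The only subtle points are getting the restriction-norm convention right and making sure the left endpoint of the atom's partition is $-\infty$ (so that $\phi_0=0$ is compatible with the requirement $\lim_{t\to-\infty}\tilde u(t)=0$ built into the $U^2$-framework recalled in Remark \ref{rmk:embedding}).
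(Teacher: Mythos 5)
Your proof is correct and follows the same essential idea as the paper's one-line argument: after the modulation $e^{it|n|^2}$, the Fourier coefficients of the free solution become constant in $t$, so each coordinate function is a scalar multiple of a single $U^2$-atom (a one-step function), giving $\|e^{it|n|^2}\widehat{u(t)}(n)\|_{U^2_t}\le|\widehat\phi(n)|$ and hence the bound after summing against $\langle n\rangle^{2s}$. You spell out the atom construction and the extension that the paper leaves implicit, but the route is the same.
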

\begin{proof}
Since $u :=  e^{it\Delta}\phi$, then $\|u\|_{X^s}  = (\sum_{n\in \mathbb{Z}^d}\langle n \rangle^{2s} \|\widehat{\phi}(n)\|^2_{U_t^2})^{\frac{1}{2}} \leq \|\phi\|_{H^s}$.
\end{proof}

\begin{remark}
  Compared with Bourgain's $X^{s,b}$ first introduced in Bourgain,

\begin{align*}
\|v\|_{X^{s,b}} & = \|e^{-it\Delta}v\|_{H^b_tH^s_x},\\
\|v\|_{U^p_\Delta H^s} & = \|e^{-it\Delta}v\|_{U^p_tH^s_x},\\
\|v\|_{V^p_\Delta H^s} & = \|e^{-it\Delta}v\|_{V^p_tH^s_x}.\\
\end{align*}

  Also later we will see that the atomic spaces enjoy the similar duality and
transfer principle properties with $X^{s,b}$.
\end{remark}

\begin{remark}
Follow the definitions, it's easy to check the following embedding property:
\begin{equation}
U^2_{\Delta}H^s \hookrightarrow X^s \hookrightarrow Y^s \hookrightarrow V^2_{\Delta}H^s
\hookrightarrow L^\infty(\RRR, H^s).
\end{equation}
\end{remark}

\begin{definition}[$X^s$ and $Y^s$ restricted to a time interval $I$]
  For intervals $I\subset \RRR$, we define $X^s(I)$ and $Y^s(I)$ as following
  \begin{equation*}
    X^s(I) := \{ v\in C(I: H^s) : \|v\|_{X^s(I)} := \sup_{J\subset I ,\  |J|\leq 1}
    \inf_{\tilde{v}|_J = v}
    \|\tilde{v}\|_{X^s}<\infty\},
  \end{equation*}
  and
    \begin{equation*}
    Y^s(I) := \{ v\in C(I: H^s) : \|v\|_{Y^s(I)} := \sup_{J\subset I ,\  |J|\leq 1}
    \inf_{\tilde{v}|_J = v}
    \|\tilde{v}\|_{Y^s}<\infty\}.
  \end{equation*}
  
\end{definition}

We will consider our solution in $X^1(I)$ spaces, and then let's introduce nonlinear
norm $N(I)$.

\begin{definition}[Nonlinear norm $N(I)$]
  Let $I=[0, T]$, then
  \begin{equation*}
    \|f\|_{N(I)} := \left\| \int_0^t e^{i(t-t')\Delta} f(t') dt'\right\|_{X^1(I)}
  \end{equation*}
\end{definition}

\begin{prop}[Proposition 2.11 in \cite{herr2014strichartz}]\label{prop:dual}
Let $s>0$. For $f\in L^1(I, H^1(\mathbb{T}^4))$
we have
\begin{equation}
\|f \|_{N(I)}\leq
\sup_{v\in Y^{-1}(I)}
\left|\int_I\int_{\mathbb{T}^4} f(t, x)\overline{v(t,x)}dxdt\right|.
\end{equation}
\end{prop}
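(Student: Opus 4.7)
The plan is to reduce to the frequency-localized duality between $U^2$ and $V^2$ proved in \cite{hadac2009well}, and then to sum in the Fourier variable using the $\ell^2$ pairing that matches the weight $\langle n\rangle$ appearing in $X^1$ against the weight $\langle n\rangle^{-1}$ appearing in $Y^{-1}$.

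First I unpack the definitions: with $F(t) := \int_0^t e^{i(t-t')\Delta} f(t')\,dt'$ one has $\|f\|_{N(I)} = \|F\|_{X^1(I)}$ tautologically, and $F$ solves the Duhamel equation $(i\partial_t+\Delta)F = if$ with $F(0)=0$. In Fourier variables, set
\begin{equation*}
g_n(t) := e^{it|n|^2}\widehat{F(t)}(n) = -i\int_0^t e^{it'|n|^2}\widehat{f(t')}(n)\,dt',
\end{equation*}
so each $g_n$ is absolutely continuous with $g_n'(t) = -i\,e^{it|n|^2}\widehat{f(t)}(n)$. By the standard $U^2$-$V^2$ duality one has $\|g_n\|_{U^2_t} = \sup_{\|w_n\|_{V^2_t}\leq 1} |B(g_n,w_n)|$, where $B$ is the Riemann-Stieltjes bracket; since $g_n$ is absolutely continuous, integration by parts yields
\begin{equation*}
B(g_n,w_n) = -\int_{\mathbb{R}} g_n'(t)\,\overline{w_n(t)}\,dt = i\int_{\mathbb{R}} e^{it|n|^2}\widehat{f(t)}(n)\,\overline{w_n(t)}\,dt,
\end{equation*}
once $f$ has been truncated to $I$ so that the endpoint contributions vanish.

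Second, I assemble the frequency-localized bounds via $\ell^2$-duality. Given $\epsilon > 0$, for each $n$ choose $w_n \in V^2$ with $\|w_n\|_{V^2}\leq 1$ and $B(g_n,w_n)\geq (1-\epsilon)\|g_n\|_{U^2}$ (rotating the phase of $w_n$ to make the bracket real and nonnegative), and choose $\ell^2$-weights $\mu_n\geq 0$ with $\sum_n\mu_n^2 = 1$ and $\|F\|_{X^1}\leq (1+\epsilon)\sum_n\mu_n\langle n\rangle\|g_n\|_{U^2}$. Define $v$ by prescribing its Fourier coefficients via $\widehat{v(t)}(n) := \mu_n\langle n\rangle e^{-it|n|^2}w_n(t)$, so that $e^{it|n|^2}\widehat{v(t)}(n) = \mu_n\langle n\rangle w_n(t)$. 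Then by Plancherel in $n$,
\begin{equation*}
\|v\|_{Y^{-1}}^2 = \sum_n \langle n\rangle^{-2}\|\mu_n\langle n\rangle w_n\|_{V^2}^2 = \sum_n\mu_n^2\|w_n\|_{V^2}^2 \leq 1,
\end{equation*}
and the spatial pairing rearranges via Parseval as
\begin{equation*}
\int_I\!\int_{\TTT^4} f\,\overline{v}\,dx\,dt = \sum_n\mu_n\langle n\rangle\int_{\mathbb{R}} e^{it|n|^2}\widehat{f(t)}(n)\,\overline{w_n(t)}\,dt = -i\sum_n\mu_n\langle n\rangle B(g_n,w_n).
\end{equation*}
Taking absolute values and inserting the defining properties of $w_n$ and $\mu_n$ gives $\bigl|\int_I\int_{\TTT^4} f\,\overline{v}\,dx\,dt\bigr| \geq (1-\epsilon)(1+\epsilon)^{-1}\|F\|_{X^1}$, and letting $\epsilon \to 0$ finishes the argument.

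The main obstacle, and the only real work beyond invoking the $U^2$-$V^2$ duality, is the restriction to the interval $I$. Since $X^s(I)$ is defined by an infimum over extensions, one must handle sharp cutoffs $\mathds{1}_I$ carefully: these are harmless in $V^2$ (which tolerates discontinuities, with the telescoping estimate $\|\mathds{1}_I v\|_{V^2}\leq 2\|v\|_{V^2}$) and are compatible with $U^2$ via the interval-restriction definition. This endpoint bookkeeping is the only technical subtlety; the rest of the proof is a direct unwinding of the $U^2$-$V^2$ duality frequency by frequency combined with an $\ell^2$ Cauchy-Schwarz exchange between the weights $\langle n\rangle$ and $\langle n\rangle^{-1}$.
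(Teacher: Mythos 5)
Your argument is correct and is essentially the standard proof: frequency-by-frequency $U^2$--$V^2$ duality (Hadac--Herr--Koch Prop.\ 2.10/Thm.\ 2.8) combined with $\ell^2$ duality across the weights $\langle n\rangle$, $\langle n\rangle^{-1}$, which is precisely the route taken in the cited reference \cite{herr2014strichartz}; the paper here states the result by citation only. Two minor remarks: the stray $-i$ factors in $g_n$ and $g_n'$ are incorrect given the paper's $N$-norm definition (which carries no $-i$), but this is harmless since you take absolute values, and the interval-restriction bookkeeping you flag at the end is indeed the only real subtlety and is handled exactly as you indicate.
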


Now, we will need a weaker norm $Z$, which plays a similar role as
$L^{10}_{t,x}$ norm in \cite{colliander2008global}.

\begin{definition}
  \[\|v\|_{Z(I)} := \sup_{J\subset I, \ |J|\leq 1}
  \left( \sum_{N \in 2^\ZZZ} N^2 \|P_N v\|^4_{L^4(\TTT^4\times J)}
  \right)^{\frac{1}{4}}.\]
\end{definition}

\begin{remark}
  $\|v\|_{Z(I)}$ actually can be considered as
  \[
  \sum_{p\in \{p_1, p_2, \cdots, p_k\}} \sup_{J\subset I, \ |J|\leq 1}
  \left( \sum_{N \in 2^\ZZZ} N^{6-p} \|P_N v\|^p_{L^p(\TTT^4\times J)}  \right)^{\frac{1}{p}},
  \]
  where $\{p_1, p_2, \cdots, p_k\}$ should be the $L^p$ estimates that we need to use in
  the proof of nonlinear estimate.
 In our case, we only need $\|P_N u\|_{L^4(\TTT^4\times I)} \lesssim \|P_N u\|_{Z(I)}$
  in the proof of the nonlinear estimates, so we choose $\{p_1, p_2, \cdots, p_k\}= \{4\}$.
\end{remark}

The following property shows us that $Z(I)$ is a weaker norm than $X^1(I)$.
\begin{prop}\label{prop:ZinX}
  \[\|v\|_{Z(I)} \lesssim \|v\|_{X^1(I)}\].
\end{prop}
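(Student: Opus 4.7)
The plan is to reduce the claim, via a Littlewood--Paley decomposition, to the frequency-localized Strichartz bound $\|P_N v\|_{L^4(\TTT^4\times J)} \lesssim N^{1/2}\|P_N v\|_{X^0}$ on unit time intervals $J\subset I$, and then to sum this against the weight $N^2$ appearing in the $Z$-norm by the trivial $\ell^2\hookrightarrow \ell^4$ nesting.

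First I fix $J\subset I$ with $|J|\le 1$ and an extension $\tilde v$ of $v$ to $\RRR$ within a factor of two of the infimum in the definition of $\|v\|_{X^1(J)}$. For a linear Schr\"odinger solution $e^{it\Delta}P_N\phi$ at frequency scale $N$, the Bourgain--Demeter sharp $L^4$ Strichartz estimate on $\TTT^4$ (Lemma \ref{prop:strichartz}) gives
\begin{equation*}
\|e^{it\Delta}P_N\phi\|_{L^4(\TTT^4\times[0,1])} \lesssim N^{\frac{d}{2}-\frac{d+2}{p}}\|P_N\phi\|_{L^2} = N^{1/2}\|P_N\phi\|_{L^2},
\end{equation*}
since in dimension $d=4$ the exponent $p=4$ exceeds the critical threshold $2(d+2)/d=3$. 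The standard $U^p$-atomic transfer principle then upgrades this to $\|P_N\tilde v\|_{L^4(\TTT^4\times J)} \lesssim N^{1/2}\|P_N\tilde v\|_{U^4_\Delta L^2}$. Chaining the embeddings $X^0 \hookrightarrow Y^0 \hookrightarrow V^2_\Delta L^2$ recorded in (\ref{eq:Embedding}) with $V^2_\Delta L^2 \hookrightarrow U^4_\Delta L^2$ (which holds since $4>2$) produces the desired single-scale bound $\|P_N\tilde v\|_{L^4(\TTT^4\times J)} \lesssim N^{1/2}\|P_N\tilde v\|_{X^0}$.

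Next I take the fourth power, multiply by $N^2$, and sum dyadically. Setting $x_N := N\|P_N\tilde v\|_{X^0}$, this gives
\begin{equation*}
\sum_N N^2\|P_N\tilde v\|_{L^4(\TTT^4\times J)}^4 \lesssim \sum_N N^4\|P_N\tilde v\|_{X^0}^4 = \sum_N x_N^4 \le \Big(\sum_N x_N^2\Big)^2 \simeq \|\tilde v\|_{X^1}^4,
\end{equation*}
where the middle inequality is $\|x\|_{\ell^4}\le \|x\|_{\ell^2}$ and the final identification uses Definition \ref{def:Xs} to rewrite $\sum_N N^2\|P_N\tilde v\|_{X^0}^2 \simeq \|\tilde v\|_{X^1}^2$. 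Taking the fourth root, the supremum over $J\subset I$ with $|J|\le 1$, and choosing $\tilde v$ almost-optimal on each $J$, yields $\|v\|_{Z(I)}\lesssim \|v\|_{X^1(I)}$.

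The main obstacle is establishing the single-scale bound $\|P_N v\|_{L^4(\TTT^4\times J)}\lesssim N^{1/2}\|P_N v\|_{X^0}$, which rests on Bourgain--Demeter together with a careful chaining of atomic-space embeddings (in particular, one cannot bypass $U^4_\Delta L^2$ with $U^2_\Delta L^2$, since the latter is strictly stronger than $X^0$). Once this bound is in hand, the dyadic summation is essentially forced: the $N^{1/2}$ Strichartz loss, raised to the fourth power, pairs with the $N^2$ weight built into the definition of $Z$ to produce $N^4$, which is precisely the weight the $X^1$-norm carries on each dyadic block.
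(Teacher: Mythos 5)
Your proof is correct and follows essentially the same route as the paper's: both rest on the Bourgain--Demeter $L^4$ Strichartz bound upgraded to atomic spaces via the transfer principle (Corollary \ref{coro:strichartz}), giving $\|P_N v\|_{L^4}\lesssim N^{1/2}\|P_N v\|_{X^0}$, followed by the $\ell^2\hookrightarrow\ell^4$ step and Littlewood--Paley reassembly of the $X^1$ norm. The paper compresses the chain $X^0\hookrightarrow Y^0\hookrightarrow V^2_\Delta\hookrightarrow U^4_\Delta$ and the square-sum recombination into a single displayed line; you have simply written out those intermediate steps explicitly.
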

\begin{proof}
  By the definition of $Z(I)$ and the following Strichartz type estimates
  {Proposition \ref{coro:strichartz}}, we obtain that
  \begin{align*}
  \sup_{J\subset I, \ |J|\leq 1}
  \left( \sum_{N \text{ dydic number}} N^2 \|P_N v\|^4_{L^4(\TTT^4\times J)}  \right)^{\frac{1}{4}}
  &\lesssim \sup_{J\subset I, \ |J|\leq 1}
  \left( \sum_{N \text{ dydic number}} N^4 \|P_N v\|^4_{X^0(J)}  \right)^{\frac{1}{4}}\\
  &\lesssim \|v\|_{X^1(I)}.
  \end{align*}
\end{proof}

\begin{prop}[Proposition 2.19 in \cite{hadac2009well}]\label{prop:transfer}
Let $T_0 : L^2 \times \cdots \times L^2 \to L^{1}_{loc}$ be a m-linear operator.
Assume that for some $1\leq p,\ q \leq \infty$
\begin{equation}
\| T_0 (e^{it\Delta }\phi_1,\cdots, e^{it\Delta }\phi_m)\|_{L^p(\mathbb{R},
L_x^q(\mathbb{T}^d))} \lesssim \prod_{i=1}^m\|\phi_i\|_{L^2(\mathbb{T}^d)}.
\end{equation}
Then, there exists an extension $T : U^p_{\Delta}\times \cdots \times U^p_{\Delta}
\to L^p(\mathbb{R}, L^q(\mathbb{T}^d))$ satisfying
\begin{equation}
\|T(u_1, \cdots, u_m)\|_{L^p(\mathbb{R}, L^q(\mathbb{T}^d))} \lesssim \prod_{i=1}^m
\|u_i\|_{U^p_{\Delta}},
\end{equation}
such that $T(u_1, \cdots , u_m) (t, \cdot) = T_0 (u_1(t), \cdots , u_m(t))(\cdot)$,
a.e.
\end{prop}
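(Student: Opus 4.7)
The plan is to define $T$ pointwise in time by $T(u_1,\dots,u_m)(t,\cdot) = T_0(u_1(t),\dots,u_m(t))(\cdot)$, to verify the claimed estimate first on $m$-tuples of $U^p_\Delta$-atoms via a common-refinement argument, and then to extend the estimate to arbitrary $u_i \in U^p_\Delta$ through the atomic decomposition combined with multilinearity of $T_0$.

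First I would take $m$ atoms $a_1,\dots,a_m \in U^p_\Delta$ of the form
\[
a_i(t) = \sum_{k=1}^{K_i} \mathds{1}_{[t^i_{k-1}, t^i_k)}(t)\, e^{it\Delta}\phi^i_{k-1}, \qquad \sum_{k=1}^{K_i}\|\phi^i_{k-1}\|_{L^2}^p \le 1,
\]
and let $\{J_\alpha\}$ be a common refinement of the $m$ time partitions. On each $J_\alpha$ every $a_i$ reduces to a single free solution $e^{it\Delta}\phi^i_{k_i(\alpha)}$, so the disjointness of the $J_\alpha$, extension of each time integral from $J_\alpha$ to $\mathbb{R}$, and the hypothesized free-solution bound yield
\[
\|T(a_1,\dots,a_m)\|_{L^p(\mathbb{R}, L^q_x)}^p \le \sum_\alpha \|T_0(e^{it\Delta}\phi^1_{k_1(\alpha)},\dots,e^{it\Delta}\phi^m_{k_m(\alpha)})\|_{L^p L^q}^p \lesssim \sum_\alpha \prod_{i=1}^m \|\phi^i_{k_i(\alpha)}\|_{L^2}^p.
\]
The combinatorial key is that the tuples $(k_1(\alpha),\dots,k_m(\alpha))$ are pairwise distinct as $\alpha$ varies, since each such tuple determines a unique nonempty intersection of subintervals. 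Hence the right-hand side is majorized by the full tensor-product sum, which factors:
\[
\sum_\alpha \prod_{i=1}^m \|\phi^i_{k_i(\alpha)}\|_{L^2}^p \le \prod_{i=1}^m \Bigl(\sum_{k=1}^{K_i}\|\phi^i_{k-1}\|_{L^2}^p\Bigr) \le 1.
\]
Thus $\|T(a_1,\dots,a_m)\|_{L^p L^q} \lesssim 1$ uniformly over $m$-tuples of atoms.

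To pass to general $u_i \in U^p_\Delta$, I would fix atomic decompositions $u_i = \sum_j \lambda_{i,j} a_{i,j}$ with $\sum_j |\lambda_{i,j}| \le (1+\varepsilon)\|u_i\|_{U^p_\Delta}$ and extend $T$ via the absolutely convergent series
\[
T(u_1,\dots,u_m) := \sum_{j_1,\dots,j_m} \lambda_{1,j_1}\cdots \lambda_{m,j_m}\, T(a_{1,j_1},\dots,a_{m,j_m}),
\]
whose $L^p L^q$-norm is controlled by $\prod_i \sum_{j_i}|\lambda_{i,j_i}| \le (1+\varepsilon)^m \prod_i \|u_i\|_{U^p_\Delta}$; sending $\varepsilon \to 0$ gives the stated bound. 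Absolute convergence together with multilinearity of $T_0$ shows that $T(u_1,\dots,u_m)$ is well defined independently of the atomic decompositions and agrees pointwise a.e.\ in $t$ with $T_0(u_1(t),\dots,u_m(t))(\cdot)$, since at each fixed $t$ the atomic expansions converge in $L^2$. The only delicate step is the distinctness of the refinement tuples in the atomic estimate: that is precisely what collapses the a priori coupled $m$-linear sum into a clean product, and the remainder is soft manipulation of the $U^p$ atomic structure.
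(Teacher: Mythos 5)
The paper states this proposition without proof, citing Hadac--Herr--Koch \cite{hadac2009well}, so there is no internal argument to compare against; your proof is the standard one from that reference and is correct. The common refinement reduces each atom to a single free solution on each subinterval, the observation that distinct subintervals give pairwise-distinct index $m$-tuples dominates the $\ell^p$ sum by a tensor-product sum that factors into the individual atom normalizations, and the $\ell^1$ atomic structure then yields the extension. The only step you keep implicit is that identifying $T(u_1,\dots,u_m)(t)$ with $T_0(u_1(t),\dots,u_m(t))$ a.e.\ requires commuting $T_0$ through the $L^2$-convergent atomic expansions, i.e.\ using continuity of $T_0$ as a map $(L^2)^m \to L^1_{\mathrm{loc}}$; that is the intended reading of the hypotheses and does not constitute a gap.
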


\begin{lem}[Strichartz type estimates \cite{bourgain1993fourier}\cite{bourgain2015proof}]
  \label{prop:strichartz}
  If $p>3$, then
  \begin{equation*}
    \|P_N e^{it\Delta} f\|_{L^p_{t,x}([-1,1]\times\TTT^4)} \lesssim_p
    N^{2-\frac{6}{p}} \|f\|_{L^2_x}
  \end{equation*}
  and
  \begin{equation*}
    \|P_C e^{it\Delta} f\|_{L^p_{t,x}([-1,1]\times\TTT^4)} \lesssim_p
    N^{2-\frac{6}{p}} \|f\|_{L^2_x}
  \end{equation*}
  where $C$ is a cube of side length N and $f\in L^2(\TTT^4)$.
\end{lem}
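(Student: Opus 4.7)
The plan is to derive this as a direct corollary of the Bourgain-Demeter $\ell^2$-decoupling theorem for the paraboloid in $\mathbb{R}^5$ from \cite{bourgain2015proof}. The critical decoupling exponent in ambient dimension $n=5$ is $\frac{2(n+1)}{n-1}=3$, matching the threshold $p>3$ in the lemma statement; and the Schr\"odinger extension on $\TTT^4$ is, after expanding in the Fourier basis, exactly an extension operator associated to a discrete subset of the paraboloid.

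First I would write $P_N e^{it\Delta}f(x,t)=\sum_{|n|\sim N}\widehat f(n)\,e^{i(n\cdot x-|n|^2 t)}$, so that this is a superposition of plane waves whose characters $(n,-|n|^2)\in\mathbb{R}^5$ lie on a discrete, $O(N^4)$-cardinality subset of the paraboloid $\{(\xi,|\xi|^2)\}$. Then I would rescale by $\tilde x=Nx$, $\tilde t=N^2t$: the lattice $\{n/N\}_{|n|\sim N}$ becomes an $N^{-1}$-separated subset of the unit paraboloid, and the rescaled function lives on a spacetime box of radius $R\sim N^2$ so that the natural Bourgain-Demeter cap scale $R^{-1/2}=N^{-1}$ captures at most one lattice point per cap. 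Applying decoupling at $p=3$ and observing that each cap piece is a single plane wave with $L^3$-norm $\lesssim |\widehat f(n)|\cdot|B_R|^{1/3}$, I obtain, after unwinding the rescaling and passing from $B_R$ to the spacetime cylinder by standard periodization/covering,
$$\|P_N e^{it\Delta}f\|_{L^3(\TTT^4\times[-1,1])}\lesssim_\epsilon N^\epsilon\|f\|_{L^2}.$$

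To reach the full range $p>3$, I would interpolate this critical $L^3$ bound against the trivial $L^\infty$ estimate $\|P_N e^{it\Delta}f\|_{L^\infty_{t,x}}\lesssim N^{2}\|f\|_{L^2}$ (which follows from Cauchy-Schwarz and $\#\{|n|\sim N\}\lesssim N^4$), giving $\|P_N e^{it\Delta}f\|_{L^p}\lesssim_{p,\epsilon} N^{2-6/p+c_p\epsilon}\|f\|_{L^2}$; choosing $\epsilon=\epsilon(p)$ sufficiently small then absorbs the loss into the $\lesssim_p$ constant. The cube version is identical: frequency translation by the cube center $n_0$ amounts to modulating $e^{it\Delta}f$ by $e^{i(n_0\cdot x-|n_0|^2t)}\cdot e^{-2i n_0\cdot(\text{translation})}$, which preserves every $L^p$-norm, while the decoupling hypothesis is invariant under translation in frequency.

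The real obstacle is of course the decoupling theorem itself, which we invoke as a black box; its proof relies on multilinear Kakeya, ball inflation, and induction on scales. The secondary technical point is the transfer from the $\mathbb{R}^5$-ball estimate to the spacetime cylinder $\TTT^4\times[-1,1]$: one covers the cylinder by unit cells of $\mathbb{R}^5$ and sums using the weighted-ball formulation of decoupling, verifying that the fast-decaying weights remain compatible with the covering. Once that bookkeeping is in hand, the rest is essentially a routine change of variables.
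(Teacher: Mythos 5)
The paper does not prove this lemma; it cites it to Bourgain \cite{bourgain1993fourier} and Bourgain--Demeter \cite{bourgain2015proof}, so there is no internal proof to compare against. What you propose — reducing to the $\ell^2$-decoupling theorem for the paraboloid in $\RRR^5$, recovering the critical threshold $p_c = \tfrac{2(d+2)}{d} = 3$, and handling the cube localization $P_C$ by a Galilean boost — is indeed the correct framework, and the arithmetic (cap scale $N^{-1}$ at radius $N^2$, one lattice point per cap, exponent $\tfrac{d}{2}-\tfrac{d+2}{p}=2-\tfrac{6}{p}$) is right.

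The gap is the removal of the $\epsilon$-loss, and the interpolation you propose does not close it. Bourgain--Demeter gives, for every $\epsilon>0$,
$\|P_N e^{it\Delta}f\|_{L^p(\TTT^4\times[-1,1])}\leq C_\epsilon N^{2-6/p+\epsilon}\|f\|_{L^2}$,
where the constant $C_\epsilon$ blows up as $\epsilon\to 0$ (exponentially or worse in $1/\epsilon$). Interpolating the critical $L^3$ estimate, which carries an $N^{\epsilon}$ factor, against the trivial $L^\infty$ bound $\lesssim N^2\|f\|_{L^2}$ with Hölder weight $\theta=3/p$ yields the exponent $\tfrac{3\epsilon}{p}+2-\tfrac{6}{p}$: the $\epsilon$-loss is attenuated by a fixed factor but \emph{not} eliminated. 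For any fixed $\epsilon>0$ the factor $N^{3\epsilon/p}$ is unbounded in $N$, so it cannot be absorbed into a constant $\lesssim_p$; and you cannot instead take $\epsilon=\epsilon(N)\to 0$ because then $C_{\epsilon(N)}$ grows with $N$ and the product $C_{\epsilon(N)}N^{\epsilon(N)}$ need not stay bounded. (Applying decoupling directly at the exponent $p$, rather than at $p=3$, hits the same wall, since the decoupling constant itself carries an $R^\epsilon$ factor for every $p$ in its admissible range.)

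Removing the $\epsilon$ on the strictly supercritical range $p>3$, uniformly over rational \emph{and} irrational tori — which is exactly what the lemma as stated claims and what the paper needs (e.g., in Proposition \ref{prop:ZinX}, where any extra power of $N$ would cost a derivative) — is the content of Killip--Visan's ``Scale invariant Strichartz estimates on tori and applications'' (referenced in the paper's bibliography as \cite{KillipVisanStrichartz2016}). Their argument is not a one-line interpolation: it goes through level-set decompositions and a careful multi-scale analysis built on top of decoupling. For rational tori the clean estimate already follows from Bourgain's 1993 lattice counting, but for the irrational tori treated in this paper the $\epsilon$-removal is a genuinely separate step that must either be cited to Killip--Visan or reproduced. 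Your reduction to decoupling and your treatment of the $P_C$ version via Galilean invariance are fine; the proof as written is incomplete at precisely the $\epsilon$-removal, which is the technically delicate part.
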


By the transfer principle proposition {(Proposition \ref{prop:transfer})} and Strichartz type estimate
Lemma \ref{prop:strichartz}, we obtain the following corollary:
\begin{cor}\label{coro:strichartz}
  If $p>3$, for any $v\in U^p_\Delta([-1, 1])$,
  \[
  \|P_N v\|_{L^p([-1,1]\times\TTT^4)} \lesssim_p N^{2-\frac{6}{p}}
  \|v\|_{U^p_\Delta([-1, 1])},
  \]
  and
  \[
  \|P_C v\|_{L^p([-1,1]\times\TTT^4)} \lesssim_p N^{2-\frac{6}{p}}
  \|v\|_{U^p_\Delta([-1, 1])},
  \]
  where $C$ is a cube of side length $N$.
\end{cor}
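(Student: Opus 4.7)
The plan is to deduce the corollary by a direct application of the transfer principle (Proposition \ref{prop:transfer}) to the linear Strichartz bound of Lemma \ref{prop:strichartz}, treating the Littlewood--Paley (or cube) projector as a bounded linear operator on $L^2$ and packaging everything as a one-linear operator in the sense of Proposition \ref{prop:transfer}.

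First, I would set up the input operator. Define the linear map $T_0 : L^2(\TTT^4) \to L^1_{\mathrm{loc}}(\RRR \times \TTT^4)$ by $T_0(\phi)(t,x) := P_N \phi(x)$ (constant in $t$). Then for any initial datum $\phi \in L^2(\TTT^4)$ one has
\[
T_0(e^{it\Delta}\phi)(t,x) = P_N e^{it\Delta}\phi(x),
\]
and Lemma \ref{prop:strichartz} is exactly the statement that for $p>3$,
\[
\| T_0(e^{it\Delta}\phi) \|_{L^p_{t,x}([-1,1]\times\TTT^4)} \lesssim_p N^{2-\frac{6}{p}} \|\phi\|_{L^2(\TTT^4)}.
\]
This is the hypothesis of Proposition \ref{prop:transfer} with $m=1$, $q=p$, and the underlying time interval $[-1,1]$ (which is harmless, since we may extend functions by zero outside $[-1,1]$ or replace $\RRR$ by $[-1,1]$ in the transfer principle, as the atomic structure of $U^p_\Delta$ is compatible with restriction).

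Next, I would invoke the transfer principle to obtain an extension $T : U^p_\Delta([-1,1]) \to L^p([-1,1]\times\TTT^4)$ satisfying
\[
\|T(u)\|_{L^p([-1,1]\times\TTT^4)} \lesssim_p N^{2-\frac{6}{p}} \|u\|_{U^p_\Delta([-1,1])}
\]
and $T(u)(t,\cdot) = T_0(u(t))(\cdot) = P_N u(t,\cdot)$ for a.e.\ $t$. Hence $T(u) = P_N u$ pointwise a.e., and the first claim
\[
\|P_N u\|_{L^p([-1,1]\times\TTT^4)} \lesssim_p N^{2-\frac{6}{p}} \|u\|_{U^p_\Delta([-1,1])}
\]
follows. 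The second claim, with $P_N$ replaced by the cube projector $P_C$ on a cube $C$ of side length $N$, is proved by exactly the same argument, using the $P_C$-version of Lemma \ref{prop:strichartz} as the linear input in the definition of $T_0$.

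There is essentially no obstacle here beyond a careful bookkeeping of the finite time interval $[-1,1]$ and the verification that $T_0$ is measurable as an $L^2 \to L^1_{\mathrm{loc}}$ operator so that Proposition \ref{prop:transfer} applies; the conclusion is otherwise a mechanical consequence of the atomic decomposition underlying $U^p$, which transfers any linear estimate for free Schr\"odinger evolutions to arbitrary $U^p_\Delta$-functions atom-by-atom.
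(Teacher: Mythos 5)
Your proof is correct and follows the same route the paper intends: the paper gives no separate argument for this corollary beyond citing Proposition \ref{prop:transfer} together with Lemma \ref{prop:strichartz}, which is precisely what you carry out (with $m=1$, $q=p$, and $T_0 = P_N$ or $P_C$). The only minor stylistic quibble is that $T_0$ in Proposition \ref{prop:transfer} should be read as a map $L^2(\TTT^4)\to L^1_{\mathrm{loc}}(\TTT^4)$ acting on the spatial variable only, with the time dependence entering through $T_0(u(t))$, rather than as a map producing a function of $(t,x)$ outright; your computation is unaffected.
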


\section{Local well-posedness and Stability theory}
In this section, we present large-data local well-poesdness and stability
results. Although Herr, Tataru and Tzvetkov's idea \cite{herr2014strichartz}
together with Bourgain and Demeter's result \cite{bourgain2015proof} gives the local well-posedness
of (\ref{eq:IVP}), to obtain the stability results, we need a refined
nonlinear estimate and the local well-posedness result.

\begin{definition}[Definition of solutions]
  Given an interval $I\subseteq \RRR$, we call $u\in C(I: H^1(\TTT^4))$ a strong
  solution of (\ref{eq:IVP}) if $u\in X^1(I)$ and $u$ satisfies that for all
  $t, s\in I$,
  \[
  u(t) = e^{i(t-s)\Delta} u(s) - i\mu
  \int_s^t e^{i(t-t')\Delta} u(t')|u(t')|^2 dt'.
  \]
\end{definition}

First, we need to introduce
\begin{equation}\label{def:Z'}
\|u\|_{Z'(I)} := \|u\|_{Z(I)}^{\frac{3}{4}}\|u\|^\frac{1}{4}_{X^1(I)}.
\end{equation}

\begin{lem}[Bilinear estimates in \cite{herr2014strichartz}]\label{lem:bilinear}
  Assuming $|I|\leq 1$ and $N_1\geq N_2$, then we hold that
  \begin{equation}
    \|P_{N_1} u_1 P_{N_2} u_2\|_{L^2_{x,t}(\TTT^4\times I)}
    \lesssim (\frac{N_2}{N_1}+\frac{1}{N_2})^\kappa \|P_{N_1} u_1\|_{Y^0(I)}
    \|P_{N_2}u_2\|_{Y^1(I)}
  \end{equation}
  for some $\kappa >0$.
\end{lem}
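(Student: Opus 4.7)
The plan is to reduce the estimate to a statement about free Schr\"odinger solutions via the atomic-space transfer principle, perform a cube decomposition of the higher-frequency factor at scale $N_2$, prove a bilinear estimate with a small gain on each single cube, and sum using Plancherel-based almost orthogonality.

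First, using the embedding $Y^s\hookrightarrow V^2_\Delta H^s$ together with the atomic structure of $V^2_\Delta$ in terms of $U^p$-atoms and Proposition \ref{prop:transfer}, it suffices to prove for $\phi_1,\phi_2\in L^2(\TTT^4)$ frequency-localized at $N_1\ge N_2$ the free-solution estimate
\begin{equation*}
\|P_{N_1}e^{it\Delta}\phi_1\cdot P_{N_2}e^{it\Delta}\phi_2\|_{L^2(\TTT^4\times I)}\lesssim \Bigl(\tfrac{N_2}{N_1}+\tfrac{1}{N_2}\Bigr)^\kappa N_2\|\phi_1\|_{L^2}\|\phi_2\|_{L^2},
\end{equation*}
where the explicit factor $N_2$ encodes the derivative in the $Y^1$-norm of $P_{N_2}u_2$. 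I then partition the annulus $\{|\xi|\sim N_1\}$ into essentially disjoint cubes $\{C\}$ of side $N_2$, so that $P_{N_1}\phi_1=\sum_C P_C\phi_1$. For each such cube the spatial Fourier support of $(P_Ce^{it\Delta}\phi_1)(P_{N_2}e^{it\Delta}\phi_2)$ is contained in a cube of side $O(N_2)$ centred near the centre of $C$, and these supports are essentially disjoint as $C$ varies, so Plancherel yields the almost-orthogonal bound
\begin{equation*}
\|P_{N_1}u_1\cdot P_{N_2}u_2\|_{L^2}^2\lesssim \sum_C \|P_Cu_1\cdot P_{N_2}u_2\|_{L^2}^2.
\end{equation*}

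The core of the argument is a single-cube estimate
\begin{equation*}
\|P_Ce^{it\Delta}\phi_1\cdot P_{N_2}e^{it\Delta}\phi_2\|_{L^2}\lesssim \Bigl(\tfrac{N_2}{N_1}+\tfrac{1}{N_2}\Bigr)^\kappa N_2\|P_C\phi_1\|_{L^2}\|P_{N_2}\phi_2\|_{L^2}.
\end{equation*}
A Galilean boost shifting the centre of $C$ to the origin (which preserves all $L^p$ norms and the shape of the paraboloid up to a time-dependent phase) reduces both factors to data frequency-localized in a ball of radius $N_2$. The base bound $N_2^{1+\varepsilon}\|P_C\phi_1\|_{L^2}\|\phi_2\|_{L^2}$ follows from H\"older with Lemma \ref{prop:strichartz} at $p=4$. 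To promote this to the bound with gain $(N_2/N_1+1/N_2)^\kappa$, I would interpolate with a refined bilinear estimate obtained by applying the Bourgain--Demeter $\ell^2$-decoupling at an exponent $p$ slightly above $3$: the $N_2/N_1$ factor is gained by exploiting the volume ratio of a single cube $C$ inside the annulus (i.e.\ the number of cubes entering the almost-orthogonal sum), while the $1/N_2$ factor is gained from decoupling at the small cube scale $N_2$. Summing the squared single-cube bounds via $\sum_C\|P_C\phi_1\|_{L^2}^2=\|P_{N_1}\phi_1\|_{L^2}^2$ completes the proof, and the claim for general $u_i\in Y^{s_i}(I)$ follows by atomic decomposition.

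The main obstacle is producing the positive exponent $\kappa>0$: Lemma \ref{prop:strichartz} is sharp only for $p>3$, narrowly missing the scale-critical exponent $p=3$ on $\TTT^4$. Consequently the realizable $\kappa$ is only marginally positive, and one must carefully balance the two distinct regimes $N_2\sim N_1$ and $N_2\ll N_1$, tracking which power of $N_2/N_1$ or $1/N_2$ is extracted from each use of Strichartz/Bernstein/decoupling and then packaging them into the single symmetric form $(N_2/N_1+1/N_2)^\kappa$ uniformly in the ratio $N_2/N_1\in(0,1]$. This bookkeeping, rather than any individual estimate, is the technical heart of the argument.
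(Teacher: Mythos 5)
The preliminaries of your proposal are correct and do match the cited proof: reducing via the atomic structure of $V^2$ and the transfer principle (Proposition \ref{prop:transfer}) to free solutions, tiling the high-frequency annulus by cubes $C$ of side $N_2$, and using Fourier-support disjointness of the products $(P_Ce^{it\Delta}\phi_1)(P_{N_2}e^{it\Delta}\phi_2)$ to obtain the $\ell^2_C$ sum. Together with H\"older and Lemma \ref{prop:strichartz} at $p=4$, this gives the per-cube bound $N_2\|P_C\phi_1\|_{L^2}\|\phi_2\|_{L^2}$ (with no $\varepsilon$ loss, incidentally, since $p=4$ is away from the critical exponent $3$). But the mechanism you propose for extracting the gain $(N_2/N_1+1/N_2)^\kappa$ does not work, and the step meant to supply it rests on a claim that is false.

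A Galilean boost by $\xi_C$ (the center of $C$) shifts $C$ to $B(0,N_2)$, but it simultaneously shifts the support of $P_{N_2}\phi_2$ to $B(-\xi_C,N_2)$; the frequency separation between the two factors is invariant, as it must be. So the boost does not ``reduce both factors to data frequency-localized in a ball of radius $N_2$'' about a common center --- after the boost the configuration is the same, with the roles of high and low frequency swapped. The alleged $N_2/N_1$ gain from ``the volume ratio of a single cube $C$ inside the annulus'' is also illusory: the cube counting is already fully absorbed by the $\ell^2$ almost-orthogonality, and every H\"older split $\|fg\|_{L^2}\le\|f\|_{L^p}\|g\|_{L^q}$ with $1/p+1/q=1/2$ and both $p,q>3$ applied with Lemma \ref{prop:strichartz} returns exactly $N_2^{(2-6/p)+(2-6/q)}=N_2$, with no gain, because the exponent is forced by scaling. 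As written, the proposal yields only $\kappa=0$.

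The actual gain in \cite{herr2014strichartz}, as the paper's remark immediately after the lemma points out (``further strip partitions to apply the decoupling properties for time frequency''), comes from a secondary decomposition of $C$ into slabs of width $M$ perpendicular to $\hat{\xi}_C$. Writing $\xi_1=\xi_C+\eta_1$, the time frequency $|\xi_1|^2+|\xi_2|^2 = |\xi_C|^2+2\xi_C\cdot\eta_1+|\eta_1|^2+|\xi_2|^2$ has its dominant variation in the linear term $2\xi_C\cdot\eta_1$; once $M\gtrsim N_2^2/N_1$ the slabs occupy essentially disjoint modulation windows of length $\sim N_1M$, producing a second almost-orthogonal $\ell^2$ sum, this time in the time-frequency variable. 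On each slab one applies Lemma \ref{prop:strichartz} at $p$ slightly above $3$, where the slab's thinness in one direction yields a gain of order $(M/N_2)^\delta$; optimizing $M\sim\max(N_2^2/N_1,1)$ produces exactly $(N_2/N_1+1/N_2)^\kappa$. This strip decomposition and the accompanying time-frequency orthogonality are the technical heart of the lemma, not bookkeeping on top of the cube decomposition, and without them the positive $\kappa$ cannot be obtained.
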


\begin{remark}
  This Bilinear estimate is Proposition 2.8 in \cite{herr2014strichartz}. The proof
  of Lemma \ref{lem:bilinear} relies on $L^p$ estimates in
  Corolla \ref{prop:strichartz} (for some $p<4$). In the proof not only
  we need the decoupling properties for spatial frequency, but also
  we need further strip partitions to apply the decoupling properties for time
  frequency.
\end{remark}

Let's introduce a refined nonlinear estimate.

\begin{prop}[Refined nonlinear estimate]\label{prop:nonlinear}
  For $u_k\in X^1(I)$, $k=1, 2, 3$, $|I|\leq 1$, we hold the estimate
  \begin{equation}\label{eq:nonlinear}
    \|\prod_{k=1}^3 \widetilde{u_k}\|_{N(I)}\lesssim
    \sum_{\{i, j, k\}=\{1, 2, 3\}} \|u_i\|_{X^1(I)}\|u_j\|_{Z'(I)}\|u_k\|_{Z'(I)}
  \end{equation}
  where $\widetilde{u_k} =u_k$ or $\widetilde{u_k} = \overline{u_k}$
  for $k=1, 2, 3$.

  In particular, if there exist constants $A, B>0$, such that $u_1 = P_{>A} u_1$,
  $u_2 = P_{>A} u_2$ and $u_3 = P_{<B} u_3$, then we obtain that
\begin{equation}\label{eq:nonlinearP}
  \|\prod_{k=1}^3 \widetilde{u_k}\|_{N(I)}\lesssim
  \|u_1\|_{X^1(I)}\|u_2\|_{Z'(I)}\|u_3\|_{Z'(I)}
  +  \|u_2\|_{X^1(I)}\|u_1\|_{Z'(I)}\|u_3\|_{Z'(I)}.
\end{equation}
\end{prop}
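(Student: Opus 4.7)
The plan is to prove Proposition \ref{prop:nonlinear} via the $N(I)$/$Y^{-1}(I)$ duality, a Littlewood--Paley decomposition of the four factors, and an interpolation between the bilinear Strichartz estimate of Lemma \ref{lem:bilinear} and the trivial Hölder bound $\|fg\|_{L^2}\le\|f\|_{L^4}\|g\|_{L^4}$, arranged so that the hybrid norm $\|\cdot\|_{Z'(I)}$ defined in \eqref{def:Z'} emerges naturally from the interpolation.

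First I would apply Proposition \ref{prop:dual} to reduce the problem to estimating
\[
\sup_{\|v\|_{Y^{-1}(I)}\le 1}\Bigl|\int_I\int_{\TTT^4}\widetilde{u_1}\,\widetilde{u_2}\,\widetilde{u_3}\,\overline{v}\,dx\,dt\Bigr|.
\]
Decomposing $u_k=\sum_{N_k}P_{N_k}u_k$ and $v=\sum_{N_0}P_{N_0}v$, the symmetry in the three $u$-factors lets me assume $N_1\ge N_2\ge N_3$, and the Fourier support of the triple product forces $N_0\lesssim N_1$. The task then becomes bounding the dyadic sum
\[
\sum_{\substack{N_0\lesssim N_1\\ N_1\ge N_2\ge N_3}}\Bigl|\int_I\int_{\TTT^4}P_{N_0}v\cdot\prod_{k=1}^{3}P_{N_k}\widetilde{u_k}\,dx\,dt\Bigr|
\]
by the right-hand side of \eqref{eq:nonlinear}.

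For each dyadic term I would apply Cauchy--Schwarz pairing a high with a low frequency, say writing the integral as $\|P_{N_1}u_1\cdot P_{N_3}u_3\|_{L^2_{t,x}}\cdot\|P_{N_0}v\cdot P_{N_2}u_2\|_{L^2_{t,x}}$. For each of the two resulting bilinear $L^2$ norms I have two competing bounds: (a) Lemma \ref{lem:bilinear}, which yields a decay factor of the form $(N_{\mathrm{small}}/N_{\mathrm{large}}+N_{\mathrm{small}}^{-1})^\kappa$ together with $Y^0\cdot Y^1$ norms, all controlled via \eqref{eq:Embedding} by the ambient $X^1$-norms; and (b) Hölder's inequality together with the single-dyadic bound $\|P_Nw\|_{L^4}\lesssim N^{-1/2}\|w\|_{Z(I)}$, giving a purely $Z(I)$-based estimate. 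Interpolating between (a) with weight $\tfrac{1}{4}$ and (b) with weight $\tfrac{3}{4}$ on each bilinear factor is what produces exactly the combination $\|\cdot\|_{X^1(I)}^{1/4}\|\cdot\|_{Z(I)}^{3/4}=\|\cdot\|_{Z'(I)}$, together with a residual geometric gain $(N_{\mathrm{small}}/N_{\mathrm{large}})^{\kappa/4}$ that is enough, via a Schur-type argument combined with the $\ell^2$-summability built into the $X^1$ norm and the $\ell^4$-summability of the $Z$ norm, to close the dyadic sum. Relabeling $\{i,j,k\}=\{1,2,3\}$ then produces the symmetric right-hand side of \eqref{eq:nonlinear}.

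For the localized improvement \eqref{eq:nonlinearP}, the hypotheses $u_1,u_2=P_{>A}u_{1,2}$ and $u_3=P_{<B}u_3$ force $N_1,N_2>A$ and $N_3<B$ in any nonzero dyadic contribution, so $u_3$ must play the role of the low-frequency factor in every surviving pairing. Consequently only those terms in the symmetric sum \eqref{eq:nonlinear} in which $\|u_3\|_{Z'(I)}$ appears as a $Z'$-factor (never as the $X^1$-factor) contribute, which is precisely the right-hand side of \eqref{eq:nonlinearP}. The main obstacle I anticipate is in the dyadic summation: the bilinear gain must be distributed carefully among the three $u_k$'s so that after summation one recovers the complete norms $\|u_i\|_{X^1(I)}$ and $\|u_j\|_{Z'(I)}$ rather than merely their Littlewood--Paley pieces, and balancing the $\ell^2$-structure of $X^1$ against the $\ell^4$-structure of $Z$ demands careful bookkeeping in the interpolation weights. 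A secondary, milder issue is the boundary term $N_{\mathrm{small}}^{-1}$ in Lemma \ref{lem:bilinear}, which is harmless because dyadic scales on $\TTT^4$ are bounded below by a fixed constant.
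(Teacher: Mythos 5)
Your overall strategy — duality via Proposition~\ref{prop:dual}, Littlewood--Paley decomposition, a Cauchy--Schwarz pairing of high with low frequencies, and interpolation between the bilinear estimate of Lemma~\ref{lem:bilinear} and a $Z$-flavored estimate so that the exponents $\tfrac14$, $\tfrac34$ reproduce the definition of $Z'$ in \eqref{def:Z'} — matches the paper's proof. But there is a genuine gap in how you propose to obtain the ``competing bound (b).'' You describe it as ``H\"older's inequality together with the single-dyadic bound $\|P_N w\|_{L^4}\lesssim N^{-1/2}\|w\|_{Z(I)}$,'' but plain H\"older on a pair $\|P_{N_0}v\,P_{N_2}u_2\|_{L^2}$ with $N_0\gg N_2$ forces you to bound the \emph{high}-frequency factor in $L^4$: using Corollary~\ref{coro:strichartz} this costs a factor $N_0^{1/2}$, whereas the $Z$-norm only recovers $N_2^{-1/2}$ from the low-frequency factor, so the resulting estimate carries an uncontrolled loss of $(N_0/N_2)^{1/2}$ (and a matching $(N_1/N_3)^{1/2}$ on the other pair). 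After the $\tfrac14:\tfrac34$ interpolation this loss appears at weight $3/8$, while the gain coming from Lemma~\ref{lem:bilinear} at weight $1/4$ is only $(N_2/N_0)^{\kappa/4}$ with $\kappa>0$ small; since $\kappa<3/2$, the net exponent is negative and the dyadic sum diverges.

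The missing ingredient is the cube decomposition that the paper uses in estimate \eqref{3.3}: split the high-frequency factor $P_{N_0}u_0$ over a tiling $\{C_j\}$ of $\ZZZ^4$ by cubes of side $N_2$ (the \emph{low} frequency), use almost orthogonality of the products $\{P_{C_j}P_{N_0}u_0\cdot P_{N_2}u_2\}_j$ in $L^2_{x,t}$, and apply the cube-localized Strichartz bound of Corollary~\ref{coro:strichartz}, which yields $\|P_{C_j}P_{N_0}u_0\|_{L^4}\lesssim N_2^{1/2}\|P_{C_j}P_{N_0}u_0\|_{Y^0}$ — with the exponent $N_2^{1/2}$ matching the cube side rather than $N_0^{1/2}$. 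Resumming over $j$ then gives $\|P_{N_0}u_0 P_{N_2}u_2\|_{L^2}\lesssim N_2^{1/2}\|P_{N_0}u_0\|_{Y^0}\|P_{N_2}u_2\|_{L^4}$ with \emph{no} frequency-ratio loss, so the competing bound (b) reads $\lesssim\|P_{N_0}u_0\|_{Y^0}\|P_{N_1}u_1\|_{Y^0}\|P_{N_2}u_2\|_{Z}\|P_{N_3}u_3\|_{Z}$ and the subsequent interpolation and dyadic summation close. A secondary inconsistency in your write-up is calling (b) a ``purely $Z(I)$-based estimate'': the dual factor $P_{N_0}v$ is only controlled in $Y^{-1}(I)$, so both (a) and (b) must keep a $Y^0$ (or $Y^{-1}$) bound on that factor; the cube decomposition is precisely what lets you do so without paying derivatives. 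The localized claim \eqref{eq:nonlinearP} you discuss is then handled essentially as the paper does, by noting that the frequency supports eliminate the orderings in which $u_3$ is the highest-frequency factor.
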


\begin{proof}
  Suppose $N_0$, $N_1$, $N_2$, $N_3$ are dyadic and
  WLOG we assume $N_1\geq N_2 \geq N_3$. By the Proposition \ref{prop:dual}, we obtain that
\begin{align*}
  &\|\prod_{k=1}^3 \widetilde{u_k}\|_{N(I)} \lesssim
  \sup_{\|u_0\|_{Y^{-1}}} |\int_{\TTT^4\times I} \overline{u_0}
  \prod_{k=1}^3 \widetilde{u_k} \,dxdt|\\
  \leq & \sup_{\|u_0\|_{Y^{-1}}} \sum_{N_0, N_1\geq N_2\geq N_3}|
  \int_{\TTT^4\times I} \overline{P_{N_0} u_0}
  \prod_{k=1}^3 P_{N_k}\widetilde{u_k} \,dxdt|
\end{align*}

Then we know that $N_1 \sim \max (N_2, N_0)$ by the spatial frequency orthogonality.
There are two cases:
\begin{enumerate}
  \item $N_0\sim N_1 \geq N_2 \geq N_3$;
  \item $N_0\leq N_2 \sim N_1 \geq N_3$.
\end{enumerate}

\case{1}{$N_0\sim N_1 \geq N_2 \geq N_3$}
By Cauchy-Schwartz inequality and Lemma \ref{lem:bilinear}, we have that
\begin{equation}\label{3.2}
\begin{split}
  & |\int \overline{P_{N_0} u_0} P_{N_1} \widetilde{u_1} P_{N_2} \widetilde{u_2}
  P_{N_3} \widetilde{u_3}\, dxdt|
  \leq \|P_{N_0} u_0 P_{N_2} u_2\|_{L^2_{x,t}}
    \|P_{N_1}u_1 P_{N_3} u_3\|_{L^2_{x,t}}\\
  \lesssim& (\frac{N_3}{N_1} + \frac{1}{N_3})^\kappa
  (\frac{N_2}{N_0} + \frac{1}{N_2})^\kappa \|P_{N_0} u_0\|_{Y^0(I)}
  \|P_{N_1}u_1\|_{Y^0(I)}\|P_{N_2}u_2\|_{X^1(I)}\|P_{N_3}u_3\|_{X^1(I)}
\end{split}
\end{equation}

Assume $\{C_j\}$ is a cube partition of size $N_2$, and $\{C_k\}$ is a cube
partition of size $N_3$. By $\{P_{C_j}P_{N_0} u_0 P_{N_2} u_2\}_j$
and $\{P_{C_k}P_{N_1}u_1 P_{N_3}u_3\}_k$ are both almost orthogonal,
Corollary \ref{coro:strichartz} and definition of $Z$ norm, we obtain that
\begin{equation}\label{3.3}
  \begin{split}
    & |\int \overline{P_{N_0} u_0} P_{N_1} \widetilde{u_1} P_{N_2} \widetilde{u_2}
    P_{N_3} \widetilde{u_3}\, dxdt|
    \leq \|P_{N_0} u_0 P_{N_2} u_2\|_{L^2_{x,t}}
    \|P_{N_1}u_1 P_{N_3} u_3\|_{L^2_{x,t}}\\
    \lesssim & (\sum_{C_j} \|P_{C_j}P_{N_0} u_0 P_{N_2} u_2\|^2_{L^2_{x,t}})^{\frac{1}{2}}
    (\sum_{C_k} \|P_{C_k}P_{N_1}u_1 P_{N_3}u_3 \|^2)^{\frac{1}{2}}\\
    \lesssim & (\sum_{C_j} \|P_{C_j}P_{N_0} u_0\|^2_{L^4_{x,t}}
    \|P_{N_2} u_2\|^2_{L^4_{x,t}})^{\frac{1}{2}}
    (\sum_{C_k} \|P_{C_k}P_{N_1}u_1\|_{L^4_{x,t}}^2
    \| P_{N_3}u_3 \|^2_{L^4_{x,t}})^{\frac{1}{2}}\\
    \lesssim & (\sum_{C_j} \|P_{C_j}P_{N_0} u_0\|^2_{Y^0(I)}
    (N_2^{\frac{1}{2}}\|P_{N_2} u_2\|_{L^4_{x,t}})^2)^{\frac{1}{2}}
    (\sum_{C_k} \|P_{C_k}P_{N_1} u_1\|^2_{Y^0(I)}
    (N_3^{\frac{1}{2}}\|P_{N_3} u_3\|_{L^4_{x,t}})^2)^{\frac{1}{2}}\\
    \lesssim& \|P_{N_0}u_0\|_{Y^0(I)}\|P_{N_1}u_1\|_{Y^0(I)}\|P_{N_2}u_2\|_{Z(I)}
    \|P_{N_2}u_2\|_{Z(I)}.
  \end{split}
\end{equation}

Interpolating (\ref{3.2}) with (\ref{3.3}) we obtain that
\begin{equation}\label{3.4}
\begin{split}
  &|\int \overline{P_{N_0} u_0} P_{N_1} \widetilde{u_1} P_{N_2} \widetilde{u_2}
  P_{N_3} \widetilde{u_3}\, dxdt|\\
  \lesssim&
  (\frac{N_3}{N_1} + \frac{1}{N_3})^{\kappa_1}
  (\frac{N_2}{N_0} + \frac{1}{N_2})^{\kappa_1}
  \|P_{N_0}u_0\|_{Y^{-1}(I)}\|P_{N_1}u_1\|_{X^1(I)}\|P_{N_2}u_2\|_{Z'(I)}
  \|P_{N_2}u_2\|_{Z'(I)}.
\end{split}
\end{equation}

Then we sum (\ref{3.4}) over all $N_0\sim N_1\geq N_2\geq N_3$,
\begin{align*}
    &\sum_{N_0\sim N_1\geq N_2\geq N_3}
    (\frac{N_3}{N_1} + \frac{1}{N_3})^{\kappa_1}
    (\frac{N_2}{N_0} + \frac{1}{N_2})^{\kappa_1}
    \|P_{N_0}u_0\|_{Y^{-1}(I)}\|P_{N_1}u_1\|_{X^1(I)}\|P_{N_2}u_2\|_{Z'(I)}
    \|P_{N_2}u_2\|_{Z'(I)}\\
    \lesssim & \|u_0\|_{Y^{-1}(I)}\|u_1\|_{X^1{I}}\|u_2\|_{Z'(I)}\|u_3\|_{Z'(I)}.
\end{align*}

\case{2}{$N_0\leq N_2\sim N_1\geq N_3$}

Similarly we have that
\begin{equation}\label{3.5}
\begin{split}
  &|\int \overline{P_{N_0} u_0} P_{N_1} \widetilde{u_1} P_{N_2} \widetilde{u_2}
  P_{N_3} \widetilde{u_3}\, dxdt|\\
  \lesssim& (\frac{N_3}{N_1} + \frac{1}{N_3})^\kappa
  (\frac{N_0}{N_2} + \frac{1}{N_0})^\kappa \|P_{N_0} u_0\|_{Y^0(I)}
  \|P_{N_1}u_1\|_{Y^0(I)}\|P_{N_2}u_2\|_{X^1(I)}\|P_{N_3}u_3\|_{X^1(I)}.
\end{split}
\end{equation}

Similar with (\ref{3.3}), we obtain that:

\begin{equation}\label{3.6}
\begin{split}
  &|\int \overline{P_{N_0} u_0} P_{N_1} \widetilde{u_1} P_{N_2} \widetilde{u_2}
  P_{N_3} \widetilde{u_3}\, dxdt|\\
  \lesssim& \|P_{N_0} u_0\|_{Y^0(I)}
  \|P_{N_1}u_1\|_{Y^0(I)}\|P_{N_2}u_2\|_{Z(I)}\|P_{N_3}u_3\|_{Z(I)}.
\end{split}
\end{equation}

We interpolate (\ref{3.5}) with (\ref{3.6}) and sum over $N_0\leq N_2\sim N_1\geq N_3$. Then we have that
\begin{align*}
  &\sum_{N_0\leq N_2\sim N_1\geq N_3}
  |\int \overline{P_{N_0} u_0} P_{N_1} \widetilde{u_1} P_{N_2} \widetilde{u_2}
  P_{N_3} \widetilde{u_3}\, dxdt|\\
  \lesssim & \|P_{N_0} u_0\|_{Y^{-1}(I)}
  \|P_{N_1}u_1\|_{X^1(I)}\|P_{N_2}u_2\|_{Z'(I)}\|P_{N_3}u_3\|_{Z'(I)}.
\end{align*}

 Next we summarize these two cases and similarly consider
 $N_1\geq N_3\geq N_2$, $N_2\geq N_1\geq N_3$, $N_2\geq N_3\geq N_1$,
 $N_3\geq N_1\geq N_2$, and $N_3\geq N_2\geq N_1$,
 we can get the desired estimate (\ref{eq:nonlinear}).

In particular, if there exist constants $A, B>0$ such that $u_1 = P_{>A} u_1$,
$u_2 = P_{>A} u_2$ and $u_3 = P_{<B} u_3$, then we only consider the sum when
 $N_1\geq N_2\gtrsim N_3$ and
$N_2\geq N_1\gtrsim N_3$. So we get the estimate (\ref{eq:nonlinearP}).
\end{proof}

\begin{prop}[Local Wellposedness]\label{prop:lwp}
  Assume that $E>0$ is fixed. There exists $\delta_0 = \delta_0(E)$ such that if
  \[
  \|e^{it\Delta} u_0\|_{Z'(I)} < \delta
  \]
  for some $\delta \leq \delta_0$, some interval $0\in I$ with $|I|\leq 1$ and
  some function $u_0\in H^1(\TTT^4)$ satisfying $\|u_0\|_{H^1}\leq E$, then
  there exists a unique strong solution to (\ref{eq:NLS}) $u\in X^1(I)$
  such that $u(0) = u_0$. Besides we also have
  \begin{equation}\label{ineq:smoothing}
    \|u-e^{it\Delta}u_0\|_{X^1(I)}\leq \dd^{\frac{5}{3}}.
  \end{equation}
\end{prop}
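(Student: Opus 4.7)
The plan is to set up a Banach fixed point argument in a suitable closed ball of $X^1(I)$ for the Duhamel map
\[
\Phi(u)(t) := e^{it\Delta} u_0 - i\mu \int_0^t e^{i(t-t')\Delta}\, u(t')|u(t')|^2\, dt'.
\]
The natural ball, dictated by the target estimate (\ref{ineq:smoothing}), is
\[
B := \{ u \in X^1(I) \colon \|u - e^{it\Delta} u_0\|_{X^1(I)} \leq \delta^{5/3}\},
\]
equipped with the metric inherited from $X^1(I)$. By Proposition \ref{EstimateFreeSolution}, any $u \in B$ satisfies $\|u\|_{X^1(I)} \leq E + \delta^{5/3}$.

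The first step is to control the $Z'$-norm of any $u \in B$. I would write $u = e^{it\Delta}u_0 + (u-e^{it\Delta}u_0)$, use the hypothesis $\|e^{it\Delta}u_0\|_{Z'(I)} < \delta$, and for the remainder invoke Proposition \ref{prop:ZinX} together with (\ref{def:Z'}) to obtain
\[
\|u - e^{it\Delta}u_0\|_{Z'(I)} \leq \|u-e^{it\Delta}u_0\|_{Z(I)}^{3/4} \|u-e^{it\Delta}u_0\|_{X^1(I)}^{1/4} \lesssim \|u-e^{it\Delta}u_0\|_{X^1(I)} \leq \delta^{5/3}.
\]
Hence $\|u\|_{Z'(I)} \lesssim \delta$ whenever $\delta_0$ is small. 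Next, by definition of $N(I)$ and the refined trilinear estimate (Proposition \ref{prop:nonlinear}),
\[
\|\Phi(u) - e^{it\Delta}u_0\|_{X^1(I)} = \| u|u|^2 \|_{N(I)} \lesssim \|u\|_{X^1(I)} \|u\|_{Z'(I)}^2 \lesssim (E + \delta^{5/3}) \delta^2.
\]
Choosing $\delta_0 = \delta_0(E)$ so that $C(E+1)\delta^{1/3} \leq 1$ for all $\delta \leq \delta_0$, this bound is at most $\delta^{5/3}$, so $\Phi(B) \subset B$.

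For the contraction property, I would apply Proposition \ref{prop:nonlinear} to the telescoping decomposition of $u|u|^2 - v|v|^2$ (three terms of the form, e.g., $(u-v)\bar u u$, $v(\overline{u-v})u$, $v\bar v(u-v)$), yielding
\[
\| \Phi(u) - \Phi(v)\|_{X^1(I)} \lesssim (\|u\|_{Z'(I)} + \|v\|_{Z'(I)})^2 \|u-v\|_{X^1(I)} + (\|u\|_{X^1(I)} + \|v\|_{X^1(I)}) \|u-v\|_{Z'(I)}^2,
\]
where the second summand is in turn controlled, via Proposition \ref{prop:ZinX} and (\ref{def:Z'}), by $(E+1)\|u-v\|_{X^1(I)}^{1/2}\|u-v\|_{Z(I)}^{3/2}$. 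Using $\|u\|_{Z'(I)}, \|v\|_{Z'(I)} \lesssim \delta$ established above, both terms are bounded by a small multiple of $\|u-v\|_{X^1(I)}$ once $\delta_0$ is made smaller if necessary, and the Banach fixed point theorem produces a unique $u \in B$ with $\Phi(u) = u$, which is the desired strong solution and automatically satisfies (\ref{ineq:smoothing}).

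The main subtlety, rather than a genuine obstacle, is the interplay of the three norms $X^1$, $Z$, $Z'$ and in particular the exploitation of the exponent $3/4$ in (\ref{def:Z'}): it is precisely this interpolation that turns the $\delta^2$ gain coming from the two $Z'$-factors in Proposition \ref{prop:nonlinear} into the stronger $\delta^{5/3}$ smoothing bound, rather than merely $\delta^2$ with a harmless $E$ loss. Uniqueness in the whole space $X^1(I)$ (not just in $B$) follows by a standard continuity-in-time argument: on any subinterval $J \subset I$ containing $0$ with $|J|$ small enough that a competing solution $\tilde u$ lies in $B$, the contraction inequality forces $\tilde u = u$ on $J$, and this set is open and closed in $I$.
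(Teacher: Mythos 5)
Your approach is the same as the paper's: a Banach fixed-point argument for the Duhamel map driven by the refined trilinear estimate (Proposition \ref{prop:nonlinear}) together with the embedding $Z\hookrightarrow X^1$ of Proposition \ref{prop:ZinX}. The only structural difference is that the paper iterates on the two-constraint set $S=\{u : \|u\|_{X^1(I)}\leq 2E,\ \|u\|_{Z'(I)}\leq 2\delta\}$ while you iterate on the ball of radius $\delta^{5/3}$ centred at the free evolution; that variant is harmless and has the small advantage of building (\ref{ineq:smoothing}) directly into the invariant set.

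One slip worth correcting: Proposition \ref{prop:nonlinear} assigns the $X^1$-factor to exactly one of the three arguments, and in each term of the telescoping of $u|u|^2-v|v|^2$ exactly one argument is $u-v$; so when the $X^1$-factor lands on a $u$- or $v$-factor, the remaining two $Z'$-factors are one copy of $u-v$ and one copy of $u$ or $v$, never two copies of $u-v$. The second summand in your contraction inequality should therefore read $(\|u\|_{X^1(I)}+\|v\|_{X^1(I)})\,\|u-v\|_{Z'(I)}\,(\|u\|_{Z'(I)}+\|v\|_{Z'(I)})$ rather than $(\|u\|_{X^1(I)}+\|v\|_{X^1(I)})\,\|u-v\|_{Z'(I)}^2$. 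The correct term is in fact easier to handle: since $\|u-v\|_{Z'(I)}\lesssim\|u-v\|_{X^1(I)}$ and $\|u\|_{Z'(I)},\|v\|_{Z'(I)}\lesssim\delta$ on your ball, it is bounded by $C\,E\,\delta\,\|u-v\|_{X^1(I)}$, already a small multiple, and the extra interpolation step you used to absorb the spurious $\|u-v\|_{Z'(I)}^2$ term becomes unnecessary. The slip does not change your conclusion, but the formula as written does not follow from Proposition \ref{prop:nonlinear}.
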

\begin{proof}
  First, we consider the set
  \[
  S = \{ u\in X^1(I): \|u\|_{X^1(I)}\leq 2E,\qquad \|u\|_{Z'(I)}\leq a\},
  \]
  and the mapping
  \[
  \Phi(v) = e^{it\Delta} u_0 -i \mu\int_0^t e^{i(t-s)\DD} v(s)|v(s)|^2\, ds.
  \]

  For $u$, $v\in S$, by Proposition \ref{prop:nonlinear}, there exists
  a constant $C>0$, we have that
  \begin{align*}
    &\|\Phi(u) - \Phi(v)\|_{X^1(I)}\\
    \leq& C \left( \|u\|_{X^1(I)}+\|v\|_{X^1(I)}\right)
    \left( \|u\|_{Z'(I)}+\|v\|_{Z'(I)}\right) \|u-v\|_{X^1(I)}\\
    \leq&C Ea\|u-v\|_{X^1(I)}
  \end{align*}
Similarly, using Proposition \ref{EstimateFreeSolution} and nonlinear estimate
Proposition \ref{prop:nonlinear}, we also obtain that
\begin{align*}
  \|\Phi(u)\|_{X^1(I)} & \leq \|\Phi(0)\|_{X^1(I)} + \|\Phi(u)-\Phi(0)\|_{X^1(I)}\\
  &\leq \|u_0\|_{H^1} + C Ea^2
\end{align*}
and
\begin{align*}
  \|\Phi(u)\|_{Z'(I)} & \leq \|\Phi(0)\|_{Z'(I)} + \|\Phi(u)-\Phi(0)\|_{Z'(I)}\\
  &\leq \dd + C Ea^2.
\end{align*}

Now, we choose $a=2\dd$ and we let $\dd_0 =\dd_0(E)$ be small enough.
We see that $\Phi$ is a contraction on $S$, so we have a fixed point $u$.
And it's easy to check (\ref{ineq:smoothing}) and uniqueness in $X^1(I)$.
\end{proof}
By a similar idea of Herr-Tataru-Tzvetkov \cite{herr2011global}, we can easily prove the global well-posedness result with
small initial data by using Theorem \ref{prop:lwp}.
\begin{prop}[Small data global wellposedness]\label{prop:smallDataGWP}
  If $\|\phi\|_{H^1(\TTT^4)} =\dd\leq \dd_0$, then the unique strong solution
  with initial data $\phi$ is global and satisfies
  \[\|u\|_{X^1([-1, 1])} \leq 2\dd\] and moreover
  \[\|u-e^{it\DD}\phi\|_{X^1([-1, 1])}\lesssim \dd^2.\]
\end{prop}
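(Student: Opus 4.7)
The plan is to deduce Proposition \ref{prop:smallDataGWP} by feeding small initial data into the local well-posedness machinery of Proposition \ref{prop:lwp}. The key observation is that when $\|\phi\|_{H^1}$ is small, the free evolution $e^{it\Delta}\phi$ automatically has small $Z'$-norm, so the smallness hypothesis of Proposition \ref{prop:lwp} is met on subintervals of length $1$ with room to spare.

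First I would record the chain of embeddings needed. By Proposition \ref{EstimateFreeSolution}, $\|e^{it\Delta}\phi\|_{X^1(I)} \le \|\phi\|_{H^1} = \delta$ for any interval $I \ni 0$ with $|I|\le 1$. By Proposition \ref{prop:ZinX}, $\|\cdot\|_{Z(I)} \lesssim \|\cdot\|_{X^1(I)}$, and by the definition (\ref{def:Z'}) of $Z'$ as a geometric mean, $\|\cdot\|_{Z'(I)} \lesssim \|\cdot\|_{X^1(I)}$. Hence $\|e^{it\Delta}\phi\|_{Z'(I)} \lesssim \delta$, which is below the threshold $\delta_0(E)$ of Proposition \ref{prop:lwp} as long as we take $E = \delta$ and $\delta$ small enough.

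Next I would apply Proposition \ref{prop:lwp} on $I^+ = [0,1]$ and on $I^- = [-1,0]$ separately. Each application produces a unique strong solution $u$ in $X^1(I^\pm)$ with $u(0)=\phi$ and the smoothing bound $\|u - e^{it\Delta}\phi\|_{X^1(I^\pm)} \le \delta^{5/3}$; gluing these two half-solutions at $t=0$ produces a strong solution on $[-1,1]$. The triangle inequality then gives
\begin{equation*}
\|u\|_{X^1([-1,1])} \le \|e^{it\Delta}\phi\|_{X^1([-1,1])} + \|u - e^{it\Delta}\phi\|_{X^1([-1,1])} \le \delta + \delta^{5/3} \le 2\delta,
\end{equation*}
provided $\delta \le \delta_0$ is small enough, which is the first claimed bound.

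For the sharper quadratic bound $\|u - e^{it\Delta}\phi\|_{X^1([-1,1])} \lesssim \delta^2$, I would revisit the Duhamel identity $u - e^{it\Delta}\phi = -i\mu\int_0^t e^{i(t-s)\Delta}(u|u|^2)(s)\,ds$, so by the definition of the $N(I)$-norm,
\begin{equation*}
\|u-e^{it\Delta}\phi\|_{X^1(I^\pm)} = \|u|u|^2\|_{N(I^\pm)} \lesssim \|u\|_{X^1(I^\pm)}\|u\|_{Z'(I^\pm)}^{2}
\end{equation*}
by the refined nonlinear estimate (Proposition \ref{prop:nonlinear}). Since $\|u\|_{X^1(I^\pm)} \le 2\delta$ and $\|u\|_{Z'(I^\pm)} \lesssim \|u\|_{X^1(I^\pm)} \lesssim \delta$, the right side is $\lesssim \delta^3 \lesssim \delta^2$, and summing over the two half-intervals yields the claim on $[-1,1]$. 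Essentially no obstacle arises here; the proof is just a packaging of Proposition \ref{prop:lwp} together with the embeddings of Section 3, the main point being that the $H^1$-smallness of $\phi$ controls the $Z'$-norm of its linear evolution through the chain $H^1 \to X^1 \to Z'$.
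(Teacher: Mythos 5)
Your proof is essentially the argument the paper has in mind (the paper itself only says that the result "follows by a similar idea of Herr--Tataru--Tzvetkov using Proposition \ref{prop:lwp}"), and the chain of reductions $H^1 \to X^1 \to Z'$ via Propositions \ref{EstimateFreeSolution} and \ref{prop:ZinX}, followed by the fixed-point bound from Proposition \ref{prop:lwp} and the refined nonlinear estimate for the quadratic gain, is correct. Two small points are worth flagging. First, you only prove existence and bounds on $[-1,1]$, but the statement also claims the solution is \emph{global}; this requires iterating the local argument, and the iteration only closes because mass and energy are conserved and, for $\delta$ small, $M(u)+E(u)\simeq \|u\|_{H^1}^2$, so $\|u(t)\|_{H^1}\lesssim \delta$ uniformly in $t$ — without invoking the conservation laws, naively re-applying Proposition \ref{prop:lwp} on successive unit intervals would let the $H^1$ bound grow geometrically. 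Second, passing from the two one-sided smoothing bounds on $I^\pm$ to a bound on $X^1([-1,1])$ implicitly uses subadditivity of the restricted $X^1$ norm across the splitting point $t=0$; this is standard for the $U^p/V^p$-based spaces but deserves a word, and it is also why the final constant must be absorbed by taking $\delta_0$ small (your $\delta + \delta^{5/3}$ should really be $\delta + C\delta^{5/3}$ for some constant $C$ coming from the embeddings and the splitting). Neither issue changes the structure of the argument.
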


\begin{lem}[$Z$-norm controls the global existence]\label{lem:cgwp}
Assume that $I\subseteq \RRR$ is a bounded open interval.
  \begin{enumerate}
    \item  If $E$ is a nonnegative finite number, that $u$ is
    a strong solution of (\ref{eq:NLS}) and 
    \[
    \|u\|_{L^\infty_t(I, H^1)}\leq E.
    \]
    Then, if
    \[
    \|u\|_{Z(I)} <+\infty
    \]
    there exists an open interval $J$ with $\bar{I}\subset J$ such that $u$
    can be extended to a strong solution of (\ref{eq:NLS}) on $J$, besides
    \[
	\|u\|_{X^1(I)}\leq C(E, \|u\|_{Z(I)}).    
    \]
    \item (GWP with a priori bound)Assume $C$ is some positive finite number and we have a priori bound $\|u\|_{Z(I)}<C$,
    for any solution $u$ of
    (\ref{eq:NLS}) in the interval $I$, then this IVP (\ref{eq:IVP}) is well-posedness
    on $I$.
    (In particular, if $u$ blows up in finite time, then $u$ blows up in the
    $Z$-norm.)
  \end{enumerate}
\end{lem}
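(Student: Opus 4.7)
The plan is to reduce everything to the local well-posedness result (Proposition \ref{prop:lwp}) and the refined nonlinear estimate (Proposition \ref{prop:nonlinear}) via a partitioning argument driven by the a priori $Z$-norm. The key observation is that the dyadic sum defining $\|u\|_{Z(J)}$ is absolutely continuous in $J$, so finiteness of $\|u\|_{Z(I)}$ lets us split $I$ into a controlled number of sub-intervals on each of which the $Z$-norm is small, which in turn converts the nonlinear estimate into a usable bootstrap.

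For part (1), I would fix a small $\eta=\eta(E)>0$ (to be chosen) and partition $I=\bigcup_{j=1}^{J} I_j$ with $|I_j|\le 1$ so that $\|u\|_{Z(I_j)}\le \eta$ for every $j$. The number $J$ is controlled in terms of $\|u\|_{Z(I)}$ and $\eta$. On each $I_j=[t_{j-1},t_j]$ I would apply $\|\cdot\|_{X^1(I_j)}$ to the Duhamel formula, use Proposition \ref{EstimateFreeSolution} to bound the linear part by $\|u(t_{j-1})\|_{H^1}\le E$, and use Proposition \ref{prop:nonlinear} to bound the nonlinear part by $C\|u\|_{X^1(I_j)}\|u\|_{Z'(I_j)}^2$. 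Then the definition $\|u\|_{Z'}=\|u\|_Z^{3/4}\|u\|_{X^1}^{1/4}$ yields
\begin{equation*}
\|u\|_{X^1(I_j)}\le E + C\,\eta^{3/2}\,\|u\|_{X^1(I_j)}^{3/2}.
\end{equation*}
A standard continuity/bootstrap argument in $|I_j|$ (starting from the small interval near $t_{j-1}$ where $\|u\|_{X^1}$ is small, and using that $\|u\|_{X^1(I_j)}$ depends continuously on the endpoint) then produces $\|u\|_{X^1(I_j)}\le 2E$ once $\eta$ is sufficiently small depending on $E$. Summing (using that an interval of length $\le 1$ in $I$ meets at most two consecutive $I_j$'s, together with the sup-of-restrictions definition of $X^1(I)$) gives $\|u\|_{X^1(I)}\le C(E,\|u\|_{Z(I)})$. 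To extend past the endpoints of $I$, I use that the bound just obtained shows $u\in C(\bar{I},H^1)$ with endpoint values of $H^1$-norm at most $E$; applying Proposition \ref{prop:lwp} with initial data $u(\sup I)$ (using that $\|e^{it\Delta}u(\sup I)\|_{Z'}\to 0$ as the interval shrinks to the endpoint) produces a local solution on a slightly larger interval $J\supset \bar{I}$, and these two pieces glue by uniqueness.

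Part (2) is then immediate from part (1): if $\|u\|_{Z(I)}<C$ a priori, combine this with the uniform $H^1$ bound (which for the IVP \eqref{eq:IVP} is supplied by conservation of mass and energy in the defocusing case, and by Theorem \ref{thm:EnergyTrapping} in the focusing case) to place oneself in the hypotheses of part (1), and then part (1) rules out finite-time blow-up by extension. The main technical obstacle I expect is the bookkeeping in the partitioning and gluing step: verifying that finiteness of $\|u\|_{Z(I)}$ really does allow a partition into finitely many sub-intervals of small $Z$-norm (which requires the absolute continuity of the dyadic-weighted $L^4_{t,x}$ sum with respect to the time-interval), and checking that the $X^1(I)$ norm of the glued solution on an interval of length possibly much larger than $1$ is controlled by the $X^1(I_j)$ norms on each piece, given that $X^1(I)$ is defined as a supremum over length-$\le 1$ subintervals with extensions.
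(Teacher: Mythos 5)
Your proposal is correct and is essentially the same argument the paper sketches (its proof is a three-line outline: pick $T_1$ near the endpoint with $\|u\|_{Z(T_1,T)}$ small, run a continuity argument on $h(s)=\|e^{i(t-T_1)\Delta}u(T_1)\|_{Z'(T_1,T_1+s)}$, and invoke Proposition~\ref{prop:lwp}). Your partition-and-bootstrap write-up fills in what that sketch leaves implicit: the absolute continuity of $\int\sum_N N^2\|P_N u\|^4_{L^4}$ in the time variable gives the finite splitting into pieces with $\|u\|_{Z(I_j)}\le\eta$; Duhamel with Proposition~\ref{EstimateFreeSolution} and the refined nonlinear estimate (Proposition~\ref{prop:nonlinear}) yields $\|u\|_{X^1(I_j)}\le E+C\eta^{3/2}\|u\|_{X^1(I_j)}^{3/2}$, which closes by continuity since the restriction norm tends to $\|u(t_{j-1})\|_{H^1}\le E$ as the interval shrinks; and the endpoint extension is the same local-existence step.

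One small inaccuracy to flag: the remark that a window of length $\le 1$ ``meets at most two consecutive $I_j$'s'' need not hold, since the $I_j$'s are chosen by a $Z$-norm budget and can be arbitrarily short. This does not affect the conclusion --- what you actually need is the standard concatenation property of $X^1$ on adjacent intervals (if $u\in X^1([a,b])$ and $u\in X^1([b,c])$ then $\|u\|_{X^1([a,c])}\lesssim\|u\|_{X^1([a,b])}+\|u\|_{X^1([b,c])}$, which follows from the atomic structure of $U^2$), together with the fact that the number $J$ of pieces is finite and bounded by $(\|u\|_{Z(I)}/\eta(E))^4+O(1)$, which is precisely what produces the claimed constant $C(E,\|u\|_{Z(I)})$. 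Part (2) you handle correctly: the $L^\infty_t H^1$ hypothesis of part (1) is supplied by mass/energy conservation for (\ref{eq:IVP}) (defocusing), or by Theorem~\ref{thm:EnergyTrapping} in the focusing case, so a finite $Z$-norm together with part (1) rules out finite-time breakdown.
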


\begin{proof}
  Consider the case $I = (0, T)$.
  \begin{enumerate}
    \item By the continuity arguments of $h(s) =
    \|e^{i(t-T_1)\DD}u(T_1)\|_{Z'(T_1, T_1+s)}$ where $T_1\geq T-1$ such that
    $\|u\|_{Z(T_1, T)}\leq \ee$.
    \item Combined (1) and Proposition \ref{prop:lwp}, it's trivial to know.
  \end{enumerate}
\end{proof}

\begin{remark}
  This proof determines the ratio of $X^1$ norm and $Z$ norm in the definition
  of $Z'$ norm \ref{def:Z'}, and the portion of $Z$ norm in $Z'$ norm can be any number strictly between
  $\frac{1}{2}$ and $1$.
\end{remark}

\begin{prop}[Stability]\label{prop:stability}
  Assume $I$ is an open bounded interval, $\mu \in [-1, 1]$, and $\tilde{u}
  \in X^1(I)$ satisfies the approximate Schr\"{o}dinger equation
  \begin{equation}\label{eq:aNLS}
    (i\partial_t + \Delta) \tilde{u} =\mu \tilde{u}|\tilde{u}|^2 + e,\qquad
    \text{on }\TTT^4\times I.
  \end{equation}
  Assume in addition that
  \begin{equation}
    \|\tilde{u}\|_{Z(I)} + \|\tilde{u}\|_{L^\infty_t(I, H^1(\TTT^4)}\leq M,
  \end{equation}
  for some $M\in [1, \infty]$. Assume $t_0\in I$ and $u_0\in H^1(\TTT^4)$ is
  such that the smallness condition:
  \begin{equation}\label{ineq:err}
    \|u_0 - \tilde{u}(0)\|_{H^1(\TTT^4)} + \|e\|_{N(I)}\leq \ee
  \end{equation}
  holds for some $0<\ee<\ee_1$, where $\ee_1 \leq 1$.
  $\ee_1 =\ee_1(M)>0$ is a small constant.

  Then there exists a strong solution $u\in X^1(I)$ of the NLS
  \[
    (i\partial_t + \Delta) u =\mu u|u|^2,
  \]
  such that $u(t_0) =u_0$ and
  \begin{equation}\label{3.11}
    \begin{split}
      \|u\|_{X^1(I)} +\|\tilde{u}\|_{X^1(I)}&\leq C(M),\\
      \|u-\tilde{u}\|_{X^1(I)}&\leq C(M)\ee.
    \end{split}
  \end{equation}
\end{prop}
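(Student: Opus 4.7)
The plan is the standard bootstrap perturbation argument for critical problems: partition $I$ into pieces on which $\tilde u$ is small in $Z$, run a contraction mapping on each piece to produce $u$ and bound $u-\tilde u$ in $X^1$, and then iterate across the finitely many pieces. Since $\|\tilde u\|_{Z(I)}\le M$ and $Z$ is built from $L^4$-type norms in space-time, I split $I = \bigcup_{j=0}^{N-1} I_j$ with $|I_j|\le 1$, consecutive intervals meeting at points $t_j$, and $\|\tilde u\|_{Z(I_j)}\le \eta$, where $\eta=\eta(M)$ is small (to be chosen) and $N=N(M,\eta)$ is finite. Using the Duhamel representation coming from \eqref{eq:aNLS}, the refined nonlinear estimate (Proposition \ref{prop:nonlinear}), the interpolation $\|v\|_{Z'(I_j)}\le \|v\|_{Z(I_j)}^{3/4}\|v\|_{X^1(I_j)}^{1/4}$ from \eqref{def:Z'}, and a continuity argument in $|I_j|$, one gets an a priori bound $\|\tilde u\|_{X^1(I_j)}\le C(M)$ and hence $\|\tilde u\|_{Z'(I_j)}\le C(M)\eta^{3/4}$, uniformly in $j$.

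On a fixed piece $I_j$, set $w=u-\tilde u$, which satisfies
\begin{equation*}
(i\partial_t+\Delta)w = \mu\bigl(|u|^2u-|\tilde u|^2\tilde u\bigr) - e, \qquad w(t_j) = u_0^{(j)} - \tilde u(t_j),
\end{equation*}
where $u_0^{(j)}$ is the prescribed data at $t_j$ (equal to $u_0$ when $t_j=t_0$). Expanding $|u|^2u-|\tilde u|^2\tilde u$ as a finite sum of cubic monomials in $u,\bar u,\tilde u,\bar{\tilde u}$, each containing at least one factor of $w$ or $\bar w$, and bounding each monomial via Proposition \ref{prop:nonlinear} using the uniform $Z'$-bound for $\tilde u$ from the previous step, I set up the Duhamel map for $w$ as a contraction on a ball of radius $K\bigl(\|w(t_j)\|_{H^1}+\|e\|_{N(I_j)}\bigr)$ in $X^1(I_j)$ with $K=K(M)$. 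Provided $\eta$ is small enough (depending only on $M$) and the input $\|w(t_j)\|_{H^1}+\|e\|_{N(I_j)}$ lies below a threshold $\ee_*(M)$, the contraction closes and produces $u\in X^1(I_j)$ with
\begin{equation*}
\|u-\tilde u\|_{X^1(I_j)} \le K(M)\bigl(\|w(t_j)\|_{H^1}+\|e\|_{N(I_j)}\bigr).
\end{equation*}

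The final step is to iterate. The above bound controls $\|w(t_{j+1})\|_{H^1}$ and so feeds into the next piece, giving $\|w(t_{j+1})\|_{H^1}\le K(M)\bigl(\|w(t_j)\|_{H^1}+\|e\|_{N(I_j)}\bigr)$; chaining these $N=N(M)$ inequalities and summing yields \eqref{3.11} with a final constant depending only on $M$. The main obstacle is precisely this compounding: $\ee_1(M)$ must be chosen small enough that $K(M)^{N(M)}\ee_1(M)$ remains below the single-step threshold $\ee_*(M)$ at every intermediate $j$, which forces the constants $K(M), \eta(M), N(M)$ from the first two steps to be genuinely uniform in the piece. This uniformity rests entirely on the refined estimate of Proposition \ref{prop:nonlinear}, whose interpolation gain encoded in $Z'$ produces the small factor $\eta^{3/2}$ that drives both the a priori bound for $\tilde u$ and the contraction for $w$.
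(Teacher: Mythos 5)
Your proposal is correct and matches the approach the paper takes, which is sketched very briefly there: establish short-time stability via a contraction mapping in the spirit of Proposition~\ref{prop:lwp} (using Proposition~\ref{prop:nonlinear} and the $Z'$-interpolation), then iterate across finitely many subintervals on which $\|\tilde u\|_{Z}$ is small, invoking Lemma~\ref{lem:cgwp} to control $X^1$ by $Z$. Your write-up supplies the details that the paper omits, including the compounding of constants across the $N(M)$ pieces, and is consistent with the intended argument.
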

\begin{proof}
  First, we need to show the short time Stability, which follows a similar proof
  as the proof of Proposition \ref{prop:lwp}.
  Then, by using Lemma \ref{lem:cgwp}, we extend to the entire time interval.
\end{proof}
\section{Euclidean profiles}
In this section, we introduce the Euclidean profiles which are linear and nonlinear Sch\"odinger solutions on $\TTT^4$  concentrated at a point. The Euclidean profiles perform similar with the solutions in the Euclidean space $\RRR^4$ and hence Euclidean profiles hold some similar well-posedness and scattering properties 
by using the theory for the NLS in Euclidean space $\RRR^4$, which is proven by
Ryckman and Vi{\c{s}}an \cite{visan2007global}\cite{vicsan2011global} (the defocusing case) and Dodson \cite{DodsonFocusing} (the focusing case), as a black box. This is an analogue in 4 dimensions of the section 4 in \cite{ionescu2012energy}, we follows closely the argument in the section 4 of \cite{ionescu2012energy}.

We fix a spherically symmetric function $\eta \in C_0^{\infty}(\RRR^4)$ supported
in the ball of radius 2 and equal to 1 in the ball of radius 1.

Given $\phi\in \dot{H}^1(\RRR^4)$ and a real number $N\geq 1$ we define

\begin{equation}\label{def:euclideanProfile}
  \begin{split}
    Q_N \phi \in H^1(\RRR^4),&\qquad (Q_N\phi)(x) =\eta(x/N^{\frac{1}{2}})\phi(x),\\
    \phi_N \in H^1(\RRR^4),&\qquad \phi_N (x) = N(Q_N \phi)(Nx),\\
    f_N \in H^1(\TTT^4),&\qquad f_N(y) =\phi_N(\Psi^{-1}(y)),
  \end{split}
\end{equation}
where $\Psi :\{x\in \RRR^4: |x|<1 \} \to O_0\subseteq \TTT^4, \quad \Psi(x)=x$.

The cutoff function $\eta(\frac{x}{N^{1/2}})$
is useful to concentrate our focus on the range of a point, and the choice
of the order $1/2$ actually can be chosen any number between $1/2$ and $1$.

Thus $Q_N \phi$ is a compactly supported modification of profile $\phi$.
$\phi_N$ is a $\dot{H}^1$-invariant rescaling of $Q_N \phi$, and $f_N$ is
the function obtained by transferring $\phi_N$ to a neighborhood of 0 in $\TTT^4$.

\begin{thm}[GWP of the defocusing cubic NLS in $\RRR^4$ \cite{visan2007global}\cite{vicsan2011global}]
  \label{thm:GWPinR}
  Assume $\phi \in \dot{H}^1(\RRR^4)$ then there is an unique global solution
  $v\in C(\RRR: \dot{H}^1(\RRR^4))$ of the initial-value problem
  \begin{equation}
    (i\partial_t +\DD)v = v|v|^2, \qquad v(0) = \phi,
  \end{equation}
  and
  \begin{equation}
    \| \nabla_{\RRR^4} v\|_{(L_t^\infty L_x^2\cap L^2_tL_x^4)(\RRR\times\RRR^4)}
    \leq C(E_{\RRR^4}(\phi)) < +\infty.
  \end{equation}

  Moreover, this solution scatters in the sense that there exists $\phi^{\pm\infty}
  \in \dot{H}^1(\RRR^4)$, such that
  \begin{equation}
    \|v(t) - e^{it\DD}\phi^{\pm\infty}\|_{\dot{H}^1(\RRR^4)}\to 0,\text{ as }
    t\to \pm\infty.
  \end{equation}
  Besides, if $\phi\in H^5(\RRR^4)$ then $v\in C(\RRR: H^5(\RRR^4))$ and
  \begin{equation}
    \sup_{t\in\RRR} \|v(t)\|_{H^5(\RRR^4)} \lesssim_{\|\phi\|_{H^5(\RRR^4)}}
    \lesssim 1.
  \end{equation}
\end{thm}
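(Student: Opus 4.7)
The plan is to prove Theorem \ref{thm:GWPinR} by the concentration-compactness/rigidity scheme of Kenig-Merle, specialized to the energy-critical cubic NLS on $\RRR^4$. Since the equation is $\dot H^1$-critical, the scaling $u_\lambda(t,x)=\lambda u(\lambda^2 t,\lambda x)$ preserves $E_{\RRR^4}$, so there is no direct Gronwall/energy-conservation shortcut and one is forced to argue by contradiction at the critical level. The target quantitative statement is that the Strichartz-type scattering norm $\|\nabla v\|_{L^2_tL^4_x(\RRR\times\RRR^4)}$ (which is the $\dot H^1$-critical admissible norm in dimension $4$, since $(q,r)=(2,4)$ satisfies $\tfrac{2}{q}+\tfrac{4}{r}=2$) is finite, with bound depending only on $E_{\RRR^4}(\phi)$. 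Scattering to a linear solution in $\dot H^1$ will then follow from the standard Duhamel argument once this global scattering bound is established.

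First I would set up the local Cauchy theory: Strichartz estimates in $\RRR^4$ together with a contraction argument in the space $X(I)=\{v:\nabla v\in L^\infty_tL^2_x\cap L^2_tL^4_x\}$ yield local well-posedness for arbitrary $\dot H^1$ data (with the lifespan depending on the \emph{profile} of $\phi$, not just its size) and global well-posedness plus scattering when $\|\nabla\phi\|_{L^2}$ is small. The second ingredient is a stability/perturbation lemma: any $\tilde v$ solving the equation up to an error $e$ small in a dual Strichartz norm, with $\|\nabla\tilde v\|_{L^2_tL^4_x}$ \emph{a priori finite}, can be approximated by a genuine solution on the same interval. This will be proved by iterating the small-data theory on a partition of $I$ into finitely many subintervals on which $\tilde v$ has small scattering norm.

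The core step is the concentration-compactness reduction. Define $E_c=\sup\{E:$ every $\phi$ with $E_{\RRR^4}(\phi)<E$ scatters$\}$, and suppose for contradiction $E_c<\infty$. Applying the Keraani/Bahouri-G\'erard linear profile decomposition adapted to the $\dot H^1$ Strichartz inequality to a sequence of near-critical initial data, together with the stability lemma above, produces a \emph{critical element} $u_c$ with $E_{\RRR^4}(u_c)=E_c$ whose orbit is precompact in $\dot H^1$ modulo the symmetries of the equation (translation and $\dot H^1$-scaling). Equivalently $u_c$ is almost periodic modulo symmetries, with an associated frequency-scale function $N(t)$ and spatial center $x(t)$. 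The remaining rigidity step then rules out such a $u_c$ by partitioning into the three standard enemies (self-similar with $N(t)\to\infty$, low-to-high frequency cascade, and soliton-like with $N(t)\sim 1$) and excluding each one via a frequency-localized interaction Morawetz inequality, which in the defocusing case produces a strictly positive spacetime bound that is incompatible with compactness of the orbit. The main obstacle is precisely this rigidity step: the 4D Morawetz has a critical scaling that barely matches the nonlinearity, and one must upgrade regularity of the critical element (additional Sobolev control coming from almost periodicity plus a double-Duhamel trick) before the Morawetz output can be turned against the enemies.

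Finally, the persistence of $H^5$ regularity is a soft corollary. Given the globally finite scattering norm $\|\nabla v\|_{L^2_tL^4_x}$, I would partition $\RRR$ into finitely many subintervals $I_j$ on which this norm is small, and on each $I_j$ close a Strichartz estimate for $\langle\nabla\rangle^5 v$ against the nonlinearity $v|v|^2$ using the fractional Leibniz rule and Sobolev embedding in $\RRR^4$. Iterating over the $I_j$ produces $\sup_t\|v(t)\|_{H^5}\lesssim_{\|\phi\|_{H^5}}1$, as claimed.
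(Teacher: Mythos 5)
The paper does not prove Theorem \ref{thm:GWPinR}; it is imported as a black box from Ryckman--Vi{\c{s}}an \cite{visan2007global} and Vi{\c{s}}an \cite{vicsan2011global}, and the only item the paper actually argues is the $H^5$ persistence, established in the remark immediately following the theorem statement by precisely the finite-partition Strichartz bootstrap you give in your final paragraph. So that part of your proposal reproduces the paper's own content exactly.

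The rest of your sketch reconstructs the cited references rather than anything in this paper. It is a reasonable high-level account of the concentration-compactness and rigidity proof in \cite{vicsan2011global} (the earlier \cite{visan2007global} instead runs the CKSTT induction-on-energy scheme with a frequency-localized interaction Morawetz, without building an explicit minimal-energy blowup solution). One small inaccuracy worth flagging: the trichotomy you invoke in the rigidity step (self-similar, low-to-high cascade, soliton with $N(t)\sim 1$) is the Killip--Tao--Vi{\c{s}}an classification from the mass-critical theory. For the energy-critical problem in $\RRR^4$ the standard reduction is to two scenarios, governed by whether $\int N(t)^{-1}\,dt$ is finite (finite-time blowup) or infinite (global soliton-like solution), and the second is eliminated by combining a long-time Strichartz/double-Duhamel negative-regularity upgrade with the frequency-localized interaction Morawetz, as you correctly note is needed. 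Since the paper cites rather than proves the theorem, none of this affects the paper's logic; it only bears on how faithfully your sketch tracks the literature it replaces.
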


\begin{thm}[GWP of the focusing cubic NLS in $\RRR^4$ \cite{DodsonFocusing}]\label{thm:GWPfocusing}
  Assume $\phi \in \dot{H}^1(\RRR^4)$, under the assumption that
  \[
\sup_{t\in \text{lifespan of }v}\|v(t)\|_{\dot{H}^1(\RRR^4)}<\|W\|_{\dot{H}^1(\RRR^4)},
  \]
  then there is an unique global solution
  $v\in C(\RRR: \dot{H}^1(\RRR^4))$ of the initial-value problem
  \begin{equation}\label{eq:ivpfocusing}
    (i\partial_t +\DD)v = -v|v|^2, \qquad v(0) = \phi,
  \end{equation}
  and
  \begin{equation}
    \| \nabla_{\RRR^4} v\|_{(L_t^\infty L_x^2\cap L^2_tL_x^4)(\RRR\times\RRR^4)}
    \leq C(\|\phi\|_{\dot{H}^1(\RRR^4)}, E_{\RRR^4}(\phi))<+\infty.
  \end{equation}

  Moreover, this solution scatters in the sense that there exists $\phi^{\pm\infty}
  \in \dot{H}^1(\RRR^4)$, such that
  \begin{equation}
    \|v(t) - e^{it\DD}\phi^{\pm\infty}\|_{\dot{H}^1(\RRR^4)}\to 0,\text{ as }
    t\to \pm\infty.
  \end{equation}
  Besides, if $\phi\in H^5(\RRR^4)$ then $v\in C(\RRR: H^5(\RRR^4))$ and
  \begin{equation}
    \sup_{t\in\RRR} \|v(t)\|_{H^5(\RRR^4)} \lesssim_{\|\phi\|_{H^5(\RRR^4)}} 1.
  \end{equation}
\end{thm}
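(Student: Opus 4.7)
My plan is to treat this theorem essentially as a black box drawn from Dodson \cite{DodsonFocusing}, but I will sketch the strategy one would follow to prove it in the Kenig--Merle concentration-compactness/rigidity framework, adapted by Dodson to the focusing energy-critical equation in $\RRR^4$ without radial symmetry.

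The first step is to upgrade the qualitative hypothesis $\sup_t \|v(t)\|_{\dot H^1(\RRR^4)}<\|W\|_{\dot H^1(\RRR^4)}$ into a quantitative coercivity statement. Using the sharp Sobolev embedding on $\RRR^4$ with best constant $C_4$ (so that $\|f\|_{L^4(\RRR^4)}^2\le C_4^2\|f\|_{\dot H^1(\RRR^4)}^2$) together with the identity $E_{\RRR^4}(W)=\tfrac{1}{4}\|W\|_{\dot H^1(\RRR^4)}^2$, one runs the Euclidean analogue of Lemma \ref{lem:EnergyTrapping}: the sub-threshold condition on $\|v\|_{\dot H^1}$ combined with energy conservation traps both $\|v(t)\|_{\dot H^1(\RRR^4)}^2$ and $\|v(t)\|_{\dot H^1(\RRR^4)}^2-\|v(t)\|_{L^4(\RRR^4)}^4$ uniformly away from the ground-state threshold. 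This yields the a priori bound $\|v\|_{L^\infty_t\dot H^1_x}\le C(\|\phi\|_{\dot H^1},E_{\RRR^4}(\phi))$ and, crucially, positivity of the energy in a quantitative form. Combined with Strichartz estimates on $\RRR^4$ and a standard fixed-point argument, this yields local well-posedness and a stability theory for \eqref{eq:ivpfocusing}, so that the solution extends as long as the scattering norm $\|\nabla v\|_{L^2_tL^4_x}$ stays finite.

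Next I would argue by contradiction: if the theorem fails, there is a sequence of sub-threshold data whose solutions have blow-up scattering norm but bounded kinetic energy. A Keraani-type profile decomposition in $\dot H^1(\RRR^4)$ produces a minimal counterexample -- a solution $v_*$ that does not scatter, whose orbit $\{v_*(t)\}$ is precompact in $\dot H^1$ modulo the symmetries of the equation (translation and scaling). This critical element further reduces, via a standard extraction procedure, to one of three enemies: a finite-time blow-up / self-similar scenario, a soliton-like scenario with bounded frequency scale $N(t)\sim 1$, and a frequency cascade. Throughout, one uses that the sub-threshold hypothesis is preserved along the limits, so $W$ itself is ruled out as a concentration profile.

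The hard part, and the reason the result is attributed to Dodson, is the rigidity step -- ruling out each of the three enemies in four space dimensions without radial assumption. The key technical input is the long-time Strichartz estimate together with a frequency-localized interaction Morawetz inequality tailored to $d=4$; these allow one to force frequency localization and then propagate it to show that either the mass of $v_*$ is zero (killing the soliton and cascade scenarios by conservation laws) or that the self-similar scenario contradicts the $L^\infty_t\dot H^1_x$ bound. Once the absence of a critical element is established, the spacetime bound $\|\nabla v\|_{L^\infty_tL^2_x\cap L^2_tL^4_x}\le C(E_{\RRR^4}(\phi))$ follows, scattering is a routine consequence of the Duhamel representation and the finite $L^2_tL^4_x$ gradient norm, and persistence of $H^5$ regularity is obtained by a standard iteration in the higher Strichartz spaces, differentiating the equation five times and closing a Grönwall-type bound on finitely many subintervals where the scattering norm is small.
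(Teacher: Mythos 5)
The paper does not prove Theorem~\ref{thm:GWPfocusing}; it imports it verbatim as a black box from Dodson \cite{DodsonFocusing}, exactly as you propose, and your high-level sketch of Dodson's Kenig--Merle concentration-compactness and rigidity argument (energy trapping via sharp Sobolev, profile decomposition to extract a critical element, long-time Strichartz together with a frequency-localized interaction Morawetz estimate to rule out the three enemies in $d=4$ without radiality, and persistence of $H^5$ regularity by iteration) is an accurate description of what that reference does. Your approach therefore matches the paper's.
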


\begin{remark}[Persistence of regularity]
  Consider $\phi \in H^5(\RRR^4)$, and $v\in C(\RRR: \dot{H}^1(\RRR^4))$ is the solution of (\ref{eq:NLS}) with $v(0) = \phi$ and satisfying
  \[
\| \nabla_{\RRR^4}v \|_{(L_t^\infty L_x^2\cap L^2_tL_x^4)(\RRR\times\RRR^4)}
  < +\infty.\]

  So we can have a finite partition $\{I_k\}_{k=1}^K$ of $\RRR$, ($I_k =
  [t_{k-1},t_k)$, where $t_k =\infty$.)
  s.t. $\| \nabla_{\RRR^4} v\|_{L_t^4L_x^{8/3}}< \frac{1}{2}$, for each $k$,
  \begin{align*}
    &\|v(t)\|_{L_t^\infty(I_k: H^5(\RRR^4))}\leq \|e^{i(t-t_{k-1})\DD}v(t_{k-1})\|_{H^5}
    + \|\langle \nabla  \rangle^5 |v(t)|^2v(t)\|_{L^2_tL_x^{4/3}(I_k)}\\
    \leq & \|v(t_{k-1})\|_{H^5_x}+\|\langle \nabla\rangle^5 v\|_{L^\infty_tL^2_x(I_k)}
    \|v(t)\|^2_{L^4_tL_x^8(I_k)}\\
    \leq & \|v(t_{k_1})\|_{H^5} + \frac{1}{4} \|v\|_{L^\infty_t(I_k: H^5(\RRR^4))}
  \end{align*}
  which implies $\|v(t)\|_{L^\infty_t(I_k: H^5(\RRR^4))} \leq \frac{4}{3}
  \|v(t_{k-1})\|_{H^5}$ for each $1\leq k \leq K$, so
  $\|v(t)\|_{L^\infty_t(\RRR: H^5_x(\RRR^4))}<\infty$.
\end{remark}

\begin{thm}\label{thm:4.2}
  Assume  $T_0\in (0,\infty)$, and
  $\mu \in\{-1, 0, 1\}$ are given, and define $f_N$ as (\ref{def:euclideanProfile})
  above. 
  Suppose 
  \[\|\phi\|_{\dot{H}^1(\RRR^4)}< +\infty, \qquad \text{when }\mu \in \{0, 1\};\]
  or under the assumption that if $v$ is a solution of (\ref{eq:ivpfocusing}) then $v$ satisfies
  \[\sup_{t\in \text{lifespan of }v}\|v(t)\|_{\dot{H}^1(\RRR^4)}<\|W\|_{\dot{H}^1(\RRR^4)}, , \qquad \text{when }\mu \in \{-1\}.\]
  Then the following conclusions hold:
  \begin{enumerate}
    \item There is $N_0 = N_0(\phi, T_0)$ sufficiently large such that
    for any $N\geq N_0$ there is an unique solution $U_N\in C((-T_0N^{-2},T_0N^{-2}):H^1(\TTT^4))$
    of the initial value problem
    \begin{equation}
      (i\partial_t +\DD)U_N = \mu U_N|U_N|^2, \qquad U_N(0)=f_N.
    \end{equation}
    Moreover, for any $N\geq N_0$,
    \begin{equation}
      \|U_N\|_{X^1(-T_0N^{-2},T_0N^{-2})}\lesssim_{E_{\RRR^4}(\phi),\,\|\phi\|_{\dot{H}^1(\RRR^4)}} 1.
    \end{equation}

    \item Assume $\ee_1\in (0,1]$ is sufficiently small (depending on only $E_{\RRR^4}\phi$),
    $\phi'\in H^5(\RRR^4)$, and $\|\phi -\phi'\|_{\dot{H}^1(\RRR^4)}\leq \ee_1$.
    Let $v'\in C(\RRR: H^5(\RRR^4))$ denote the solution of the initial value problem
    \begin{equation}
      (i\partial_t +\DD)v' = \mu v'|v'|^2, \qquad v'(0)=\phi'.
    \end{equation}
    For $R$, $N\geq 1$, we define
    \begin{equation}
      \begin{split}
        v_R'(x, t) =\eta(x/R) v'(x, t), &\qquad (x, t)\in \RRR^4\times(-T_0, T_0)\\
        v'_{R,N} (x, t) =N v_R'(Nx, N^2t), &\qquad (x, t)\in \RRR^4\times(-T_0N^{-2}, T_0N^{-2})\\
        V_{R, N}(y, t) = v'_{R,N}(\Psi^{-1}(y), t), &\qquad (y, t)\in
        \TTT^4\times(-T_0N^{-2}, T_0N^{-2}).
      \end{split}
    \end{equation}
    Then there is $R_0\geq 1$(depending on $T_0$, $\phi'$ and $\ee_1$), for any
    $R\geq R_0$, we obtain that
    \begin{equation}
    \limsup_{N\to\infty} \|U_N-V_{R,N}\|_{X^1(-T_0N^{-2}, T_0N^{-2})}
    \lesssim_{E_{\RRR^4}(\phi),\,\|\phi\|_{\dot{H}^1(\RRR^4)}}\ee_1.
    \end{equation}
  \end{enumerate}
\end{thm}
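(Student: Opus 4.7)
The plan is to deduce part (1) from part (2) via density of $H^5(\RRR^4)$ in $\dot H^1(\RRR^4)$, so I focus on proving part (2). The starting point is to feed $\phi'\in H^5(\RRR^4)$ into the Euclidean theory (Theorem \ref{thm:GWPinR} in the defocusing/linear case, Theorem \ref{thm:GWPfocusing} in the focusing case under the sub-ground-state hypothesis), producing a global solution $v'$ whose Euclidean Strichartz norm $\|\nabla_{\RRR^4}v'\|_{L_t^\infty L_x^2\cap L_t^2 L_x^4(\RRR\times \RRR^4)}$ is controlled by $E_{\RRR^4}(\phi)$ (and $\|\phi\|_{\dot H^1}$ in the focusing case), and such that $v'\in L_t^\infty(\RRR:H^5(\RRR^4))$ by the persistence-of-regularity remark. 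The strategy is to use $V_{R,N}$ as an approximate solution of the torus NLS and invoke the stability Proposition \ref{prop:stability}.

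I build $V_{R,N}$ in three stages. First I cut off in space by $\eta(x/R)$ to form $v_R'$; this satisfies the NLS on $\RRR^4$ up to an error consisting of the commutator piece $2R^{-1}(\nabla\eta)(x/R)\cdot\nabla v'+R^{-2}(\Delta\eta)(x/R)v'$ and the algebraic piece $\mu(\eta^3-\eta)(x/R)\,v'|v'|^2$, each supported in $\{R\leq |x|\leq 2R\}$ and tending to zero in natural Strichartz norms as $R\to\infty$ by dominated convergence. Second I rescale by $v'_{R,N}(x,t)=Nv_R'(Nx,N^2t)$; this preserves the NLS form on $\RRR^4$ and maps the time window to $I_N:=(-T_0N^{-2},T_0N^{-2})$. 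Third I transfer to $\TTT^4$ via the chart $\Psi$: as soon as $N$ is large enough that $2R/N<1$, the support of $v'_{R,N}(\cdot,t)$ lies in $\{|x|<1\}$, and the transfer to $V_{R,N}$ is exact, so $V_{R,N}$ satisfies the torus NLS up to the transferred error.

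The core technical step is to control $\|V_{R,N}\|_{X^1(I_N)}$ and $\|V_{R,N}\|_{Z(I_N)}$ in terms of Euclidean Strichartz norms of $v'$, uniformly in $R$ once $N$ is large. On the rescaled window $I_N$ of length $\sim N^{-2}$ the essential torus frequencies are $\sim N$, and the periodic propagator is locally indistinguishable from the Euclidean one at this scale; I exploit this by decomposing $V_{R,N}$ in torus dyadic frequencies and estimating each block using the transfer principle Proposition \ref{prop:transfer}, the sharp Strichartz Lemma \ref{prop:strichartz}, the bilinear estimate Lemma \ref{lem:bilinear}, and the duality Proposition \ref{prop:dual}, reducing each block to a Euclidean Strichartz quantity evaluated on $v'$ by change of variables. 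The very same reduction converts the truncation and algebraic errors into $N(I_N)$-norm bounds that go to zero as $R\to\infty$, uniformly in $N$.

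Finally I match initial data: the difference $f_N-V_{R,N}(0)$ splits, after unscaling, into $\phi-\phi'$ (whose $\dot H^1(\RRR^4)$-norm is $\leq\ee_1$ by hypothesis and is preserved by the rescaling) plus a term coming from the mismatch between the two cutoffs $\eta(x/N^{1/2})$ and $\eta(x/R)$ applied to $\phi'$, which vanishes as $N\to\infty$ with $R$ fixed since $\phi'\in H^5$ and $N^{1/2}/R\to\infty$. Combined with the $N(I_N)$-smallness of the nonlinear error for $R$ large, Proposition \ref{prop:stability} then produces the genuine torus solution $U_N$ on $I_N$ with $\|U_N-V_{R,N}\|_{X^1(I_N)}\lesssim_{E_{\RRR^4}(\phi),\|\phi\|_{\dot H^1}}\ee_1$, giving (2); taking $\phi'$ close to $\phi$ in $\dot H^1$ and using that the $X^1$ bound depends only on the Euclidean norms of $\phi$ then yields (1). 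I expect the main obstacle to be the transfer step: bounding adapted atomic-space norms of $V_{R,N}$ on $\TTT^4\times I_N$ by Euclidean-Strichartz norms of $v'$, where the short-time window $|I_N|\sim N^{-2}$ and the high-frequency localization of $V_{R,N}$ at scale $N$ must be used simultaneously.
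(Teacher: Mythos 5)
Your outline matches the paper's proof: both argue via the stability Proposition \ref{prop:stability} applied to the approximate solution $V_{R,N}$, constructed by the same cut-then-rescale-then-transfer pipeline, with the initial-data mismatch handled by your Case~2 splitting, and with parts (1) and (2) obtained simultaneously from the stability conclusion via an $H^5$ approximant $\phi'$.

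However, you misidentify where the real work lies and consequently anticipate a harder step than is actually required. You propose to ``control $\|V_{R,N}\|_{X^1(I_N)}$'' and flag ``bounding adapted atomic-space norms of $V_{R,N}$'' as the main obstacle, invoking the transfer principle Proposition \ref{prop:transfer}, the sharp Strichartz Lemma \ref{prop:strichartz}, and the bilinear estimate Lemma \ref{lem:bilinear}. None of these are needed, and an $X^1$ bound on $V_{R,N}$ is neither needed nor what the paper attempts. Proposition \ref{prop:stability} only requires $\|V_{R,N}\|_{Z(I_N)}+\|V_{R,N}\|_{L_t^\infty(I_N,H^1)}\le M$ as input (the $X^1$ bound on the true solution comes out as a conclusion). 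The $L^\infty_t H^1$ bound is elementary unscaling; the $Z$-norm is a frequency-localized $L^4_{t,x}$ quantity, not an atomic-space norm, and is bounded by Littlewood--Paley square-function plus Sobolev embedding, reducing to a Euclidean $L^4_t L^{8/3}_x$ Strichartz norm of $v_R'$. Similarly, the error is not estimated by atomic-space machinery: the $N(I_N)$ duality (Proposition \ref{prop:dual}) is used only through the crude bound $\|E_{R,N}\|_{N(I_N)}\lesssim\|\nabla E_{R,N}\|_{L^1_t L^2_x}$, whose right-hand side vanishes as $R\to\infty$ by support considerations and persistence of $H^5$ regularity. So your plan is correct, but the ``transfer step'' you flag as the crux is a red herring introduced by trying to bound a norm ($X^1$ of $V_{R,N}$) that the stability theorem does not ask for.
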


$V_{R,N}$ can be considered as solve NLS firstly, then cutoff and scaling, while
$U_N$ can be considered as cutoff and scaling firstly, then solve NLS.

\begin{proof}
  We show Part(1) and Part(2) together, by Proposition \ref{prop:stability} (stability).

  Using Theorem \ref{thm:GWPinR} and Theorem \ref{thm:GWPfocusing}, we know $v'$ globally exists and satisfying
  \begin{equation*}
    \| \nabla_{\RRR^4} v' \|_{(L_t^\infty L_x^2\cap L^2_tL_x^4)(\RRR\times\RRR^4)}
    \lesssim 1,
  \end{equation*}
  and
  \begin{equation}\label{eq:scatteringsmooth}
    \sup_{t\in\RRR} \|v'(t)\|_{H^5(\RRR^4)}\lesssim_{\|\phi'\|_{H^5(\RRR^4)}} 1.
  \end{equation}

  Let's consider $v_R'(x, t) = \eta(x/R) v'(x,t)$.
  \begin{align*}
    &(i\partial_t + \DD_{\RRR^4})v_R' =(i\partial_t +\DD_{\RRR^4})
    (\eta(x/R)v'(x,t))\\
    =&\eta(x/R)(i\partial_t + \DD_{\RRR^4})v'(x,t) + R^{-2} v'(x,t)
    (\DD_{\RRR^4}\eta)(x/R)+2R^{-1}\sum_{j=1}^4\partial_j v'(x,t)\partial_j\eta(x/R).
  \end{align*}
  which implies
  \begin{equation*}
    (i\partial_t + \DD_{\RRR^4})v_R' = \lambda |v_R'|^2v_R' + e_R(x,t),
  \end{equation*}
  where $e_R(x,t) = \mu (\eta(x/R)-\eta^3(x/R))v'|v'|^2 +
  R^{-2} v'(x,t)(\DD_{\RRR^4}\eta)(x/R)+2R^{-1}
  \sum_{j=1}^4\partial_j v'(x,t)\partial_j\eta(x/R).$
  After scaling, we get
  \begin{equation*}
    (i\partial_t + \DD_{\RRR^4})v'_{R,N} = \mu |v_{R,N}'|^2v_R' +
    e_{R,N}(x,t),
  \end{equation*}
  where $e_{R,N}(x,t) = N^3 e_R (Nx, N^2t)$.
  with $V_{R,N}(y,t) = v'_{R,N}(\Phi^{-1}(y), t)$ and taking $N\geq 10R$,
  we obtain that
  \begin{equation}\label{4.3}
    (i\partial_t + \DD_{\RRR^4})V_{R,N}(y,t) = \mu |V_{R,N}|^2V_{R,N}
    +E_{R,N}(y,t),
  \end{equation}
where $E_{R,N}(y,t) = e_{R,N}(\Phi^{-1}(y),t).$

By Proposition \ref{prop:stability}, we need following conditions:
\begin{enumerate}
  \item $\|V_{R,N}\|_{L_t^\infty([-T_0N^{-2},T_0N^{-2}]: H^1(\TTT^4))}
  +\|V_{R,N}\|_{Z([-T_0N^{-2},T_0N^{-2}])}\leq M$;
  \item $\|f_N -V_{R,N}(0)\|_{H^1(\TTT^4)}\leq \ee$;
  \item $\|E_{R,N}\|_{N([-T_0N^{-2},T_0N^{-2}])}\leq \ee$.
\end{enumerate}
We will prove all 3 condition above:

\case{1}{$\|V_{R,N}\|_{L_t^\infty([-T_0N^{-2},T_0N^{-2}]: H^1(\TTT^4))}
+\|V_{R,N}\|_{Z([-T_0N^{-2},T_0N^{-2}])}\leq M$.}
Since $v'(x,t)$ globally exists, $V_{R,N}(y,t)$ also globally exists.
Given $T_0\in (0, \infty)$,
\begin{align*}
  &\sup_{t\in[-T_0N^{-2},T_0N^{-2}]}\|V_{R,N}(t)\|_{H^1(\TTT^4)}
  \leq \sup_{t\in[-T_0N^{-2},T_0N^{-2}]}\|v'_{R,N}(t)\|_{H^1(\RRR^4)}\\
  =&\sup_{t\in[-T_0N^{-2},T_0N^{-2}]}\|Nv'_R(Nx, N^2t)\|_{H^1(\RRR^4)}\\
  =&\sup_{t\in[-T_0N^{-2},T_0N^{-2}]}\frac{1}{N} \|v'_R(N^2t)\|_{L^2(\RRR^4)}
  +\|v'_R(N^2t)\|_{\dot{H}^1(\RRR^4)}\\
  \leq& \sup_{t\in[-T_0, T_0]} \|v'_R\|_{H^1(\RRR^4)} =\sup_{t\in[-T_0,T_0]}
  \|\eta(x/R)v'(x,t)\|_{H^1(\RRR^4)}\\
  \leq& \sup_{t\in[-T_0, T_0]}\|\eta(x/R)v'(x,t)\|_{L^2(\RRR^4)}+
  \|\nabla\eta(x/R)v'(x,t)\|_{L^2(\RRR^4)}+\|\eta(x/R)\nabla v'(x,t)\|_{L^2(\RRR^4)}\\
  \leq & 2\|v'(x,t)\|_{H^1(\RRR^4)}\leq 2\|\phi'(t)\|_{H^5(\RRR^4)}.
\end{align*}
By Littlewood-Paley theorem and Sobolev embedding, we obtain that 
\begin{align*}
  &\|V_{R,N}\|_{Z([-T_0N^{-2},T_0N^{-2}])}=\sup_{J\subset[-T_0N^{-2},T_0N^{-2}]}
  (\sum_{M dyadic} M^2\|P_M V_{R,N}\|^4_{L^4(J\times\TTT^4)})^{\frac{1}{4}}\\
  =&\sup_{J\subset[-T_0N^{-2},T_0N^{-2}]}
  \|(\sum_M (\langle 1-\DD \rangle^{\frac{1}{4}}
  P_M V_{R,N})^4)^{\frac{1}{4}}\|_{L^4(J\times \TTT^4)}\\
  \leq&  \sup_{J\subset[-T_0N^{-2},T_0N^{-2}]}
  \|(\sum_M (\|\langle 1-\DD \rangle^{\frac{1}{4}}
  P_M V_{R,N})^2)^{\frac{1}{2}}\|_{L^4(J\times \TTT^4)}\\
  \lesssim& \sup_{J\subset[-T_0N^{-2},T_0N^{-2}]}
  \|\langle 1-\DD \rangle^{\frac{1}{4}} V_{R,N}\|_{L^4(J\times\TTT^4)}\\
  \leq&  \sup_{J\subset[-T_0N^{-2},T_0N^{-2}]}
  \|\langle 1-\DD \rangle^{\frac{1}{2}} V_{R,N}
  \|_{L^4_t(J)L^\frac{8}{3}_x(\TTT^4)}\\
  \lesssim& \||v'_{R,N}| +|\nabla_{\RRR^4} v'_{R,N}|\|_{L^4_tL_x^{\frac{8}{3}}
  ([-T_0N^{-2},T_0N^{-2}]\times\RRR^4)}\\
  \lesssim& \|v'_R\|_{L^4_tL_x^{\frac{8}{3}}([-T_0,T_0]\times\RRR^4)}+
  \||\nabla_{\RRR^4}v'_{R}|\|_{L^4_tL_x^{\frac{8}{3}}([-T_0,T_0]\times\RRR^4)}.
\end{align*}

Since
 $\|v'_R\|_{L^4_tL_x^{\frac{8}{3}}([-T_0,T_0]\times\RRR^4)}+
  \||\nabla_{\RRR^4}v'_{R}|\|_{L^4_tL_x^{\frac{8}{3}}([-T_0,T_0]\times\RRR^4)}\lesssim \sup_{t} \|v'(t)\|_{H^5}$, by (\ref{eq:scatteringsmooth}) we obtain $\|V_{R,N}\|_{Z([-T_0N^{-2},T_0N^{-2}])}\lesssim_{\|\phi'\|_{H^5(\RRR^4)}} 1.$

\case{2}{$\|f_N -V_{R,N}(0)\|_{H^1(\TTT^4)}\leq \ee$.}

By H\"{o}lder inequality, we obtain that

\begin{align*}
  \|f_N - V_{R,N}(0)\|_{H^1(\TTT^4)}&\leq \|\phi_N(\Psi^{-1}(y))-
  \phi'_{R,N}(\Psi^{-1}(y))\|_{\dot{H}^1(\TTT^4)}\\
  &\leq \|\phi_N -\phi'_{R,N}\|_{\dot{H}^1(\RRR^4)}
  =\|Q_N\phi -\phi'_R\|_{\dot{H}^1(\RRR^4)}\\
  &= \|\eta(\frac{x}{N^{\frac{1}{2}}})\phi(x) -
  \eta(\frac{x}{N^{\frac{1}{2}}})\phi'(x)\|_{\dot{H}^1(\RRR^4)}\\
  &\leq \|\eta(\frac{x}{N^{\frac{1}{2}}})\phi(x)-\phi(x)\|_{\dot{H}^1(\RRR^4)}
  +\|\phi -\phi'\|_{\dot{H}^1(\RRR^4)}
  +\|\eta(\frac{x}{N^{\frac{1}{2}}})\phi'(x)-\phi'(x)\|_{\dot{H}^1(\RRR^4)}.
\end{align*}
With $N\geq 10 R$, and $R> R_0$, $R_0$ large enough, we have that
\[
\|f_N -V_{R,N}(0)\|_{H^1(\TTT^4)}\leq 2\ee_1.
\]

\case{3}{$\|E_{R,N}\|_{N([-T_0N^{-2},T_0N^{-2}])}\leq \ee$.}
Next, by Proposition \ref{prop:dual} and scaling invariance, we obtain that
\begin{align*}
\|E_{R,N}\|_{N([-T_0N^{-2},T_0N^{-2}])}
&= \|\int_{0}^t e^{i(t-s)\DD} E_{R,N}(s)\, ds\|_{X^1([-T_0N^{-2},T_0N^{-2}])}\\
&\leq \sup_{\|u_0\|_{Y^{-1}}=1}
\left| \int_{\TTT^4\times[-T_0N^{-2},T_0N^{-2}]} \overline{u_0}\cdot E_{R,N}
\, dxdt\right|\\
&\leq \sup_{\|u_0\|_{Y^{-1}}=1} \||\nabla|^{-1}u_0\|_{L_t^\infty L_x^2}
\||\nabla| E_{R,N}\|_{L_t^1L_x^2}\\
&\leq \sup_{\|u_0\|_{Y^{-1}}=1}
\|u_0\|_{Y^{-1}} \||\nabla|E_{R,N}\|_{L^1_tL^2_x([-T_0N^{-2},T_0N^{-2}]
\times\TTT^4)}\\
&\leq \|\nabla_{\RRR^4}\,e_{R,N}\|_{L^1_tL^2_x([-T_0N^{-2},T_0N^{-2}]\times\RRR^4)}\\
&=\|\nabla_{\RRR^4}\, e_R\|_{L^1_tL^2_x([-T_0, T_0]\times\RRR^4)}.
\end{align*}

\begin{align*}
|\nabla_{\RRR^4}\, e_R(x,t)|&=| \nabla_{\RRR^4}(\mu (\eta(x/R)-\eta^3(x/R)))
v'(x,t)|v'(x,t)|^2\\
+& R^{-2} v'(x,t)(\DD_{\RRR^4} \eta)(\frac{x}{R}) + 2R^{-1}(\sum_{j=1}^4
\partial_j v'(x,t)\partial_j \eta(x/R)|\\
&\leq |\nabla_{\RRR^4}(\eta(\frac{x}{R})-\eta(\frac{x}{R})^3)v'(x,t)|v'(x,t)|^2|
+3|(\eta(\frac{x}{R})-\eta(\frac{x}{R})^3)\nabla_{\RRR^4}v'(x,t)|v'(x,t)|^2|\\
+& R^{-3}|v'(x,t)\nabla_{\RRR^4} \DD_{\RRR^4} \eta(\frac{x}{R})|
+ R^{-2}|\nabla_{\RRR^4}v'(x,t)(\DD_{\RRR^4}\eta)(\frac{x}{R})|
+ R^{-1}|\DD_{\RRR^4}v'(x,t)\nabla_{\RRR^4}\eta(\frac{x}{R})|\\
 &\lesssim_{\|\phi'\|_{H^5(\RRR^4)}} \mathds{1}_{[R, 2R]}(|x|)
 \left( |v'(x,t)| +|\nabla_{\RRR^4} v'(x,t)| \right)+\frac{1}{R}
\left( |\langle \nabla_{\RRR^4}\rangle^2 v'(x,t)| \right).
\end{align*}

Since $\|\nabla_{\RRR^4}^2 v'(x,t)\|_{L^\infty_x}\lesssim_{\|\phi'\|_{H^5}} 1$,
$\|\nabla_{\RRR^4} v'(x,t)\|_{L^\infty_x}\lesssim_{\|\phi'\|_{H^5}} 1$, and
$\|v'(x,t)\|_{L^\infty_x}\lesssim_{\|\phi'\|_{H^5}} 1$ (by Sobolev embedding), we obtain that

\begin{align*}
  &\|\nabla_{\RRR^4}\, e_R\|_{L^1_tL^2_x([-T_0,T_0]\times\RRR^4)}
  =\int_{-T_0}^{T_0} (\int_{\RRR^4} |\nabla_{\RRR^4} \, e_R|\, dx)^{\frac{1}{2}}dt\\
  &\leq\int_{-T_0}^{T_0} \left(\int_{\RRR^4}\mathds{1}_{[R, 2R]}(|x|)
  (|v'(x,t)|^2 + |\nabla_{\RRR^4} v'(x,t)|^2)\,dx +\frac{1}{R^2}\int_{\RRR^4}
  |\langle \nabla_{\RRR^4}\rangle^2 v'(x,t)|^2\,dx
  \right)^\frac{1}{2}dt\\
  &\lesssim_{\|\phi'\|_{H^5}}
  2T_0 \left( \int_{\RRR^4}\mathds{1}_{[R,2R]}(|x|)
  \langle \nabla_{\RRR^4}\rangle^2 v'(x,t)|^2\,dx \right)^{\frac{1}{2}}+\frac{1}{R}
  \to 0,\text{ as } R\to \infty.\\
\end{align*}

So we can obtain that \begin{equation*}
  \|\nabla E_{R,N}\|_{L^1_tL^2_x([-T_0N^{-2}, T_0N^{-2}]\times\TTT^4)}<\ee_1,
\end{equation*}
where $R>R_0$, and $R_0$ large enough.

By checking all there conditions above, we have the desired result.

\end{proof}

Next, we prove a extinction lemma as Ionescu and Pausader \cite{ionescu2012energy}
did in their paper about energy critical NLS in $\TTT^3$. The extinction lemma
is the essential part why we prove the GWP result in $\TTT^4$.
\begin{lem}[Extinction Lemma]\label{lem:extinction}
Let $\phi\in\dot{H}^1(\RRR^4)$, and define $f_N$ as in (\ref{def:euclideanProfile}).
For any $\ee >0$, there exist $T = T(\phi, \ee)$ and $N_0(\phi, \ee)$ such that
for all $N\geq N_0$,
there holds that
\[
\|e^{it\DD} f_N\|_{Z([TN^{-2}, T^{-1}])}\lesssim \ee.
\]
\end{lem}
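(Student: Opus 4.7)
The plan is to reduce the lemma to an $L^4_{t,x}$-tail estimate for the Euclidean Schr\"odinger flow via rescaling and Poisson summation. First, by density of $C_c^\infty(\RRR^4)$ in $\dot H^1(\RRR^4)$, I approximate $\phi$ by $\phi^\sharp\in C_c^\infty(\RRR^4)$ with $\|\phi-\phi^\sharp\|_{\dot H^1(\RRR^4)}\leq\varepsilon$. Using the scaling invariance of $\dot H^1$ and the Sobolev embedding $\dot H^1(\RRR^4)\hookrightarrow L^4(\RRR^4)$ (to control the $L^2$-part contributed by the cutoff $\eta(\cdot/N^{1/2})$ in the definition of $f_N$), one obtains $\|f_N-f_N^\sharp\|_{H^1(\TTT^4)}\lesssim\varepsilon$ uniformly in $N$. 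The embeddings $Z\hookrightarrow X^1$ and $\|e^{it\Delta}g\|_{X^1}\lesssim\|g\|_{H^1}$ then give $\|e^{it\Delta}(f_N-f_N^\sharp)\|_{Z}\lesssim\varepsilon$, reducing the problem to the case of smooth, compactly supported $\phi$.

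For such $\phi$, the Fourier transform $\widehat{f_N}(\xi)=N^{-3}\widehat{Q_N\phi}(\xi/N)$ is Schwartz-concentrated near the dyadic shell $|\xi|\sim N$. Applying the Strichartz estimate $\|P_Me^{it\Delta}g\|_{L^4(J\times\TTT^4)}\lesssim M^{1/2}\|g\|_{L^2}$ (Lemma~\ref{prop:strichartz}) to each dyadic piece $P_Mf_N$ for $M\not\sim N$ shows that their contribution to $\|e^{it\Delta}f_N\|_Z$ is $O_\phi(N^{-A})$ for any $A>0$. Hence it suffices to bound the single term $N^{1/2}\,\|P_Ne^{it\Delta_{\TTT^4}}f_N\|_{L^4([TN^{-2},T^{-1}]\times\TTT^4)}$. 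For this I use the Poisson summation identity
\[
e^{it\Delta_{\TTT^4}}f_N(y)=\sum_{n\in\mathcal L}\bigl(e^{it\Delta_{\RRR^4}}\phi_N\bigr)(y+n),
\]
valid for $y$ in a fundamental domain and $N$ large enough that $\phi_N$ is supported therein, together with the rescaling $z=Ny$, $s=N^2t$, which sends $\phi_N$ to $Q_N\phi$ and the torus to $N\TTT^4$. The leading $n=0$ contribution reduces, up to the scaling factor $N^{-1/2}$, to the $L^4$-norm of $e^{is\Delta_{\RRR^4}}Q_N\phi$ over $[T,N^2/T]\times\RRR^4$. By the global Euclidean Strichartz estimate $\|e^{is\Delta_{\RRR^4}}\psi\|_{L^4_{s,z}(\RRR\times\RRR^4)}\lesssim\|\psi\|_{\dot H^{1/2}(\RRR^4)}$ (a consequence of the admissible Strichartz pair $(4,8/3)$ in $\RRR^4$ combined with the Sobolev embedding $\dot W^{1/2,8/3}\hookrightarrow L^4$), dominated convergence gives $\|e^{is\Delta_{\RRR^4}}\phi\|_{L^4_{s,z}([T,\infty)\times\RRR^4)}\to 0$ as $T\to\infty$, which delivers the desired smallness for this leading piece.

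The main technical obstacle is controlling the Poisson-summation tail terms $n\neq 0$, particularly in the regime $t\gtrsim cN^{-1}$ where the rescaled time $s=N^2t$ exceeds the wrap-around scale $\sim N$ of the large torus $N\TTT^4$, so that many lattice translates contribute and the naive Euclidean approximation breaks down. I plan to address this by splitting $[TN^{-2},T^{-1}]=[TN^{-2},cN^{-1}]\cup[cN^{-1},T^{-1}]$: on the first sub-interval only the $n=0$ term matters for $N$ large, and the Euclidean argument above applies directly; on the second, the wave packet has been dispersed uniformly over $\TTT^4$, and I would combine the fact that $\|e^{icN^{-1}\Delta}f_N\|_{L^2(\TTT^4)}\lesssim N^{-1}\|\phi\|_{L^2}$ (for smooth compactly supported $\phi$) with the frequency-localized Strichartz bound applied to the shifted initial datum, together with a H\"older-in-time estimate over the short sub-interval of length $\lesssim T^{-1}$, to extract a quantitative decay in $T$ and $N$. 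Choosing first $T=T(\phi,\varepsilon)$ large enough that the Euclidean tail is $O(\varepsilon)$, and then $N_0=N_0(\phi,T)$ large enough to absorb the approximation and wrap-around errors, yields the claimed bound.
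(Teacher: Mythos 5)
Your reduction to $\phi\in C_c^\infty$ is sound, and your treatment of the frequency tails $M\not\sim N$ via the Strichartz estimate plus the Schwartz decay of $\widehat{f_N}$ matches what the paper does (its Cases 1 and 2). The pre-wrap-around regime $t\in[TN^{-2},cN^{-1}]$ handled via Poisson summation and the Euclidean $L^4_{t,x}$ Strichartz estimate on $[T,cN]$ is a workable alternative to the paper's kernel-based argument for that range, and the dominated-convergence step to make the Euclidean tail small as $T\to\infty$ is the right idea there.

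The genuine gap is the post-wrap-around regime $t\in[cN^{-1},T^{-1}]$, and this is exactly where the paper's hardest input lives. Your proposed combination of ingredients does not close: $\|e^{ic N^{-1}\Delta}f_N\|_{L^2}\sim N^{-1}$ is just mass conservation and produces no gain; the frequency-localized Strichartz estimate $\|P_N e^{it\Delta}g\|_{L^4_{t,x}}\lesssim N^{1/2}\|g\|_{L^2}$ applied to this datum returns $N^{1/2}\|P_Ne^{it\Delta}f_N\|_{L^4}\lesssim 1$ with no smallness in $T$ or $N$; and H\"older in time over a subinterval of length $T^{-1}$ cannot manufacture decay either, since passing from $L^4_t$ to $L^\infty_t L^4_x$ via Sobolev costs a factor $N$ on $P_N$-localized functions, which overwhelms the $T^{-1/4}$ gain from the short interval. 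In short, none of the three ingredients you list sees any dispersion beyond the Strichartz scale. The paper's proof supplies exactly the missing mechanism: the pointwise Weyl-sum bound on the torus kernel, $\|K_M\|_{L^\infty_{x,t}(\TTT^4\times[SM^{-2},S^{-1}])}\lesssim S^{-2}M^4$ (its estimate (4.11), derived from Bourgain's Lemma 3.18 via Dirichlet approximation), which shows that the torus kernel retains Euclidean-like $t^{-2}$ decay across the entire interval $[TN^{-2},T^{-1}]$ — including after wrap-around — provided one works with frequency-localized data. Combining this $L^\infty_{x,t}$ bound (via Young with $\|f_N\|_{L^1}\lesssim N^{-3}$) with the $L^3_{t,x}$ Strichartz bound and interpolating gives the $T^{-1/100}$ smallness the lemma requires. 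Without this arithmetic dispersive input, or a substitute that controls equidistribution of the flow at scales $t\in[cN^{-1},T^{-1}]$, your plan cannot produce the decay in $T$, so as written the proof does not go through.

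Two minor notes: you wrote $Z\hookrightarrow X^1$ but the embedding you actually invoke is $X^1\hookrightarrow Z$ (Proposition~\ref{prop:ZinX} combined with Proposition~\ref{EstimateFreeSolution}); and the $\dot H^{1/2}$ Strichartz norm $\|Q_N\phi\|_{\dot H^{1/2}(\RRR^4)}$ is bounded uniformly in $N$ only after the reduction to $\phi^\sharp\in C_c^\infty$, so the order in which you set up the density argument matters.
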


\begin{proof}
  For $M\geq 1$, we define
  \[
  K_M(x,t) = \sum_{\xi\in\ZZZ^4} e^{-i[t|\xi|^2 +x\cdot\xi]\eta(\xi/M)}
  =e^{it\DD}P_{\leq M}\delta_0.
  \]
  We know from [Lemma 3.18, Bourgain\cite{bourgain1993fourier}] that
  $K_M$ satisfies
  \begin{equation}\label{4.9}
    |K_M(x,t)|\lesssim \prod_{i=1}^4 \left(\frac{M}{\sqrt{q_i}(1+M|t/(\lambda_i)-a_i/q_i|^{\frac{1}{2}})}
    \right),
  \end{equation}
  if $a_i$ and $q_i$ satisfying $\frac{t}{\lambda_i} =\frac{a_i}{q_i}+\beta_i$, where
  $q_i\in \{1,\cdots, M\}$, $a_i\in \ZZZ$, $(a_i, q_i)=1$ and $|\beta_i|\leq (Mq_i)^{-1}$ for all $i=1,2,3,4.$

  From this, we conclude that for any $1\leq S\leq M$,
  \begin{equation}\label{4.11}
    \|K_M(x,t)\|_{L_{x,t}^\infty(\TTT^4\times[SM^{-2}, S^{-1}])} \lesssim
    S^{-2}M^4.
  \end{equation}

  This follows directly from (\ref{4.9}) and Dirichlet's approximation lemma
  which is stated as following:
  \textit{For any real numbers $\alpha$, and any positive integer $N$, there exists
  integers $p$ and $q$ such $1\leq q\leq N$ and $|q\alpha -p|<\frac{1}{N}$}.
  
  Assume that $|t|\leq \frac{1}{S}$. $\frac{t}{\lambda_i} =\frac{a_i}{q_i}+\beta_i$ and
  $|\beta_i|\leq \frac{1}{Mq_i}\leq \frac{1}{M}\leq \frac{1}{S}$. So we obtain that
  \[\left|\frac{a_i}{q_i}\right|\leq \frac{2}{S}\quad \implies \quad q_i\geq \frac{a_i}{2}.\]

  Therefore either $q_i\geq \frac{1}{2}S\quad (a_i\geq 1)$ or $a_i=0$ for each $i$.
  If {$q_i\geq \frac{1}{2}S\quad (a_i\geq 1)$}, then
  \[\frac{M}{\sqrt{q_i}(1+M|t/(\lambda_i)-a_i/q_i|^{\frac{1}{2}})}
    \lesssim  \frac{M}{\sqrt{q}}\lesssim S^{-\frac{1}{2}}M.
  \]

  If {$a_i=0$}, then
  \[\frac{M}{\sqrt{q_i}(1+M|t/(\lambda_i)-a_i/q_i|^{\frac{1}{2}})}
    \lesssim \frac{M}{\sqrt{q_i}+M|t|^{1/2}}\lesssim |t|^{-\frac{1}{2}}
  \leq S^{-\frac{1}{2}}M.
  \]
So we have that $|K_M(x,t)|\lesssim S^{-2}M^4$.

By the definition as in (\ref{def:euclideanProfile}), to prove the extinction lemma,
we may assume that $\phi\in C_0^{\infty}(\RRR^4)$, we claim that
\begin{equation}\label{4.12}
  \begin{split}
    \|f_N\|_{L^1(\TTT^4)} &\lesssim_{\phi} N^{-3}\\
    \|P_K f_N\|_{L^2(\TTT^4)}&\lesssim_{\phi} \left( 1+ \frac{K}{N}\right)^{-10}N^{-1}.
  \end{split}
\end{equation}

Let's consider the bound of $\|f_N\|_{L^1(\TTT^4)}$:
\begin{align*}
  \|f_N\|_{L^1(\TTT^4)}& = \|\phi_N(\Psi^{-1}(y))\|_{L^1(\TTT^4)}
  = \|\phi_N(x)\|_{L^1(\RRR^4)}\\
  &=\int_{\RRR^4} |N(Q_N\phi)(Nx)|\,dx \\
  &= \frac{1}{N^3}\int_{\RRR^4}|Q_N \phi|(x)\, dx =
  \frac{1}{N^3} \int_{\RRR^4} |\eta(\frac{x}{N^{1/2}}) \phi(x)|\,dx\\
  &\leq \frac{1}{N^3} \|\phi\|_{L^1(\RRR^4)}.
\end{align*}

Let's consider the bound of $\|P_K f_N\|_{L^2(\TTT^4)}$:
\begin{align*}
  \|P_K f_N\|_{L^2(\TTT^4)} &=\|P_K \phi_N\|_{L^2(\RRR^4)}\\
  &=\|P_K N(Q_N\phi)(Nx)\|_{L^2(\RRR^4)}\\
  &=\|N(P_{\frac{K}{N}}Q_N\phi)(Nx)\|_{L^2(\RRR^4)}=
  \frac{1}{N}\|P_{\frac{K}{N}}Q_N\phi\|_{L^2(\RRR^4)}\\
  &=\frac{1}{N}\|P_{\frac{K}{N}}(\eta(\frac{x}{N^{\frac{1}{2}}})\phi(x))\|_{L^2(\RRR^4)}\\
  &\leq \frac{1}{N}\left(1+\frac{K}{N} \right)^{-10}\|\eta(\frac{x}{N^{\frac{1}{2}}})
  \phi(x)\|_{H^{10}(\RRR^4)}\\
  &\leq \frac{1}{N}\left(1+\frac{K}{N} \right)^{-10} \|\phi\|_{H^{10}}.
\end{align*}

By Proposition \ref{prop:strichartz}, for $p>3$ we obtain that
\begin{equation}\label{4.121}
\|e^{it\DD}P_K f_N\|_{L^p_{x,t}(\TTT^4\times[-1,1])}\leq K^{2-\frac{6}{p}}
\left(1+\frac{K}{N} \right)^{-10} N^{-1} \|\phi\|_{H^{10}}.
\end{equation}

Then let's estimate $\|e^{it\DD}f_N\|_{Z([TN^{-2},T^{-1}])}$. We know that
\begin{equation*}
  \|e^{it\DD}f_N\|_{Z([TN^{-2},T^{-1}])} = \sup_{J\subset[TN^{-2},T^{-1}]}
  \left( \sum_K K^2 \|P_K e^{it\DD}f_N\|^4_{L^4(J\times\TTT^4)}\right)^\frac{1}{4}
\end{equation*}

To estimate it, we decompose the sum above into three part:
  $$\left(\sum_{K\leq NT^{-\frac{1}{100}}} +
  \sum_{K\geq NT^{\frac{1}{100}}}+
  \sum_{NT^{-\frac{1}{100}}\leq K\leq NT^{\frac{1}{100}}}\right)
  K^2 \|P_K e^{it\DD}f_N\|^4_{L^4([TN^{-2}, T^{-1}]\times\TTT^4)}$$.

\case{1}{$K\leq NT^{-\frac{1}{100}}$:}
By (\ref{4.121}), we obtain that
\begin{align*}
  &\sum_{K\leq NT^{-\frac{1}{100}}} K^2
  \|P_K e^{it\DD}f_N\|^4_{L^4([TN^{-2}, T^{-1}]\times\TTT^4)}\\
  \leq & \sum_{K\leq NT^{-\frac{1}{100}}} K^4 \left(1+\frac{K}{N}\right)^{-40}
  N^{-4}\|\phi\|_{H^{10}}\\
  \lesssim_{\phi}& (NT^{-\frac{1}{100}})^4N^{-4} = T^{-\frac{1}{25}}.
\end{align*}

\case{2}{$K\geq NT^{\frac{1}{100}}$:}
By (\ref{4.121}), we obtain that
\begin{align*}
  &\sum_{K\geq NT^{\frac{1}{100}}} K^2
  \|P_K e^{it\DD}f_N\|^4_{L^4([TN^{-2}, T^{-1}]\times\TTT^4)}\\
  \leq & \sum_{K\geq NT^{\frac{1}{100}}} K^4 \left(1+\frac{K}{N}\right)^{-40}
  N^{-4}\|\phi\|_{H^{10}}\\
  \leq &\sum_{K\geq NT^{\frac{1}{100}}} K^{-36}N^{36}\|\phi\|_{H^10}\\
  \lesssim_{\phi} & T^{-\frac{4}{100}}.
\end{align*}

\case{3}{$NT^{-\frac{1}{100}}\leq K\leq NT^{\frac{1}{100}}$:}
Let's consider $K\in [NT^{-\frac{1}{100}}, NT^{\frac{1}{100}}]$ and set $M\sim \max{(K, N)}$
and $S\sim T$.

\begin{equation}\label{4.13}
  \begin{split}
    \|e^{it\DD} P_K f_N\|_{L^{\infty}_{x,t}(\TTT^4\times[TN^{-2}, T^{-1}])}
    &=\|K_M * f_N\|_{L^{\infty}_{x,t}(\TTT^4\times[TN^{-2}, T^{-1}])}\\
    &\leq \|K_M\|_{L^\infty_{x,t}(\TTT^4\times[TN^{-2}, T^{-1}])
    \|f_N\|_{L^1_{x,t}(\TTT^4)}}\\
    &\lesssim_{\phi} T^{-2}K^4N^{-3}\leq T^{-2+\frac{1}{25}}N.
  \end{split}
\end{equation}

\begin{equation}\label{4.14}
  \begin{split}
    \|e^{it\DD} P_N f_N\|_{L_{x,t}^{3}(\TTT^4\times[TN^{-2}, T^{-1}])}&\lesssim_\phi K^{\ee}\left(
    1+\frac{K}{N}\right)^{-10}N^{-1}\\
    &\leq N^{-1+\ee}T^{-\frac{\ee}{100}}.
  \end{split}
\end{equation}

Interpolating (\ref{4.13}) with (\ref{4.14}), we have that
\begin{equation}
  \|e^{it\DD}P_K f_N\|_{L^4_{x,t}([TN^{-2},T^{-1}])}\lesssim_{\phi}
  \left( N^{-1+\ee} T^{\frac{\ee}{100}} \right)^{\frac{3}{4}}
  \left(T^{-2+\frac{1}{25}} N \right)^{\frac{1}{4}}
  \leq
  N^{-\frac{1}{4}}T^{-\frac{1}{100}}.
\end{equation}

Summing  $K^2 \|P_K e^{it\DD}f_N\|^4_{L^4([TN^{-2}, T^{-1}]\times\TTT^4)}$ over  $K$, we obtain that
\begin{align*}
  \sum_{NT^{-\frac{1}{100}}\leq K\leq NT^{\frac{1}{100}}}
  K^2 \|P_K e^{it\DD}f_N\|^4_{L^4([TN^{-2}, T^{-1}]\times\TTT^4)}
  &  \leq \sum_{NT^{-\frac{1}{100}}\leq K\leq NT^{\frac{1}{100}}} K^2
  (N^{-\frac{1}{4}}T^{-\frac{1}{100}})^4\\
  &\leq \sum_{NT^{-\frac{1}{100}}\leq K\leq NT^{\frac{1}{100}}}
  K^2N^{-2} T^{-\frac{1}{25}}\\
  &\leq T^{-\frac{1}{50}}.
\end{align*}

Summarizing all three cases by setting $T$ large enough, we hold the estimate.
\end{proof}

Let's now consider $f\in L^2(\TTT^4)$, $t_0\in \RRR$ and $x_0\in \TTT^4$,
\begin{align*}
  (\pi_{x_0} f)(x)&:= f(x-x_0),\\
  (\Pi_{t_0,x_0}) f(x)&:= (\pi_{x_0}e^{-it_0\DD} f)(x).\\
\end{align*}

As in (\ref{def:euclideanProfile}), given $\phi\in\dot{H}^1(\RRR^4)$
and $N\geq 1$, we define
\[
T_N \phi (x):= N\tilde{\phi} (N\Psi^{-1}(x)),\text{ where }
\tilde{\phi} (y):= \eta(y/N^{\frac{1}{2}})\phi(y)
\]
and claim that
$T_N : \dot{H}^1(\RRR^4)\to H^1(\RRR^4)$ is a linear operator with
$\|T_N \phi\|_{H^1(\TTT^4)}\lesssim \|\phi\|_{\dot{H}^1(\RRR^4)}$.

\begin{remark}
  To show $\|T_N \phi\|_{H^1(\TTT^4)}\lesssim \|\phi\|_{\dot{H}^1(\RRR)}$.
  \begin{align*}
    \|T_N \phi(x)\|_{H^1(\TTT^4)}& \lesssim \|T_N \phi(x)\|_{\dot{H}^1(\TTT^4)}\\
    &= \|\nabla(N\eta(N^{\frac{1}{2}})\phi(Ny))\|_{L^2(\RRR^4)}\\
    &\leq \|N^{\frac{3}{2}} (\nabla \eta) (N^{\frac{1}{2}}y)\phi(Ny)\|_{L^2(\RRR^4)}
    +\|N^2 \eta(N^{\frac{1}{2}}) \nabla \phi(Ny)\|_{L^2(\RRR^4)}\\
    &\leq \|\mathds{1}_{[0, N^{\frac{1}{2}}]}\|_{L^4(\RRR^4)}
    \|N^{\frac{3}{2}} |(\nabla \eta) (N^{\frac{1}{2}}y)\phi(Ny)|\|_{L^4_{\RRR^4}}
    +\|\phi\|_{\dot{H}^1(\RRR^4)}\\
    &\leq \|N\phi(Ny)\|_{\dot{H}^1(\RRR^4)}+\|\phi\|_{\dot{H}^1(\RRR^4)}\\
    &\leq 2\|\phi\|_{\dot{H}^1(\RRR^4)}.
  \end{align*}
\end{remark}

\begin{definition}
  Let $\widetilde{\mathcal{F}_e}$ denote the set of renormalized Euclidean frames
\begin{align*}
  \widetilde{\mathcal{F}_e}:=&
  \{
  (N_k, t_k, x_k)_{k\geq 1}: N_k \in [1,\infty),\ t_k\to 0,\ x_k\in\TTT^4,\
  N_k\to\infty \\
  &\text{ and either } t_k=0 \text{ for any } k\geq 1 \text{ or }
  \lim_{k\to \infty} N_k^2|t_k| =\infty \}.
\end{align*}
\end{definition}

\begin{prop}[Euclidean profiles]\label{prop:4.4}
  Assume that $\mathcal{O} = (N_k, t_k, x_k)_k \in \widetilde{\mathcal{F}_e}$ and $\mu \in \{-1, 0, 1\}$.
  $\phi\in \dot{H}^1(\RRR^4)$.
  Suppose 
  \[\|\phi\|_{\dot{H}^1(\RRR^4)}< +\infty, \qquad \text{when }\mu \in \{0, 1\};\]
 or under the assumption that if $v$ is a solution of (\ref{eq:ivpfocusing}) with $v(0) =\phi$ then $v$  satisfies
  \[\sup_{t\in \text{lifespan of }v}\|v(t)\|_{\dot{H}^1(\RRR^4)}<\|W\|_{\dot{H}^1(\RRR^4)}, , \qquad \text{when }\mu \in \{-1\}.\] Then
  \begin{enumerate}
    \item there exists $\tau =\tau(\phi)$ such that for $k$ large enough
    (depending only on $\phi$ and $\mathcal{O}$) there is a nonlinear solution
    $U_k\in X^1(-\tau, \tau)$ of the initial value problem (\ref{eq:NLS}) with initial data $U_k(0) = \Pi_{t_k, 0} (T_{N_k} \phi)$ and
\begin{equation}\label{4.4.1}
      \|U_k\|_{X^1(-\tau, \tau)}\lesssim_{E_{\RRR^4}(\phi),\, \|\phi\|_{\dot{H}^1(\RRR^4)}} \, 1;
    \end{equation}
    \item there exists an Euclidean solution $u\in C(\RRR: \dot{H}^1(\RRR^4))$
    of
    \[
    (i\partial_t +\DD_{\RRR^4})u = \mu u|u|^2
    \]
    with scattering data $\phi^{\pm \infty}$ defined as Theorem \ref{thm:GWPinR} such
    that the following holds, up to a subsequence: for any $\ee >0$, there exists
    $T(\phi, \ee)$ such that for all $T\geq T(\phi, \ee)$, there exists
    $R(\phi, \ee, T)$ such that for all $R\geq R(\phi, \ee, T)$,
    there holds that
    \begin{equation}\label{4.4.2}
      \|U_k -\widetilde{u_k}\|_{X^1(\{|t-t_k|\leq TN_k^{-2}\}\cap\{|t|<T^{-1}\})}
      \leq \ee,
    \end{equation}\label{eq:tildeu}
    for $k$ large enough, where
    \begin{equation}
    (\pi_{-x_k} \widetilde{u_k})(x, t) = N_k
     \eta(N_k \Psi^{-1}(x)/R) u(N_k\Psi^{-1}(x), N_k^2(t-t_k)).
    \end{equation}
    In addition, up to a subsequence,
    \begin{equation}\label{4.4.3}
      \|U_k(t) -\Pi_{t_k-t,x_k}
      T_{N_k}\phi^{\pm\infty}\|_{X^1(\{\pm (t-t_k)\geq \pm TN_k^{-2}\}\cap\{|t|<T^{-1}\})},
      \leq \ee,
    \end{equation}
    for $k$ large enough (depending on $\phi$, $\ee$, $T$, and $R$).
  \end{enumerate}
\end{prop}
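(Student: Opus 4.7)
The plan is to construct $U_k$ by patching together three regimes: a concentration window $|t-t_k|\le T_0 N_k^{-2}$ where Theorem \ref{thm:4.2} produces the solution and yields the approximation (\ref{4.4.2}); and two outer windows inside $\{|t|\le T^{-1}\}$ where the extinction Lemma \ref{lem:extinction} together with the local well-posedness Proposition \ref{prop:lwp} and stability Proposition \ref{prop:stability} will force $U_k$ to remain close to the rescaled linear scattering profile $\Pi_{t_k-t,0}T_{N_k}\phi^{\pm\infty}$. By the spatial translation invariance of (\ref{eq:NLS}) I may assume $x_k=0$, and I will use the global Euclidean solution $u$ from Theorem \ref{thm:GWPinR} (or Theorem \ref{thm:GWPfocusing}) with $u(0)=\phi$, together with its scattering data $\phi^{\pm\infty}$, as the reference profile.

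Subcase $t_k\equiv 0$: here $U_k(0)=T_{N_k}\phi=f_{N_k}$, exactly the setup of Theorem \ref{thm:4.2}. After a preliminary $\dot H^1$-smoothing $\phi\to\phi'\in H^5(\RRR^4)$ with $\|\phi-\phi'\|_{\dot H^1}\le \ee_1$, Theorem \ref{thm:4.2} yields $U_k$ on $[-T_0 N_k^{-2},T_0 N_k^{-2}]$ satisfying (\ref{4.4.1}) together with (\ref{4.4.2}) for any prescribed $T\le T_0$. To reach $[T_0 N_k^{-2},T^{-1}]$ I would use scattering of $u$: for $T_0$ large, $u(T_0)$ is $\dot H^1(\RRR^4)$-close to $e^{iT_0\Delta_{\RRR^4}}\phi^{+\infty}$; after rescaling and cutoff, $\widetilde{u_k}(T_0 N_k^{-2})$ becomes $H^1(\TTT^4)$-close to $\Pi_{-T_0 N_k^{-2},0}(T_{N_k}\phi^{+\infty})$, and hence so is $U_k(T_0 N_k^{-2})$ by (\ref{4.4.2}). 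The free evolution $e^{i(t-t_k)\Delta}T_{N_k}\phi^{+\infty}$ has small $Z$-norm on $[T_0 N_k^{-2},T^{-1}]$ by Lemma \ref{lem:extinction}, so Propositions \ref{prop:lwp}--\ref{prop:stability} will extend $U_k$ onto this interval as a small perturbation of that free evolution, giving (\ref{4.4.3}); the symmetric argument with $\phi^{-\infty}$ handles $[-T^{-1},-T_0 N_k^{-2}]$.

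Subcase $N_k^2|t_k|\to\infty$ (WLOG $t_k\to 0^+$): here an extra initial step is required. The datum $U_k(0)=e^{-it_k\Delta}T_{N_k}\phi$ has free evolution $e^{i(t-t_k)\Delta}T_{N_k}\phi$; writing $s=t-t_k\in[-t_k,-T_0 N_k^{-2}]\subseteq[-T^{-1},-T_0 N_k^{-2}]$ for $k$ large, Lemma \ref{lem:extinction} again gives a small $Z$-norm, and Proposition \ref{prop:lwp} will produce $U_k$ on $[0,t_k-T_0 N_k^{-2}]$ that is $X^1$-close to this free evolution, so that $U_k(t_k-T_0 N_k^{-2})$ is $H^1$-close to $e^{-iT_0 N_k^{-2}\Delta}T_{N_k}\phi$. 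Applying Theorem \ref{thm:4.2} with shifted initial time $t_k-T_0 N_k^{-2}$ will extend $U_k$ across the concentration window and supply (\ref{4.4.2}); the outer argument of the previous paragraph then covers $[t_k+T_0 N_k^{-2},T^{-1}]$ and gives (\ref{4.4.3}).

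The hard part will be the junction estimate at $t=t_k\pm T_0 N_k^{-2}$: I need to show, uniformly in $k$, that $\widetilde{u_k}(t_k\pm T_0 N_k^{-2})$ is $H^1(\TTT^4)$-close to $\Pi_{\mp T_0 N_k^{-2},0}(T_{N_k}\phi^{\pm\infty})$. This will combine scattering in $\dot H^1(\RRR^4)$ for $u$, the persistence of regularity of the smooth approximant $\phi'\in H^5$, and the fact that the cutoff $\eta(\cdot/R)$ preserves almost all of the $\dot H^1$-mass of $\phi^{\pm\infty}$ once $R$ is large; the transfer $\RRR^4\to\TTT^4$ via $\Psi$ contributes only negligible errors because the rescaled supports have radius $\sim R/N_k\ll 1$. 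Taking $T_0,R\to\infty$ first, then $k\to\infty$, and finally $\ee_1\to 0$ along a diagonal subsequence will close the argument, with $\tau(\phi)$ emerging as a fixed fraction of the threshold $T^{-1}$ produced by Lemma \ref{lem:extinction}. The overall structure parallels the $\TTT^3$ argument in \cite{ionescu2012energy}.
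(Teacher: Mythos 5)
Your Part (1) and the subcase $t_k\equiv 0$ of Part (2) track the paper's proof closely: reduce to $x_k=0$, smooth $\phi$ to $\phi'\in H^5$, invoke Theorem~\ref{thm:4.2} for the concentration window, use scattering of the Euclidean solution $u$ with $u(0)=\phi$ at $|t|=T$, transfer to the torus via Lemma~\ref{lem:extinction} plus Propositions~\ref{prop:lwp} and~\ref{prop:stability} to obtain~(\ref{4.4.3}). That part is fine.

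However, your treatment of the subcase $N_k^2|t_k|\to\infty$ has a genuine gap. You commit at the start to using the Euclidean solution $u$ with $u(0)=\phi$ as the reference profile. For this subcase that is the wrong profile. The datum $U_k(0)=\Pi_{t_k,0}T_{N_k}\phi$ corresponds, after rescaling, to the free state $e^{-iN_k^2 t_k\Delta}\phi$ at Euclidean time $-N_k^2 t_k\to -\infty$. The Euclidean nonlinear solution that matches this asymptotic is the wave-operator solution $v$ with $\lim_{t\to-\infty}\|v(t)-e^{it\Delta}\phi\|_{\dot H^1(\RRR^4)}=0$, so that its scattering datum at $-\infty$ is $\phi$ itself. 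The Cauchy solution with $u(0)=\phi$ has scattering datum $\phi^{-\infty}\ne\phi$ in general, and if you use it then~(\ref{4.4.3}) on the range $\{t-t_k\le -TN_k^{-2}\}$ would assert closeness of $U_k$ to $\Pi_{t_k-t,x_k}T_{N_k}\phi^{-\infty}$, whereas on $[0,t_k-T_0N_k^{-2}]$ your extinction/LWP argument actually shows $U_k$ is close to $e^{i(t-t_k)\Delta}T_{N_k}\phi$; these two agree only when $\phi^{-\infty}=\phi$. The paper handles this subcase precisely by first producing $v$ via the existence of wave operators and then comparing $U_k$ to the rescaled cutoff of $v$ using Proposition~\ref{prop:stability}.

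A secondary, fixable issue: the step ``apply Theorem~\ref{thm:4.2} with shifted initial time $t_k-T_0N_k^{-2}$'' is not a direct application of that theorem, since the data at that time is $\approx e^{-iT_0 N_k^{-2}\Delta}T_{N_k}\phi$, not $f_{N_k}=T_{N_k}\phi$. You would either need a version of Theorem~\ref{thm:4.2} allowing a linear time-shift of the datum (and then a limit $T_0\to\infty$, which again converges to the wave-operator solution), or you should compare $U_k$ directly to the rescaled wave-operator solution via stability, as the paper does. Once the reference profile is corrected to the wave-operator solution, the rest of your outline can be made to work, but as written the case $N_k^2|t_k|\to\infty$ is not proved.
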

\begin{proof}
  By the statement, it is equivalent to prove the case when $x_k = 0$.

  \noindent Part (1):
First, for $k$ large enough, we can make
\[
\|\phi -\eta(\frac{x}{N^{\frac{1}{2}}}) \phi\|_{\dot{H}^1(\RRR^4)}\leq \ee_1.
\]
For each $N_k$, we choose $T_{0,N_k} = \tau N_k^2$($T_{0, N_k}$ is the coefficient
in Lemma \ref{thm:4.2}).
For each $T_{0, N_k}$, we make $R_k$ large enough to make Theorem \ref{thm:4.2} work.
(Note: in this case, $R_k$ determined by $T_{0, N_k}$ as in the proof of
Theorem \ref{thm:4.2}.)

\noindent Part(2):
Let's consider first case in Euclidean frame: $t_k =0$ for all $k$.
(\ref{4.4.1}) is directly from Theorem \ref{thm:4.2}, by choosing $k$, $R$ for any
fixed $T$ large enough.

To prove (\ref{4.4.2}), we need to choose $T(\phi, \dd)$ large enough, to make
sure
\[
\|\nabla_{\RRR^4} u\|_{L^3_{x,t}(\RRR^4\times\{|t|>T(\phi,\dd)\})}\leq \dd.
\]

By Theorem \ref{thm:GWPinR}, we obtain that
\[
\|u(\pm T(\phi, \dd))-e^{\pm iT(\phi,\dd)\DD}\phi^{\pm\infty}\|_{\dot{H}^1(\RRR^4)}
\leq \dd,
\]
which implies
\begin{equation}
\|U_{N_k}(\pm TN_k^{-2}) -\Pi_{-\pm T, x_k} T_{N_k} \phi^{\pm\infty}\|_{H^1(\TTT^4)}
\leq \dd.
\end{equation}

By Proposition \ref{prop:ZinX} and Proposition \ref{EstimateFreeSolution}, we have
\begin{equation}\label{4.4.5}
\|e^{it\DD}\left( U_{N_k}(\pm TN_k^{-2}) -\Pi_{-\pm T, x_k} T_{N_k}
 \phi^{\pm\infty}\right)\|_{X^1(|t|<T^{-1})}.
\end{equation}

By Proposition \ref{prop:lwp}, we obtain that
\begin{equation}\label{4.4.6}
\|U_{N_k} -e^{it\DD}U_{N_k}(\pm TN_k^{-2})\|_{X^1}\leq \dd,
\end{equation}
and combining (\ref{4.4.5}) and (\ref{4.4.6}), we have
\[
\|U_{N_k} -\Pi_{-t, x_k} T_{N_k}\phi^{\pm\infty} \|_{X^1(\{\pm t\geq \pm TN_k^{-2}\}\cap\{|t|<T^{-1}\})}
\leq \ee.
\]
when we choose $\dd$ small enough.

The second case: $N_k^2|t_k|\to \infty$.
\begin{align*}
  U_k(0) &= \Pi_{t_k, 0} (T_{N_k} \phi)\\
  &= e^{-it_k\DD} \left(N_k^{\frac{1}{2}}\widetilde{\phi}(N_k\Psi^{-1}(x))\right)\\
  &= e^{-it_k\DD} \left( N_k^{\frac{1}{2}} \eta(N_k^{\frac{1}{2}}\Psi^{-1}(x))
  \phi(N_k\Psi^{-1}(x))\right).
\end{align*}

By existence of wave operator of NLS, we know the following initial value problem
is global well-posed, so there exists $v$ satisfying:
\begin{equation}
\begin{cases}
  (i\partial_t+\DD_{\RRR^4}) v = \mu v|v|^2,\\
  \lim_{t\to -\infty} \|v(t)- e^{it\DD} \phi\|_{\dot{H}^1(\RRR^4)} = 0.
\end{cases}
\end{equation}

We set
\[\widetilde{v_k}(t) = N_k^{\frac{1}{2}} \eta(N_k\Psi^{-1}(x)/R)
v(N_k\Psi^{-1}(x), N_k^2 t),
\]
so we have $\widetilde{v_k}(-t_k) = N_k^{\frac{1}{2}} \eta(N_k\Psi^{-1}(x)/R)
v(N_k\Psi^{-1}(x), -N_k^2 t_k).$

For $k$ and $R$ large enough,
\begin{align*}
  &\|\widetilde{v_k}(-t_k) - e^{-it_k\DD} N_k^{\frac{1}{2}}
  \eta(N_k^{\frac{1}{2}}\Psi^{-1}(x))\phi(N_k\Psi^{-1}(x))\|_{\dot{H}^1(\TTT^4)}\\
  \leq& \|\eta(\frac{x}{N_k^{\frac{1}{2}}})v(x, -N_k^2t_k)
  -e^{it_kN_k^2\DD}\eta(\frac{x}{N_k^{\frac{1}{2}}})\phi(x)
  \|_{\dot{H}^1(\RRR^4)}\\
  \leq &\ee.
\end{align*}

So $V_k(t)$ solves initial value problem (\ref{eq:IVP}) in $\TTT^4$, with
initial data $V_k(0) = \widetilde{V_k}(0)$, which implies $V_k(t)$ exists
in $[-\dd, \dd]$, and $\|V_k(t)-\widetilde{V_k}(t)\|_{X^1([-\dd, dd])}\lesssim \ee$.

By the stability property (Proposition \ref{prop:stability}),
$
\|U_k - V_k\|_{X^1([-\dd, \dd])}\to 0, \text{ as } k\to \infty.
$
\end{proof}

The following corollary (Corollary \ref{lem:decomposition1}) decompose the nonlinear Euclidean profiles $U_k$ defined in the Proposition \ref{prop:4.4}. This corollary follows closely in a part of the proof of Lemma 6.2 in \cite{ionescu2012energy}. I state it here as a corollary because the almost orthogonality of nonlinear profiles (Lemma \ref{prop:almostorth}) heavily relies on this decomposition lemma (Corollary \ref{lem:decomposition1}).
\begin{cor}[Decomposition of the nonlinear Euclidean profiles $U_k$]\label{lem:decomposition1}
  Consider $U_k$ is the nonlinear Euclidean profiles w.r.p.t. $\mathcal{O} = (N_k, t_k, x_k)_k \in \widetilde{\mathcal{F}_e}$ defined above. For any
  $\theta>0$, there exist $T_{\theta}^0$ sufficiently large such that
  for all $T_{\theta}\geq T_{\theta}^0$ and $R_{\theta}$
  sufficiently large such that for all $k$ large enough
  (depending on $R_{\theta}$) we can decompose $U_k$ as following:
  \[  \mathds{1}_{(-T_{\theta}^{-1},T_{\theta}^{-1})}(t)
    U_k = \w_k^{\theta,-\infty}+\w_k^{\theta,+\infty}
    +\w_k^{\theta}+\rho^{\theta}_k,\]
    and $\w_k^{\theta,\pm\infty}$, $\w_k^{\theta}$,
    and $\rho^{\theta}_k$ satisfy the following
    conditions:
  \begin{equation}\label{eq:decomposition1}
  \begin{split}
    \|\w_k^{\theta,\pm\infty}\|_{Z'(-T_{\theta}^{-1},T_{\theta}^{-1})}
    +\|\rho^{ \theta}_k\|_{X^1(-T_{\theta}^{-1},T_{\theta}^{-1})}\leq \theta,\\
    \|\w_k^{\theta,\pm\infty}\|_{X^1(-T_{\theta}^{-1},T_{\theta}^{-1})}+
    \|\w_k^{\theta}\|_{X^1(-T_{\theta}^{-1},T_{\theta}^{-1})}\lesssim 1,\\
    \w_k^{\theta,\pm\infty} = P_{\leq R_{\theta}N_{k}}
    \w_k^{ \theta,\pm\infty}\\
    |\nabla_x^m \w_k^{\theta}|+(N_{k})^{-2}\mathds{1}_{S_k^{ \theta}}
    |\partial_t \nabla_x^m \w_k^{ \theta}|\leq R_{\theta}
    (N_{k})^{|m|+1}\mathds{1}_{S_k^{\theta}},\ 0\leq |m| \leq 10,
  \end{split}
  \end{equation}
  where
  \[
  S_k^{\theta} :=\{ (x,t)\in \TTT^4\times (-T_{\theta}, T_{\theta})
  : |t-t_{k}|< T_{\theta}(N_{k})^{-2},\ |x-x_{k}|\leq R_{\theta}(N_{k})^{-1}\}.
  \]
\end{cor}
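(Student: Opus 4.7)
The plan is to slice $\mathds{1}_{(-T_\theta^{-1},T_\theta^{-1})}U_k$ into three overlapping time regions — a bubble window of width $\sim T_\theta N_k^{-2}$ centered at $t_k$ and two scattering tails — and to replace $U_k$ on each region by the corresponding explicit approximation supplied by Proposition \ref{prop:4.4}. The bubble piece will furnish $w_k^\theta$, the two tails $w_k^{\theta,\pm\infty}$, and the mismatch is absorbed into $\rho_k^\theta$. Concretely, given $\theta>0$, first pick $\phi'\in C_c^\infty(\RRR^4)$ with $\|\phi-\phi'\|_{\dot H^1}\le\theta/100$; iterating the persistence-of-regularity argument from the remark after Theorem \ref{thm:GWPinR} (or Theorem \ref{thm:GWPfocusing}) yields $\sup_t\|v'(t)\|_{H^{20}(\RRR^4)}\lesssim_{\phi'}1$, controlling $\nabla^m v'$ pointwise for $|m|\le 10$ by Sobolev embedding. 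Apply Proposition \ref{prop:4.4} to fix $T_\theta$ so large that the scattering bound (\ref{4.4.3}) holds with $X^1$-error $\theta/10$, then $R_\theta$ so large that the bubble bound (\ref{4.4.2}) with $\widetilde{v_k}:=V_{R_\theta,N_k}$ (as in Theorem \ref{thm:4.2}) holds with $X^1$-error $\theta/10$ for all $k$ sufficiently large. Fix $\chi\in C_c^\infty(\RRR)$ with $\chi\equiv 1$ on $[-1,1]$, $\mathrm{supp}\,\chi\subset[-2,2]$, and set $\chi_\pm(s):=\mathds{1}_{\pm s>0}(1-\chi(s))$. Define
\begin{align*}
w_k^\theta(x,t)&:=\chi\bigl(T_\theta^{-1}N_k^2(t-t_k)\bigr)\,\widetilde{v_k}(x,t),\\
w_k^{\theta,\pm\infty}(x,t)&:=\chi_\pm\bigl(T_\theta^{-1}N_k^2(t-t_k)\bigr)\mathds{1}_{(-T_\theta^{-1},T_\theta^{-1})}(t)\,P_{\le R_\theta N_k}\bigl[\Pi_{t_k-t,x_k}T_{N_k}\phi'^{\pm\infty}\bigr],\\
\rho_k^\theta&:=\mathds{1}_{(-T_\theta^{-1},T_\theta^{-1})}(t)U_k-w_k^\theta-w_k^{\theta,+\infty}-w_k^{\theta,-\infty}.
\end{align*}
By construction, $w_k^\theta$ is supported in $S_k^\theta$ and the frequency localization of $w_k^{\theta,\pm\infty}$ is automatic.

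The pointwise inequality on $w_k^\theta$ follows by differentiating the explicit formula $N_k\eta(N_k\Psi^{-1}(x-x_k)/R_\theta)v'(N_k\Psi^{-1}(x-x_k),N_k^2(t-t_k))$: each spatial derivative contributes a factor $N_k$, and $\|\nabla^m v'\|_{L^\infty_{t,x}}\lesssim_{\phi'}1$ for $|m|\le 10$; for the time derivative one uses the equation $\partial_t v'=i\Delta v'-i\mu v'|v'|^2\in L^\infty_t H^{18}$ to absorb the $N_k^2$ produced by the time scaling. The $X^1$ bound on $w_k^{\theta,\pm\infty}$ follows from $\|T_{N_k}\phi'^{\pm\infty}\|_{H^1(\TTT^4)}\lesssim_{\phi'}1$ together with Proposition \ref{EstimateFreeSolution} (boundedness of $P_{\le R_\theta N_k}$ and of multiplication by smooth time cutoffs on $X^1$ is standard). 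For the $Z'$-smallness, Lemma \ref{lem:extinction} applied to $\phi'^{\pm\infty}$ gives $\|e^{is\Delta}T_{N_k}\phi'^{\pm\infty}\|_{Z([T_\theta N_k^{-2},T_\theta^{-1}])}\le\theta^2$ once $T_\theta$ is large; this bound survives the translation $\pi_{x_k}$ and the Littlewood–Paley projection $P_{\le R_\theta N_k}$, so $\|w_k^{\theta,\pm\infty}\|_Z\le\theta^2$, and hence $\|w_k^{\theta,\pm\infty}\|_{Z'}\le\theta$ by the definition (\ref{def:Z'}) together with the already-established $X^1$ bound.

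Finally, $\rho_k^\theta$ is controlled region by region. On $|t-t_k|\le T_\theta N_k^{-2}$ it equals $U_k-\widetilde{v_k}$ plus the transition-strip contribution of the cutoffs $\chi_\pm$, which together give error $\le\theta/5$ via Proposition \ref{prop:4.4}(2). On $\pm(t-t_k)\ge T_\theta N_k^{-2}$, it reduces to $U_k-\Pi_{t_k-t,x_k}T_{N_k}\phi'^{\pm\infty}$ (bounded by $\theta/10$ from Proposition \ref{prop:4.4}(2)) plus the high-frequency tail $(I-P_{\le R_\theta N_k})T_{N_k}\phi'^{\pm\infty}$, whose $H^1(\TTT^4)$ norm tends to $0$ as $R_\theta\to\infty$ uniformly in $k$ because $\phi'^{\pm\infty}$ is smooth, so $T_{N_k}\phi'^{\pm\infty}$ is essentially frequency-localized at scale $N_k$. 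The replacement of $\phi$ by $\phi'$ costs $\le\theta/100$ via the stability Proposition \ref{prop:stability}. The main obstacle is the bookkeeping at the boundaries of the three time windows: the smooth cutoffs $\chi,\chi_\pm$ must be chosen so that (i) their derivatives, while controlled, do not inflate the atomic $X^1$-norms of the individual pieces, and (ii) the Littlewood–Paley projection $P_{\le R_\theta N_k}$ has negligible effect on the scattering data. Both are handled by first fixing $T_\theta$ from the Extinction Lemma and then sending $R_\theta$ to infinity, exploiting the a priori smoothness of the scattering profiles $\phi'^{\pm\infty}$.
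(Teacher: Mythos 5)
Your argument follows the paper's proof of this corollary (and the parallel Lemma 8.1) essentially step for step: both split $\mathds{1}_{(-T_\theta^{-1},T_\theta^{-1})}U_k$ into a bubble piece built from the compactly supported approximation $\widetilde{u_k}$ of Proposition \ref{prop:4.4}(2), two scattering tails built from $\Pi_{t_k-t,x_k}T_{N_k}\phi^{\pm\infty}$, and an error $\rho_k^\theta$; both fix $T_\theta$ first via the Extinction Lemma to kill the $Z'$-norm of the tails, then take $R_\theta$ large; and both use persistence of regularity for the Euclidean solution to get the pointwise derivative bounds. Your only departures are cosmetic — smooth time cutoffs $\chi,\chi_\pm$ rather than the paper's sharp indicators $\mathds{1}_{\{\pm(t-t_k)\geq T_\theta N_k^{-2}\}}$, and a torus-side projection $P_{\leq R_\theta N_k}$ rather than truncating the Euclidean scattering data before transfer — and you should note that with $\chi,\eta$ supported in radius-2 balls the support of $w_k^\theta$ only lands in $S_k^\theta$ after adjusting $T_\theta,R_\theta$ by a factor of 2, and that boundedness of smooth time cutoffs on $X^1$ requires the standard bounded-variation argument for $U^2/V^2$ rather than being literally immediate.
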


\begin{proof}
By Proposition \ref{prop:4.4}, there exists
  $T(\phi, \frac{\theta}{4})$, such that for all
  $T\geq T(\phi, \frac{\theta}{4})$, there exists $R(\phi,\frac{\theta}{4},T)$
  such that for all $R\geq R(\phi,\frac{\theta}{2},T)$, there holds that
  \begin{equation*}
    \|U_k - \widetilde{u_k}\|_{X^1(\{
      |t-t_k|\leq T(N_{k})^{-2}\}\cap\{
      |t|<T^{-1}
    \})}\leq \frac{\theta}{2},
  \end{equation*}
  for $k$ large enough, where
  \begin{equation*}
    \left(\pi_{-x_k} \widetilde{u_k}\right)(x,t)
    =N_{k} \eta(N_{k} \Psi^{-1}(x)/R)
    u(N_{k} \Psi^{-1}(x), N_{k}^2(t-t_k)),
  \end{equation*}
  where $u$ is a solution of (\ref{eq:NLS}) with scattering data $\phi^{\pm\infty}$.

  In addition, up to subsequence,
  \begin{equation*}
    \|U_k - \Pi_{t_k-t, x_k} T_{N_{k}} \phi^{\pm\infty}
    \|_{X^1(\{\pm(t-t_k)\geq T(N_k)^{-2}\}\cap\{
    |t|\leq T^{-1}
    \})}\leq \frac{\theta}{4},
  \end{equation*}
  for $k$ large enough (depending on $\phi$, $\theta$, $T$, and $R$).

  Choose a sufficiently large $T_{\theta}> T(\phi,\frac{\theta}{4})$
  based on the extinction lemma(Lemma \ref{lem:extinction}), such that
  \begin{equation*}
    \|e^{it\DD} \Pi_{t_k, x_k} T_{N_k} \phi^{\pm\infty}
    \|_{Z(T_{\theta}(N_k)^{-2}, T_{\theta}^{-1})}\leq \frac{\theta}{4}
  \end{equation*}
  when k large enough.

  And then we choose $R_{\theta} = R(\phi, \frac{\theta}{2}, T_{\theta})$.

  Denote:
  \begin{enumerate}
    \item $\w_k^{\theta,\pm\infty} := \mathds{1}_{\{
    \pm(t-t_k)\geq T_{\theta}(N_k)^{-2},|t|\leq T_{\theta}^{-1}\}}
    \left(\Pi_{t_k-t, x_k} T_{N_k}\phi^{\theta,\pm\infty}
    \right)$,

    {where }
    \[
    \|\phi^{\theta,\pm\infty}\|_{\dot{H}^1(\RRR^4)}\lesssim 1,
      \ \phi^{\theta,\pm\infty} = P_{\leq R_{\theta}} (\phi^{\theta,\pm\infty}),
      \]
      which implies   $\w_k^{ \theta,\pm\infty} = P_{\leq R_{\theta}N_{\theta}}
        \w_k^{ \theta,\pm\infty}$.
    \item $\w_k^{\theta} := \widetilde{u_k}\cdot \mathds{1}_{S_k^{ \theta}},$
    where $
    S_k^{\theta} :=\{ (x,t)\in \TTT^4\times (-T_{\theta}, T_{\theta})
    : |t-t_k|< T_{\theta}(N_k)^{-2},\ |x-x_k|\leq R_{\theta}(N_k)^{-1}\}$.

   \noindent By the stability property (Proposition \ref{prop:stability}) and Theorem \ref{thm:4.2}, we can adjust
    $\w_k^{\theta}$ and $\w_k^{\theta,\pm\infty}$, with an acceptable
    error, to make
    \[|\nabla_x^m \w_k^{\theta}|+(N_k)^{-2}\mathds{S_k^{\alpha, \theta}}
    |\partial_t \nabla_x^m \w_k^{ \theta}|\leq R_{\theta}
    (N_k)^{|m|+1}\mathds{1}_{S_k^{ \theta}},\ 0\leq |m| \leq 10.\]

    \item $\rho_k:= \mathds{1}_{(-T_{\theta}^{-1},T_{\theta}^{-1})}(t)
    U_k^{\alpha} -\w^{\theta}_k -\w^{\theta,+\infty}-\w^{\theta,-\infty}$.
  \end{enumerate}
  By (\ref{4.4.2}) and (\ref{4.4.3}), we obtain that
  \[
  \|\rho_k^{\theta}\|_{X^1(\{|t|<T_{\theta}^{-1}\})}\leq \frac{\theta}{2}.
  \]
  and then we have
  \begin{align*}
    \|\w_k^{\theta,\pm\infty}\|_{Z'(-T_{\theta}^{-1},T_{\theta}^{-1})}
    +\|\rho^{\theta}_k\|_{X^1(-T_{\theta}^{-1},T_{\theta}^{-1})}\leq \theta,\\
    \|\w_k^{\theta,\pm\infty}\|_{X^1(-T_{\theta}^{-1},T_{\theta}^{-1})}+
    \|\w_k^{\theta}\|_{X^1(-T_{\theta}^{-1},T_{\theta}^{-1})}\lesssim 1.
  \end{align*}
\end{proof}

\section{Profile decomposition}

In this section, we construct the profile decomposition on $\TTT^4$ for linear
Schr\"{o}dinger equations. The arguments and propositions in this section is almost identical to those in the Section 5 of \cite{ionescu2012global2}, except for one more lemma (Lemma \ref{prop:almostorth}) about almost  orthogonality of nonlinear profiles which is useful in the focusing case.

As in the previous section, given $f\in L^2(\RRR^4)$, $t_0\in\RRR$, and
$x_0\in\TTT^4$, we define:
\begin{align*}
  (\Pi_{t_0, x_0} )f(x)&:= (e^{-it_0\DD} f)(x-x_0)\\
  T_N \phi (x) &:= N\widetilde{\phi} (N\Psi^{-1}(x)),
\end{align*}
where $\widetilde{\phi}(y):= \eta(\frac{y}{N^{\frac{1}{2}}})\phi(y).$

Observe that $T_N : \dot{H}^1(\RRR^4) \to H^1(\TTT^4)$ is a linear operator with
$\|T_N \phi\|_{H^1(\TTT^4)}\lesssim \|\phi\|_{\dot{H}^1(\RRR^4)}$.


\begin{definition}[Euclidean frames]
  \begin{enumerate}
    \item We define a Euclidean frame to be a sequence $\mathcal{F}_e =(N_k, t_k, x_k)_k$
    with $N_k\geq 1$, $N_k\to +\infty$, $t_k\in\RRR$, $t_k\to 0$, $x_k\in\TTT^4$.
    We say that two frames, $(N_k, t_k, x_k)_k$ and $(M_k, s_k, y_k)_k$ are
    orthogonal if
    \[
    \lim_{k\to+\infty} \left( \ln \left|\frac{N_k}{M_k}\right|+ N_k^2
    \left|t_k-s_k\right| + N_k\left|x_k-y_k\right|\right) =\infty.
    \]
    Two frames that are not orthogonal are called equivalent.
    \item If $\mathcal{O} =(N_k, t_k, x_k)_k$ is a Euclidean frame and if
    $\phi\in\dot{H}^1(\RRR^4)$, we define the Euclidean profile associated to
    $(\phi, \mathcal{O})$ as the sequence $\widetilde{\phi}_{\mathcal{O}_k}$:
    \[
    \widetilde{\phi}_{\mathcal{O}_k} := \Pi_{t_k, x_k} (T_{N_k}\phi).\]
  \end{enumerate}
\end{definition}

\begin{prop}[Equivalence of frames \cite{ionescu2012global2}]\label{prop:equivalenceFrames}

\noindent (1) If $\mathcal{O}$ and $\mathcal{O}'$ are equivalent Euclidean frames, then there
exists an isometry $T: \dot{H}^1(\RRR^4) \to \dot{H}^1(\RRR^4)$ such that
for any profile $\widetilde{\phi}_{\mathcal{O}'_k}$, up to a subsequence there holds that
\[
\limsup_{k\to\infty} \|\widetilde{T\phi}_{\mathcal{O}_k} - \widetilde{\phi}_{\mathcal{O}'_k}\|_{H^1(\TTT^4)} = 0.
\]

(2) If $\mathcal{O}$ and $\mathcal{O}'$ are orthogonal Euclidean frames and $\widetilde{\phi}_{\mathcal{O}_k}$, $\widetilde{\phi}_{\mathcal{O}'_k}$ are corresponding profiles, then, up to a subsequence:
\begin{align}\label{eq:almostorth1}
\lim_{k\to\infty} \langle \widetilde{\phi}_{\mathcal{O}_k},  \widetilde{\phi}_{\mathcal{O}'_k}\rangle_{H^1\times H^1(\TTT^4)} = 0;\\\label{eq:almostorth2}
\lim_{k\to\infty} \langle \widetilde{|\phi}_{\mathcal{O}_k}|^2,  |\widetilde{\phi}_{\mathcal{O}'_k}|^2\rangle_{L^2\times L^2(\TTT^4)} = 0.
\end{align}
\end{prop}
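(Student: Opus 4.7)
\textbf{Proof plan for Proposition \ref{prop:equivalenceFrames}.}

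For part (1), start by using the negation of the orthogonality condition: since $\ln|N_k/M_k|+N_k^2|t_k-s_k|+N_k|x_k-y_k|$ does not tend to $+\infty$, all three nonnegative summands are bounded along a subsequence. A diagonal extraction then yields limits $N_k/M_k\to\alpha\in(0,\infty)$, $N_k^2(s_k-t_k)\to\beta\in\RRR$, and $N_k\Psi^{-1}(x_k-y_k)\to\gamma\in\RRR^4$ (which makes sense because for $k$ large $x_k-y_k$ lies in a chart where $\Psi$ is the identity). The plan is then to define $T\colon\dot{H}^1(\RRR^4)\to\dot{H}^1(\RRR^4)$ as a composition of three building blocks: the $\dot H^1$-isometric rescaling $\phi\mapsto\alpha\phi(\alpha\cdot)$ (this is the $\dot H^1$-invariant scaling in $d=4$), translation by $\gamma$, and the free Schrödinger evolution $e^{i\beta\Delta_{\RRR^4}}$. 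Each factor is an $\dot H^1$-isometry, so $T$ is.

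The content of part (1) is then to show $\|\widetilde{T\phi}_{\mathcal{O}_k}-\widetilde{\phi}_{\mathcal{O}'_k}\|_{H^1(\TTT^4)}\to0$. I would reduce to $\phi\in C_c^\infty(\RRR^4)$ by density (both sides depend boundedly on $\phi\in\dot H^1$), and then unravel the definitions on the Euclidean side, changing variables by $z=N_k\Psi^{-1}(x-x_k)$. In those coordinates both profiles, once we drop the cutoff $\eta(\cdot/N_k^{1/2})$ (which tends to $1$ on compact sets) and the chart map $\Psi^{-1}$ (which is the identity plus $O(|z|/N_k^2)$ in these coordinates), become exact Euclidean rescalings/translations/time-translates of $\phi$, and the algebraic identity relating the two profiles is precisely the composition defining $T$. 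The main obstacle is to control the error from $\eta$ and $\Psi^{-1}$ at the $\dot H^1$ level uniformly in $k$ and to show that replacing the torus evolution $e^{-i(t_k-s_k)\Delta_{\TTT^4}}$ by its Euclidean counterpart $e^{-i(t_k-s_k)\Delta_{\RRR^4}}$ produces only $o(1)$ error in $H^1$, since $\phi$ is smooth and compactly supported on the rescaled side and the relevant time is $O(N_k^{-2})$.

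For part (2), the orthogonality hypothesis lets me pass to a subsequence on which at least one of the three terms diverges. I would again reduce to $\phi,\psi\in C_c^\infty(\RRR^4)$ by density and split into three cases. If $|\ln(N_k/M_k)|\to\infty$, say $N_k/M_k\to\infty$, the profile $\widetilde{\phi}_{\mathcal{O}_k}$ is (essentially) Fourier-supported at frequencies $\lesssim N_k$ on $\TTT^4$ after chart change, so when tested against the much-lower-frequency $\widetilde{\psi}_{\mathcal{O}'_k}$, Plancherel and scaling give an inner product bounded by $(M_k/N_k)^{c}\to0$; for the second inner product one changes variables to the finer scale and notes that the coarser profile becomes a smooth bump of small amplitude on that scale, so an $L^\infty\cdot L^1$ estimate wins a power. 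If instead $N_k^2|t_k-s_k|\to\infty$, I would rescale both profiles to the Euclidean scale $N_k$ and use dispersion (the $d=4$ decay $\|e^{it\Delta}f\|_{L^\infty(\RRR^4)}\lesssim|t|^{-2}\|f\|_{L^1(\RRR^4)}$ for Schwartz $f$) to show both inner products vanish. If $N_k|x_k-y_k|\to\infty$, the two profiles have essentially disjoint spatial supports on the $N_k^{-1}$-scale, so Cauchy–Schwarz with supports gives zero in the limit. Case splitting is clean, and the hardest individual step is the time-separation case, where one must carefully combine the dispersive decay with the fact that the relevant functions are $\dot H^1$- rather than $L^1$-normalized; the standard trick is to trade one $L^\infty$ estimate (via dispersion after subtracting out a Schwartz approximant) against a uniform $L^1$ bound for the other profile.
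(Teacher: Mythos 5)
The paper does not supply its own proof of this proposition: it is stated with a citation to Ionescu--Pausader \cite{ionescu2012global2} and used as a black box, so there is no in-paper argument to compare against. That said, your plan essentially reconstructs the argument from \cite{ionescu2012global2} (and its $\TTT^3$ predecessor \cite{ionescu2012energy}) adapted to $d=4$, and the structure is correct: for (1), extract subsequential limits $\alpha,\beta,\gamma$ of $N_k/M_k$, $N_k^2(s_k-t_k)$, $N_k(x_k-y_k)$ from the boundedness of the orthogonality functional along a subsequence, build the isometry $T$ as a composition of an $\dot H^1$-invariant dilation, a translation, and a free Schr\"odinger flow, and reduce to $\phi\in C_c^\infty$ to control the $\eta$-cutoff and chart errors; for (2), pass to a subsequence on which one of the three terms diverges and treat scale, time and space separation separately.

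The one place where your plan has a genuine gap is the time-separation case of (2). You invoke the Euclidean $d=4$ dispersion bound $\|e^{it\Delta_{\RRR^4}}f\|_{L^\infty}\lesssim |t|^{-2}\|f\|_{L^1}$ after rescaling to Euclidean coordinates. But the profiles are propagated by the \emph{torus} flow, and the regime you must handle is precisely $N_k^2|t_k-s_k|\to\infty$: even though $|t_k-s_k|\to 0$, after rescaling by $N_k$ the elapsed time is $N_k^2|t_k-s_k|\to\infty$ while the ``box'' has side $\sim N_k$, so as soon as $|t_k-s_k|\gtrsim N_k^{-1}$ the evolved profile wraps around $\TTT^4$ and the Euclidean approximation of the propagator is no longer valid. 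One cannot use the Euclidean decay rate directly; what is needed is the torus pointwise bound on $K_M=e^{it\Delta}P_{\leq M}\delta_0$ in the form $\|K_M\|_{L^\infty_{x,t}(\TTT^4\times[SM^{-2},S^{-1}])}\lesssim S^{-2}M^4$, which the paper derives from the Weyl-sum estimate (\ref{4.9}) inside the proof of the extinction lemma (Lemma~\ref{lem:extinction}). Pairing that $L^\infty$ control for one profile against the $L^1$ bound $\|f_N\|_{L^1(\TTT^4)}\lesssim N^{-3}$ from (\ref{4.12}) for the other (after using unitarity to collapse to a single time difference) gives the required vanishing. Your instinct to trade an $L^\infty$ against an $L^1$ estimate is right; the missing point is that the $L^\infty$ input must be the torus bound rather than the Euclidean one.
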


\begin{lem}[Refined Strichartz inequality]
  Let $f\in H^1(\TTT^4)$ and $I\subset [0,1]$. Then 
  \[
  \|e^{it\Delta} f\|_{Z(I)} \lesssim (\|f\|_{H_x^1(\TTT^4)})^{\frac{5}{6}} \sup_{N\in 2^{\ZZZ}} (N^{-1} \|P_N e^{it\Delta}f\|_{L^\infty_{t,x}(I\times\TTT^4)})^{\frac{1}{6}}.
  \]
\end{lem}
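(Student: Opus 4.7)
The plan is to run a standard refined Strichartz argument: interpolate the $L^4$ norm between $L^\infty$ and an $L^p$ norm with $p$ slightly above $3$ (so that Lemma \ref{prop:strichartz} applies), and then sum in the dyadic frequency $N$ by embedding $\ell^2 \hookrightarrow \ell^{10/3}$ at the cost of a supremum.

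More precisely, I will fix $p=10/3 > 3$. For a fixed dyadic $N$ and the subinterval $J\subset I$ realizing the $Z$-norm, I would use the elementary bound
\begin{equation*}
\|g\|_{L^4(\TTT^4\times J)} \leq \|g\|_{L^\infty(\TTT^4\times J)}^{1/6}\,\|g\|_{L^{10/3}(\TTT^4\times J)}^{5/6},
\end{equation*}
applied to $g = P_N e^{it\Delta}f$, and then invoke Lemma \ref{prop:strichartz} with $p=10/3$ to get $\|P_N e^{it\Delta}f\|_{L^{10/3}} \lesssim N^{1/5}\|P_N f\|_{L^2}$. Combining these and multiplying by $N^{1/2}$ gives, after bookkeeping of powers of $N$,
\begin{equation*}
N^{1/2}\|P_N e^{it\Delta}f\|_{L^4(\TTT^4\times J)} \lesssim \bigl(N^{-1}\|P_N e^{it\Delta}f\|_{L^\infty(\TTT^4\times J)}\bigr)^{1/6}\,\bigl(N\|P_N f\|_{L^2(\TTT^4)}\bigr)^{5/6}.
\end{equation*}

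To produce the $Z$-norm I then take the fourth power and sum over $N\in 2^{\ZZZ}$. Pulling out the supremum factor yields
\begin{equation*}
\sum_{N} N^2 \|P_N e^{it\Delta}f\|_{L^4}^4 \lesssim \Bigl(\sup_{M\in 2^\ZZZ} M^{-1}\|P_M e^{it\Delta}f\|_{L^\infty}\Bigr)^{2/3}\sum_N (N\|P_N f\|_{L^2})^{10/3}.
\end{equation*}
The last sum is handled by the trivial embedding $\ell^2\hookrightarrow \ell^{10/3}$: writing $a_N := N\|P_N f\|_{L^2}$ and using $\sup_N a_N \leq (\sum_N a_N^2)^{1/2} \lesssim \|f\|_{H^1}$,
\begin{equation*}
\sum_N a_N^{10/3} \leq (\sup_N a_N)^{4/3} \sum_N a_N^{2} \lesssim \|f\|_{H^1}^{10/3}.
\end{equation*}
Taking the fourth root in the displayed estimate, and then the supremum over $J\subset I$, $|J|\leq 1$, yields the desired bound.

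There is no real obstacle here beyond choosing exponents correctly; the only thing to be mindful of is that Lemma \ref{prop:strichartz} is stated on $[-1,1]$, so the argument must be carried out on the subintervals $J\subset I$ of length at most $1$ appearing in the definition of $Z(I)$, but this matches the structure of the $Z$-norm perfectly and causes no difficulty.
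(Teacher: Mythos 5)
Your proof is correct and follows essentially the same route as the paper: interpolate $L^4$ between $L^{10/3}$ and $L^\infty$, apply Lemma \ref{prop:strichartz} with $p=10/3$, pull out the supremum factor, and use the $\ell^2\hookrightarrow\ell^{10/3}$ embedding to close. The only cosmetic difference is that you phrase the last step as $\sum a_N^{10/3}\leq(\sup a_N)^{4/3}\sum a_N^2$, which is exactly the $\ell^2\hookrightarrow\ell^{10/3}$ bound written out; everything matches.
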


\begin{proof}
By the definition of $Z$-norm,
\[
\|e^{it\Delta} f\|_{Z(I)} = \left( \sum_N N^2 \|P_N e^{it\Delta} f\|^4_{L^4_{t,x}}\right)^{\frac{1}{4}}
= \left\|N^{\frac{1}{2}} \|P_Ne^{it\Delta}f\|_{L^4_{t,x}} \right\|_{l^4_N}.
\]
By H\"{o}lder inequality and Proposition \ref{prop:strichartz}, we have that
\begin{align*}
    &\left\|N^{\frac{1}{2}} \|P_Ne^{it\Delta}f\|_{L^4_{t,x}} \right\|_{l^4_N}\\
    \lesssim & \left\|
    \left(N^{\frac{4}{5}}\|P_Ne^{it\Delta}f\|_{L^{\frac{10}{3}_{t,x}}}\right)^{\frac{5}{6}} \left(N^{-1}
    \|P_N e^{it\Delta} f\|_{L^\infty_{t,x}}\right)^{\frac{1}{6}}\right\|_{l^4_N}\\
    \lesssim & \sup_{N}\left(N^{-1}
    \|P_N e^{it\Delta} f\|_{L^\infty_{t,x}}\right)^{\frac{1}{6}} 
    \left( \sum_N N^{\frac{8}{3}}\|P_N e^{it\Delta} f\|^{\frac{10}{3}}_{L^{\frac{10}{3}}_{t,x}}
    \right)^{\frac{5}{6}}\\
    \lesssim & \sup_{N}\left(N^{-1}
    \|P_N e^{it\Delta} f\|_{L^\infty_{t,x}}\right)^{\frac{1}{6}} \left(\|f\|_{H^1_x(\TTT^4)}\right)^{\frac{5}{6}}.
    \end{align*}
\end{proof}

\begin{remark}
  The refined Strichartz estimate actually give us a fact:
  \[
  \sup_{N\in 2^{\ZZZ}} (N^{-1} \|P_N e^{it\Delta}f\|_{L^\infty_{t,x}})\gtrsim
  \frac{\|e^{it\Delta}f\|^6_{Z(I)}}{\|f\|^5_{H^1(\TTT^4)}},
  \]
  the linear solutions with non-trivial space-time norm must concentrate on at least one
  frequency annulus and around some points in space-time space.
\end{remark}


\begin{prop}[Profile decompositions \cite{ionescu2012global2}]\label{prop:ProfileDecomposition}
  Consider $\{ f_k\}_{k}$ a sequence of functions in $H^1(\TTT^4)$ and $0<A<\infty$ satisfying
  \[
  \limsup_{k\to +\infty} \|f_k\|_{H^1(\TTT^4)}\leq A
  \]
  and a sequence of intervals $I_k=(-T_k, T^k)$ such that $|I_k|\to 0$ as $k\to\infty$.
  Up to passing to a subsequence, assume that $f_k\rightharpoonup g\in H^1(\TTT^4).$
  There exists $J^*\in \{0, 1, ...\}\cup \{\infty\}$, and a sequence of profile $\widetilde{\psi}^\alpha_{k}:= \widetilde{\psi}^\alpha_{\mathcal{O}_k^\alpha}$
  associated to pairwise orthogonal Euclidean frames $\mathcal{O}^\alpha$ and $\psi^\alpha \in {H}^1(\RRR^4)$ such that
  extracting a subsequence, for every $0 \leq J\leq J^*$, we have
  \begin{equation}\label{5.1}
    f_k = g + \sum_{1\leq \alpha \leq J} \widetilde{\psi}^\alpha_{k}
    + R_k^J
  \end{equation}

  where $R^J_k$ is small in the sense that

  \begin{equation}\label{5.2}
    \limsup_{J\to J^*}\limsup_{k\to\infty}\|e^{it\DD}R_k^J\|_{Z(I_k)} = 0.
  \end{equation}

  Besides, we also have the following orthogonality relations:
\begin{equation}\label{5.3}
  \begin{split}
    \|f_k\|_{L^2}^2 = \|g\|_{L^2}^2 + \|R_k^J\|^2_{L^2}+o_k(1).\\
    \|\nabla f_k\|_{L^2}^2=\|\nabla g\|_{L^2}^2 +
    \sum_{\alpha\leq J}\|\nabla_{\RRR^4} \psi^\alpha\|^2_{L^2(\RRR^4)} +\|\nabla R_k^J\|^2_{L^2}
    +o_k(1).\\
    \lim_{J\to J^*}\limsup_{k\to\infty}
    \left| \|f_k\|^4_{L^4} - \|g\|_{L^4}^4 -\sum_{\alpha\leq J}
    \|\widetilde{\psi^\alpha_k}\|_{L^4}^4 \right| = 0.
  \end{split}
\end{equation}
\end{prop}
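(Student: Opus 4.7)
The plan is to carry out a standard concentration-compactness profile extraction, with the refined Strichartz inequality proved just above serving as the main extraction tool. After replacing $f_k$ by $f_k-g$ (so that we may assume $f_k\rightharpoonup 0$ in $H^1(\TTT^4)$), I set $R_k^0:=f_k-g$ and build the decomposition inductively. At step $J$, let $\delta_J:=\limsup_{k\to\infty}\|e^{it\Delta}R_k^{J-1}\|_{Z(I_k)}$. If $\delta_J=0$ along a subsequence we stop and set $J^*=J-1$. Otherwise, the refined Strichartz inequality gives
\begin{equation*}
\sup_{N\in 2^\ZZZ}N^{-1}\|P_N e^{it\Delta}R_k^{J-1}\|_{L^\infty_{t,x}(I_k\times\TTT^4)}\gtrsim \frac{\delta_J^6}{A^5},
\end{equation*}
so there exist a dyadic scale $N_k^J$ and a point $(t_k^J,x_k^J)\in I_k\times\TTT^4$ with $N_k^{-J}|P_{N_k^J}e^{it_k^J\Delta}R_k^{J-1}(x_k^J)|\gtrsim \delta_J^6/A^5$. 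Because $|I_k|\to 0$ and $R_k^{J-1}\rightharpoonup 0$, after passing to a subsequence we force $N_k^J\to\infty$ and $t_k^J\to 0$, so $\mathcal{O}^J:=(N_k^J,t_k^J,x_k^J)_k$ is a Euclidean frame. Testing $\Pi_{-t_k^J,-x_k^J}R_k^{J-1}$ rescaled by $N_k^J$ against a bump in a ball of size $1/N_k^J$ around $x_k^J$ and extracting a weak limit yields $\psi^J\in\dot{H}^1(\RRR^4)$ with $\|\psi^J\|_{\dot H^1(\RRR^4)}\gtrsim \delta_J^6/A^5$. Set $\widetilde{\psi}^J_k:=\Pi_{t_k^J,x_k^J}T_{N_k^J}\psi^J$ and $R_k^J:=R_k^{J-1}-\widetilde{\psi}^J_k$.

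The next step is to verify pairwise orthogonality of the frames and the energy-additivity. If two frames $\mathcal{O}^\alpha,\mathcal{O}^\beta$ were not orthogonal, then by Proposition \ref{prop:equivalenceFrames} they would be equivalent, and the construction of $\psi^\beta$ as a weak limit in the $\mathcal{O}^\beta$ frame would force $\psi^\beta=0$ (contradicting $\|\psi^\beta\|_{\dot H^1}>0$), since the $\psi^\alpha$ contribution was already subtracted from the remainder. Using \eqref{eq:almostorth1} in Proposition \ref{prop:equivalenceFrames}, frame orthogonality combined with weak convergence of rescaled remainders in $\dot{H}^1(\RRR^4)$ gives the Pythagorean identity
\begin{equation*}
\|R_k^J\|_{\dot H^1}^2=\|R_k^{J-1}\|_{\dot H^1}^2-\|\psi^J\|_{\dot H^1(\RRR^4)}^2+o_k(1),
\end{equation*}
which iterates to the stated $\dot H^1$ orthogonality. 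The $L^2$ orthogonality is simpler: because $T_N$ maps $\dot H^1(\RRR^4)\to H^1(\TTT^4)$ with $\|T_N\phi\|_{L^2(\TTT^4)}\lesssim N^{-1}\|\phi\|_{\dot H^1(\RRR^4)}\to 0$, each $\widetilde{\psi}^\alpha_k$ vanishes in $L^2(\TTT^4)$, so the cross terms are $o_k(1)$. Summing the $\dot H^1$ identity and using the uniform bound $\limsup_k\|f_k\|_{H^1}\leq A$ yields $\sum_\alpha\|\psi^\alpha\|_{\dot H^1(\RRR^4)}^2\leq A^2$; hence $\delta_J\to 0$ as $J\to J^*$, which together with the refined Strichartz bound applied to $R_k^J$ delivers the $Z$-smallness \eqref{5.2}.

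The part I expect to be the main obstacle is the asymptotic $L^4$ orthogonality in \eqref{5.3}, which is genuinely nonlinear. For pairwise orthogonality of two profiles $\widetilde{\psi}^\alpha_k$ and $\widetilde{\psi}^\beta_k$ I would invoke \eqref{eq:almostorth2} of Proposition \ref{prop:equivalenceFrames} to kill the mixed terms $\int|\widetilde{\psi}^\alpha_k|^2|\widetilde{\psi}^\beta_k|^2\,dx\to 0$; the remaining triple- and odd-order crossings vanish by Cauchy--Schwarz against these pairwise-orthogonal terms. To handle the infinite sum and the remainder jointly, I would expand
\begin{equation*}
\|f_k\|_{L^4}^4=\Bigl\|g+\sum_{\alpha\leq J}\widetilde{\psi}^\alpha_k+R_k^J\Bigr\|_{L^4}^4,
\end{equation*}
reduce the contribution of $R_k^J$ using H\"older together with the bound $\|R_k^J\|_{L^4(\TTT^4\times I_k)}\lesssim \|e^{it\Delta}R_k^J\|_{Z(I_k)}^{1/2}\|R_k^J\|_{X^1}^{1/2}$ arising from Proposition \ref{prop:ZinX} and the smoothing bootstrap used in Proposition \ref{prop:lwp}, and then send $J\to J^*$, so that the $R_k^J$ contribution dies together with $\delta_J$. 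A diagonal subsequence argument in $J$ finishes the proof in the case $J^*=\infty$.
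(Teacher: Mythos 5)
The paper itself gives no proof of this proposition; it is cited verbatim from Ionescu--Pausader \cite{ionescu2012global2} (and the surrounding text says the section is "almost identical'' to Section 5 there). Your extraction scheme --- subtract $g$, iterate using the refined Strichartz inequality to find a concentration scale/point, rescale and take a weak limit to produce $\psi^J$ with $\|\psi^J\|_{\dot H^1}\gtrsim \delta_J^6/A^5$, argue frame orthogonality from the uniqueness of weak limits, and obtain the $L^2$ and $\dot H^1$ Pythagorean identities from weak convergence of the rescaled remainders plus \eqref{eq:almostorth1} --- is the same route as that reference, and those parts are essentially sound (modulo the usual details: $N_k^J\to\infty$ from $R_k^{J-1}\rightharpoonup 0$ via compactness of $P_{\le N}$ on $\TTT^4$, passing to subsequences, etc.).

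There is, however, a genuine gap in your treatment of the $L^4$ decoupling in \eqref{5.3}. That identity is about the \emph{spatial} norm $\|R_k^J\|_{L^4(\TTT^4)}$: after expanding $\|g+\sum_\alpha\widetilde\psi^\alpha_k+R_k^J\|_{L^4(\TTT^4)}^4$ and killing the cross-profile terms via \eqref{eq:almostorth2}, what remains is to show $\limsup_k\|R_k^J\|_{L^4(\TTT^4)}\to 0$ as $J\to J^*$, precisely because $R_k^J$ is absent from the stated identity (unlike the $L^2$ and $\dot H^1$ lines). The bound you invoke, $\|R_k^J\|_{L^4(\TTT^4\times I_k)}\lesssim\|e^{it\Delta}R_k^J\|_{Z(I_k)}^{1/2}\|R_k^J\|_{X^1}^{1/2}$, does not even typecheck ($R_k^J$ is a function of $x$ alone, so $\|R_k^J\|_{X^1}$ and $\|R_k^J\|_{L^4(\TTT^4\times I_k)}$ must be read as norms of $e^{it\Delta}R_k^J$), and once corrected it controls a \emph{space-time} $L^4$ norm of the free evolution on $I_k$, which is not the fixed-time quantity $\|R_k^J\|_{L^4(\TTT^4)}$ you need. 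Weak $H^1$-convergence to zero does not give $L^4(\TTT^4)$-smallness either, since $H^1\hookrightarrow L^4$ is the critical, non-compact embedding. Closing this requires a dedicated refined-Sobolev-type estimate on the torus (e.g., of the form $\|f\|_{L^4(\TTT^4)}\lesssim\|f\|_{H^1}^{1-\theta}\bigl(\sup_N N^{-1}\|P_Ne^{it\Delta}f\|_{L^\infty(I\times\TTT^4)}\bigr)^\theta$ together with the smallness of the low-frequency part coming from weak convergence), analogous to Lemma 4.3/4.5 in \cite{ionescu2012global2}; as written your argument does not supply it.
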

\begin{remark}
  $g$ and $\widetilde{\psi}^\alpha_{k}$ for all $\alpha$ are
  called profiles. In addition, we call $g$ is Scale-1-profile,
  and $\widetilde{\psi}^\alpha_{k}$ are called Euclidean profiles.
\end{remark}

\begin{remark}[Almost orthogonality of the energy]\label{rmk:EnergyDecoupling}
By (\ref{def:euclideanProfile}), we have that $\|\widetilde{\psi}^\alpha_{k}\|_{L^2(\TTT^4)} \leq \frac{1}{N_k}\|\psi^\alpha\|_{L^2(\RRR^4)}\to 0$ as $k\to \infty$ and $\|\widetilde{\psi}^\alpha_{k}\|^2_{\dot{H}^1(\TTT^4)} = \frac{1}{N}\|\nabla\eta(\frac{\cdot}{N^{\frac{1}{2}}})\psi^\alpha\|^2_{L^2(\RRR^4)} + \|\eta(\frac{\cdot}{N^{\frac{1}{2}}})\psi^\alpha\|^2_{\dot{H}^1(\RRR^4)}$. Then above  and (\ref{5.3}), we know that
\[
\lim_{J\to J^*}\lim_{k\to\infty} \left(  \sum_{1\leq \alpha \leq J}
  E(\widetilde{\psi}^\alpha_k)
  + E(R_k^J) + E(g)- E(f_k)\right) = 0.
\]
\end{remark}

\begin{lem}[Almost orthogonality of nonlinear profiles]\label{prop:almostorth}
Define $U_k^\alpha$, $U_k^\beta$ as the maximal life-span $I_k$  solutions of (\ref{eq:NLS}) with
  initial data $U_k^\alpha(0) = \widetilde{\psi}^\alpha_{\mathcal{O}^\alpha_k}$, $U_k^\beta(0) = \widetilde{\psi}^\beta_{\mathcal{O}^\beta_k}$, where $\mathcal{O}^\alpha$ and $\mathcal{O}^\beta$ are orthogonal.
  And define $G$ to be the solution of the maximal lifespan $I_0$ of (\ref{eq:NLS}) with initial
  data $G(0) =g$.  And $0\in I_k$ and $\lim_{k\to \infty} |I_k| = 0$. 
   Then 
  \begin{equation}
  \lim_{k\to \infty} \sup_{t\in I_k}\, \langle U^\alpha_k(t), U^\beta_k(t) \rangle_{\dot{H}^1\times \dot{H}^1} = 0, \quad \lim_{k\to \infty} \sup_{t\in I_k\cap I_0} \langle U^\alpha_k(t), G(t) \rangle_{\dot{H}^1\times \dot{H}^1} = 0.
  \end{equation}
\end{lem}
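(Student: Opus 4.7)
The plan is to invoke the structural decomposition of nonlinear Euclidean profiles provided by Corollary \ref{lem:decomposition1}. Fix $\theta>0$. Since $|I_k|\to 0$, for $k$ sufficiently large $I_k\subset(-T_\theta^{-1},T_\theta^{-1})$, so on $I_k$ we may decompose
\begin{equation*}
U_k^\alpha=\w_k^{\alpha,\theta,-\infty}+\w_k^{\alpha,\theta,+\infty}+\w_k^{\alpha,\theta}+\rho_k^{\alpha,\theta},
\end{equation*}
and analogously for $U_k^\beta$. Expanding $\langle U_k^\alpha(t),U_k^\beta(t)\rangle_{\dot{H}^1}$ bilinearly gives $16$ cross-terms; by Cauchy--Schwarz in $\dot{H}^1$ (using $X^1\hookrightarrow L^\infty_t H^1$) together with the uniform bounds of \eqref{eq:decomposition1}, every term involving a $\rho$-factor is $O(\theta)$ uniformly in $t\in I_k$. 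Since $\theta>0$ is arbitrary, it suffices to show that the remaining nine cross-terms, coming from pairs in $\{\w_k^{\alpha,\theta,-\infty},\w_k^{\alpha,\theta,+\infty},\w_k^{\alpha,\theta}\}\times\{\w_k^{\beta,\theta,-\infty},\w_k^{\beta,\theta,+\infty},\w_k^{\beta,\theta}\}$, tend to zero as $k\to\infty$ uniformly in $t\in I_k$.

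Up to a subsequence, the orthogonality of $\mathcal{O}^\alpha$ and $\mathcal{O}^\beta$ leaves us with two regimes: either the scales separate ($N_k^\alpha/N_k^\beta\to 0$ or $\infty$), or the scales stay comparable and one of $(\min N_k)^2|t_k^\alpha-t_k^\beta|\to\infty$, $(\min N_k)\,\dist_{\TTT^4}(x_k^\alpha,x_k^\beta)\to\infty$ holds. For the pair $\w_k^{\alpha,\theta}\cdot\w_k^{\beta,\theta}$: in the comparable-scale regime the supports $S_k^{\alpha,\theta}$ and $S_k^{\beta,\theta}$ become disjoint for $k$ large and the pairing vanishes; in the separated-scale regime, a change of variables to the smaller scale reduces the pairing to an integral whose volume factor, combined with the pointwise derivative bounds from \eqref{eq:decomposition1}, forces it to zero. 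For pairings of two $\w^{\cdot,\theta,\pm\infty}$-pieces, each such piece has the form $\Pi_{t_k^{\cdot}-t,x_k^{\cdot}}T_{N_k^{\cdot}}\phi^{\cdot,\theta,\pm\infty}$ (truncated in time), so at each fixed $t\in I_k$ the inner product reduces to the linear almost-orthogonality statement \eqref{eq:almostorth1} of Proposition \ref{prop:equivalenceFrames}; this applies because replacing $t_k^{\cdot}$ by $t_k^{\cdot}-t$ with $|t|\le|I_k|\to 0$ preserves orthogonality of frames. The remaining mixed pairings of $\w^{\theta}$ with $\w^{\theta,\pm\infty}$ follow by localizing the linear piece on the small concentration ball of the other.

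For the second inner product, continuity of $G$ in $H^1(\TTT^4)$ at $0$ combined with $|I_k|\to 0$ gives $\|G(t)-g\|_{\dot{H}^1}<\theta$ on $I_k\cap I_0$ for $k$ large, reducing matters to estimating $\langle U_k^\alpha(t),g\rangle_{\dot{H}^1}$. Applying the same decomposition of $U_k^\alpha$: the concentrating piece $\w_k^{\alpha,\theta}$ lives in a ball around $x_k^\alpha$ of radius $O((N_k^\alpha)^{-1})\to 0$ and pairs with the fixed $g$ to give $o(1)$ by dominated convergence on the shrinking support; the linear pieces $\w_k^{\alpha,\theta,\pm\infty}$ are, after unscaling, rescalings $N_k^\alpha\psi(N_k^\alpha\Psi^{-1}(\,\cdot-x_k^\alpha))$ of fixed $\dot{H}^1(\RRR^4)$-profiles evolved by the linear group, so they tend weakly to zero in $\dot{H}^1(\TTT^4)$ as $N_k^\alpha\to\infty$ and pair with $g$ to give $o(1)$.

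The hard part is the separated-scale case for the pair $\w_k^{\alpha,\theta}\cdot\w_k^{\beta,\theta}$: one must carefully combine the scale mismatch with the pointwise bounds from \eqref{eq:decomposition1} to show that the $\dot{H}^1$-pairing of two concentrating bumps at vastly different scales indeed collapses. This is the four-dimensional analog of the orthogonality computation behind Proposition \ref{prop:equivalenceFrames} and proceeds in the spirit of the orthogonality analysis in \cite{ionescu2012global2}.
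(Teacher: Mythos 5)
Your proposal follows essentially the same overall architecture as the paper's proof: both apply the decomposition from Corollary~\ref{lem:decomposition1}, absorb the $\rho$-terms into an $O(\theta)$ error, and handle the remaining cross-pairings case by case, with the two-linear-piece pairings reduced at each fixed $t$ to the frame-orthogonality statement \eqref{eq:almostorth1} of Proposition~\ref{prop:equivalenceFrames}. Where you diverge is in the details of the other cases. For the concentrating--concentrating term $\langle\w_k^{\alpha,\theta},\w_k^{\beta,\theta}\rangle$ at separated scales, you propose a physical-space change-of-variables estimate (volume of the smaller support against the pointwise bounds of \eqref{eq:decomposition1}); the paper instead splits in frequency via $P_{\leq\ee^{10}N_k^\alpha}$, $P_{>\ee^{-10}N_k^\beta}$ and invokes items (2) and (5) of the \textbf{Claim $\dagger$} in the proof of Lemma~\ref{lem:7.2}, with the middle cross-term vanishing by disjoint Fourier support. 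Your physical-space computation does work (one gets a factor $\sim(N_k^\beta/N_k^\alpha)^2$ up to $R_\theta$-constants), but the paper's route has the advantage of reusing bounds already established elsewhere in Section~8. For the $G$-pairing, the paper uses the economical device of setting $U_k^0:=G$ with the trivial frame $(1,0,0)_k$ and the degenerate decomposition $\w^{0,\theta,\pm\infty}=\tfrac12 G$, $\w^{0,\theta}=\rho^{0,\theta}=0$, so that the $G$-case is literally the special case $\beta=0$; your separate argument via continuity of $G$ at $0$ and weak convergence of the pieces also works, though the statement ``pairs with the fixed $g$ via dominated convergence on the shrinking support'' should really be Cauchy--Schwarz plus the fact that $\|\nabla g\|_{L^2(B(x_k^\alpha,R/N_k^\alpha))}\to 0$ by absolute continuity of the integral, since $\nabla\w_k^{\alpha,\theta}$ itself is not dominated. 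Finally, for the mixed $\w^{\theta}$--$\w^{\theta,\pm\infty}$ pairings you only gesture at ``localizing the linear piece''; the paper refers to the detailed Case~4 analysis (subcases 4.1--4.4) in the proof of \eqref{eq:6.9}, which splits further by the relative sizes of $N_k^\alpha,N_k^\beta$ and uses Lemma~\ref{lem:7.1} and the extinction lemma, and this is genuinely the most delicate part of the argument — your sketch here would need to be fleshed out along those lines.
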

\begin{proof}
Set $U^0_k(0) = g$ and $U^0_k = G$ for all $k$, such that $U^0_k$ can be considered as a nonlinear profile with a trivial frame $\mathcal{O} =(1, 0, 0)_k$. 

For any $\theta > 0$, by the decomposition of the nonlinear profiles $U^\alpha$ and $U^\beta$ (Corollary \ref{lem:decomposition1}), there exist $T_{\theta, \alpha}$, $R_{\theta, \alpha}$, $T_{\theta, \beta}$, $R_{\theta, \beta}$ sufficiently large
\begin{align*} 
    U_k^\alpha &= \w_k^{\alpha, \theta,-\infty}+\w_k^{\alpha, \theta,+\infty}
    +\w_k^{\alpha, \theta}+\rho^{\alpha, \theta}_k,\\
    U_k^\beta &= \w_k^{\beta, \theta,-\infty}+\w_k^{\beta, \theta,+\infty}
    +\w_k^{\beta, \theta}+\rho^{\beta, \theta}_k.
\end{align*}
For $U^0_k$,   set $U^0_k := \w_k^{\alpha,\theta,-\infty}+\w_k^{\alpha,\theta,+\infty}
  +\w_k^{\alpha,\theta}+\rho^{\alpha, \theta}_k$ where $\rho_{k}^{0, \theta} =
  \w_k^{0,\theta} = 0$ and $\w^{0,\theta,+\infty}=\w^{0,\theta,-\infty}=\frac{1}{2}G$.
  And by taking $T_{\theta,0}$ large, it is easy to make $\|G\|_{Z'(-T_{\theta,0},T_{\theta,0})}\leq \theta$. So
  $\langle U^\alpha_k(t), G(t) \rangle_{\dot{H}^1\times \dot{H}^1}$  can be considered as a special case of $\langle U^\alpha_k(t), U^\beta_k(t) \rangle_{\dot{H}^1\times \dot{H}^1}$ when $\beta = 0$.

Since $\rho^{\alpha,\theta}_k$, $\rho^{\beta,\theta}_k$ are the small term with the $X^1$-norm  less then $\theta$, for any fixed $t\in I_k$, it will suffice to consider the following three terms:
\begin{enumerate}
\item $\langle\w_k^{\alpha, \theta,\pm\infty} , \w_k^{\beta, \theta,\pm\infty} \rangle_{\dot{H}^1\times \dot{H}^1}$;
\item $\langle\w_k^{\alpha, \theta,\pm\infty} , \w_k^{\beta, \theta} \rangle_{\dot{H}^1\times \dot{H}^1}$;
\item $\langle\w_k^{\alpha, \theta} , \w_k^{\beta, \theta} \rangle_{\dot{H}^1\times \dot{H}^1}$.
\end{enumerate}
\case{(1)}{ $\langle\w_k^{\alpha, \theta,\pm\infty} , \w_k^{\beta, \theta,\pm\infty} \rangle_{\dot{H}^1\times \dot{H}^1}$.}
By the constructions of $\w_k^{\alpha, \theta,\pm\infty}, \w_k^{\beta, \theta,\pm\infty}$ in the proof of Lemma \ref{lem:decomposition1}, we obtain that 
\begin{align}
\w_k^{\alpha, \theta,\pm\infty} := \mathds{1}_{\{
    \pm(t-t_k^{\alpha})\geq T_{\alpha, \theta}(N^{\alpha}_k)^{-2},|t|\leq T_{\alpha, \theta}^{-1}\}}
    \left(\Pi_{t_k^\alpha-t, x_k^\alpha} T_{N^\alpha_k}\phi^{\alpha, \theta,\pm\infty}\right),\\
   \w_k^{\alpha, \theta,\pm\infty} := \mathds{1}_{\{ \pm(t-t_k^{\beta})\geq T_{\beta, \theta}(N^{\beta}_k)^{-2},|t|\leq T_{\beta, \theta}^{-1}\}}
    \left(\Pi_{t_k^\beta-t, x_k^\beta} T_{N^\beta_k}\phi^{\beta, \theta,\pm\infty}\right).
    \end{align}
For any fixed $t\in I_k$, we obtain that \[\langle\w_k^{\alpha, \theta,\pm\infty}(t) , \w_k^{\beta, \theta,\pm\infty}(t) \rangle_{\dot{H}^1\times \dot{H}^1} = \langle \phi^{\alpha, \theta,\pm\infty}_{\mathcal{O}^\alpha_k}, \phi^{\beta, \theta,\pm\infty}_{\mathcal{O}^\beta_k} \rangle_{\dot{H}^1\times \dot{H}^1}.\]
By (\ref{eq:almostorth1}) of Proposition \ref{prop:equivalenceFrames}, we obtain that 
\[\lim_{k\to\infty}\sup_t \langle\w_k^{\alpha, \theta,\pm\infty}(t) , \w_k^{\beta, \theta,\pm\infty}(t) \rangle_{\dot{H}^1\times \dot{H}^1} = 0.\]

\case{(2)}{$\langle\w_k^{\alpha, \theta,\pm\infty} , \w_k^{\beta, \theta} \rangle_{\dot{H}^1\times \dot{H}^1}$.} 
By the constructions of $\w_k^{\alpha, \theta,\pm\infty}, \w_k^{\beta, \theta,\pm\infty}$ in the proof of Lemma \ref{lem:decomposition1}, we obtain that 
\[
\w_k^{\beta, \theta} := \widetilde{u_k}^\beta\cdot \mathds{1}_{S_k^{ \beta, \theta}},
\]
    where $
    S_k^{\beta, \theta} :=\{ (x,t)\in \TTT^4\times (-T_{\beta,\theta}, T_{\beta, \theta})
    : |t-t_k^\beta|< T_{\beta, \theta}(N^\beta_k)^{-2},\ |x-x^\beta_k|\leq R_{\beta, \theta}(N^\beta_k)^{-1}\}$ and $\widetilde{u_k}^\beta$ is defined in (\ref{eq:tildeu}).
Following a similar proof of the \textbf{Case 4} in the proof of (\ref{eq:6.8}) in Lemma \ref{lem:6.2}, we have that $\lim_{k\to \infty}\sup_t\langle\w_k^{\alpha, \theta,\pm\infty} , \w_k^{\beta, \theta} \rangle_{\dot{H}^1\times \dot{H}^1} = 0$.
    
\case{(3)}{$\langle\w_k^{\alpha, \theta} , \w_k^{\beta, \theta} \rangle_{\dot{H}^1\times \dot{H}^1}$.} 
For $\ee > 0$ small.

If $N^\alpha_{k}/N^\beta{k} + N^\beta_{k}/N^\alpha_{k} \leq \ee^{-1000}$ and
$k$ is large enough then $S^{\alpha, \theta}_{k}\cap S^{\beta, \theta}_{k} = \emptyset$. (By the definition of
orthogonality of frames,  $N^\alpha_{k}/N^\beta_{k} + N^\beta_{k}/N^\alpha_{k} \leq \ee^{-1000}$ implies
$(N^\alpha_{k})^2|t^\alpha_{k}-t^\beta_{k}|\to \infty$ or $N^\alpha_{k}|x^\alpha_{k}-x^\beta_{k}|\to\infty$, so
$S^{\alpha, \theta}_{k}\cap S^{\beta, \theta}_{k} = \emptyset$.)
In this case, $\w^{\alpha,\theta}_{k}\, \w^{\beta,\theta}_{k}\ \equiv 0$.

If $N^\alpha_{k}/N^\beta_{k} \geq \ee^{-1000}/2$.
Denote that
\[
\w^{\alpha,\theta}_{k} \w^{\beta,\theta}_{k} = \w^{\alpha,\theta}_{k} \widetilde{ \w}^{\beta,\theta}_{k}:=
\w^{\alpha,\theta}_{k} \cdot (\w^{\beta,\theta}_{k} \mathds{1}_{(t^\alpha_{k}-T_{\alpha,\theta} (N^{\alpha}_{k})^{-2}, t^\alpha_{k}+T_{\alpha,\theta} (N^{\alpha}_{k})^{-2})}(t)).
\]
By $\ee^{10}N_k^{\alpha}>>\ee^{-10}N_k^{\beta}$ and the \textit\textbf{Claim $\dagger$} in the proof of Lemma \ref{lem:7.2}, we obtain that
\begin{align*}
  \langle\w^{\alpha,\theta}_{k}, \w^{\beta,\theta}_{k}\rangle_{\dot{H}^1\times\dot{H}^1}& \leq \langle P_{\leq \ee^{10}N_k^{\alpha}}\w^{\alpha,\theta}_{k},
  \w^{\beta,\theta}_{k}\rangle_{\dot{H}^1\times\dot{H}^1}
  +\langle P_{> \ee^{10}N_k^{\alpha}} \w^{\alpha,\theta}_{k}, P_{>\ee^{-10} N_k^{\beta}}\w^{\beta,\theta}_{k} \rangle_{\dot{H}^1\times\dot{H}^1}\\
  &+\langle P_{> \ee^{10}N_k^{\alpha}} \w^{\alpha,\theta}_{k}, \w^{\beta,\theta}_{k}\rangle_{\dot{H}^1\times\dot{H}^1} \\
  &\lesssim \ee.
\end{align*}

\end{proof}

\section{Proof of the main theorems}
It suffice to prove the solutions remain bounded in $Z$-norm on intervals of
length at most 1. To obtain this, we run the induction on the $E(u) + M(u)$ ($\mu = +1$) and $\|u\|_{L^\infty_t\dot{H}^1}$ ($\mu = -1$).

\begin{definition}
  Define
  \[
  \Lambda (L,\tau) = 
  \begin{cases}\sup_{\text{u is a solution}\atop \text{of } (\ref{eq:NLS})}\{ \|u\|_{Z(I)}: E(u) + M(u) \leq L, \ |I|\leq \tau\}\qquad \text{if } \mu = +1\\
  \sup_{\text{u is a solution}\atop \text{of } (\ref{eq:NLS})}\{ \|u\|_{Z(I)}:  \sup_{t\in I} \|u(t)\|^2_{\dot{H}^1(\TTT^4)}<L,\ |I|\leq \tau\} \qquad \text{if } \mu = -1
  \end{cases}
  \]
where $u$ is any strong solution of (\ref{eq:NLS}) with initial data $u_0$ in interval $I$ of length $|I|\leq \tau$.
\end{definition}

It is easy to see that $\Lambda$ is an increasing function of both $L$ and $\tau$,
and moreover, by the definition
 we have the sublinearity of $\Lambda$ in $\tau$:
$
\Lambda (L, \tau + \sigma) \leq \Lambda (L, \tau) + \Lambda (L, \sigma).
$
Hence we define
\[
\Lambda_0 (L) = \lim_{\tau\to 0} \Lambda (L,\tau),
\]
and for all $\tau$, we have that 
$
\Lambda(L, \tau)<+\infty  \Leftrightarrow \Lambda_0 (L)<+\infty.
$
Finally, we define
\[
E_{max} = \sup\{L: \Lambda_0 (L)<+\infty\}.
\]

\begin{thm}\label{thm:main2}
Consider $E_{max}$ defined above, if $\mu = +1$ (the defocusing case), then $E_{max} = +\infty$; if $\mu = -1$ (the focusing case), then $E_{max}\geq \|W\|^2_{\dot{H}^1(\RRR^4)}$.
\end{thm}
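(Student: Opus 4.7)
The plan is to argue by contradiction via the Kenig--Merle concentration-compactness scheme. Assume $E_{max}<+\infty$ in the defocusing case, or $E_{max}<\|W\|^2_{\dot{H}^1(\RRR^4)}$ in the focusing case. By definition of $E_{max}$ one can select strong solutions $u_k$ of \eqref{eq:NLS} on intervals $I_k\ni 0$ with $|I_k|\le 1$ and $|I_k|\to 0$, whose relevant conserved quantity ($E(u_k)+M(u_k)$ in the defocusing case, $\sup_{t\in I_k}\|u_k(t)\|_{\dot{H}^1}^2$ in the focusing case) tends to $E_{max}$, while $\|u_k\|_{Z(I_k)}\to\infty$. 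Applying Proposition \ref{prop:ProfileDecomposition} to $\{u_k(0)\}$ then yields a Scale-1-profile $g$, pairwise orthogonal Euclidean profiles $\widetilde{\psi}^\alpha_k$, and a remainder $R_k^J$ whose linear evolution is small in $Z(I_k)$ as $J\to J^\ast$.

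I would then construct nonlinear profiles. The decoupling \eqref{5.3} together with Remark \ref{rmk:EnergyDecoupling}, and in the focusing case combined with the energy-trapping Theorem \ref{thm:EnergyTrapping}, ensures that whenever the decomposition contains more than one nontrivial piece each profile carries energy (respectively $\dot{H}^1$-norm) strictly less than $E_{max}$. For each Euclidean profile, Proposition \ref{prop:4.4} (via Theorems \ref{thm:GWPinR} and \ref{thm:GWPfocusing}) produces a nonlinear Euclidean solution $U_k^\alpha\in X^1(I_k)$ whose $X^1$-norm is bounded by a constant depending only on the invariants of $\psi^\alpha$. For the Scale-1-profile, the induction hypothesis on $E_{max}$ supplies a nonlinear solution $G\in X^1([-\tau_g,\tau_g])$ with $G(0)=g$ and bounded $Z$-norm on a fixed interval depending only on $g$. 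Form the approximate solution
\[
\widetilde{u}_k^J(t) := G(t)+\sum_{1\le\alpha\le J}U_k^\alpha(t)+e^{it\Delta}R_k^J,
\]
so that $\widetilde{u}_k^J(0)=u_k(0)$ by \eqref{5.1}. The next task is to verify that $\widetilde{u}_k^J$ is an approximate solution of \eqref{eq:NLS} in the sense of Proposition \ref{prop:stability}, i.e.\ $\|(i\partial_t+\Delta)\widetilde{u}_k^J-\mu|\widetilde{u}_k^J|^2\widetilde{u}_k^J\|_{N(I_k)}\to 0$ as $k\to\infty$ and $J\to J^\ast$. Granting this, Proposition \ref{prop:stability} forces a uniform bound on $\|u_k\|_{X^1(I_k)}$, and hence on $\|u_k\|_{Z(I_k)}$ via Proposition \ref{prop:ZinX}, a contradiction. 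The two degenerate cases -- a pure single Euclidean profile or a pure Scale-1-profile -- are disposed of directly: the former by Proposition \ref{prop:4.4} and Proposition \ref{prop:ZinX}, the latter by the local wellposedness Proposition \ref{prop:lwp} combined with $|I_k|\to 0$, which yields a uniform $X^1$-bound on $u_k$ for $k$ large.

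The main obstacle is the $N(I_k)$-smallness of the error: it is a sum of cubic cross-terms whose three factors come from distinct profiles or from $e^{it\Delta}R_k^J$. To control these via the refined nonlinear estimate Proposition \ref{prop:nonlinear}, I would replace each $U_k^\alpha$ by its decomposition from Corollary \ref{lem:decomposition1} into two scattering pieces, a concentrated piece, and a small $X^1$-remainder; then mixed products of pieces originating in different profiles must be shown to vanish in $N(I_k)$, relying on the orthogonality of Euclidean frames (Proposition \ref{prop:equivalenceFrames}) and the almost-orthogonality of nonlinear profiles (Lemma \ref{prop:almostorth}). In the focusing case, a further delicate point is applying Theorem \ref{thm:EnergyTrapping} profile-by-profile to certify that every $U_k^\alpha$ and $G$ stays below the ground-state threshold, so that Theorem \ref{thm:GWPfocusing} actually supplies the required $X^1$-bound.
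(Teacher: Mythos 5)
Your plan coincides in structure with the paper's proof: contradiction from the definition of $E_{max}$, profile decomposition of $u_k(0)$, a case split between the ``single nonzero profile'' degenerate situations and the generic multi-profile case, construction of an approximate solution from nonlinear profiles plus the linear evolution of the remainder, a cross-term error estimate, and finally stability. The one genuine gap is that you treat the $N(I_k)$-smallness of the error as the only outstanding hypothesis before applying Proposition~\ref{prop:stability}, writing ``Granting this, Proposition~\ref{prop:stability} forces a uniform bound.'' But the stability proposition also requires an a priori bound $\|\widetilde u_k^J\|_{Z(I_k)}+\|\widetilde u_k^J\|_{L^\infty_t(I_k,H^1)}\le M$ that must be \emph{uniform in $J$ and $k$}. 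This is not automatic: you are adding $J$ nonlinear profiles each with $X^1$-norm $\lesssim 1$, so a naive triangle inequality yields $O(J)$, which blows up as $J\to J^\ast=\infty$. The paper addresses this with a dedicated claim (the estimate giving $\|U^J_{prof,k}\|^2_{X^1}+\sum_{\alpha\le J}\|U^\alpha_k\|^2_{X^1}\le Q^2$ uniformly): since only finitely many $\alpha$ can carry energy $\ge\delta_0$, one bounds those by $O(1)$ and for the remaining small-energy profiles invokes the small-data Proposition~\ref{prop:smallDataGWP}, so that $U_k^\alpha$ stays quadratically close to its linear flow; the $\ell^2$-decoupling of energies in~\eqref{5.3} then sums. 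Your argument as written cannot close without something of this kind. A smaller remark: in the degenerate case of a single Euclidean profile, the passage from the bound on $U^1_k$ (whose initial datum is $\widetilde\psi^1_k$) to a bound on $u_k$ (whose initial datum is $\widetilde\psi^1_k+R^J_k$) still requires a stability/perturbation step; the paper invokes Proposition~\ref{prop:stability} there, not just Proposition~\ref{prop:4.4} and Proposition~\ref{prop:ZinX} as you list.
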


\begin{cor}
Suppose $u$ is a solution of (\ref{eq:NLS}) in some time interval with the initial data $u_0\in H^1(\TTT^4)$.
\begin{enumerate}
\item (the defocusing case) If $\mu = +1$ and $\|u_0\|_{H^1(\TTT^4)}<+\infty$, then $u$ is a global solution.
\item (the focusing case) If $\mu = -1$ and under the assumption that 
\begin{equation*}
\sup_{t}\|u(t)\|_{\dot{H}^1(\TTT^4)}<\|W\|_{\dot{H}^1(\RRR^4)},
\end{equation*}  then $u$ is a global solution.
\end{enumerate}
\end{cor}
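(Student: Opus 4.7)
\medskip

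The plan is to derive the Corollary as a direct consequence of Theorem \ref{thm:main2} together with the $Z$-norm blowup criterion (Lemma \ref{lem:cgwp}). In both cases the strategy is the same: identify a conserved (or a priori bounded) quantity $L$ attached to the solution, show $L < E_{max}$, invoke Theorem \ref{thm:main2} to get $\Lambda_0(L) < \infty$, upgrade this to a bound on $\|u\|_{Z(I)}$ on every bounded time interval $I$, and then apply Lemma \ref{lem:cgwp} to extend the solution globally.

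For the defocusing case $(\mu=+1)$, I would first use conservation of mass and energy (guaranteed by Theorem \ref{thm:main}) to set $L := M(u_0) + E(u_0)$, which is finite since $u_0 \in H^1(\TTT^4)$, and is preserved in time. By Theorem \ref{thm:main2}, $E_{max} = +\infty$, so $L < E_{max}$ and hence $\Lambda_0(L) < \infty$. Fix $\tau_0 > 0$ small enough that $\Lambda(L, \tau_0) < \infty$. Given any bounded interval $I \subset \RRR$, partition it into finitely many subintervals $I_1, \dots, I_K$ of length at most $\tau_0$; applying the definition of $\Lambda$ to the restriction of $u$ to each $I_j$ (the conserved quantities are still bounded by $L$) gives $\|u\|_{Z(I_j)} \leq \Lambda(L, \tau_0)$, and summing yields a finite bound on $\|u\|_{Z(I)}$. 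Lemma \ref{lem:cgwp}(2) then forces $u$ to extend beyond any finite interval, i.e.\ to be global.

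For the focusing case $(\mu=-1)$, the conservation of energy does not by itself bound $\|u(t)\|_{H^1}$, but the standing hypothesis is precisely $\sup_{t\in I}\|u(t)\|_{\dot{H}^1(\TTT^4)}^2 < \|W\|_{\dot{H}^1(\RRR^4)}^2$. Setting $L := \sup_t\|u(t)\|_{\dot{H}^1(\TTT^4)}^2$, the inequality $L < \|W\|_{\dot{H}^1(\RRR^4)}^2 \leq E_{max}$ from Theorem \ref{thm:main2} again gives $\Lambda_0(L) < \infty$. The same partition argument as above, using the focusing branch of the definition of $\Lambda$, produces a finite bound on $\|u\|_{Z(I)}$ on each bounded interval. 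One technical point worth verifying: to apply Lemma \ref{lem:cgwp} in the focusing setting we also need an $L^\infty_t H^1$ bound; the $\dot{H}^1$ bound is given by hypothesis, and the $L^2$ bound follows from mass conservation plus $u_0\in H^1$, so $\|u\|_{L^\infty_t(I,H^1)}$ is controlled. Lemma \ref{lem:cgwp}(1) then extends $u$ past the endpoints of any bounded interval, giving global existence.

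The only step that requires any thought at all is the reduction from ``a priori bound on the conserved quantity'' to ``finite $Z$-norm on every bounded interval'': here one must be careful that the subintervals in the partition each satisfy the hypotheses appearing in the definition of $\Lambda$, which is immediate since the relevant quantities ($M+E$ in the defocusing case, $\sup\|u\|_{\dot H^1}^2$ in the focusing case) are nonincreasing when one restricts to a subinterval. Everything else is a direct citation of Theorem \ref{thm:main2} and Lemma \ref{lem:cgwp}, so the argument is short. No obstacle of substance arises at this stage; all the hard work has been packaged into Theorem \ref{thm:main2}.
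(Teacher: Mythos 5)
Your argument is correct and is exactly the (omitted) proof the paper has in mind: the corollary is intended as an immediate consequence of Theorem \ref{thm:main2} together with Lemma \ref{lem:cgwp}, via the sublinearity of $\Lambda$ in $\tau$ which the paper records just above. One cosmetic nit for the focusing case: since the definition of $\Lambda$ there requires $\sup_{t\in I}\|u(t)\|_{\dot H^1}^2 < L$ with strict inequality, you should take $L$ strictly between $\sup_t\|u(t)\|_{\dot H^1}^2$ and $\|W\|_{\dot H^1}^2$ rather than equal to the former, which the standing hypothesis permits.
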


\begin{proof}[Proof of Theorem \ref{thm:main2}]
  Suppose for contradiction that $E_{max}<+\infty$ (if $\mu = +1$), or $E_{max} < \|W\|_{\dot{H}^1(\RRR^4)}$ (if $\mu = -1$). By the definition of $E_{max}$, there exists
  a sequence of solutions $u_k$ such that
  \begin{equation}\label{6.2}
  \begin{cases}
    E(u_k)+M(u_k),\ (\text{if } \mu = +1)\\
    \sup_{t\in [-T_k, T^k]}\|u(t)\|_{\dot{H}^1(\TTT^4)},\ (\text{if } \mu = -1)
    \end{cases}\to E_{max},
    \quad \|u_k\|_{Z(-T_k, 0)},\ \|u_k\|_{Z(0,T^k)}\to +\infty.
  \end{equation}
  for some $T_k,\ T^k\to 0$ as $k\to +\infty$. For the simplicity of notations, set 
  \[
	L(\phi) :=   \begin{cases}
  E(\phi)+M(\phi),\ (\text{if } \mu = +1),\\
    \sup_{t\in [-T_k, T^k]}\|u_{\phi}(t)\|^2_{\dot{H}^1(\TTT^4)},\ (\text{if } \mu = -1),
    \end{cases}
  \]
where $u_{\phi}(t)$ is the solution of (\ref{eq:NLS}) with initial data $u_{\phi}(0) =\phi$.
  By the Proposition \ref{prop:ProfileDecomposition}, after extracting a subsequence, (\ref{6.2}) gives a sequence of profiles $\widetilde{\psi}^\alpha_k$, where $\alpha, k = 1, 2,\cdots$, and a decomposition
  \begin{equation*}
    u_k(0) = g + \sum_{1\leq\alpha\leq J}
    \widetilde{\psi}^\alpha_k + R^J_k.
  \end{equation*}
satisfying
  \begin{equation}\label{63}
  \limsup_{J\to\infty}\limsup_{k\to\infty} \|e^{it\DD}R_k^J\|_{Z(I_k)} =0.
  \end{equation}
 And moreover the almost orthogonality in the Proposition \ref{prop:ProfileDecomposition} and the almost orthogonality of nonlinear profiles (Lemma \ref{prop:almostorth}), we obtain that 
 \begin{equation}\label{eq:EnergyDecoupling}
 \begin{split}
 L(\alpha) := \lim_{k\to+\infty} L(\widetilde{\psi}^\alpha_{\mathcal{O}^\alpha_k})
  \in [0,E_{max}],\\
  \lim_{J\to J^*} \left(  \sum_{1\leq \alpha \leq J}
 L(\alpha)
  +\lim_{k\to\infty} L(R_k^J)\right) + L(g)= E_{max},
 \end{split}
 \end{equation}

  \case{1}{$g\neq 0$ and no any Euclidean profiles.}
  There is no any Euclidean profiles, and by Remark \ref{rmk:EnergySimSobolev}, $\|g\|_{H^1(\TTT^4)}\lesssim L(g)\leq E_{max}$. Then, by $I_k\to 0$ as $k\to \infty$, there exist, $\eta >0$,
  s.t. for $k$ large enough
  \[
  \|e^{it\DD} u_k(0)\|_{Z(-T_k, T^k)} \leq \|e^{it\DD} g\|_{Z(-\eta, \eta)}
  + \ee \leq \dd_0
  \]
  where $\dd_0$ is given by the local theory in Proposition \ref{prop:lwp}. In this case.
  we conclude that $\|u_k\|_{Z(-T_k, T^k)}\lesssim 2\dd_0$ which contradicts (\ref{6.2}).

  \case{2}{$g=0$ and only one Euclidean profile $\widetilde{\psi}^1_k$ such that $L(1) = E_{max}$.}
     By Remark \ref{rmk:EnergyDecoupling} and (\ref{eq:EnergyDecoupling}), we obtain that $L(\widetilde{\psi}^1_k) \leq E_{max}$ which implies $\|\psi\|_{\dot{H}^1(\RRR^4)}<\infty$ (if $\mu = +1$) or $\sup_{t}\|u_{\psi}\|_{\dot{H}^1(\RRR^4)}<\|W\|_{\dot{H}^1(\RRR^4)}$ (if $\mu = -1$). Denote $U^1_k$ is the solution of (\ref{eq:NLS}) with $U^1_k(0)= \widetilde{\psi}^1_k$.
   In this case, we use the part (1) of Proposition \ref{prop:4.4} and Remark \ref{rmk:EnergySimSobolev},
  Given some $\epsilon>0$, for $k$ large enough, we have that
  \begin{equation}\label{eq:case2}
  \|U^1_k\|_{X^1(-T_k, T^k)}\leq \|U^1_k\|_{X^1(-\dd, \dd)} \lesssim 1, \qquad \text{and}\qquad \|U^1_k(0) - u_k(0)\|_{H^1(\TTT^4)}\leq \epsilon.
  \end{equation}
  By (\ref{eq:case2}) and Proposition \ref{prop:stability}, we obtain that
  \[
  \|u_k\|_{Z(I_k)} \lesssim \|u_k\|_{X^1(I_k)} \lesssim 1,
  \]
  which contradicts (\ref{6.2}).

  \case{3}{At least two of all profiles are nonzero}
 By (\ref{eq:EnergyDecoupling}), $L(g)<E_{max}$ and $L(\alpha)<E_{max}$ for any $\alpha = 1, 2,\cdots$
  By almost orthogonality and relabeling the profiles, we can assume that for all $\alpha$,
  \[L(\alpha)\leq L(1)< E_{max} -\eta, \ L(g)<E_{max} -\eta, \text{ for some }\eta>0.\]

  Define $U_k^\alpha$ as the maximal life-span solution of (\ref{eq:NLS}) with
  initial data $U_k^\alpha(0) = \widetilde{\psi}^\alpha_k$
  and $G$ to be the maximal life-span solution  of (\ref{eq:NLS}) with initial
  data $G(0) =g$.

  By the definition of $\Lambda$ and the hypothesis $E_{max}<\infty$ (if $\mu = +1$) and $E_{max}< E_{W}$ (if $\mu = -1$), we have
  \[
  \|G\|_{Z(-1,1)} + \lim_{k\to\infty} \|U_k^\alpha\|_{Z(-1,1)} \leq
  2\Lambda(E_{max}-\eta/2, 2) \lesssim 1.
  \]
  By Proposition \ref{lem:cgwp}, it follows that for any $\alpha$ and any
  $k>k_0(\alpha)$ sufficient large,
  \[
  \|G\|_{X^1(-1,1)} +\|U^\alpha_k\|_{X^1(-1, 1)}\lesssim 1.
  \]

  For $J, k \geq 1$, we define
  \[
  U_{prof, k}^J:= G + \sum_{\alpha =1}^J U_k^\alpha = \sum_{\alpha = 0}^J U_k^\alpha.
  \]
  where we set that $U_k^0 := G$.

  \noindent\textit{\textbf{Claim} that there is a constant $Q$ such that
  \begin{equation}\label{6.6}
      \|U^J_{prof, k}\|^2_{X^1(-1,1)} +\sum_{\alpha=0}^J \|U_k^\alpha\|^2_{X^1(-1,1)}
      \leq Q^2,
  \end{equation}
  uniformly on $J$.
  }

  From (\ref{63}) we know that there are only finite many profiles such that
  $L(\alpha)\geq \frac{\dd_0}{2}$. We may assume that for all $\alpha\geq A$,
  $L(\alpha) \leq\dd_0$. Consider $U_k^\alpha$ for $k\geq A$, by small
  data GWP result (Proposition \ref{prop:smallDataGWP}), we have that
  \begin{align*}
    &\|U^J_{prof, k}\|_{X^1(-1,1)} = \|\sum_{0\leq\alpha\leq J}
    U^\alpha_k\|_{X^1(-1,1)} \\
    \leq &\sum_{0\leq \alpha\leq A} \|U^\alpha_k\|_{X^1(-1,1)}
    +\|\sum_{A\leq \alpha\leq J} (U^\alpha_k -e^{it\DD}U^\alpha_k(0))\|_{X^1(-1,1)}
    +\|e^{it\DD} \sum_{A\leq \alpha \leq J} U^\alpha_k(0)\|_{X^1(-1,1)}\\
    \lesssim& (A+1)+\sum_{A\leq\alpha\leq J} \|U^\alpha_k(0)\|^2_{H^1} +
    \|\sum_{A\leq\alpha\leq J} U_k^{\alpha}(0)\|_{H^1}\\
    \lesssim& (A+1) + \sum_{A\leq\alpha\leq J} L(\alpha) + E_{max}^{\frac{1}{2}}\\
    \lesssim& 1.
  \end{align*}
And also similarly, we have that
\begin{align*}
  \sum_{\alpha = 0}^{J} \|U_k^\alpha\|^2_{X^1(-1,1)} &=
  \sum_{\alpha =0}^{A-1} \|U_k^\alpha\|^2_{X^1(-1,1)} +
  \sum_{A\leq\alpha\leq J}\|U_k^\alpha\|^2_{X^1(-1,1)}  \\
  &\lesssim A + \sum_{A\leq\alpha\leq J} L(\alpha)\\
  &\lesssim 1.
\end{align*}

We denote that
\[
U^J_{app, k} = \sum_{0\leq \alpha\leq J} U^\alpha_k + e^{it\DD}R_k^J
\]
is a solution of the approximation equation (\ref{eq:aNLS}) with
the error term:
\begin{align*}
e &= (i\partial_t +\DD) U_{app, k}^J - F(U^J_{app, k})\\
& = \sum_{0\leq \alpha\leq J} F(U_k^\alpha) - F(\sum_{0\leq \alpha\leq J} U^\alpha_k
+e^{it\DD}R_k^J),
\end{align*}
where $F(u) = u|u|^2.$

From (\ref{6.6}) we know
$\|U_{app, k}^J\|_{X^1(-1,1)}\leq Q$

By Lemma \ref{lem:6.2} (proven later), we obtain that
\[
\limsup_{k\to\infty} \|e\|_{N(I_k)} \leq \ee/2, \text{ for } J\geq J_0(\ee).
\]
We use the stability proposition (Proposition \ref{prop:stability}) to conclude that
$u_k$ satisfies
\[
\|u_k\|_{X^1(I_k)} \lesssim \|U_{app, k}^J\|_{X^1(I_k)}\leq
\|U^J_{prof, k}\|_{X^1(-1,1)} \|e^{it\DD}R_k^J\|_{X^1(-1,1)} \lesssim 1.
\]
which contradicts (\ref{6.2}).
\end{proof}

\begin{lem}\label{lem:6.2}
  With the same notation, we obtain that
  \begin{equation}
    \limsup_{J\to\infty}\limsup_{k\to \infty}
    \|\sum_{0\leq \alpha\leq J} F(U_k^\alpha) -
    F(\sum_{0\leq \alpha\leq J} U^\alpha_k
    +e^{it\DD}R_k^J)\|_{N(I_k)} = 0.
  \end{equation}
\end{lem}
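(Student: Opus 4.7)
The plan is to split the difference into a ``cross-profile'' part and a ``remainder'' part, handle the latter by the smallness of $\|R\|_Z$, and reduce the former to a trilinear almost-orthogonality statement for pieces coming from orthogonal Euclidean frames. Setting
\[
V_k^J := \sum_{0 \leq \alpha \leq J} U_k^\alpha, \qquad R := e^{it\Delta} R_k^J,
\]
I would write
\[
\sum_{\alpha} F(U_k^\alpha) - F(V_k^J + R) = \Big(\sum_{\alpha} F(U_k^\alpha) - F(V_k^J)\Big) - \big(F(V_k^J + R) - F(V_k^J)\big),
\]
and estimate each piece separately in $N(I_k)$.

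For the remainder piece $F(V_k^J + R) - F(V_k^J)$, I expand $F(u) = u|u|^2$ trilinearly into seven monomials, each containing at least one factor of $R$. Applying the refined nonlinear estimate (Proposition \ref{prop:nonlinear}), each monomial is dominated by a sum of products of $X^1$ and $Z'$ norms of its factors, and one can always route one copy of $R$ into a $Z'$-slot (possibly after an additional Littlewood-Paley splitting to absorb the high-frequency-$R$ configuration, in which Proposition \ref{prop:nonlinear} forces $R$ into an $X^1$-slot, into an estimate involving $\|P_{>K}R\|_{Z'}$ rather than $\|R\|_{X^1}$). Using the a priori bound (\ref{6.6}) together with $\|R\|_{X^1} = \|R_k^J\|_{H^1}\lesssim 1$ from the profile decomposition, and the decay $\|R\|_{Z'}=\|R\|_Z^{3/4}\|R\|_{X^1}^{1/4}\to 0$ obtained from (\ref{63}) and Proposition \ref{prop:ZinX}, this piece vanishes as $k,J\to\infty$.

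For the cross-profile piece, writing $U_k^0 := G$ as a profile at the trivial frame we have
\[
\sum_{\alpha} F(U_k^\alpha) - F(V_k^J) = -\sum_{\substack{(\alpha,\beta,\gamma)\\ \text{not all equal}}} U_k^\alpha\,\overline{U_k^\beta}\, U_k^\gamma,
\]
each summand involving at least two profiles whose Euclidean frames are orthogonal. To handle the unbounded number of terms, I first isolate the tail with $\max(\alpha,\beta,\gamma) > A$: by choosing $A = A(\theta)$ large enough and running a small-data argument paralleling the derivation of (\ref{6.6}), the tail's $N(I_k)$-norm is bounded by $\theta$ uniformly in $k, J$, using the energy decoupling $\sum_{\alpha>A} L(\alpha)\lesssim \theta$. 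For the finitely many remaining triples, I would apply Corollary \ref{lem:decomposition1} to write each $U_k^\alpha = \omega_k^{\alpha,\theta,-\infty}+\omega_k^{\alpha,\theta,+\infty}+\omega_k^{\alpha,\theta}+\rho_k^{\alpha,\theta}$; the $\rho$-pieces contribute $O(\theta)$ in $X^1$ (hence in $Z'$), and the problem reduces to a trilinear almost-orthogonality statement for the surviving localized pieces.

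The main obstacle is this last trilinear almost-orthogonality: for factors coming from orthogonal frames $\mathcal{O}^\alpha, \mathcal{O}^\beta$, the $N(I_k)$-norm of the resulting product must tend to zero. The case analysis mirrors the proof of Lemma \ref{prop:almostorth}: when both orthogonal factors are the space-time concentrated pieces $\omega_k^{\alpha,\theta}$ and $\omega_k^{\beta,\theta}$, the supports $S_k^{\alpha,\theta}$ and $S_k^{\beta,\theta}$ become disjoint for large $k$ whenever $(N_k^\alpha)^2|t_k^\alpha-t_k^\beta|+N_k^\alpha|x_k^\alpha-x_k^\beta|\to\infty$, killing the product pointwise; when only the scales diverge, $\ln(N_k^\alpha/N_k^\beta)\to\pm\infty$, a Littlewood-Paley decomposition combined with the bilinear estimate (Lemma \ref{lem:bilinear}) yields decay, analogous to the \emph{Claim $\dagger$} argument cited in Lemma \ref{prop:almostorth}. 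The scattering-type pieces $\omega_k^{\alpha,\theta,\pm\infty}$ are handled similarly, exploiting their compact frequency support $\omega_k^{\alpha,\theta,\pm\infty}=P_{\leq R_\theta N_k^\alpha}\omega_k^{\alpha,\theta,\pm\infty}$ together with a frequency splitting before applying Proposition \ref{prop:nonlinear}. Passing $k\to\infty$, then $J\to\infty$, then $\theta\to 0$ in that order completes the proof.
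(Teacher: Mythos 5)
Your high-level split into $F(V_k^J+R)-F(V_k^J)$ (the remainder piece) and $\sum_\alpha F(U_k^\alpha)-F(V_k^J)$ (the cross-profile piece) matches the paper's organization into equations (\ref{eq:6.8}) and (\ref{eq:6.9}), and your sketch of the cross-profile piece — truncate to finitely many profiles using (\ref{6.6}), apply the decomposition lemma, dispose of $\rho$-pieces by small $X^1$, dispose of scattering-pieces by small $Z'$, and handle the remaining core-versus-core interactions by disjointness of supports or scale separation — closely tracks the paper's case analysis in the proof of (\ref{eq:6.9}), with Lemma \ref{lem:7.2} playing the role of your ``trilinear almost-orthogonality.'' That part is essentially correct.

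However, your treatment of the remainder piece has a genuine gap, and it occurs precisely at the hardest term. You claim that after expanding $F(V_k^J+R)-F(V_k^J)$ into monomials with at least one $R$ factor, Proposition \ref{prop:nonlinear} lets you ``route one copy of $R$ into a $Z'$-slot'' so that $\|R\|_{Z'}\to 0$ finishes the job. But Proposition \ref{prop:nonlinear} does not let you choose the permutation: its right-hand side is a sum over all three assignments, and for the monomial $\mathfrak{D}_{2,1}(U_{prof,k}^J, e^{it\Delta}R_k^J)$ one of the three is $\|e^{it\Delta}R_k^J\|_{X^1}\,\|U_{prof,k}^J\|_{Z'}^2$. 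This is $O(1)$ and does not decay, because $\|R_k^J\|_{H^1}$ stays bounded away from zero. The parenthetical suggestion of ``an additional Littlewood-Paley splitting'' does not repair this: the refined estimate (\ref{eq:nonlinearP}) requires the \emph{high-frequency} factors to sit in the $X^1$ and $Z'$ slots and the \emph{low-frequency} factor in the remaining $Z'$ slot, which is exactly the wrong configuration here — $R_k^J$ is the high-frequency object whose $X^1$ norm you must avoid using. The paper's actual resolution of this term is Lemma \ref{lem:7.1}, a new space-time kernel estimate showing that a high-frequency free wave $e^{it\Delta}P_{>BN}f$ has negligible $N$-norm interaction with a core $\omega$ concentrated in a $N^{-1}\times N^{-2}$ space-time box. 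This is proved by a Schur-test argument on the integral kernel $K=P_{>BN}\int e^{-it\Delta}We^{it\Delta}P_{>BN}\,dt$ and is genuinely new analytic content, not derivable from the bilinear estimate or Proposition \ref{prop:nonlinear}. Your proposal does not produce anything like it, and without it the term $\mathfrak{D}_{2,1}(U^J_{prof,k},e^{it\Delta}R_k^J)$ cannot be closed. You would also need, as the paper does, to first truncate $U^J_{prof,k}$ to $U^A_{prof,k}$ for this term before applying the profile-by-profile decomposition, and to treat the low-frequency complement $P_{\leq B^{-1}N_{k,\alpha}}\omega_k^{\alpha,\theta}$ via its smallness in $X^1$.
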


\section{Proof of Lemma \ref{lem:6.2}}
Consider 
\begin{align*}
  &\|\sum_{0\leq \alpha\leq J} F(U_k^\alpha) -
F(U_{prof, k}^J
+e^{it\DD}R_k^J)\|_{N(I_k)} \\
\leq&
\|F(U^J_{prof, k}+e^{it\DD}R_k^J))-F(U^J_{prof, k})\|_{N(I_k)}+
\|F(U^J_{prof, k}) -\sum_{0\leq \alpha\leq J} F(U_k^\alpha)\|_{N(I_k)}.
\end{align*}

It will suffice that we can prove
\begin{equation}\label{eq:6.8}
  \limsup_{J\to\infty}\limsup_{k\to \infty}
  \|F(U^J_{prof, k}+e^{it\DD}R_k^J))-F(U^J_{prof, k})\|_{N(I_k)} = 0,
\end{equation}
and
\begin{equation}\label{eq:6.9}
  \limsup_{J\to\infty}\limsup_{k\to \infty}
  \|F(U^J_{prof, k}) -\sum_{0\leq \alpha\leq J} F(U_k^\alpha)\|_{N(I_k)} = 0.
\end{equation}

Before prove (\ref{eq:6.8}) and (\ref{eq:6.9}), we need several lemma.
\begin{lem}[Decomposition of $U_k^\alpha$]\label{lem:decomposition}
  Consider $U^\alpha_k$ is the nonlinear profiles defined above. For any
  $\theta>0$, there exists $T_{\theta,\alpha}^0$ sufficiently large such that
  for all $T_{\theta, \alpha}\geq T_{\theta, \alpha}^0$ there is $R_{\theta, \alpha}$
  sufficiently large such that for all $k$ large enough
  (depending on $R_{\theta,\alpha}$) we can decompose $U^\alpha_k$ as following:
  \[  \mathds{1}_{(-T_{\theta, \alpha}^{-1},T_{\theta, \alpha}^{-1})}(t)
    U^\alpha_k = \w_k^{\alpha,\theta,-\infty}+\w_k^{\alpha,\theta,+\infty}
    +\w_k^{\alpha,\theta}+\rho^{\alpha, \theta}_k,\]
    and $\w_k^{\alpha,\theta,\pm\infty}$, $\w_k^{\alpha,\theta}$,
    and $\rho^{\alpha, \theta}_k$ satisfy the following
    conditions:
  \begin{equation}\label{eq:decomposition}
  \begin{split}
    \|\w_k^{\alpha,\theta,\pm\infty}\|_{Z'(-T_{\theta, \alpha}^{-1},T_{\theta, \alpha}^{-1})}
    +\|\rho^{\alpha, \theta}_k\|_{X^1(-T_{\theta, \alpha}^{-1},T_{\theta, \alpha}^{-1})}\leq \theta,\\
    \|\w_k^{\alpha,\theta,\pm\infty}\|_{X^1(-T_{\theta, \alpha}^{-1},T_{\theta, \alpha}^{-1})}+
    \|\w_k^{\alpha,\theta}\|_{X^1(-T_{\theta, \alpha}^{-1},T_{\theta, \alpha}^{-1})}\lesssim 1,\\
    \w_k^{\alpha, \theta,\pm\infty} = P_{\leq R_{\theta,\alpha}N_{k,\alpha}}
    \w_k^{\alpha, \theta,\pm\infty}\\
    |\nabla_x^m \w_k^{\alpha, \theta}|+(N_{k, \alpha})^{-2}\mathds{1}_{S_k^{\alpha, \theta}}
    |\partial_t \nabla_x^m \w_k^{\alpha, \theta}|\leq R_{\theta, \alpha}
    (N_{k,\alpha})^{|m|+1}\mathds{1}_{S_k^{\alpha, \theta}},\ 0\leq |m| \leq 10,
  \end{split}
  \end{equation}
  where
  \[
  S_k^{\alpha,\theta} :=\{ (x,t)\in \TTT^4\times (-T_{\theta,\alpha}, T_{\theta,\alpha})
  : |t-t_{k,\alpha}|< T_{\theta,\alpha}(N_{k,\alpha})^{-2},\ |x-x_{k,\alpha}|\leq R_{\theta, \alpha}(N_{k,\alpha})^{-1}\}.
  \]
\end{lem}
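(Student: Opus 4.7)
The plan is to apply Corollary \ref{lem:decomposition1} one profile at a time and to handle the Scale-$1$ profile $G$ (which is indexed by $\alpha=0$ in the sum $U_{prof,k}^J=\sum_{\alpha=0}^J U_k^\alpha$) as a degenerate case. For each $\alpha\geq 1$, the nonlinear profile $U_k^\alpha$ is by construction the maximal-lifespan solution of (\ref{eq:NLS}) with initial data $\widetilde{\psi}^\alpha_{\mathcal{O}^\alpha_k}$ for a Euclidean frame $\mathcal{O}^\alpha=(N_{k,\alpha},t_{k,\alpha},x_{k,\alpha})_k\in\widetilde{\mathcal{F}_e}$. Under the contradiction hypothesis $L(\alpha)<E_{max}$ from the proof of Theorem \ref{thm:main2}, the almost orthogonality of energy (Remark \ref{rmk:EnergyDecoupling}) together with the energy trapping (Theorem \ref{thm:EnergyTrapping}) in the focusing case puts $\psi^\alpha$ inside the range where Proposition \ref{prop:4.4} applies. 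I would therefore feed each such $U_k^\alpha$ directly into Corollary \ref{lem:decomposition1} and read off $\omega_k^{\alpha,\theta,\pm\infty}$, $\omega_k^{\alpha,\theta}$, $\rho_k^{\alpha,\theta}$ together with parameters $T_{\theta,\alpha}$ and $R_{\theta,\alpha}$, now indexed by $\alpha$. No genuinely new estimate is required here.

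For $\alpha=0$, I would set $\omega_k^{0,\theta,+\infty}=\omega_k^{0,\theta,-\infty}=\tfrac{1}{2}P_{\leq R_{\theta,0}}G$ on the interval $(-T_{\theta,0}^{-1},T_{\theta,0}^{-1})$, put $\omega_k^{0,\theta}=0$, and let $\rho_k^{0,\theta}$ absorb the remainder $G-P_{\leq R_{\theta,0}}G$. Because $G\in X^1((-\tau,\tau))$ for some $\tau>0$ by local well-posedness (Proposition \ref{prop:lwp}), choosing $T_{\theta,0}$ large forces $T_{\theta,0}^{-1}$ to be small, and then Proposition \ref{prop:ZinX} together with the continuity of $\|G\|_{X^1(J)}$ in $|J|$ makes both $\|\omega_k^{0,\theta,\pm\infty}\|_{Z'}$ and $\|\rho_k^{0,\theta}\|_{X^1}$ on this shrinking interval strictly less than $\theta$. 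The condition on $\omega_k^{0,\theta}$ involving pointwise derivative bounds is vacuous since $\omega_k^{0,\theta}=0$.

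The main technical point will be verifying the pointwise derivative bound $|\nabla_x^m\omega_k^{\alpha,\theta}|+(N_{k,\alpha})^{-2}\mathds{1}_{S_k^{\alpha,\theta}}|\partial_t\nabla_x^m\omega_k^{\alpha,\theta}|\leq R_{\theta,\alpha}(N_{k,\alpha})^{|m|+1}\mathds{1}_{S_k^{\alpha,\theta}}$ for $0\leq|m|\leq 10$. I would obtain it by persistence of regularity: first approximate $\psi^\alpha\in\dot{H}^1(\RRR^4)$ by $\phi'\in H^{10}(\RRR^4)$ in $\dot{H}^1$-norm within error $\ee_1$, run the Euclidean flow from $\phi'$ (which remains in $H^{10}(\RRR^4)$ by the persistence argument used in the remark following Theorem \ref{thm:GWPfocusing}, iterated up to order $10$ using Strichartz and the finite $L^2_tL^4_x$-norm of $\nabla v'$), and then transport the bounds back to $\TTT^4$ through the cutoff-and-rescale map $x\mapsto N_{k,\alpha}\Psi^{-1}(x)$, which produces the $(N_{k,\alpha})^{|m|+1}$ scaling and the indicator on $S_k^{\alpha,\theta}$ automatically. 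The $\dot{H}^1$-error $\ee_1$ between $\psi^\alpha$ and $\phi'$ is pushed into $\rho_k^{\alpha,\theta}$ via the stability proposition (Proposition \ref{prop:stability}), whose constant depends only on $L(\alpha)<E_{max}$.

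In short, the entire proof is a bookkeeping exercise: Lemma \ref{lem:decomposition} is Corollary \ref{lem:decomposition1} applied separately to each $U_k^\alpha$ with $\alpha\geq 1$, plus a direct splitting of the scale-$1$ piece $G$ that exploits $|I_k|\to 0$. No new analytical obstruction should arise, and the almost orthogonality features that make the decomposition useful downstream (in the proof of Lemma \ref{lem:6.2}) come from the orthogonality of the frames $\mathcal{O}^\alpha$ already established in Proposition \ref{prop:equivalenceFrames} and exploited in Lemma \ref{prop:almostorth}.
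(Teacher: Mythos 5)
Your proposal is correct and tracks the paper's proof in all essentials: for $\alpha\geq 1$ one repeats the construction of Corollary \ref{lem:decomposition1} with $\alpha$-indexed parameters (the paper literally rewrites that proof, you cite it — same content), and for $\alpha=0$ one writes $G=\omega^{0,\theta,+\infty}+\omega^{0,\theta,-\infty}+\rho_k^{0,\theta}$ with $\omega^{0,\theta}=0$ and uses that $\|G\|_{Z'(J)}\to 0$ as $|J|\to 0$ for fixed $G\in X^1$. Your variant for $\alpha=0$, truncating to $P_{\leq R_{\theta,0}}G$ and letting $\rho_k^{0,\theta}$ carry the high-frequency tail, is a small improvement over the paper (which sets $\omega^{0,\theta,\pm\infty}=\tfrac12 G$ without truncation and hence does not literally enforce the frequency-localization constraint in the case $\alpha=0$); your choice makes the statement hold verbatim for the scale-$1$ profile as well. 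One small correction, though: the smallness of $\|\rho_k^{0,\theta}\|_{X^1}=\|G-P_{\leq R_{\theta,0}}G\|_{X^1}$ cannot come from shrinking the interval — $\|v\|_{X^1(J)}$ does not tend to zero as $|J|\to 0$ for a fixed nonzero $v\in X^1$, since it is bounded below by $\|v(0)\|_{H^1}$. It comes solely from choosing $R_{\theta,0}$ large so that the high-frequency tail of $G$ is small in $X^1$ by dominated convergence; the shrinking interval is what gives the $Z'$-smallness of $\omega^{0,\theta,\pm\infty}$. With that attribution fixed, the argument is sound.
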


\begin{proof}
  First, if $\alpha = 0$. Set $G = U^0_k := \w_k^{\alpha,\theta,-\infty}+\w_k^{\alpha,\theta,+\infty}
  +\w_k^{\alpha,\theta}+\rho^{\alpha, \theta}_k$ where $\rho_{k}^{0, \theta} =
  \w_k^{0,\theta} = 0$ and $\w^{0,\theta,+\infty}=\w^{0,\theta,-\infty}=\frac{1}{2}G$.
  And by taking $T_{\theta,0}$ large, it is easy to make $\|G\|_{Z'(-T_{\theta,0},T_{\theta,0})}\leq \theta$.

  For a fixed $\alpha$ which is not $0$, by Proposition \ref{prop:4.4}, there exists
  $T(\phi^\alpha, \frac{\theta}{4})$, such that for all
  $T\geq T(\phi^\alpha, \frac{\theta}{4})$, there exists $R(\phi^\alpha,\frac{\theta}{4},T)$
  such that for all $R\geq R(\phi^\alpha,\frac{\theta}{2},T)$, there holds that
  \begin{equation*}
    \|U^\alpha_k - \widetilde{u_k^\alpha}\|_{X^1(\{
      |t-t_k^\alpha|\leq T(N_{k,\alpha})^{-2}\}\cap\{
      |t|<T^{-1}
    \})}\leq \frac{\theta}{2},
  \end{equation*}
  for $k$ large enough, where
  \begin{equation*}
    \left(\pi_{-x_k^\alpha} \widetilde{u_k^\alpha}\right)(x,t)
    =N_{k,\alpha} \eta(N_{k,\alpha} \Psi^{-1}(x)/R)
    u(N_{k,\alpha} \Psi^{-1}(x), N_{k,\alpha}^2(t-t_k^\alpha)),
  \end{equation*}
  where $u$ is a solution of (\ref{eq:NLS}) with scattering data $\phi^{\pm\infty}$.

  In addition, up to subsequence,
  \begin{equation*}
    \|U_k^\alpha - \Pi_{t_k^\alpha-t, x_k^\alpha} T_{N_{k,\alpha}} \phi^{\pm\infty,\alpha}
    \|_{X^1(\{\pm(t-t_k^\alpha)\geq T(N_k^\alpha)^{-2}\}\cap\{
    |t|\leq T^{-1}
    \})}\leq \frac{\theta}{4},
  \end{equation*}
  for $k$ large enough (depending on $\phi^\alpha$, $\theta$, $T$, and $R$).

  Choose a sufficiently large $T_{\theta,\alpha}> T(\phi^\alpha,\frac{\theta}{4})$
  based on the extinction lemma(Lemma \ref{lem:extinction}), such that
  \begin{equation*}
    \|e^{it\DD} \Pi_{t_k^\alpha, x_k^\alpha} T_{N_k^\alpha} \phi^{\pm\infty, \alpha}
    \|_{Z(T_{\theta,\alpha}(N_k^\alpha)^{-2}, T_{\theta,\alpha}^{-1})}\leq \frac{\theta}{4}
  \end{equation*}
  when k large enough.

  And then we choose $R_{\theta, \alpha} = R(\phi^{\alpha}, \frac{\theta}{2}, T_{\theta, \alpha})$.

  Denote:
  \begin{enumerate}
    \item $\w_k^{\alpha,\theta,\pm\infty} := \mathds{1}_{\{
    \pm(t-t_k^\alpha)\geq T_{\theta,\alpha}(N_k^\alpha)^{-2},|t|\leq T_{\theta,\alpha}^{-1}\}}
    \left(\Pi_{t_k^\alpha-t, x_k^\alpha} T_{N^\alpha_k}\phi^{\alpha,\theta,\pm\infty}
    \right)$,

    {where }
    \[
    \|\phi^{\alpha,\theta,\pm\infty}\|_{\dot{H}^1(\RRR^4)}\lesssim 1,
      \ \phi^{\alpha,\theta,\pm\infty} = P_{\leq R_{\theta,\alpha}} (\phi^{\alpha,\theta,\pm\infty}),
      \]
      which implies   $\w_k^{\alpha, \theta,\pm\infty} = P_{\leq R_{\theta,\alpha}N_{\theta,\alpha}}
        \w_k^{\alpha, \theta,\pm\infty}$.
    \item $\w_k^{\alpha,\theta} := \widetilde{u_k^\alpha}\cdot \mathds{1}_{S_k^{\alpha, \theta}},$
    where $
    S_k^{\alpha,\theta} :=\{ (x,t)\in \TTT^4\times (-T_{\theta,\alpha}, T_{\theta,\alpha})
    : |t-t_k^\alpha|< T_{\theta,\alpha}(N_k^\alpha)^{-2},\ |x-x_k|\leq R_{\theta, \alpha}(N_k^\alpha)^{-1}\}$.

   \noindent By the stability property (Proposition \ref{prop:stability}) and Theorem \ref{thm:4.2}, we can adjust
    $\w_k^{\alpha,\theta}$ and $\w_k^{\alpha,\theta,\pm\infty}$, with an acceptable
    error, to make
    \[|\nabla_x^m \w_k^{\alpha, \theta}|+(N_k^\alpha)^{-2}\mathds{S_k^{\alpha, \theta}}
    |\partial_t \nabla_x^m \w_k^{\alpha, \theta}|\leq R_{\theta, \alpha}
    (N_k^\alpha)^{|m|+1}\mathds{1}_{S_k^{\alpha, \theta}},\ 0\leq |m| \leq 10.\]

    \item $\rho_k^{\alpha}:= \mathds{1}_{(-T_{\theta,\alpha}^{-1},T_{\theta,\alpha}^{-1})}(t)
    U_k^{\alpha} -\w^{\alpha,\theta}_k -\w^{\alpha,\theta,+\infty}-\w^{\alpha,\theta,-\infty}$.
  \end{enumerate}
  By (\ref{4.4.2}) and (\ref{4.4.3}), we obtain that
  \[
  \|\rho_k^{\alpha,\theta}\|_{X^1(\{|t|<T_{\theta,\alpha}^{-1}\})}\leq \frac{\theta}{2}.
  \]
  and then we have
  \begin{align*}
    \|\w_k^{\alpha,\theta,\pm\infty}\|_{Z'(-T_{\theta, \alpha}^{-1},T_{\theta, \alpha}^{-1})}
    +\|\rho^{\alpha, \theta}_k\|_{X^1(-T_{\theta, \alpha}^{-1},T_{\theta, \alpha}^{-1})}\leq \theta,\\
    \|\w_k^{\alpha,\theta,\pm\infty}\|_{X^1(-T_{\theta, \alpha}^{-1},T_{\theta, \alpha}^{-1})}+
    \|\w_k^{\alpha,\theta}\|_{X^1(-T_{\theta, \alpha}^{-1},T_{\theta, \alpha}^{-1})}\lesssim 1.
  \end{align*}
\end{proof}

Denote that $\mathfrak{D}_{p,q}(a,b)$ stands for a $p+q$ - linear
expression with $p$ factors consisting of either $\overline{a}$ or $a$
and $q$ factors consisting of either $\overline{b}$ or $b$.

  \begin{lem}[a high-frequency linear solution does not interact significantly
    with a low-frequency profile]\label{lem:7.1}
    Assume that $B, N\geq 2$, and dyadic numbers, and assume that
  $\w :\TTT^4\times(-1, 1)\to \mathbb{C}$ is a function satisfying
  \[
  |\nabla^j \w|\leq N^{j+1} \mathds{1}_{|x|\leq N^{-1}, |t|\leq N^{-2}},\ j=0, 1.
  \]
  Then we hold that
  \[
  \|\mathfrak{D}_{2,1}(\w, e^{it\DD} P_{>BN} f)\|_{N(-1,1)}\lesssim
  (B^{-1/{200}}+N^{-1/{200}}) \|f\|_{H^1(\TTT^4)}.
  \]
\end{lem}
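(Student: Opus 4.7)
The heuristic is that $\w$, being spatially localized at scale $N^{-1}$, behaves like a function concentrated at frequencies $\lesssim N$, and therefore has little resonant interaction with $\phi := e^{it\DD} P_{>BN}f$, whose frequencies all exceed $BN$. I will quantify this via duality and bilinear Strichartz. By Proposition \ref{prop:dual}, it suffices to estimate
\[
I := \sup_{\|v\|_{Y^{-1}(-1,1)}\leq 1} \Big|\int_{\TTT^4 \times (-1,1)} \bar{v}\,\mathfrak{D}_{2,1}(\w, \phi)\, dxdt\Big|,
\]
and I will frequency-decompose each factor: $v = \sum_{N_0} P_{N_0}v$, $\w = \sum_{M_i} P_{M_i}\w$ ($i=1,2$), and $\phi = \sum_{N_3 > BN} P_{N_3}\phi$. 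By spatial frequency orthogonality, the maximum of $\{N_0,M_1,M_2,N_3\}$ must be attained at least twice, and crucially $N_3 > BN$ always.

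The key auxiliary bound, obtained via Bernstein from $|\w|\lesssim N\mathds{1}_{|x|\leq N^{-1}}$, $|\nabla\w|\lesssim N^2\mathds{1}_{|x|\leq N^{-1}}$ together with the temporal support $|t|\leq N^{-2}$, is
\[
\|P_M\w\|_{L^\infty_t L^2_x(-1,1)} \lesssim \min(N^{-1}, M^{-1}),
\]
and, using that $V^2$-norms of compactly-supported-in-time functions are dominated by appropriate $L^\infty$-type quantities, also $\|P_M\w\|_{Y^s(-1,1)}\lesssim M^s\min(N^{-1},M^{-1})$ for $s=0,1$. In the main regime $N_0\sim N_3\geq BN$ (so $\phi$'s frequency is the largest) with $M_1, M_2\leq N$, I apply Cauchy--Schwarz to pair $\bar v$ with one factor of $\w$ and $\phi$ with the other and invoke Lemma \ref{lem:bilinear} on each $L^2_{x,t}$ product. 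Since $N_0, N_3 \geq M_1, M_2$, this yields a combined gain $(M_1/N_0)^\kappa (M_2/N_3)^\kappa \leq B^{-2\kappa}$. Summing over admissible frequencies using $\|v\|_{Y^{-1}}\leq 1$ together with $\sum_{N_3 > BN} N_3^{-2}\|P_{N_3}f\|_{H^1}^2 \leq (BN)^{-2}\|f\|_{H^1}^2$ and the $\w$-weights above produces the bound $\lesssim B^{-\kappa}\|f\|_{H^1}$.

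The remaining configurations---namely when some $M_i > N$, or when $N_0$ matches one of the $M_j$'s rather than $N_3$---are handled by the extra decay $\|P_{M_i}\w\|_{Y^0}\lesssim M_i^{-1}$ of $\w$'s high-frequency pieces, which converts $B^{-\kappa}$ into $N^{-\kappa'}$, or by a symmetric bilinear pairing of the same shape. The main obstacle I anticipate is correctly controlling the $Y^s$ norms of $P_M\w$ despite $\w$ not being a Schr\"odinger solution and only carrying spatial derivative bounds; once this is in place, one must carefully bookkeep the logarithmic summation losses from the Cauchy--Schwarz and Littlewood--Paley steps, which is why the final exponent $1/200$ is chosen very small (it is obtained by taking $\kappa/C$ for a universal constant $C$ absorbing those losses), giving the claimed gain $B^{-1/200}+N^{-1/200}$.
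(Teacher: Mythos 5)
Your proposed route — duality against $Y^{-1}$, Littlewood--Paley on all four factors, and the bilinear estimate (Lemma \ref{lem:bilinear}) to extract a frequency-separation gain — does not match the paper's actual argument, and more importantly it has two genuine obstructions that I don't think can be repaired within the hypotheses of the lemma.

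First, the auxiliary bound $\|P_M\w\|_{Y^s(-1,1)}\lesssim M^s\min(N^{-1},M^{-1})$ is not available. The hypotheses of Lemma \ref{lem:7.1} give only pointwise \emph{spatial} bounds $|\nabla^j\w|\le N^{j+1}\mathds{1}$ for $j=0,1$; there is no control on the time variation of $\w$. Since the $Y^s$ norm is built from the $V^2_t$-norm of the modulated Fourier coefficients $t\mapsto e^{it|n|^2}\widehat{\w(t)}(n)$, a function with $|g(t)|\le C$ and compact support in $|t|\le N^{-2}$ can still have arbitrarily large $V^2$ norm (it can oscillate wildly while staying bounded), and even granting some time regularity, the conjugation by $e^{it|n|^2}$ at high $|n|$ injects an additional factor $|n|^2 N^{-2}\gg 1$ into the variation. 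So the reduction to the bilinear estimate rests on an unproven — and, as stated, false in general — auxiliary bound. Second, even if one could put $P_M\w$ into $Y^0$ or $Y^1$ with the claimed bound, Lemma \ref{lem:bilinear} gives the factor $\bigl(\tfrac{N_2}{N_1}+\tfrac{1}{N_2}\bigr)^\kappa$, and the $1/N_2$ term destroys the gain whenever the low frequency $M_i$ of $\w$ is $O(1)$; that is exactly the generic regime here, since $\w$ is a wave packet with most of its $H^1$ mass at frequencies $\lesssim N$ but certainly not bounded away from $O(1)$. Your claimed combined gain $(M_1/N_0)^\kappa(M_2/N_3)^\kappa\le B^{-2\kappa}$ silently drops the $1/N_2$ contribution.

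The paper proceeds quite differently. It bounds $\|\mathfrak{D}_{2,1}(\w,\phi)\|_{N}$ directly by $\|\mathfrak{D}_{2,1}(\w,\phi)\|_{L^1_tH^1_x}$ (Proposition \ref{prop:dual}); the Leibniz terms where the derivative falls on $\w$ are disposed of via $\|e^{it\DD}f\|_{L^\infty_tL^2_x}\lesssim (BN)^{-1}$ and $\|\w\|_{L^2_tL^\infty_x},\|\nabla\w\|_{L^2_tL^\infty_x}\lesssim 1, N$, giving $B^{-1}$ outright. The main term $\w^2\nabla\phi$ is reorganized into a quadratic form $\sum_j\langle\partial_j f, K\partial_j f\rangle$ with $K=P_{>BN}\int_\RRR e^{-it\DD}We^{it\DD}\,dt\,P_{>BN}$ and $W\simeq N^4\eta(N\Psi^{-1}(x))\eta(N^2t)$; the Fourier kernel $c_{p,q}$ of $K$ is computed explicitly and exhibits decay in both $|p-q|/N$ and $||p|^2-|q|^2|/N^2$, after which Schur's test and a lattice-point count (splitting on $|v|\gtrless NB^{1/100}$ and $|v\cdot p|\gtrless N^2B^{1/10}$) deliver the power gain. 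The essential ingredient is thus the \emph{space-time} localization of $\w^2$ converted into decay of the operator kernel, combined with the fact that both input/output frequencies exceed $BN$ — not a bilinear Strichartz gain. If you want to salvage a bilinear-estimate approach, you would first need a $Y^s$ bound on $\w$ that incorporates time regularity (the application in Lemma \ref{lem:decomposition} does supply a $\partial_t\w$ bound, but Lemma \ref{lem:7.1} as stated does not), and you would also need a replacement for Lemma \ref{lem:bilinear} with no $1/N_2$ loss.
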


\begin{proof}
We may assume that $\|f\|_{H^1(\TTT^4)} = 1$ and $f = P_{>BN} f$.
By Proposition \ref{prop:dual}, we obtain that
\begin{align*}
  &\|\mathfrak{D}_{2,1}(\w, e^{it\DD} P_{>BN} f)\|_{N(I)}\\
  \leq & \|\mathfrak{D}_{2,1}(\w, e^{it\DD} P_{>BN} f)\|_{L^1((-1,1), H^1)}\\
  \lesssim& \|\mathfrak{D}_{2,1}(\w, \nabla e^{it\DD}f)\|_{L^1((-1,1), L^2)}
  +\|e^{it\DD}f\|_{L^\infty_tL^2_x} \|\w\|_{L^2_tL^\infty_x}\||\nabla \w|+|\w|\|_{L^2_tL^\infty_x}\\
  \lesssim & \|\mathfrak{D}_{2,1}(\w, \nabla e^{it\DD}f)\|_{L^1((-1,1),L^2)}+ B^{-1}.
\end{align*}
(It's easy to check that $\|\w\|_{L^2_tL_x^\infty} \leq \left( \int_{-N^{-2}}^{N^{-2}}
(N)^2\, dt\right)^{\frac{1}{2}} =$, $\|\nabla\w\|_{L^2_tL_x^\infty}\leq \left(
\int_{-N^{-2}}^{N^{-2}}N^4\,dt\right)^\frac{1}{2} = N$, and $\|P_{>BN} f\|_{L^2}\leq \frac{1}{BN} \|f\|_{H^1}$.)

Now we let $W(x,t):= N^4 \eta_{\RRR^4}(N\Psi^{-1}(x))\eta_{\RRR}(N^2 t)$,
\begin{align*}
  &\|\mathfrak{D}_{2,1} (\w, \nabla e^{it\DD}f)\|^2_{L^1((-1,1),L^2)}\\
  =&\left(\int_{-1}^1 \|\mathfrak{D}_{2,1}(\w, \nabla e^{it\DD}f)\| \, dt\right)^2\\
  \leq& \|\w\|^4_{L^4_tL_x^\infty}\|\frac{1}{N^2}W^{\frac{1}{2}}\nabla e^{it\DD} f\|_{L^2(\TTT^4\times[-1,1])}\\
  \lesssim&N^{-2} \|W^{\frac{1}{2}} \nabla e^{it\DD} f\|^2_{L^2(\TTT^4\times[-1,1])}\\
  \lesssim& \sum_{j=1}^4 \langle e^{it\DD}\partial_j f, We^{it\DD}\partial_j f\rangle_{L^2\times L^2}\, dt\\
  \lesssim& \sum_{j=1}^4
  \int_{-1}^1\langle \partial_j f, [\int_{-1}^1 e^{-it\DD}We^{it\DD}\,dt]\rangle_{L^2\times L^2}.
\end{align*}

It remains to prove that
\[
\|K\|_{L^2(\TTT^4)\to L^2(\TTT^4)} \lesssim N^2(B^{-\frac{1}{100}}+N^{-\frac{1}{100}}),
\]
where $K = P_{>BN} \int_{\RRR} e^{-it\DD}W e^{it\DD} P_{>BN}\,dt$.

We compute the Fourier coefficients of $K$ as follows:
\begin{align*}
  c_{p,q} &= \langle e^{ipx}, Ke^{iqx}\rangle\\
  & =\int_{\TTT^4} \overline{P_{>BN}e^{ipx}} \int_\RRR e^{-it\DD}W e^{it\DD} P_{>BN}\,dt
  dx\\
  &= (1-\eta_{\RRR^4})(p/{BN}) (1-\eta_{\RRR^4})(q/{BN})
  \int_{\TTT^4} \overline{e^{ipx}}\int_\RRR e^{-it\DD}W e^{it\DD} e^{iqx}\,dt
  dx\\
  &=(1-\eta_{\RRR^4})(p/{BN}) (1-\eta_{\RRR^4})(q/{BN})
  \int_{\TTT^4\times[-1,1]} \overline{e^{-it|p|^2 +ipx}}W(t,x) e^{-it|q|^2+iqx}\,dxdt\\
  & = (1-\eta_{\RRR^4})(p/{BN}) (1-\eta_{\RRR^4})(q/{BN}) C_{\mathcal{F}_{x,t}}(p-q, |q|^2-|p|^2).
\end{align*}

Hence, we obtain that
\[
|c_{p,q}|\lesssim N^{-2}\left( 1+ \frac{\left||p|^2-|q|^2\right|}{N^2}\right)^{-10}
\left( 1+\frac{|p-q|}{N} \right)^{-10}\mathds{1}_{\{|p|\geq BN\}}\mathds{1}_{\{|q|\geq BN\}}.
\]

Using Schur's lemma.
\[
\|K\|_{L^2(\TTT^4)\to L^2(\TTT^4)}\lesssim \sup_{p\in\ZZZ^4}
\sum_{q\in\ZZZ^4}|c_{p,q}|+\sup_{q\in\ZZZ^4}\sum_{p\in\ZZZ^4}|c_{p,q}|.
\]

It suffices to prove that
\begin{equation}\label{eq:7.3}
  N^{-4}\sup_{|p|\geq BN} \sum_{v\in\ZZZ^4}
  \left( 1+ \frac{\left||p|^2-|p+v|^2\right|}{N^2}\right)^{-10}
  \left( 1+\frac{|v|}{N} \right)^{-10}\lesssim B^{-\frac{1}{100}} + N^{-\frac{1}{100}}
\end{equation}

Consider (\ref{eq:7.3}) in the following 3 cases.

\case{1}{}
\begin{align*}
  &\sum_{|v|\geq NB^{\frac{1}{100}}}
  \left( 1+ \frac{\left||p|^2-|p+v|^2\right|}{N^2}\right)^{-10}
    \left( 1+\frac{|v|}{N} \right)^{-10}\\
\lesssim& \sum_{|v|\geq NB^{\frac{1}{100}}}\left( 1+\frac{|v|}{N} \right)^{-10}\\
\lesssim& \int_{|v|\geq N,\, v\in\RRR^4} \left( 1+\frac{|v|}{N} \right)^{-10}\,dv\\
\lesssim& \left( 1+\frac{NB^{\frac{1}{100}}}{N} \right)^{-6}\\
\lesssim& B^{-\frac{6}{100}}.
\end{align*}

\case{2}{}
\begin{align*}
&\sum_{|v|\leq NB^{\frac{1}{100}}\atop |v\cdot p|\geq N^2 B^{\frac{1}{10}}}
\left( 1+ \frac{\left||p|^2-|p+v|^2\right|}{N^2}\right)^{-10}
  \left( 1+\frac{|v|}{N} \right)^{-10}\\
  \lesssim &\sum_{|v|\leq NB^{\frac{1}{100}}\atop |v\cdot p|\geq N^2 B^{\frac{1}{10}}}
  \left(
  1 + \frac{2|v\cdot p|}{N^2}
  \right)^{-10}\\
  \lesssim & ( 1+ B^{\frac{1}{10}})^{-6} \lesssim B^{-\frac{6}{10}}.
\end{align*}

\case{3}{}
Denote $\hat{p} = \frac{p}{|p|}$
\begin{align*}
  &N^{-4}\sup_{|p|\geq BN} \sum_{|v|\leq NB^{\frac{1}{100}}\atop |p\cdot v|\leq N^2B^{\frac{1}{10}}}
  \left( 1+ \frac{\left||p|^2-|p+v|^2\right|}{N^2}\right)^{-10}
  \left( 1+\frac{|v|}{N} \right)^{-10}\\
  \leq & N^{-4}\sup_{|p|\geq BN} \sum_{|v|\leq NB^{\frac{1}{100}}\atop |\hat{p}\cdot v|\leq NB^{-\frac{9}{10}}}
  \left( 1+ \frac{\left||p|^2-|p+v|^2\right|}{N^2}\right)^{-10}
  \left( 1+\frac{|v|}{N} \right)^{-10}\\
  \leq & N^{-4}\sup_{|p|\geq BN} \sum_{|v|\leq NB^{\frac{1}{100}}\atop |\hat{p}\cdot v|\leq NB^{-\frac{9}{10}}} 1\\
  \leq & N^{-4}\sup_{|p|\geq BN} \# \{v: |v|\leq NB^{\frac{1}{100}},\ |\hat{p}\cdot v|\leq NB^{-\frac{9}{10}}\}\\
  =& N^{-4} (NB^{\frac{1}{100}})^3NB^{-\frac{9}{10}}\\
  \leq & B^{-\frac{87}{100}}.
\end{align*}
\end{proof}

\begin{lem}\label{lem:7.2}
  Assume that $\mathfrak{O}_\alpha = (N_{k,\alpha},t_{k,\alpha}, x_{k,\alpha})_k
  \in \mathcal{F}_e, \ \alpha\in\{1, 2\}$, are two orthogonal frames,
  $I\subseteq (-1, 1)$ is a fixed open interval, $0\in I$, and $T_1$, $T_2$,
  $R\in [1,\infty)$ are fixed numbers, $R\geq T_1 + T_2$. For $k$ large enough,
for $\alpha \in\{1, 2\}$
  \[
  |\nabla_x^m \w_k^{\alpha, \theta}|+(N_{k, \alpha})^{-2}\mathds{1}_{S_k^{\alpha, \theta}}
  |\partial_t \nabla_x^m \w_k^{\alpha, \theta}|\leq R_{\theta, \alpha}
  (N_k^\alpha)^{|m|+1}\mathds{1}_{S_k^{\alpha, \theta}},\ 0\leq |m| \leq 10,
  \]
  where
  \[
  S_k^{\alpha,\theta} :=\{ (x,t)\in \TTT^4\times I
  : |t-t_{k,\alpha}|< T_{\alpha}(N_{k,\alpha})^{-2},\ |x-x_{k,\alpha}|\leq R(N_{k,\alpha})^{-1}\}.
  \]
  and assume that $(\w_{k,1}, w_{k,2}, f_k)_k$ are 3 sequences of functions with
  properties $\|f_k\|_{X^1(I)}\leq 1$ for all $k$ large enough,
  then
  \[
  \limsup_{k\to\infty} \|\w_{k, 1}\, \w_{k, 2}\, f_k\|_{N(I)} = 0\]
\end{lem}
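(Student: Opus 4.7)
The plan is to argue by case analysis on the scale ratio $N_{k,1}/N_{k,2}$ of the two orthogonal frames $\mathfrak{O}_1,\mathfrak{O}_2$. Passing to a subsequence, we may assume $N_{k,1}\geq N_{k,2}$ and that $r_k := N_{k,2}/N_{k,1}\in(0,1]$ converges to some $r_\infty\in[0,1]$. The argument splits cleanly according to whether $r_\infty>0$ (comparable scales) or $r_\infty = 0$ (separated scales).

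In the comparable-scale case $r_\infty>0$, the orthogonality of the frames forces either $N_{k,1}^2|t_{k,1}-t_{k,2}|\to\infty$ or $N_{k,1}|x_{k,1}-x_{k,2}|\to\infty$. Since the temporal diameter of $S_k^{\alpha,\theta}$ is $\lesssim T_\alpha N_{k,\alpha}^{-2} \lesssim_{r_\infty} (T_1+T_2)N_{k,1}^{-2}$ and its spatial diameter is $\lesssim R\,N_{k,\alpha}^{-1} \lesssim_{r_\infty} R\,N_{k,1}^{-1}$, this separation forces $S_k^{1,\theta}\cap S_k^{2,\theta}=\emptyset$ for all $k$ large. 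Hence $\w_{k,1}\w_{k,2}\equiv 0$ and the conclusion is immediate.

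In the separated-scale case $r_\infty=0$, fix a small $\ee>0$ (eventually sent to $0$) and introduce the frequency threshold $M_k := \ee\,N_{k,1}$, so that $M_k$ vastly exceeds $N_{k,2}$ yet is much smaller than $N_{k,1}$. Decompose $f_k = P_{\leq M_k} f_k + P_{> M_k} f_k$ and estimate the two pieces separately. For the high-frequency piece $P_{> M_k} f_k$, use the atomic structure of $X^1(I)$ to write it as a superposition (with $\ell^1$ coefficients summing to $\lesssim 1$) of piecewise linear evolutions of the form $\mathds{1}_{J_\ell}(t)\,e^{it\DD}P_{>M_k}\phi_{j,\ell}$ with $\|\phi_{j,\ell}\|_{H^1(\TTT^4)}\lesssim 1$. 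On each such piece, after rescaling $\w_{k,1}$ by a power of $R$ to match the normalization of Lemma \ref{lem:7.1} (with $N=N_{k,1}$ and the cube $\{|x-x_{k,1}|\leq N_{k,1}^{-1}\}$ inflated by a factor of $R$) and after absorbing the slowly varying factor $\w_{k,2}$ as a bounded multiplier in the duality pairing against a $Y^{-1}$ test function, Lemma \ref{lem:7.1} delivers an estimate of the form $\lesssim_R (\ee^{-1/200}+N_{k,1}^{-1/200})\to 0$ as $k\to\infty$. Summing over atoms and then sending $\ee\to 0$ kills this piece. For the low-frequency piece $P_{\leq M_k} f_k$, the smoothness estimates $|\nabla^m \w_{k,1}|\leq R\,N_{k,1}^{|m|+1}\mathds{1}_{S_k^{1,\theta}}$ for $|m|\leq 10$ combined with stationary-phase-style integration by parts give $\|P_{\leq \ee N_{k,1}}\w_{k,1}\|_{H^1(\TTT^4)}\lesssim_R \ee^{9}$; splitting the dual test function $v\in Y^{-1}$ analogously and using Fourier-support considerations plus H\"older then yields the desired smallness.

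The principal obstacle is the separated-scale regime, where the two bumps live at incomparable frequencies yet may share the same spacetime neighborhood. The essential new input is Lemma \ref{lem:7.1}, which precisely captures that a sharply concentrated bump at scale $N$ barely interacts with Schr\"odinger evolutions of frequencies much larger than $N$; the technical subtleties are (i) reducing from an $X^1$-bounded $f_k$ to linear Schr\"odinger evolutions via atomic decomposition, (ii) treating $\w_{k,2}$ not as a second sharp localizer (to which Lemma \ref{lem:7.1} does not directly apply) but as a tame slowly-varying multiplier bounded on the support of $\w_{k,1}$, and (iii) tracking the various powers of $R$, $T_\alpha$, and $\ee$ arising from the rescalings so that the final bound indeed goes to $0$ with $k\to\infty$ and then $\ee\to 0$.
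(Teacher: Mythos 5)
Your comparable-scale case---deducing $S_k^{1,\theta}\cap S_k^{2,\theta}=\emptyset$ from orthogonality of the frames---matches the paper's first case. Your separated-scale case, however, diverges from the paper's argument and contains two genuine gaps. First, the high-frequency piece $P_{>M_k}f_k$ with $M_k=\ee N_{k,1}$ cannot be handled by Lemma~\ref{lem:7.1} as you describe. With $\w=\w_{k,1}$ normalized at scale $N=N_{k,1}$, the threshold $\ee N_{k,1}$ corresponds to $B=\ee<1$, violating the hypothesis $B\geq 2$; the resulting factor $B^{-1/200}=\ee^{-1/200}$ is greater than $1$ and does not vanish, so your claimed bound $(\ee^{-1/200}+N_{k,1}^{-1/200})\to 0$ is false. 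Moreover, Lemma~\ref{lem:7.1} requires two copies of the same $\w$ and one linear evolution; in $\w_{k,1}\w_{k,2}f_k$ the factor $\w_{k,2}$ has amplitude up to $R_{\theta,2}N_{k,2}\to\infty$, so it cannot be absorbed as a bounded multiplier without further justification, and your low-frequency step (``Fourier-support considerations plus H\"older'') is too vague to check.

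Second, and more fundamentally, your argument misses the mechanism that the paper's proof actually relies on: \emph{time-window shrinkage}. The paper does not decompose $f_k$ at all---$f_k$ is controlled purely through $\|f_k\|_{X^1}\leq 1$ via Proposition~\ref{prop:nonlinear}. Instead one restricts $\w_{k,2}$ to the tiny time interval $|t-t_{k,1}|\leq T_1N_{k,1}^{-2}$ on which $\w_{k,1}$ lives, setting $\widetilde\w_{k,2}=\w_{k,2}\mathds{1}_{(t_{k,1}-T_1N_{k,1}^{-2},\,t_{k,1}+T_1N_{k,1}^{-2})}(t)$, and the crucial gain is the $Z$-norm estimate $\|\widetilde\w_{k,2}\|_{Z(I)}\lesssim R^{9/4}(N_{k,2}/N_{k,1})^{1/2}\lesssim\ee^{500}$, which exploits the shortness of the interval. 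One then frequency-decomposes $\w_{k,1}$ at $\ee^{10}N_{k,1}$ and $\widetilde\w_{k,2}$ at $\ee^{-10}N_{k,2}$: the low piece of $\w_{k,1}$ and the high piece of $\widetilde\w_{k,2}$ are small in $X^1$, and the remaining high-$\w_{k,1}$/low-$\widetilde\w_{k,2}$ term is killed by estimate~(\ref{eq:nonlinearP}) together with the $Z$-smallness of $\widetilde\w_{k,2}$. Lemma~\ref{lem:7.1} is not used in this proof at all (it is used elsewhere, in the proof of~(\ref{eq:6.8})). Without the time-window shrinkage and the $Z'$-norm structure in Proposition~\ref{prop:nonlinear}, I do not see how to close your argument.
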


\begin{proof}
For $\ee > 0$ small.

If $N_{k,1}/N_{k,2} + N_{k,2}/N_{k,1} \leq \ee^{-1000}$ and
$k$ is large enough then $S_{k,1}\cap S_{k,2} = \emptyset$. (By the definition of
orthogonality of frames, $N_{k,1}/N_{k,2} + N_{k,2}/N_{k,1} \leq \ee^{-1000}$ implies
$N_{k,1}^2|t_{k,1}-t_{k,2}|\to \infty$ or $N_{k,1}|x_{k,1}-x_{k,2}|\to\infty$, so
$S_{k,1}\cap S_{k,2} = \emptyset$.)
In this case, $\w_{k, 1}\, \w_{k, 2}\, f_k\equiv 0$.

If $N_{k,1}/N_{k,2} \geq \ee^{-1000}/2$.
Denote that
\[
\w_{k,1}\w_{k,2} = \w_{k,1}\widetilde{\w}_{k,2}:=
\w_{k,1}\cdot (w_{k,2}\mathds{1}_{(t_{k,1}-T_1N_{k,1}^{-2}, t_{k,1}+T_1N_{k,1}^{-2})}(t)).
\]

\noindent\textit{\textbf{Claim $\dagger$} For $k$ large enough,
\begin{enumerate}
  \item $\|\widetilde{\w}_{k,2}\|_{X^1(I)}\lesssim_R 1$;
  \item $\|P_{>\ee^{-10}N_{k,2}} \widetilde{\w}_{k,2}\|_{X^1(I)}\lesssim_R \ee$;
  \item $\|\widetilde{\w}_{k,2}\|_{Z(I)}\lesssim_R \ee$;
  \item $\|\w_{k,1}\|_{X^1(I)}\lesssim_R 1$;
  \item $\|P_{\leq \ee^{10}} \w_{k,1}\|_{X^1(I)}\lesssim_R \ee$.
\end{enumerate}
}

By this \textit{Claim $\dagger$}, Proposition \ref{prop:nonlinear}, and $\ee^{10}N^1>>\ee^{-10}N_2$ we obtain that
\begin{align*}
  \|\w_{k,1}\w_{k,2}f_k\|_{N(I)} \leq& \|(P_{\leq \ee^{10}N_{k,1}} \w_{k,1})
  (\widetilde{\w}_{k,2}) f_k\|_{N(I)}
  +\|(P_{> \ee^{10}N_{k,1}} \w_{k,1})(P_{>\ee^{-10} N_{k,2}}\widetilde{\w}_{k,2})
  f_k\|_{N(I)}\\
  &+\|(P_{> \ee^{10}N_{k,1}} \w_{k,1})(P_{\leq\ee^{-10} N_{k,2}}\widetilde{\w}_{k,2})
  f_k\|_{N(I)}\\
  \lesssim_R& \ee.
\end{align*}

More detail about the \textit{Claim $\dagger$}:

\noindent (1) Consider $\widetilde{\w}_{k,2} w_{k,2}\mathds{1}_{(t_{k,1}-T_1N_{k,1}^{-2}, t_{k,1}+T_1N_{k,1}^{-2})}(t)$.
\begin{align*}
  \|\widetilde{\w}_{k,2}\|_{X^1(I)}&\lesssim \|\widetilde{\w}_{k,2}(0)\|_{H^1}
  +\left(\sum_{N} \|P_N(i\partial_t +\DD) \widetilde{\w}_{k,2}\|^2_{L^1_t([0,1],H^1)}
  \right)^{\frac{1}{2}}\\
  &\lesssim\left( \int_{|x-x_{k,2}|\leq RN_{k,2}^{-1}} |\langle \nabla \rangle \widetilde{\w}_{k,2}(0)
  |^2\,dx \right)^{\frac{1}{2}}\\
  &+ \left(\sum_{N} \left( \int_I dt
  \|P_N (\partial_t \widetilde{\w}_{k,2})\|_{H^1} + \|P_N \DD \widetilde{\w}_{k,2}\|_{H^1}
  \right)^2 \right)^{\frac{1}{2}}\\
  &\lesssim (R^2 N_{k,2}^4 R^4 N_{k,2}^{-4})^{\frac{1}{2}} + \int_I (\|\partial_t
  \widetilde{\w}_{\alpha, k}\|_{H^1}+\|\DD \widetilde{\w}_{k,2}\|_{H^1})\, dt\\
  &\lesssim 1.
\end{align*}

\noindent (2) Consider the high frequency part of $\widetilde{\w}_{k,2}$.
\begin{align*}
  &\|P_{>\ee^{-10}N_{k,2}} \widetilde{\w}_{k,2}\|_{X^1(I)}\\
  \lesssim&
  \|P_{>\ee^{-10}}\widetilde{\w}_{k,2}(0)\|_{H^1}
  +\left(\sum_{N>\ee^{-10}N_{k,2}} \|P_N(i\partial_t +\DD) \widetilde{\w}_{k,2}\|^2_{L^1_t([0,1],H^1)}
  \right)^{\frac{1}{2}}\\
  \leq&\left( \int_{|x-x_{k,2}|\leq RN_{k,2}^{-1}} |P_{>\ee^{-10} N_{k,2}}\langle \nabla \rangle \widetilde{\w}_{k,2}(0)
  |^2\,dx \right)^{\frac{1}{2}}+\int \|P_{>\ee^{-10}N_{k,2}} (i\partial_t+\DD) \widetilde{\w}_{k,2}\|_{H^1}\, dt\\
  \leq & \left( \int_{|x-x_{k,2}|\leq RN_{k,2}^{-1}}\left(\frac{\ee^{10}}{N_{k,2}}\right)^2
  |P_{>\ee^{-10} N_{k,2}}\langle \nabla \rangle^2 \widetilde{\w}_{k,2}(0)
  |^2\,dx \right)^{\frac{1}{2}}+\int_{|t-t_{k,2}|<N^{-2}_{k,2} R} \frac{\ee^{10}}{N_{k,2}}
  \| (i\partial_t+\DD) \widetilde{\w}_{k,2}\|_{H^2}\, dt\\
  \leq& \ee^{10} R^3 + N_{k,2}^{-2}R\frac{\ee^{10}}{N_{k,2}}(R^4 N_{k,2}^{-2} R^2 N_{k,2}^10)^{\frac{1}{2}}\\
  \lesssim& \ee^{10} R^4.
\end{align*}

\noindent (3): Consider the $Z$-norm of  $\widetilde{\w}_{k,2}$.
\begin{align*}
  \|\widetilde{\w}_{k,2}\|_{Z(I)}&\leq
  \left(\sum_N N^2 \|P_N \widetilde{\w}_{k,2}\|^4_{L^4(\TTT^4\times
   (t_{k,1}-RN_{k,1}^{-2}, t_{k,1}+RN_{k,1}^{-2}))}\right)^{1/4}\\
  &\leq \left(
  \sum_N \|\nabla^{\frac{1}{2}} P_N  \widetilde{\w}_{k,2}\|_{L^4_{t,x}}^4
 \right)^{1/4}\\
 &\lesssim \|\left(\sum_{N} |\nabla^{1/2} P_N \widetilde{\w}_{k,2}|^4\right)^{1/4}\|_{L^4}\\
 &\lesssim \|\left(\sum_{N} |\nabla^{1/2} P_N \widetilde{\w}_{k,2}|^2\right)^{1/2}\|_{L^4}\\
 &\lesssim \|\nabla^{1/2} \widetilde{\w}_{k,2}\|_{L^4(\TTT^4\times
  (t_{k,1}-RN_{k,1}^{-2}, t_{k,1}+RN_{k,1}^{-2}))}\\
 &\lesssim R^{\frac{9}{4}} \left(\frac{N_{k,2}}{N_{k,1}} \right)^{\frac{1}{2}}\\
 &\leq R^{\frac{9}{4}} \ee^{500}.
\end{align*}

\noindent (4): Similar with (1).

\noindent (5):
\begin{equation}\label{eq:**}\begin{split}
  &\|P_{\leq \ee^{10}N_{k,1}} \w_{k,1}\|_{X^1(I)}\\
  \leq&\|P_{\leq \ee^{10} N_{k,1}} \w_{k,1}(0)\|_{H^1}+\int \|P_{\leq \ee^{10}N_{k,1}}
  (i\partial_t +\DD) \w_{k,1}\|_{H^1}\,dt\\
  \lesssim& \ee^{10} N_{k,1}\left( \|P_{\leq \ee^{10} N_{k,1}} \w_{k,1}(0)\|_{L^2}
  +\int \|P_{\leq \ee^{10}N_{k,1}}
  (i\partial_t +\DD) \w_{k,1}\|_{L^2}\,dt
\right)\\
\lesssim& \ee^{10} R^4.
\end{split}
\end{equation}
\end{proof}

\begin{proof}[Proof of (\ref{eq:6.8})]
\begin{align*}
  &F(U_{prof,k}^J + e^{it\DD}R_k^J) - F(U_{prof,k}^J)\\
  =&\mathfrak{D}_{2,1}(U^J_{prof, k}, e^{it\DD}R_k^J) +
  \mathfrak{D}_{1,2}(U^J_{prof, k}, e^{it\DD}R_k^J)+ |e^{it\DD}R_k^J|^2 e^{it\DD} R_k^J
\end{align*}

First, by the nonlinear estimate (Proposition \ref{prop:nonlinear}), we have
\begin{align*}
  &\||e^{it\DD}R_k^J|^2 e^{it\DD} R_k^J\|_{N(I_k)}\\
  \lesssim & \|e^{it\DD}R_k^J\|^2_{Z'(I_k)}\|e^{it\DD}R_k^J\|_{X^1(I_k)}
\end{align*}

Since $\|e^{it\DD}R_k^J\|_{Z'(I_k)}\to 0$ as $J, k\to \infty$, and $\|e^{it\DD}
R_k^J\|_{X^1(I_k)}\lesssim 1$,
\[
\limsup_{J\to\infty}\limsup_{k\to\infty} \||e^{it\DD}R_k^J|^2 e^{it\DD} R_k^J\|_{N(I_k)}
= 0.
\]

Second, also by the nonlinear estimate Proposition \ref{prop:nonlinear} and
Proposition \ref{prop:ZinX},
\begin{align*}
  &\|\mathfrak{D}_{1,2}(U^J_{prof, k}, e^{it\DD}R_k^J)\|_{N(I_k)}\\
  \lesssim&
  \|U_{prof,k}^J\|_{X^1(I_k)}\|e^{it\DD}R_k^J\|_{X^1(I_k)}\|e^{it\DD}R_k^J\|_{Z'(I_k)}
  \to 0,
\end{align*}
as $k, J \to \infty.$

Third, consider
\[\|
\mathfrak{D}_{2,1}(U^J_{prof, k}, e^{it\DD}R_k^J)
\|_{N(I_k)},\]
assume $\ee>0$ is fixed, there exists $A = A(\ee)$ sufficiently large, such that
for all $J\geq A$ and $k\geq k_0(J)$
\[
\|U_{prof,k}^J - U_{prof,k}^A\|_{X^1(-1,1)}\leq \ee.
\]
Then
\begin{align*}
  &\|\mathfrak{D}_{2,1}(U^J_{prof, k}, e^{it\DD}R_k^J)\|_{N(I_k)}\\
  \leq& \|\mathfrak{D}_{2,1}(U^A_{prof, k}, e^{it\DD}R_k^J)\|_{N(I_k)}+
  \|\mathfrak{D}_{1,1,1}(U^A_{prof, k}, U^J_{prof, k}-U^A_{prof, k}, e^{itDD}R_k^J)\|_{N(I_k)}\\
  &+ \|\mathfrak{D}_{2,1}(U^J_{prof, k}-U^A_{prof, k}, e^{it\DD}R_k^J)\|_{N(I_k)}
  \to \|\mathfrak{D}_{2,1}(U^A_{prof, k}, e^{it\DD}R_k^J)\|_{N(I_k)} + \ee,
\end{align*}
as $k, J \to \infty.$

It remains to prove that
\[
\limsup_{J\to\infty}\limsup_{k\to\infty}
\|\mathfrak{D}_{2,1}(U^A_{prof, k}, e^{it\DD}R_k^J)\|_{N(I_k)}\lesssim \ee.
\]

By the definition of $U^A_{prof,k}$, it suffices to prove that for any $\alpha_1,
\alpha_2\in \{0, 1, \cdots, A\}$,

Fix $\theta = \ee A^{-2}/ 10$, apply the decomposition in Lemma \ref{lem:decomposition}
to all nonlinear profiles $U^\alpha_k$, $\alpha = 1, 2, \cdots, A$. We assume that
\[
T_{\theta,\alpha} = T_{\theta},\ \text{ and } R_{\theta,\alpha} = R_{\theta},
\]
for any $\alpha = 1, 2, \cdots, A$.

\begin{equation}
  \limsup_{J\to\infty}\limsup_{k\to\infty}
  \|\mathfrak{D}_{1,1,1}(U^{\alpha_1}_k, U^{\alpha_2}_k, e^{it\DD}R^J_k)\|_{N(I_k)}
  \lesssim \ee A^{-2}.
\end{equation}

\case{1}{$\alpha_1 = 0$ or $\alpha_2 = 0$.}
Without loss of generality, suppose $\alpha_2 = 0$.

Since $\|U^0_{k}\|_{X^1(-1,1)} = \|G\|_{X^1(-1,1)} \lesssim 1$, for any $k$ large
enough such that $\|G\|_{Z'(I_k)} \lesssim \ee A^{-2}$, and $\|G\|_{X^1(I_k)} \lesssim 1$.

By the nonlinear estimate Proposition \ref{prop:nonlinear} and
Proposition \ref{prop:ZinX},

\begin{align*}
  &\|\mathfrak{D}_{1,1,1}(G, U^{\alpha_2}_k, e^{it\DD}R^J_k)\|_{N(I_k)}\\
  \lesssim& \|G\|_{Z'(I_k)}\|U^{\alpha_2}_k\|_{Z'(I_k)}\|e^{it\DD}R_k^J\|_{X^1(I_k)}
  + \|G\|_{Z'(I_k)}\|U^{\alpha_2}_k\|_{X^1(I_k)}\|e^{it\DD}R_k^J\|_{Z'(I_k)}\\
  &+ \|G\|_{X^1(I_k)}\|U^{\alpha_2}_k\|_{Z'(I_k)}\|e^{it\DD}R_k^J\|_{Z'(I_k)}\\
  \lesssim& \ee A^{-2},
\end{align*}
when taking $k$, $J$ large enough.

\case{2}{$\alpha_1 \neq 0$, $\alpha_2\neq 0$ and $\alpha_1 = \alpha_2$.}

Taking $k$ large enough, we have $I_k \subset (-T_{\theta}^{-1},T_{\theta}^{-1})$
\[
\mathds{1}_{I_k}(t)
  U^\alpha_k = \w_k^{\alpha,\theta,-\infty}+\w_k^{\alpha,\theta,+\infty}
  +\w_k^{\alpha,\theta}+\rho^{\alpha, \theta}_k.\]

By  the nonlinear estimate Proposition \ref{prop:nonlinear}, (\ref{eq:decomposition})
and Lemma \ref{lem:7.2} (since $\|e^{it\DD}R_k^J\|_{X^1(I_k)}\lesssim 1$ uniformly
for both $k$ and $J$),
we obtain that
\begin{align*}
\|\mathfrak{D}_{1,1,1}(U_k^{\alpha_1}, U_k^{\alpha_2}, e^{it\DD}R_k^J)\|_{N(I_k)}
&\lesssim \frac{1}{2} A^{-2}\ee +
\|\mathfrak{D}_{1,1,1}(\w_k^{\alpha_1, \theta,+\infty}, \w_k^{\alpha_1, \theta,-\infty}, e^{it\DD}R_k^J)\|_{N(I_k)}\\
&\lesssim A^{-2}\ee,
\end{align*}
when $k$ large enough.

\case{3}{$\alpha_1 \neq 0$, $\alpha_2\neq 0$ and $\alpha_1 \neq \alpha_2$.}
Using Lemma \ref{lem:7.1}, and set $B$ sufficiently large and $k$ sufficiently
large, we obtain that,
\begin{equation}\label{eq:7.14}
  \begin{split}
  \|\mathfrak{D}_{2,1}(\w_k^{\alpha,\theta}, P_{>BN_{k,\alpha}} e^{it\DD}R_k^J)\|_{N(I_k)}
  &\lesssim (\frac{1}{B^{1/{200}}}+\frac{1}{N_{k,\alpha}^{1/{200}}})
  \|R_k^J\|_{H^1}\\
  &\lesssim \frac{\ee}{4} A^{-2}.
\end{split}
\end{equation}

We may also assume that $B$ is sufficiently large such that, for $k$ large enough,
by a similar estimate as (\ref{eq:**}), we obtain that
\begin{equation}\label{eq:7.15}
  \|P_{\leq B^{-1}N_{k,\alpha}} \w_k^{\alpha,\theta}\|_{X^1(I_k)}\leq \frac{\ee}{4}A^{-2}.
\end{equation}

Using the modified nonlinear estimate
(\ref{eq:nonlinearP}) of Lemma \ref{prop:nonlinear} and bounds (\ref{eq:7.14})
(\ref{eq:7.15}), it remains to prove that
\[
\limsup_{J\to\infty}\limsup_{k\to\infty} \|\mathfrak{D}_{2,1}
(P_{>B^{-1}N_{k,\alpha}}\w^{\alpha,\theta}_k, P_{\leq BN_{k,\alpha}}e^{it\DD}R_k^J)
\|_{N(I_k)} = 0.
\]
\end{proof}

\begin{proof}[Proof of (\ref{eq:6.9})]
\begin{equation*}
  F(U^J_{prof, k}) -\sum_{0\leq \alpha\leq J} F(U_k^\alpha)
  = \sum_{0\leq \alpha_1, \alpha_2, \alpha_3 \leq J\atop
  \alpha_1\neq\alpha_2\text{ or }\alpha_1\neq\alpha_3\text{ or }
  \alpha_2\neq\alpha_3
  }\mathfrak{D}_{1,1,1}(U_k^{\alpha_1}, U_k^{\alpha_2}, U_k^{\alpha_3})
\end{equation*}
By (\ref{6.6}), we choose $A(\theta)$ large enough, such that $\sum_{A\leq \alpha \leq J}
\|U_\alpha\|^2_{X^1(-1, 1)} \leq \theta$.

So we have
\begin{align*}
&\|\sum_{0\leq \alpha_1, \alpha_2, \alpha_3 \leq J\atop
\alpha_1\neq\alpha_2\text{ or }\alpha_1\neq\alpha_3\text{ or }
\alpha_2\neq\alpha_3
}\mathfrak{D}_{1,1,1}(U_k^{\alpha_1}, U_k^{\alpha_2}, U_k^{\alpha_3})\|_{N(I_k)}\\
\leq &\|\sum_{0\leq \alpha_1, \alpha_2, \alpha_3 \leq A\atop
\alpha_1\neq\alpha_2\text{ or }\alpha_1\neq\alpha_3\text{ or }
\alpha_2\neq\alpha_3
}\mathfrak{D}_{1,1,1}(U_k^{\alpha_1}, U_k^{\alpha_2}, U_k^{\alpha_3})\|_{N(I_k)}
+ \theta.
\end{align*}

Using Lemma \ref{lem:decomposition},
\begin{align*}
  &\|\sum_{0\leq \alpha_1, \alpha_2, \alpha_3 \leq A\atop
  \alpha_1\neq\alpha_2\text{ or }\alpha_1\neq\alpha_3\text{ or }
  \alpha_2\neq\alpha_3
  }\mathfrak{D}_{1,1,1}(U_k^{\alpha_1}, U_k^{\alpha_2}, U_k^{\alpha_3})\|_{N(I_k)}\\
  \leq& \|\sum_{F}
  \mathfrak{D}_{1,1,1}(W_k^{1}, W_k^{2}, W_k^{3})\|_{N(I_k)},
\end{align*}
where 
\begin{align*}
F:=& 
\{ (W_k^{1}, W_k^{2}, W_k^{3}) : W^i_k\in\{\w_k^{\alpha,\theta, +\infty},
 \w_k^{\alpha,\theta, -\infty}, \w_k^{\alpha,\theta}
,\rho_k^{\alpha, \theta}\}, \\ 
&\text{ for } 0\leq\alpha\leq A,\text{ and each }i,
\text{at least two different }\alpha \}
\end{align*}
and $\#F< A^3$

\noindent Consider the following several cases:

\case{1}{the terms containing one error component $\rho^{\alpha,\theta}_k$.}
By the nonlinear estimate (Proposition \ref{prop:nonlinear}),
\[
\|\mathfrak{D}_{1,1,1}(W_k^1, W_k^2, \rho_k^{\alpha, \theta})\|_{N(I_k)}
\leq \|\rho_k^{\alpha, \theta}\|_{X^1(I_k)} \|W_k^1\|_{X^1(I_k)}\|W_k^2\|_{X^1(I_k)}
\lesssim \theta,\]
for $k$ large enough.

\case{2}{the terms containing two scattering components $\w_k^{\alpha,\theta,\pm\infty}$
and $\w_k^{\beta,\theta,\pm\infty}$(maybe $\alpha =\beta$ or not).}
\begin{align*}
&\|\mathfrak{D}_{1,1,1}(\w_k^{\alpha,\theta,\pm\infty}, \w_k^{\beta,\theta,\pm\infty}, W_k^3)\|_{N(I_k)}\\
\leq&\|W^3_k\|_{X^1(I_k)}(\|\w_k^{\alpha,\theta,\pm\infty}\|_{X^1(I_k)}+\|\w_k^{\beta,\theta,\pm\infty}\|_{X^1(I_k)})
(\|\w_k^{\alpha,\theta,\pm\infty}\|_{Z'(I_k)}+\|\w_k^{\beta,\theta,\pm\infty}\|_{Z'(I_k)})\\
\lesssim&  \theta
,\end{align*}
for $k$ large enough.

\case{3}{the terms containing two different cores $\w^{\alpha,\theta}_k$ and
$\w^{\beta,\theta}_k$ with $\alpha\neq\beta$.}
By Lemma \ref{lem:7.2}, for $k$ large enough, we obtain that
\[
\|\mathfrak{D}_{1,1,1}(\w^{\alpha,\theta}_k, \w^{\beta,\theta}_k, W_k^3)\|_{N(I_k)}
\lesssim \theta.
\]
\case{4}{the others: $\mathfrak{D}_{2,1}(\w_k^{\alpha,\theta},
\w_k^{\beta, \theta,\pm\infty})$ with $\alpha\neq \beta$.}

\case{4.1}{$\limsup_{k\to \infty}\frac{N_{k,\beta}}{N_{k,\alpha}} = +\infty$.}
By Lemma \ref{lem:7.1}, and choosing $B$ and $k$ large enough,
\begin{equation}\label{eq:***}
  \|\mathfrak{D}_{2,1}(\w^{\alpha, \theta},
  P_{>BN_{k,\alpha}}\w^{\beta,\theta,\pm\infty})\|_{N(I_k)}\lesssim
  (B^{-1/{200}}+ N_{k, \alpha}^{-1/{200}})\lesssim \theta.
\end{equation}
And for the other part,
\begin{align*}
\|P_{\leq BN_{k,\alpha}} \w^{\beta, \theta, \pm\infty}_k\|_{X^1(I_k)}
&= \|P_{\leq BN_{k,\beta}\frac{N_{k,\alpha}}{N_{k,\beta}}}
\w^{\beta, \theta, \pm\infty}_k\|_{X^1(I_k)}\\
&= \|P_{\leq BN_{k,\beta}\frac{N_{k,\alpha}}{N_{k,\beta}}}
\pi_{x^\beta_k}T_{N_{k,\beta}}(\phi^{\beta,\theta,\pm\infty})\|_{H^1(\TTT^4)}\\
& = \|P_{\leq B\frac{N_{k,\alpha}}{N_{k,\beta}}}
\phi^{\beta,\theta,\pm\infty}\|_{\dot{H}^1(\RRR^4)}\to 0, \text{ as } k \to\infty.
\end{align*}

So for $k$ large enough, we obtain that
\begin{equation*}
\|\mathfrak{D}_{2,1}(\w_k^{\alpha, \theta},
P_{\leq BN_{k,\alpha}}\w_k^{\beta,\theta,\pm\infty})\|_{N(I_k)}
\lesssim \|P_{\leq BN_{k,\alpha}} \w_k^{\beta, \theta, \pm\infty}\|_{X^1(I_k)}
\|\w_k^{\alpha, \theta}\|^2_{X^1(I_k)} \lesssim \theta.
\end{equation*}
\case{4.2}{$\limsup_{k\to \infty}\frac{N_{k,\alpha}}{N_{k,\beta}} = +\infty$.}

We assume that $B$ is sufficiently large such that for $k$ large, by a similar
estimate as (\ref{eq:***}), we obtain that
\begin{equation*}
  \|\mathfrak{D}_{2,1}(\w_k^{\alpha, \theta},
  P_{>BN_{k,\beta}}\w_k^{\beta,\theta,\pm\infty})\|_{N(I_k)}\lesssim
  (B^{-1/{200}}+ N_{k, \beta}^{-1/{200}})\lesssim \theta.
\end{equation*}

And by the similar estimate as (\ref{eq:**}), for $k$ large enough,
 we obtain that
\begin{equation*}
  \|P_{\leq N_{k,\beta}} \w_k^{\alpha,\theta}\|_{X^1(I_k)}=
  \|P_{\leq N_{k,\alpha}\frac{N_{k,\beta}}{N_{k,\alpha}}} \w_k^{\alpha,\theta}\|_{X^1(I_k)}
  \lesssim \theta.
\end{equation*}
and $\|P_{> N_{k,\beta}} \w_k^{\alpha,\theta}\|_{X^1(I_k)} \lesssim 1.$

Consider the remaining part, by the nonlinear estimate (\ref{eq:nonlinear}) and
(\ref{eq:nonlinearP}),
\begin{align*}
  &\|\mathfrak{D}_{2,1}(\w_k^{\alpha, \theta},
  P_{\leq BN_{k,\beta}}\w_k^{\beta,\theta,\pm\infty})\|_{N(I_k)}\\
  \lesssim& \|\mathfrak{D}_{2,1}(P_{\leq N_{k,\beta}}\w_k^{\alpha, \theta},
  P_{\leq BN_{k,\beta}}\w_k^{\beta,\theta,\pm\infty})\|_{N(I_k)}
  +
  \|\mathfrak{D}_{2,1}(P_{> N_{k,\beta}}\w_k^{\alpha, \theta},
  P_{\leq BN_{k,\beta}}\w_k^{\beta,\theta,\pm\infty})\|_{N(I_k)} \\
  &+
  \|\mathfrak{D}_{1,1,1}(P_{> N_{k,\beta}}\w_k^{\alpha, \theta},
  P_{\leq N_{k,\beta}}\w_k^{\alpha, \theta},
  P_{\leq BN_{k,\beta}}\w_k^{\beta,\theta,\pm\infty})\|_{N(I_k)}\\
  \lesssim&\|P_{\leq N_{k,\beta}}\w_k^{\alpha, \theta}\|_{X^1(I_k)}+
  \|\mathfrak{D}_{2,1}(P_{> N_{k,\beta}}\w_k^{\alpha, \theta},
  P_{\leq BN_{k,\beta}}\w_k^{\beta,\theta,\pm\infty})\|_{N(I_k)}\\
  \lesssim &\theta + \|P_{\leq BN_{k,\beta}}\w_k^{\beta,\theta,\pm\infty}\|_{Z'(I_k)}\\
  \lesssim& \theta,
\end{align*}
where $\limsup_{J\to\infty}\limsup_{k\to\infty}
\|P_{\leq BN_{k,\beta}}\w_k^{\beta,\theta,\pm\infty}\|_{Z'(I_k)} = 0$(by a similar
estimate with (\ref{eq:decomposition}) from extinction lemma.)

\case{4.3}{$N_{k,\alpha} \approx N_{k,\beta}${ and }
$N_{k,\alpha}|x_k^\alpha - x_k^\beta|\to \infty$ as $k\to \infty$.}

From Proposition \ref{prop:equivalenceFrames}, we can use an equivalent frame of $\mathcal{O}^\alpha$ to adjust $N_{k,\alpha}$ and $t_k^\alpha$ such that
$N_{k,\alpha} = N_{k,\beta}$ and $t_k^\alpha = t_k^\beta$.

By the definition of $\w_k^{\alpha,\theta}$ and $\w_k^{\beta,\theta,\pm\infty}$,
for $k$ large enough, we obtain that
$\w_k^{\beta,\theta} \w_k^{\alpha,\theta,\pm\infty} \equiv 0.$

\case{4.4}{$N_{k,\alpha} \approx N_{k,\beta}${ and }
$N^{2}_{k,\alpha}|t_k^\alpha - t_k^\beta|\to \infty$ as $k\to \infty$.}

By Proposition \ref{prop:equivalenceFrames}, we can adjust $N_{k,\alpha}$ such that $N_{k,\alpha} = N_{k,\beta} := N_k$.

By the definition of $\w_k^{\alpha,\theta}$ and $\w_k^{\beta,\theta,\pm\infty}$,
taking $k$ large enough and $N_k^2|t_k^\alpha - t_k^\beta| > T_\theta$, we obtain that
\[
\w_k^{\alpha,\theta}\w_k^{\beta,\theta,\pm\infty} =
\mathds{1}_{[t_\alpha - \frac{T_\theta}{N_k^2},t_\alpha + \frac{T_\theta}{N_k^2}]}
\w_k^{\alpha,\theta}\w_k^{\beta,\theta,\pm\infty},
\]
and also $\w_k^{\alpha, \theta,\pm\infty} = P_{\leq R_{\theta}N_{k}}
\w_k^{\alpha, \theta,\pm\infty}$.

By (\ref{4.12}) and (\ref{4.13}), for any $T\leq N_k$, we obtain that
\begin{equation}\label{eq:L2}
  \begin{split}
  \|\w_k^{\beta,\theta,\pm\infty}\|_{L^2(\TTT^4)}
  &= \|P_{\leq R_{\theta}N_k}\w_k^{\beta,\theta,\pm\infty}\|_{L^2(\TTT^4)}\\
  &\lesssim (1 + {R_\theta})^{-10}\frac{1}{N_{k}},
\end{split}
\end{equation}
and
\begin{equation}\label{eq:Linfty}
  \sup_{|t-t_{k}^\beta|\in [TN_k^{-2}, T^{-1}]}
  \|w_k^{\beta,\theta,\pm\infty}\|_{L^\infty{(\TTT^4)}} \lesssim T^{-2}
  R_\theta^4 N_k.
\end{equation}

Interpolate (\ref{eq:L2}) and (\ref{eq:Linfty}), we can obtain that
\begin{equation}\label{eq:Lp}
  \sup_{|t-t_{k}^\beta|\in [TN_k^{-2}, T^{-1}]}
  \|w_k^{\beta,\theta,\pm\infty}\|_{L^p(\TTT^4)}
  \lesssim_{R_\theta} T^{\frac{4}{p}-2} N_k^{1-\frac{4}{p}}.
\end{equation}

By choosing $T_k = N_k |t_k^\alpha - t_k^\beta|^{\frac{1}{2}}\to \infty$ as $k\to \infty$
 and using (\ref{eq:Lp}),
we obtain that
\begin{equation}\label{eq:Linfty2}
  \sup_{t\in [t_k^\alpha - \frac{T_\theta}{N_k^2},t_k^\alpha + \frac{T_\theta}{N_k^2}]}
  \|\w_k^{\beta, \theta,\pm\infty}\|_{L^\infty(\TTT^4)}
  \lesssim_{R_\theta} T_k^{-2} N_k,
\end{equation}
and
\begin{equation}\label{eq:L4}
  \sup_{t\in [t_k^\alpha - \frac{T_\theta}{N_k^2},t_k^\alpha + \frac{T_\theta}{N_k^2}]}
  \|\langle \nabla \rangle \w_k^{\beta, \theta,\pm\infty}\|_{L^4(\TTT^4)}
  \lesssim_{R_\theta}  T_k^{-1}N_k.
\end{equation}

So by using of Leibniz rule, (\ref{eq:decomposition}) (\ref{eq:L4}) and (\ref{eq:Linfty2}), we obtain that
\begin{align*}
&\|\mathfrak{D}_{2,1}(\w^{\alpha,\theta}_k, \w^{\beta,\theta,\pm\infty}_k)
\|_{N( [t^\alpha_k - \frac{T_\theta}{N_k^2},t^\alpha_k + \frac{T_\theta}{N_k^2}])}\\
\lesssim& \|\mathfrak{D}_{2,1}(\w^{\alpha,\theta}_k, \w^{\beta,\theta,\pm\infty}_k)
\|_{L^1([t_k^\alpha - \frac{T_\theta}{N_k^2},t_k^\alpha + \frac{T_\theta}{N_k^2}], H^1(\TTT^4))}\\
\lesssim&\int_{t_k^\alpha - \frac{T_\theta}{N_k^2}}^{t_k^\alpha + \frac{T_\theta}{N_k^2}}
\left(\|\mathfrak{D}_2(\langle \nabla \rangle\w_k^{\alpha,\theta})\|_{L^2(\TTT^4)}
\|\w_k^{\beta,\theta,\pm\infty}\|_{L^\infty(\TTT^4)}
+ \|\mathfrak{D}_2(\w_k^{\alpha,\theta})\|_{L^4(\TTT^4)}
\|\langle \nabla \rangle\w_k^{\beta,\theta,\pm\infty}\|_{L^4(\TTT^4)}
\right)\, dt\\
\lesssim& \int_{t_k^\alpha - \frac{T_\theta}{N_k^2}}^{t_k^\alpha + \frac{T_\theta}{N_k^2}}
\left(
N_k^2 T_k^{-2} R_\theta^8+
N_k^2 T_k^{-1} R_\theta^3
\right)\, dt
\\
\lesssim&
T_k^{-1} T_\theta R_\theta^8 \to 0 \text{ as } k\to\infty.
\end{align*}

\end{proof}

\bibliographystyle{amsplain}
\bibliography{Yue_ref.bbl}{}

\end{document}